\tikzset{every line/.style ={thin}}
\tikzset{linea/.style ={every line, black}, lineb/.style={every line, blue!50}, linec/.style={every line, green!50}, lined/.style={every line, yellow!90!black}}
\tikzset{linee/.style ={gray!60}}  
\tikzset{shade/.style ={gray!30}}  
\tikzset{shade b/.style ={gray!50}} 
\tikzset{shade c/.style ={black}}
\title{Homology Representations Arising from a Hypersimplex }
\author {Jacob Tyler} {Harper}
\abstract{\OnePageChapter
We present a complete acyclic matching of the Hasse diagram associated with the face lattice of a hypersimplex. Since a hypersimplex is a convex polytope, there is a natural way to form a CW complex from its faces. We will then utilize this matching along with discrete Morse theory and some topological techniques to classify every subcomplex whose reduced homology groups are concentrated in a single degree. These reduced homology groups support a natural action of the symmetric group and a description of the characters that this action produces is given. 
  }
\begin{document}

\theoremstyle{plain}
\newtheorem{thm}{Theorem}[section]
\newtheorem{lem}[thm]{Lemma}
\newtheorem{prop}[thm]{Proposition}
\newtheorem{cor}[thm]{Corollary}

\theoremstyle{definition}
\newtheorem{defn}[thm]{Definition}
\newtheorem{conj}[thm]{Conjecture}
\newtheorem{eg}[thm]{Example}

\theoremstyle{remark}
\newtheorem{rem}[thm]{Remark}
\newtheorem{noteU}[thm]{Note}
\newtheorem{case}[thm]{Case}

\newcommand*{\longerrightarrow}{\ensuremath{\relbar\joinrel\relbar\joinrel\relbar\joinrel\rightarrow}}

\chapter{Introduction}
\label{c:intro}

\section{Coxeter Groups}
\label{s:Coxeter}

We start by recalling the definition and some key properties of Coxeter groups. 

\begin{defn}
A \textbf{Coxeter system} is a pair $(W, S)$ consisting of a group $W$ and a set of generators $S\subset W$, subject only to relations of the form $(ss')^{m(s,s')}=1$, where $m(s,s)=1$ and $m(s,s') = m(s',s) \geq 2$ for $s\neq s'$ in $S$. In case no such relation occurs for a pair $s,s'$, we make the convention that $m(s,s')=\infty$. The group $W$ itself is called a \textbf{Coxeter group} and the subgroups given by $W_I = \langle s\in I \ | \ I\subset S \rangle$ are called the \textbf{parabolic subgroups}.
\end{defn}

\begin{eg}
\label{eg:SnPres}
The symmetric group has the following presentation: 
\begin{center}
$S_n = \langle s_1,s_2,\ldots,s_{n-1}\ | \ s_i^2=1, (s_is_{i+1})^3=1,$ and $(s_is_j)^2=1$ if $|i-j|\geq 2\rangle$
\end{center}
where the generator $s_i$ can be identified with the transposition $(i, i+1)$. The focus of this thesis will be on this particular Coxeter group.
\end{eg}

It is usually very difficult to say much about a group given only by generators and relations, but in the case of Coxeter groups there is a nice result that tells us the order of the elements mentioned in the presentation.

\begin{prop}
Let $(W, S)$ be a Coxeter system. 
\begin{enumerate}
\item [\rm(i)] There is a unique epimorphism $\epsilon:W\rightarrow\{1,-1\}$ sending each generator $s\in S$ to $-1$. In particular, each $s$ has order 2 in $W$.
\item [\rm(ii)] The order of $ss'\in W$ for any $s,s'\in S$ is precisely $m(s,s')$.
\end{enumerate}
\end{prop}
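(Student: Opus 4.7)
The plan is to treat the two parts separately, since (i) is a direct consequence of the universal property of a presentation while (ii) requires constructing an auxiliary linear representation of $W$.

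For part (i), I will invoke the universal property of groups given by generators and relations. The assignment $s \mapsto -1$ for every $s \in S$ determines a unique homomorphism $\epsilon\colon W \to \{1,-1\}$ provided the images of the generators satisfy all the defining relations. Writing out $\epsilon((ss')^{m(s,s')}) = ((-1)(-1))^{m(s,s')} = 1$ for $s \ne s'$, and using $m(s,s)=1$ to get $\epsilon(s)^{2} = (-1)^{2} = 1$ for the diagonal case, verifies this. Surjectivity is immediate since $-1$ is in the image. The relation $s^{2} = 1$ gives that each $s$ has order dividing $2$, and $\epsilon(s) = -1 \ne 1$ rules out $s = 1_W$, so each generator has order exactly $2$.

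For part (ii), the relation $(ss')^{m(s,s')} = 1$ shows that the order of $ss'$ divides $m(s,s')$, so the real content is the reverse inequality. The plan is to produce a linear representation $\sigma\colon W \to GL(V)$ in which $\sigma(ss')$ has order exactly $m(s,s')$, forcing the order in $W$ to be at least that value. I will use the standard geometric representation: let $V$ be the real vector space with basis $\{\alpha_s\}_{s \in S}$, equip it with the symmetric bilinear form $B(\alpha_s,\alpha_{s'}) = -\cos(\pi/m(s,s'))$ (with the convention $\cos(\pi/\infty) = 1$), and set $\sigma_s(v) = v - 2B(\alpha_s,v)\,\alpha_s$. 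Each $\sigma_s$ is a $B$-orthogonal involution, so the first family of relations is automatic.

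To see that $\sigma$ descends to a homomorphism on $W$ and simultaneously pin down the order, I will analyse the action of $\sigma_s\sigma_{s'}$ on the two-dimensional subspace $P = \mathbb{R}\alpha_s + \mathbb{R}\alpha_{s'}$ and its $B$-orthogonal complement. On the complement, both $\sigma_s$ and $\sigma_{s'}$ act as the identity, and on $P$ one verifies by direct matrix computation that when $m(s,s') < \infty$ the element $\sigma_s\sigma_{s'}$ is conjugate (in $GL(P)$) to rotation by $2\pi/m(s,s')$, hence has order exactly $m(s,s')$; when $m(s,s') = \infty$ the characteristic polynomial has a repeated root $1$ with a nontrivial Jordan block, giving infinite order. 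This simultaneously shows $(\sigma_s\sigma_{s'})^{m(s,s')} = 1$ in $GL(V)$, so $\sigma$ is a well-defined homomorphism, and that $\sigma(ss')$ already has order $m(s,s')$, completing the argument.

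The main obstacle is the plane calculation: one must choose $B$ so that the reflections $\sigma_s,\sigma_{s'}$ meet at the correct angle, and then carry through the trigonometric identity that recognises $\sigma_s\sigma_{s'}|_P$ as an honest rotation. The $m(s,s') = \infty$ case also needs separate treatment because $B$ restricted to $P$ becomes degenerate, so one cannot speak of a genuine rotation and must work directly with the matrix of $\sigma_s\sigma_{s'}$ in the $\{\alpha_s,\alpha_{s'}\}$ basis.
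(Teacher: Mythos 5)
Your argument is correct and is precisely the standard one: the paper itself only cites Humphreys (Propositions 5.1 and 5.3), and your part (i) via the universal property of the presentation and part (ii) via the geometric representation restricted to the plane $\mathbb{R}\alpha_s+\mathbb{R}\alpha_{s'}$ (with the degenerate $m(s,s')=\infty$ case handled separately) reproduce exactly the proofs given there. No gaps; you correctly flag the only delicate points, namely that the orthogonal-complement decomposition fails when $B|_P$ is degenerate and that well-definedness of $\sigma$ requires the rotation computation on all of $V$.
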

\begin{proof}
Part (i) is immediate from the definition and is also \cite[Proposition 5.1]{Humphreys}. Part (ii) is \cite[Proposition 5.3]{Humphreys}
\end{proof}

\begin{defn}
Consider a Coxeter system $(W,S)$. Since $s^{-1}=s$ for every $s\in S$, every $w\neq1$ in $W$ can be written in the form $w=s_{i_1}s_{i_2}\ldots s_{i_r}$ for some $s_{i_j}$ (not necessarily distinct) in $S$. If $r$ is as small as possible, we call $r$ the \textbf{length} of $w$, written $\ell(w)$.
\end{defn}

\begin{lem}
\label{lem:Coxeter}
Consider a Coxeter group $W$ and some parabolic subgroup $W_I$. Define $$W^I = \{w\in W \ | \ \ell(ws) > \ell(w) \ \mathrm{for\ all}\ s\in I\}.$$ Given $w\in W$, there is a unique $u\in W^I$ and a unique $v\in W_I$ such that $w=uv$. Their lengths satisfy $\ell(w)=\ell(u)+\ell(v)$. Moreover, $u$ is the unique element of smallest length in the coset $wW_I$.
\end{lem}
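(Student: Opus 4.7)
The plan is to establish existence by choosing $u$ of minimum length in $wW_I$, then prove a separate length-additivity claim for arbitrary pairs in $W^I \times W_I$, and finally deduce both uniqueness and the minimum-length characterization from that claim.

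For existence, let $u$ be an element of minimum length in the coset $wW_I$, and write $w = uv$ with $v \in W_I$. To see that $u \in W^I$, suppose some $s \in I$ had $\ell(us) < \ell(u)$; then $us \in wW_I$ would be strictly shorter than $u$, contradicting the choice of $u$.

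The heart of the argument is the auxiliary statement that $\ell(uv) = \ell(u) + \ell(v)$ for every $u \in W^I$ and $v \in W_I$, which I would prove by induction on $r = \ell(v)$. The base $r = 0$ and the inequality $\ell(uv) \le \ell(u) + r$ are immediate. Fix reduced expressions $u = a_1 \cdots a_p$ and $v = s_1 \cdots s_r$ with each $s_j \in I$. By the inductive hypothesis applied to the proper prefixes of $v$, one has $\ell(us_1 \cdots s_k) = \ell(u) + k$ for $k = 0, 1, \ldots, r - 1$, so each of those successive right-multiplications is length-increasing. Suppose the final step from $k=r-1$ to $k=r$ were length-decreasing, and set $x = us_1 \cdots s_{r-1}$, which then has the reduced expression $a_1 \cdots a_p s_1 \cdots s_{r-1}$. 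Then $\ell(xs_r) < \ell(x)$, and the strong exchange (deletion) condition produces an expression for $xs_r$ obtained by omitting a single letter from this reduced word. Deleting some $s_j$ with $j < r$ would force $s_1 \cdots s_r$ to equal a word of length $r - 2$, contradicting that it is reduced. Deleting some $a_i$ and cancelling $s_1 \cdots s_{r-1}$ from the right would produce an expression of length $p - 1$ for $us_r$, yielding $\ell(us_r) < \ell(u)$ and contradicting $u \in W^I$ because $s_r \in I$.

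With length-additivity in hand, uniqueness is essentially automatic. If $w = u_1 v_1 = u_2 v_2$ with $u_i \in W^I$ and $v_i \in W_I$, then $u_2^{-1}u_1 \in W_I$; applying the auxiliary claim to both $u_1 = u_2(u_2^{-1}u_1)$ and $u_2 = u_1(u_1^{-1}u_2)$ gives $\ell(u_1) = \ell(u_2) + \ell(u_2^{-1}u_1)$ and $\ell(u_2) = \ell(u_1) + \ell(u_1^{-1}u_2)$, forcing $u_2^{-1}u_1 = 1$, hence $u_1 = u_2$ and $v_1 = v_2$. The final clause also follows: for every nonidentity $v' \in W_I$, $\ell(uv') = \ell(u) + \ell(v') > \ell(u)$, so $u$ is strictly shortest in $wW_I$. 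I expect the length-additivity step to be the main obstacle, since the naive concatenation of reduced expressions need not be reduced, and closing the argument requires pairing the strong exchange condition with a case analysis that localizes any cancellation either to the $v$-part (contradicting reducedness) or to the $u$-part (contradicting $u \in W^I$).
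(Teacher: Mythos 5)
The paper itself does not prove this lemma---it simply cites \cite[Proposition 1.10(c)]{Humphreys}---so your proposal must stand on its own, and it contains a genuine gap in the key inductive step. In the subcase where the exchange condition deletes a letter $a_i$ from the reduced word $a_1\cdots a_p s_1\cdots s_{r-1}$ for $x$, the resulting identity is
$$u\,s_1\cdots s_{r-1}s_r \;=\; (a_1\cdots \hat a_i\cdots a_p)\,s_1\cdots s_{r-1}.$$
The two sides do not share the right factor $s_1\cdots s_{r-1}$ (the left side ends in $s_1\cdots s_{r-1}s_r$), so you cannot ``cancel $s_1\cdots s_{r-1}$ from the right'' to obtain an expression of length $p-1$ for $us_r$. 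What the identity actually gives is $ut=a_1\cdots\hat a_i\cdots a_p$, where $t=(s_1\cdots s_{r-1})\,s_r\,(s_{r-1}\cdots s_1)$ is a \emph{reflection} of $W_I$, not a simple generator. Hence you only learn $\ell(ut)<\ell(u)$, which does not contradict $u\in W^I$: that hypothesis controls right multiplication by the generators $s\in I$ only. The assertion that $\ell(ut)>\ell(u)$ for every nonidentity $t\in W_I$ is equivalent to the additivity you are trying to prove, so it cannot be invoked here without circularity. (For $r=1$ the step is just the definition of $W^I$; the gap is real for every $r\geq 2$.)

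The standard repair is to prove length-additivity first only for $u$ chosen of minimal length in its coset, i.e., the $u$ produced by your existence step: then subcase (ii) yields $ut\in uW_I$ with $\ell(ut)<\ell(u)$, contradicting minimality directly. One then shows that every element of $W^I$ is in fact the minimal element of its coset (if $u=u_0v$ with $u_0$ minimal and $v\neq 1$, write $v=v's$ with $s\in I$ and $\ell(v')=\ell(v)-1$; additivity for $u_0$ gives $\ell(us)=\ell(u_0)+\ell(v')<\ell(u)$, contradicting $u\in W^I$), after which your uniqueness and final-clause arguments go through verbatim. A smaller point: writing $v=s_1\cdots s_r$ as a word reduced in $W$ with all letters in $I$ presupposes that reduced expressions of elements of $W_I$ use only generators from $I$, which is usually proved as part of this same proposition; this is harmless if you instead take the word reduced in $W_I$ and observe at the end that $\ell(uv)=\ell(u)+r$ forces $\ell(v)=r$.
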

\begin{proof}
This follows from \cite[Proposition 1.10(c)]{Humphreys}.
\end{proof}

All of the information about a Coxeter system $(W, S)$ can be encoded into a graph $\Gamma_W$. The vertex set of $\Gamma_W$ is in one-to-one correspondence with $S$ and there exists an edge joining the vertices corresponding to $s\neq s'$ whenever $m(s,s')\geq3$. The edges are labeled by $m(s,s')$, although it is common to omit the label 3 because it occurs so often. Note that this implies if the two vertices corresponding to $s$ and $s'$ are not joined by an edge, then $m(s,s')=2$.

\begin{defn}
Given an arbitrary Coxeter system $(W, S)$, the graph $\Gamma_W$ described above is called the \textbf{Coxeter graph} of $W$.
\end{defn}

\begin{eg}
Figure \ref{fig:CgraphA} below shows the Coxeter graph for the Coxeter group of type $A_{n-1}$. It has exactly $n-1$ vertices and all of the edges shown could be labeled by 3, but by convention we omit the labels. Notice if we translate the information from the graph into a presentation we arrive at the presentation shown in Example \ref{eg:SnPres} which shows that the Coxeter group of type $A_{n-1}$ is isomorphic to the symmetric group $S_n$.

\vspace{.15in}
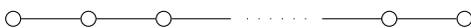
\begin{figure}[htbp]
\begin{center}
\begin{tikzpicture}[scale=1]
\node (a) at (0,0) [circle, draw, inner sep=2pt] {};
\node (b) at (1,0) [circle, draw, inner sep=2pt] {};
\draw (a) to (b);
\node (c) at (2,0) [circle, draw, inner sep=2pt] {};
\draw (b) to (c);
\node (d) at (3,0) [circle, inner sep=2pt] {};
\draw (c) to (d);
\node (e) at (4,0) [circle, inner sep=2pt] {};
\draw[loosely dotted] (d) to (e);
\node (f) at (5,0) [circle, draw, inner sep=2pt] {};
\draw (e) to (f);
\node (g) at (6,0) [circle, draw, inner sep=2pt] {};
\draw (f) to (g);
\end{tikzpicture}
\caption{Coxeter graph of type $A_{n-1}$}
\label{fig:CgraphA}
\end{center}
\end{figure}
\end{eg}

It will be important to understand how Coxeter groups act on vector spaces. One of the most common representations, sometimes known as the geometric representation, deals with the following definition and theorem.

\begin{defn}
\label{def:refgp}
Let $V$ be a real euclidean space endowed with a positive definite symmetric bilinear form. A \textbf{reflection} is a linear operator $s$ on $V$ that sends some nonzero vector $\alpha$ to its negative while fixing pointwise the hyperplane $H_\alpha$ orthogonal to $\alpha$. We may write $s = s_\alpha$ to denote such a reflection. A \textbf{finite reflection group} is a finite group generated by reflections.
\end{defn}

\begin{thm}
Every finite Coxeter group has a representation as a finite reflection group and every finite reflection group has a presentation as a Coxeter group.
\end{thm}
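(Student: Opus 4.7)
The plan is to prove the theorem as a two-way correspondence, building the geometric representation in one direction and extracting the Coxeter presentation from a root system in the other. Both halves are substantial but well-trodden, and both appear in Humphreys; I sketch the ideas one would chase.

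For the forward direction, I would start with a Coxeter system $(W,S)$ and construct the \emph{geometric representation}. Let $V$ be a real vector space with basis $\{\alpha_s : s\in S\}$ and define a symmetric bilinear form by $B(\alpha_s,\alpha_{s'}) = -\cos(\pi/m(s,s'))$, interpreting the value as $-1$ when $m(s,s')=\infty$. For each $s\in S$, define the operator $\sigma_s(v) = v - 2B(\alpha_s,v)\,\alpha_s$. A direct check shows $\sigma_s$ fixes the hyperplane $B(\alpha_s,\cdot)=0$ pointwise and sends $\alpha_s\mapsto -\alpha_s$, so $\sigma_s$ is a reflection with respect to $B$. The next step is the key verification: the operators $\sigma_s\sigma_{s'}$ have order exactly $m(s,s')$. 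This is done by restricting to the plane spanned by $\alpha_s,\alpha_{s'}$, where $\sigma_s\sigma_{s'}$ is a rotation by $2\pi/m(s,s')$. Consequently, $s\mapsto \sigma_s$ extends to a homomorphism $\sigma:W\to GL(V)$. Faithfulness of $\sigma$ is the deepest point and usually proved via the Tits cone / exchange condition; Part~(ii) of the earlier proposition gives us that the orders are at least correct on the rank-two pieces. Finally, for \emph{finite} $W$, I would show the form $B$ is positive definite by classifying the finite Coxeter graphs (or by an averaging argument: finiteness provides a $W$-invariant positive definite form, and one checks it agrees with $B$ up to scale on each simple root plane). This places $V$ in the setting of Definition~\ref{def:refgp} and realizes $W$ as a finite reflection group.

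For the converse, suppose $W\subset O(V)$ is a finite reflection group. For each reflection $s\in W$, pick a unit normal $\alpha$ to its fixed hyperplane; the set $\Phi = \{\pm\alpha : s_\alpha\in W\}$ is the associated \emph{root system}, and $W$-invariance of the generating set of reflections makes $\Phi$ stable under $W$. Choose a vector $t\in V$ lying in none of the hyperplanes $H_\alpha$, split $\Phi = \Phi^+\sqcup\Phi^-$ according to the sign of $B(t,\alpha)$, and define the \emph{simple roots} $\Delta\subset\Phi^+$ to be those positive roots that cannot be written as nonnegative combinations of two other positive roots. Standard arguments show $B(\alpha,\beta)\le 0$ for distinct $\alpha,\beta\in\Delta$, that $\Delta$ is a basis of its span, and that the simple reflections $S=\{s_\alpha:\alpha\in\Delta\}$ generate $W$. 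Setting $m(s_\alpha,s_\beta)$ to be the order of $s_\alpha s_\beta$ — which by planar geometry equals $\pi/\theta$ where $\theta$ is the angle between the hyperplanes — one gets relations $(s_\alpha s_\beta)^{m(s_\alpha,s_\beta)}=1$, so $W$ is a quotient of the abstract Coxeter group $W'$ built from this data. To get equality, one applies the forward direction to $W'$: the geometric representation of $W'$ surjects onto $W$, and comparing orders (both finite of the same size, once the root system is finite) shows the surjection is an isomorphism.

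The main obstacle is the faithfulness of the geometric representation in the forward direction; everything else is either a calculation in a two-dimensional subspace or a combinatorial manipulation of roots. Faithfulness ultimately relies on the exchange/deletion property for Coxeter groups, which is precisely what Lemma~\ref{lem:Coxeter} is a consequence of, so I would invoke the machinery surrounding \cite[Ch.~5]{Humphreys} rather than reconstruct it from scratch. The second obstacle, less severe, is the clean identification of the two root-system data on both sides so that the comparison $|W'|=|W|$ in the converse is legitimate; this is controlled once one knows the Tits cone argument behind faithfulness.
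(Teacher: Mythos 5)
Your sketch is correct and follows exactly the standard argument of \cite[Ch.~1 and \S\S 5.3--6.4]{Humphreys} --- geometric representation plus positive definiteness of $B$ for the forward direction, simple systems and a comparison of orders for the converse --- which is precisely the source the paper cites in lieu of giving its own proof. There is nothing to compare beyond that, since the paper's proof consists only of that citation.
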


\begin{proof}
This is a well-known result. The reader is directed towards \cite[Theorem 6.4]{Humphreys} for one version of this proof.
\end{proof}

More details of this representation can be found in \cite[\S 5.3]{Humphreys}, but one way to arrive at such a representation is the following. If we are given a Coxeter system $(W,S)$, then we begin with a vector space $V$ over $\mathbb{R}$, having basis $\{\alpha_s \ | \ s\in S\}$ in one-to-one correspondence with $S$. Next we define a symmetric bilinear form $B$ on $V$ by requiring $B(\alpha_s, \alpha_{s'}) = -\mathrm{cos}\ \frac{\pi}{m(s,s')}$. We can now define the reflection $\sigma_s:V\rightarrow V$ by the rule $\sigma_s \lambda = \lambda - 2B(\alpha_s,\lambda)\alpha_s$. Finally, there exists a unique homomorphism $\sigma :W\rightarrow GL(V)$ sending $s$ to $\sigma_s$. It is worth noting that \cite[Theorem 6.4]{Humphreys} proves that $B$ is positive definite, or in other words $B(\lambda, \lambda)>0$ for all nonzero $\lambda\in V$,  if and only if $W$ is finite.

While the above description does work for both finite and infinite Coxeter groups, from now on we will only talk about finite Coxeter groups and hence also consider them as finite reflection groups. We proceed by giving a description of two sets of vectors that arise when talking about these types of groups, and note that the definition of a root system given here is different from the root system of a Lie algebra.

\begin{defn}
A \textbf{root system} $\Phi$ is a finite set of nonzero vectors in $V$ such that $\Phi\cap\mathbb{R}\alpha = \{\alpha, -\alpha\}$ and $s_{\alpha}\Phi = \Phi$ for all $\alpha\in\Phi$ where $s_\alpha$ is the reflection associated with $\alpha$. We say that a subset $\Delta$ of $\Phi$ is a \textbf{simple system} (and call its elements \textbf{simple roots}) if $\Delta$ is a vector space basis for the $\mathbb{R}$-span of $\Phi$ in $V$ and if moreover each $\alpha\in\Phi$ is a linear combination of $\Delta$ with coefficients all of the same sign (all nonnegative or all nonpositive).
\end{defn}

Every finite reflection group can be generated by the reflections corresponding to a root system (see \cite[\S 1.2]{Humphreys}).
Given a root system, \cite[Theorem 1.3]{Humphreys} proves that there exists a corresponding simple system and \cite[Theorem 1.5]{Humphreys} proves that the reflections corresponding to a set of simple roots also generate the group. 
Finally, and not too surprising given the relationship between finite Coxeter groups and finite reflection groups, \cite[\S 5.4]{Humphreys} shows that the elements of a simple system $\Delta$ are in one-to-one correspondence with the elements of $S$.

Now with $W$ acting on $V$ as a finite reflection group, the following theorem shows us one reason why simple systems are helpful.

\begin{thm}
Every vector in $V$ is conjugate under $W$ to one and only one point in $$D=\{\lambda\in V \ | \ (\lambda,\alpha)\geq0 \ \mathrm{for\ all} \  \alpha\in\Delta\}.$$ The set $D$ is known as a \textbf{fundamental domain} for the action of $W$ on $V$.
\end{thm}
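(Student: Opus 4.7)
My plan is to prove existence and uniqueness as separate arguments. For existence, I would exploit the finiteness of $W$: the orbit $W\lambda$ is a finite set of vectors, so one can choose $\mu \in W\lambda$ maximizing $(\mu, \delta)$ for a fixed $\delta \in V$ satisfying $(\delta, \alpha) > 0$ for every $\alpha \in \Delta$. Such a $\delta$ exists because $\Delta$ spans a subspace on which the positive definite form is nondegenerate, so I may take $\delta$ to be a positive combination of the basis dual to $\Delta$ with respect to the form. For each simple root $\alpha$, the maximality gives $(s_\alpha \mu, \delta) \leq (\mu, \delta)$, which after substituting $s_\alpha \mu = \mu - 2(\mu,\alpha)\alpha/(\alpha,\alpha)$ collapses to $(\mu, \alpha)(\alpha,\delta) \geq 0$. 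Since $(\alpha, \delta) > 0$, this forces $(\mu, \alpha) \geq 0$ for all $\alpha \in \Delta$, so $\mu \in D$.

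For uniqueness, suppose $\mu$ and $w\mu$ both lie in $D$; I would prove $\mu = w\mu$ by induction on $\ell(w)$. The base case $\ell(w) = 0$ is immediate. Otherwise, fix a reduced expression $w = s_{\alpha_1}\cdots s_{\alpha_r}$ and set $\alpha = \alpha_1$, so that $\ell(s_\alpha w) = \ell(w) - 1$, which is equivalent to saying $w^{-1}\alpha$ is a negative root, giving $w^{-1}\alpha = -\sum c_i \alpha_i$ with each $c_i \geq 0$. Then
\[
0 \leq (w\mu, \alpha) = (\mu, w^{-1}\alpha) = -\sum_i c_i (\mu, \alpha_i) \leq 0,
\]
where the first inequality uses $w\mu \in D$ and the last uses $\mu \in D$. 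Thus $(w\mu, \alpha) = 0$, so $s_\alpha$ fixes $w\mu$, and $s_\alpha w$ carries $\mu$ to the same element $w\mu \in D$ while having strictly smaller length. The inductive hypothesis then yields $s_\alpha w \mu = \mu$, hence $w\mu = s_\alpha(s_\alpha w \mu) = s_\alpha \mu = \mu$ after one more application using that $\mu$ is fixed by the same reflection.

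The main obstacle is the Coxeter-theoretic input for the inductive step, namely that any reduced expression $w = s_{\alpha_1}\cdots s_{\alpha_r}$ with $r \geq 1$ produces a simple root $\alpha = \alpha_1$ with $w^{-1}\alpha < 0$, or equivalently $\ell(s_\alpha w) < \ell(w)$. This belongs to the standard theory of reduced words, length, and the sign decomposition of roots under $W$-action, and I would cite it from \cite[Ch.~1]{Humphreys} rather than reprove it here. Everything else in the argument is elementary manipulation of the bilinear form, using that positive definiteness (which the surrounding discussion has already invoked for finite $W$) is what permits both the choice of $\delta$ in the existence proof and the sign analysis in the uniqueness proof.
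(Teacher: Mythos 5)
Your proof is correct: the paper itself gives no argument here, simply citing \cite[Theorem 1.12(b)]{Humphreys}, and your existence argument (maximizing $(\mu,\delta)$ over the finite orbit) and length-induction uniqueness argument are exactly the standard proof found in that reference. Nothing further is needed.
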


\begin{proof}
This is \cite[Theorem 1.12(b)]{Humphreys}.
\end{proof}

\begin{eg}
\label{eg:fundom}
Let $V=\mathbb{R}^n$ with bilinear form equal to the dot product and let $W=S_n$ act on $V$ by permuting the coordinates. Define 
$$\Phi = \{\epsilon_i-\epsilon_j \ | \ 1\leq i \neq j \leq n\}$$ and  $$\Delta = \{\epsilon_i-\epsilon_{i+1} \ | \ 1\leq i \leq n-1 \},$$ 
where the $\epsilon_i$ are the standard basis vectors for $\mathbb{R}^n$. 
It follows from the definitions that $\Phi$ is a root system and $\Delta$ is a simple system for the finite reflection group associated with the Coxeter group $W$; for details see \cite[\S 2.10]{Humphreys}. In this example, 
$$D = \{(x_1, x_2, \ldots, x_n)\in\mathbb{R}^n \ | \ x_i\geq x_{i+1} \ \mathrm{for} \ \mathrm{all} \ i\}$$ 
since $(x_1, x_2, \ldots, x_n)\cdot (\epsilon_i-\epsilon_{i+1}) = x_i - x_{i+1}$.
\end{eg}

\section{Representation Theory of $S_n$}

Later on in this thesis we will study how the symmetric group acts on certain subsets of the hypersimplex. Fortunately, the representation theory and character theory of $S_n$ is well understood. In this section we shall summarize some of the key points of the theory and direct the reader to Geck and Pfeiffer's book \cite{CharactersCoxeter} and Fulton's book \cite{Fulton} for more details on the subject.

\begin{defn}
Given a group $G$, written multiplicatively, the \textbf{group algebra} $\mathbb{C}G$ is the $\mathbb{C}$-vector space with basis $G$ with multiplication given by the multiplication of $G$ and extended linearly.
\end{defn}

\begin{defn}
Let $n\in\mathbb{Z}$. A \textbf{representation} of a group $G$ over $\mathbb{C}$ is a group homomorphism $\rho:G\rightarrow GL_n(\mathbb{C})$.
\end{defn}

Notice that a representation $\rho:G\rightarrow GL_n(\mathbb{C})$ defines a group action of $G$ on an $n$-dimensional vector space $V$ over $\mathbb{C}$ in the following way. 
Fix a basis $\beta=\{b_1,\ldots, b_n\}$ for $V$ and define the action $G\times V\rightarrow V$ by letting $g\cdot v=\displaystyle\sum_{i=1}^{n}\left(\displaystyle\sum_{j=1}^{n}a_{ij} v_j\right)b_i$ where $v=\displaystyle\sum_{i=1}^{n}v_i b_i$ and $(\rho(g))_{i,j}=a_{ij}$.

Moreover this action completely determines the representation since we can recover $\rho$ given $V$ as long as we have the same basis. Hence to specify a representation it is enough to specify how it acts on its representing vector space. 
Alternatively, the action of a group $G$ on a complex vector space $V$ induces a left action of the group algebra $\mathbb{C}G$ on the vector space $V$, and vice versa which means that these group actions are equivalent to left $\mathbb{C}G$-modules. It is common to refer to $V$ itself as the representation when the map $\rho$ is clear from context even though this is an abuse of language. Also note that we can define representations over any field $K$ but in this thesis we will always use $\mathbb{C}$.

\begin{defn}
Let $V$ be the representation of $G$ corresponding to a map $\rho:G\rightarrow GL_n(\mathbb{C})$. The dimension of $V$ is also called the \textbf{dimension}, or \textbf{degree}, of the representation. A \textbf{subrepresentation} of $V$ is a subspace $U$ that is $G$-invariant, i.e., $g\cdot u\in U$ for all $g\in G$ and $u\in U$. If $V$ is nonzero and has no subrepresentations other than $\{0\}$ and $V$ then we say that $V$ is \textbf{irreducible}. If $V'$ is another representation of $G$ corresponding to a map $\rho':G\rightarrow GL_n(\mathbb{C})$, then we say that $V$ and $V'$ are \textbf{isomorphic} or \textbf{equivalent} if there exists a vector space isomorphism $\theta:V\rightarrow V'$ such that for all $g\in G$, $\theta\circ\rho(g)\circ\theta^{-1} = \rho'(g)$.
\end{defn}

The one dimensional representations of $S_n$ are called the trivial and sign representations. The trivial representation, denoted by id$_n$, is the unique homomorphism id$_n:S_n\rightarrow GL_1(\mathbb{C})$ that sends every element of $S_n$ to the identity. The sign representation, denoted by sgn$_n$, is the unique homomorphism sgn$_n:S_n\rightarrow GL_1(\mathbb{C})$ that sends every transposition $(i,i+1)$ of $S_n$ to $-1$. We will later rely heavily on these two representations to construct the remaining representations that we are interested in.

\begin{defn}
Given a representation $\rho:G\rightarrow GL_n(\mathbb{C})$, the corresponding \textbf{character} of $\rho$ is the function $\chi_\rho:G\rightarrow \mathbb{C}$ given by $\chi_\rho (g) = \mathrm{tr}(\rho(g))$ where $\mathrm{tr}$ is the trace function. The character of an irreducible representation is an \textbf{irreducible character}.
\end{defn}

Now before we can describe the irreducible representations, and hence describe the irreducible characters of the symmetric group, it will be beneficial to introduce some more terminology.

\begin{defn}
A \textbf{partition} of a positive integer $n$ is a way of writing $n$ as a sum of positive integers where the order of those integers does not matter. If $n=n_1+n_2+\cdots+n_i$ such that $n_1\geq n_2\geq\cdots\geq n_i\in\mathbb{Z}_{\geq0}$ then we denote the corresponding partition by $[n_1,n_2,\ldots,n_i]$.
\end{defn}

\begin{defn}
A \textbf{Young diagram} is a collection of boxes, or cells, arranged in left-justified rows, with a (weakly) decreasing number of boxes in each row from top to bottom. 
\end{defn}

\begin{eg}
Figure \ref{fig:Young} below shows a few Young diagrams. Notice that listing the number of boxes in each row from top to bottom gives a partition of $n$ where $n$ is the total number of boxes. Likewise, every partition corresponds to a Young diagram. We can label the Young diagrams below from left to right using the corresponding partitions in the following way: $[6,4,4,2]$, $[4,4,3,3,1,1]$, $[5]$, and $[1,1,1,1,1]$ which can also be denoted by $[1^5]$.

\vspace{.15in}
\begin{figure}[htbp]
\begin{center}
\begin{tikzpicture}[scale=.55]
\def \labelh{-6.5}

\draw [step=1,thin] (0,0) grid (6, 1);
\draw [step=1,thin] (0,-1) grid (4, 0);
\draw [step=1,thin] (0,-2) grid (4, -1);
\draw [step=1,thin] (0,-3) grid (2, -2);
\draw node at (3,\labelh) {$[6,4,4,2]$};

\draw [step=1,thin] (8,-1) grid (12, 1);
\draw [step=1,thin] (8,-3) grid (11, -1);
\draw [step=1,thin] (8,-5) grid (9, -3);
\draw node at (10,\labelh) {$[4,4,3,3,1,1]$};

\draw [step=1,thin] (14,0) grid (19, 1);
\draw node at (16.5,\labelh) {$[5]$};

\draw [step=1,thin] (21,-4) grid (22, 1);
\draw node at (21.5,\labelh) {$[1^5]$};

\end{tikzpicture}
\caption{Young diagrams}
\label{fig:Young}
\end{center}
\end{figure}
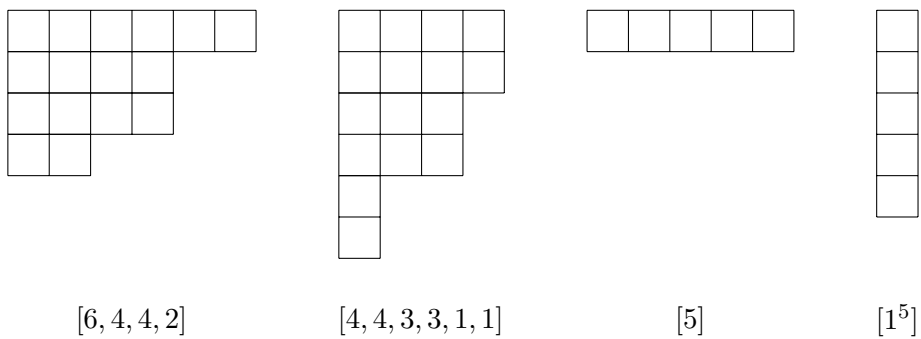
\end{eg}

\begin{defn}
Any way of putting a positive integer in each box of a Young diagram $\lambda$ is called a \textbf{filling}. A \textbf{tableau} (plural \textbf{tableaux}) is a filling that is
\begin{enumerate}
\item [\rm(i)] weakly increasing across each row and
\item [\rm(ii)] strictly increasing down each column.
\end{enumerate}
(Some authors use the term ``column-strict tableau" to mean what we call a ``tableau".)
A \textbf{standard tableau} is a tableau in which the entries are the numbers from $1$ to $n$, each occurring once. The partition $\lambda$ is sometimes referred to as the \textbf{shape} of the tableau. 
\end{defn}

\begin{eg}
Figure \ref{fig:tab} below shows two different ways to fill the Young diagram $[6,4,4,2]$. Each filling is a tableau, but only the filling on the right is a standard tableau.

\vspace{.15in}
\begin{figure}[htbp]
\begin{center}
\begin{tikzpicture}[scale=.7]
\def \labelh{-3.8}
\def \split{3}
\draw node at (-1.5,0) {};

\draw [step=1,thin] (0,0) grid (6, 1);
\draw [step=1,thin] (0,-1) grid (4, 0);
\draw [step=1,thin] (0,-2) grid (4, -1);
\draw [step=1,thin] (0,-3) grid (2, -2);

\draw [step=1,thin] (6+\split,0) grid (12+\split, 1);
\draw [step=1,thin] (6+\split,-1) grid (10+\split, 0);
\draw [step=1,thin] (6+\split,-2) grid (10+\split, -1);
\draw [step=1,thin] (6+\split,-3) grid (8+\split, -2);

\draw node at (.5,.5) {1};
\draw node at (1.5,.5) {2};
\draw node at (2.5,.5) {2};
\draw node at (3.5,.5) {3};
\draw node at (4.5,.5) {3};
\draw node at (5.5,.5) {5};
\draw node at (.5,-.5) {2};
\draw node at (1.5,-.5) {3};
\draw node at (2.5,-.5) {5};
\draw node at (3.5,-.5) {5};
\draw node at (.5,-1.5) {4};
\draw node at (1.5,-1.5) {4};
\draw node at (2.5,-1.5) {6};
\draw node at (3.5,-1.5) {6};
\draw node at (.5,-2.5) {5};
\draw node at (1.5,-2.5) {6};

\draw node at (6+\split+.5,.5) {1};
\draw node at (6+\split+1.5,.5) {3};
\draw node at (6+\split+2.5,.5) {7};
\draw node at (6+\split+3.5,.5) {12};
\draw node at (6+\split+4.5,.5) {13};
\draw node at (6+\split+5.5,.5) {15};
\draw node at (6+\split+.5,-.5) {2};
\draw node at (6+\split+1.5,-.5) {5};
\draw node at (6+\split+2.5,-.5) {10};
\draw node at (6+\split+3.5,-.5) {14};
\draw node at (6+\split+.5,-1.5) {4};
\draw node at (6+\split+1.5,-1.5) {8};
\draw node at (6+\split+2.5,-1.5) {11};
\draw node at (6+\split+3.5,-1.5) {16};
\draw node at (6+\split+.5,-2.5) {6};
\draw node at (6+\split+1.5,-2.5) {9};

\end{tikzpicture}
\caption{Tableaux}
\label{fig:tab}
\end{center}
\end{figure}
\end{eg}

Using the notion of tableaux we can now start to describe the vector spaces on which the symmetric group will act. First of all, let $T$ denote a filling for a Young diagram with $n$ boxes with the numbers from 1 to $n$, with no repeats allowed. Notice that the symmetric group $S_n$ acts on the set of such fillings, with $\sigma\cdot T$ being the filling that puts $\sigma(i)$ in the box in which $T$ puts $i$. For a filling $T$ we have a subgroup $R(T)$ of $S_n$, the row group of $T$, which consists of the permutations that permute the entries of each row among themselves. Similarly we have the column group $C(T)$ of permutations preserving the columns.

\begin{defn}
A \textbf{tabloid} is an equivalence class of fillings of a Young diagram (with distinct numbers $1,\ldots,n$), two being equivalent if corresponding rows contain the same entries. The tabloid determined by a filling $T$ is denoted $\{T\}$. If we let $\lambda$ be a partition of $n$ then define $M^\lambda$ to be the complex vector space with basis the tabloids $\{T\}$ of shape $\lambda$ and let $v_T \in M^\lambda$ be equal to $\displaystyle\sum_{q\in C(T)}\mathrm{sgn}(q)\{q\cdot T\}$. Let $S^\lambda$ be the subspace of $M^\lambda$ spanned by the elements $v_T$ as $T$ varies over all fillings of $\lambda$.
\end{defn}

\begin{prop}
\label{prop:tabs}
Consider the symmetric group $S_n$ and let $T$ be a filling for a Young diagram with n boxes with the numbers from 1 to $n$, with no repeats allowed.
\begin{enumerate}
\item [\rm(i)] The action $\sigma\cdot v_T = v_{\sigma\cdot T}$ for all $T$ and all $\sigma\in S_n$.
\item [\rm(ii)] The elements $v_T$, as $T$ varies over the standard tableaux of shape $\lambda$, form a basis for $S^\lambda$.
\item [\rm(iii)] For each partition $\lambda$ of $n$, $S^\lambda$ is an irreducible representation of $S_n$. Every irreducible representation of $S_n$ is isomorphic to exactly one $S^\lambda$.
\end{enumerate}
\end{prop}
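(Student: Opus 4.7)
For part (i), my plan is to unpack the definition directly. Writing $\sigma \cdot v_T = \sum_{q \in C(T)} \mathrm{sgn}(q)\{\sigma q \cdot T\}$ and performing the change of variables $q' = \sigma q \sigma^{-1}$, the claim reduces to the identity $C(\sigma \cdot T) = \sigma C(T) \sigma^{-1}$, which is immediate since $\sigma \cdot T$ has $\sigma(i)$ wherever $T$ has $i$, so the permutations preserving the columns of $\sigma \cdot T$ are exactly the $\sigma$-conjugates of those preserving the columns of $T$. Since $\mathrm{sgn}$ is conjugation-invariant, this yields $\sigma \cdot v_T = v_{\sigma \cdot T}$. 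This also shows $S^\lambda$ is $S_n$-stable.

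For part (ii), the natural strategy is a leading-term/straightening argument. I would introduce a total order on tabloids of shape $\lambda$, for instance the reverse lexicographic order on the sequence of rows read top to bottom. For a standard tableau $T$, I would show that among the tabloids $\{q\cdot T\}$ appearing in $v_T$, the tabloid $\{T\}$ itself is strictly maximal: any nontrivial column permutation $q$ moves some larger entry upward in its column, lowering the tabloid in the chosen order. Distinct standard tableaux of shape $\lambda$ give distinct leading tabloids, yielding linear independence. For spanning, I would set up Garnir-type relations that express $v_T$ for a non-standard $T$ as a signed sum of $v_{T'}$ whose leading tabloids are strictly smaller; iterating this straightening algorithm terminates and produces an expansion in standard $v_{T'}$.

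For part (iii), irreducibility is the substantive piece. The cleanest route I would take is the inner product approach: equip $M^\lambda$ with the symmetric bilinear form $\langle\{T\},\{T'\}\rangle = \delta_{\{T\},\{T'\}}$, which is $S_n$-invariant, and use the submodule theorem — any $S_n$-stable subspace $U \subseteq M^\lambda$ either contains $S^\lambda$ or is contained in $(S^\lambda)^\perp$. One proves this by analyzing, for $u \in U$ and a filling $T$, the element $\bigl(\sum_{q \in C(T)} \mathrm{sgn}(q)\,q\bigr)\cdot u$, which is forced to be a scalar multiple of $v_T$. Combined with the fact that $S^\lambda \not\subseteq (S^\lambda)^\perp$ in characteristic zero (since $\langle v_T, v_T\rangle = |C(T)| \neq 0$), this gives irreducibility. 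To see that the $S^\lambda$ exhaust the irreducibles, I would count: the number of partitions of $n$ equals the number of conjugacy classes of $S_n$, which equals the number of irreducible characters, so it suffices to show the $S^\lambda$ are pairwise nonisomorphic; this follows from a dominance-order argument showing that $S^\lambda$ embeds in $M^\mu$ only if $\lambda \trianglerighteq \mu$.

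The main obstacle will be the straightening algorithm in (ii) and the submodule/orthogonality analysis in (iii); both are technical but standard. Since the paper is citing Fulton and Geck–Pfeiffer, I expect the proof here will simply defer the details to those references, and my proposal would likewise attribute the bulk of the work to \cite{Fulton} and \cite{CharactersCoxeter} after outlining the argument above.
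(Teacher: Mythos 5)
Your outline is correct, and it matches the paper's treatment: the paper proves this proposition simply by citing \cite[\S 7.2]{Fulton}, and your sketch (conjugation of column groups for (i), leading-tabloid/straightening for (ii), the submodule theorem and a counting argument for (iii)) is an accurate summary of exactly the argument found there. Your closing remark that the details would be deferred to the references is precisely what the paper does.
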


\begin{proof}
This is shown in \cite[\S 7.2]{Fulton}.
\end{proof}

It should be noted that Proposition \ref{prop:tabs} (i) does not tell us explicitly how $S_n$ acts on the basis elements of $S^{\lambda}$, since $\sigma\cdot T$ may not be a standard tableau whenever $T$ is.

Given a partition $\lambda$ of $n$, we will write the corresponding character of $S_n$ as $\chi^\lambda$. In particular, the trivial character corresponds to the partition $\lambda=[n]$ and hence is denoted by $\chi^{[n]}$ while the sign character corresponds to the partition $\lambda=[1,1,\ldots,1]=[1^n]$ and hence is denoted by $\chi^{[1^n]}$.

\begin{defn}
Let $\rho_m:S_m\times U\rightarrow U$ and $\rho_n:S_n\times V\rightarrow V$ be two representations. Define the representation $\rho_m\otimes\rho_n$ of $(S_m \times S_n)$ in the following way: $$\rho_m\otimes\rho_n:(S_m \times S_n)\times(U\otimes V)\rightarrow U\otimes V$$ $$\rho_m\otimes\rho_n((g,h),(u\otimes v))\mapsto\rho_m(g,u)\otimes\rho_n(h,v)$$
and extend linearly for all $g\in S_m$, $h\in S_n$, $u\in U$, and $v\in V$. Denote the corresponding character by $\chi_{\rho_m}\times\chi_{\rho_n}$.
\end{defn}

Next notice that if we let $V$ be a representation of a group $G$ and $H$ be a subgroup of $G$ then $V$ is also a representation of $H$ by restriction. This representation is denoted by $\mathrm{Res}^G_H V$ or $(V)\downarrow^G_H$. 
Likewise, we can start with a representation of $H$ and define a representation of $G$ as follows.

\begin{defn}
Let $H$ be a subgroup of a finite group $G$ and let $\rho:H\times V\rightarrow V$ be a representation of $H$. We can define a representation on $G$, known as the \textbf{induced representation} and denoted by $\mathrm{Ind}^G_H V$ or $(V)\uparrow_H^G$, by letting $(V)\uparrow_H^G \  =\mathbb{C}G\otimes_{\mathbb{C}H}V$ where $G$ acts on $\mathbb{C}G\otimes_{\mathbb{C}H}V$ by letting $g_1(g_2\otimes v) = (g_1g_2)\otimes v$ and extending linearly for every $g_1,g_2\in G$ and $v\in V$. The corresponding character is denoted by $(\chi_\rho)\uparrow^G_H$ and the degree of this new representation is equal to the product of $|G:H|$ with the dimension of $V$.
\end{defn}

\begin{defn}
Let $V$ be a vector space over a field $K$. For any nonnegative integer $k$, we define the \textbf{$k$-th tensor power of $V$} to be the tensor product of $V$ with itself $k$ times: $T^k V = V^{\otimes k} = V \otimes V \otimes \cdots \otimes V$.
The \textbf{tensor algebra} of a vector space $V$, denoted by $T(V)$, is the algebra of tensors on $V$ (of any rank) with multiplication being the tensor product.
We can construct $T(V)$ as the direct sum of $T^k V$ for $k=0,1,2,\ldots$
$$T(V)=\displaystyle\bigoplus_{k=0}^{\infty} T^k V = K \oplus V \oplus (V\otimes V) \oplus (V\otimes V \otimes V) \oplus \cdots.$$
\end{defn}

\begin{defn}
The \textbf{exterior algebra} $\bigwedge V$ over a vector space $V$ over a field $K$ is defined as the quotient algebra of the tensor algebra by the two-sided ideal $I$ generated by all elements of the form $x\otimes x$ such that $x\in V$.
The \textbf{$k$-th exterior power of $V$}, denoted $\displaystyle\bigwedge^{k} V$, is the vector subspace of $\bigwedge V$ spanned by all elements of the form $x_1\wedge x_2\wedge\cdots\wedge x_k$ where $x_i\in V$ and $x_1\wedge x_2\wedge\cdots\wedge x_k$ is the image of $x_1\otimes x_2\otimes\cdots\otimes x_k$ in the quotient.
\end{defn}

Note that if a group $G$ acts on $V$, then this action extends to an action on $T^k V$ by letting 
$$g\cdot (x_1\otimes x_2\otimes\cdots\otimes x_k) = (g\cdot x_1)\otimes (g\cdot x_2)\otimes\cdots\otimes (g\cdot x_k).$$
It follows that this action will induce an action on $\displaystyle\bigwedge^{k} V$ by letting 
$$g\cdot (x_1\wedge x_2\wedge\cdots\wedge x_k) = (g\cdot x_1)\wedge (g\cdot x_2)\wedge\cdots\wedge (g\cdot x_k).$$

\begin{defn}
Let $E_n$ be the representation of $S_n$ constructed by first taking the $n$-dimensional representation of $S_n$ corresponding to the natural action of the group on $n$ letters, and then quotienting by the 1-dimensional submodule spanned by the all-ones vector.
This representation is known as the \textbf{reflection representation} and corresponds to the partition $[n-1,1]$. Note that this is the same representation referenced in Section \ref{s:Coxeter}.
\end{defn}

\begin{prop}
\label{prop:ext}
The character of the $S_n$-module $\displaystyle\bigwedge^{d} E_{n}$ for $0\leq d\leq n-1$ is given by the partition
$$\lambda=[n-d,\ 1^{d}].$$
\end{prop}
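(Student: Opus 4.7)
The plan is to proceed by induction on $d$. For $d=0$, $\bigwedge^0 E_n = \mathbb{C}$ is the trivial $S_n$-module $S^{[n]}$, matching $[n-0,1^0]=[n]$. For the inductive step, I assume $\bigwedge^{d-1}E_n \cong S^{[n-d+1,1^{d-1}]}$ and show $\bigwedge^d E_n \cong S^{[n-d,1^d]}$.

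Two structural observations about the natural permutation module $\mathbb{C}^n$ (with $S_n$ permuting the basis $e_1,\ldots,e_n$) drive the argument. First, the all-ones vector spans a trivial submodule $\mathbf{1}$ whose $S_n$-invariant complement is the ``sum-zero'' hyperplane, which realizes $E_n$; hence $\mathbb{C}^n \cong E_n \oplus \mathbf{1}$. Since $\mathbf{1}$ is one-dimensional, the standard direct-sum formula for exterior powers collapses to
$$\bigwedge^d \mathbb{C}^n \;\cong\; \bigwedge^d E_n \;\oplus\; \bigwedge^{d-1} E_n.$$
Second, $S_n$ acts transitively on the wedge basis $\{e_{i_1}\wedge\cdots\wedge e_{i_d} : i_1<\cdots<i_d\}$, and the stabilizer of $e_1\wedge\cdots\wedge e_d$ is the Young subgroup $S_d \times S_{n-d}$, which acts on this vector by $\mathrm{sgn}_d \otimes \mathrm{id}_{n-d}$ (the sign appears because elements of $S_d$ permute the wedge factors). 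Consequently
$$\bigwedge^d \mathbb{C}^n \;\cong\; \mathrm{Ind}_{S_d \times S_{n-d}}^{S_n}\bigl(\mathrm{sgn}_d \otimes \mathrm{id}_{n-d}\bigr).$$

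To finish, I would apply the dual (``vertical strip'') form of Pieri's rule to this induced module: inducing $\mathrm{sgn}_k \otimes S^\mu$ from $S_k \times S_{|\mu|}$ decomposes as $\bigoplus_\lambda S^\lambda$ summed over $\lambda$ with $\lambda/\mu$ a vertical strip of size $k$. Taking $\mu=(n-d)$ and $k=d$, the only partitions of $n$ obtained from a single row of length $n-d$ by adding $d$ boxes with at most one per row are $[n-d+1,1^{d-1}]$ and $[n-d,1^d]$. Thus $\bigwedge^d \mathbb{C}^n \cong S^{[n-d+1,1^{d-1}]} \oplus S^{[n-d,1^d]}$, and cancelling $\bigwedge^{d-1}E_n \cong S^{[n-d+1,1^{d-1}]}$ (by the induction hypothesis) from the first isomorphism forces $\bigwedge^d E_n \cong S^{[n-d,1^d]}$; passing to characters completes the proof. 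The main point requiring care is verifying that the sign really does appear on the $S_d$ factor in the stabilizer action, so that the induced module is genuinely $\mathrm{sgn}_d \otimes \mathrm{id}_{n-d}$ rather than the trivial induction; once that identification is fixed, the decomposition is a clean application of Pieri's rule.
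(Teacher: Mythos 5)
Your proof is correct. The paper itself does not argue this proposition at all---it simply cites \cite[Proposition 5.4.12]{CharactersCoxeter}---so your self-contained induction is a genuinely different (and more informative) route. What you do is assemble the result from ingredients already present in this thesis: the splitting $\mathbb{C}^n\cong E_n\oplus\mathbf{1}$ (valid since we work over $\mathbb{C}$, so the quotient defining $E_n$ is isomorphic to the sum-zero complement), the collapse of $\bigwedge^d(V\oplus\mathbf{1})$ to $\bigwedge^d V\oplus\bigwedge^{d-1}V$, the identification of the monomial module $\bigwedge^d\mathbb{C}^n$ with $\mathrm{Ind}_{S_d\times S_{n-d}}^{S_n}(\mathrm{sgn}_d\otimes\mathrm{id}_{n-d})$ (the same style of stabilizer argument the paper later uses in Lemma \ref{lem:cong}, and your worry about the sign is resolved exactly as you say, by the alternating property of the wedge), and finally the Pieri rule in the form of Theorem \ref{thm:Pieri}(ii) with $\mu=[n-d]$, $m=d$, whose only two constituents are $[n-d+1,1^{d-1}]$ and $[n-d,1^d]$. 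The cancellation step is legitimate by complete reducibility over $\mathbb{C}$. The payoff of your approach is that the proposition becomes a corollary of results already quoted in the thesis rather than an external black box; the cost is a page of argument where the author spends one line.
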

\begin{proof}
This is \cite[Proposition 5.4.12]{CharactersCoxeter}.
\end{proof}

\section{The Hypersimplex}

Polytopes are important geometric objects that often have nice combinatorial properties. The subject of study in this thesis is focused around a family of polytopes known as hypersimplices. In this section we will define these objects and explore some of their properties that will be needed in the subsequent chapters.

\begin{defn}
A subset $K\subset\mathbb{R}^n$ is said to be \textbf{convex} if the straight line segment between any two points in $K$ is also contained in $K$.
For any $K\subset\mathbb{R}^n$, the smallest convex set containing $K$ can be constructed as the intersection of all convex sets that contain $K$ and is known as the \textbf{convex hull} of $K$. A \textbf{polytope} is the convex hull of a finite set of points in $\mathbb{R}^n$.
\end{defn}

Note that if $H$ is a hyperplane in $\mathbb{R}^n$, then the complement $\mathbb{R}^n-H$ has two open components. A closed halfspace is the union of one of those two components with the hyperplane. A polytope can also be defined as the bounded intersection of finitely many closed halfspaces. It is nontrivial that these two definitions for a polytope are equivalent, but it is shown in the following theorem to be true.

\begin{thm}[\textbf{The Main Theorem for Polytopes}]
A subset $P\subset\mathbb{R}^n$ is the convex hull of a finite subset if and only if it is a bounded intersection of halfspaces.
\end{thm}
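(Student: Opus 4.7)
The plan is to prove the two directions separately, both by classical arguments.

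For the forward direction, suppose $P = \mathrm{conv}\{v_1, \ldots, v_k\}$. First I would observe that $P$ is compact, since it is the image of the standard $(k-1)$-simplex under the continuous map $(t_1, \ldots, t_k) \mapsto \sum_i t_i v_i$. In particular $P$ is bounded. To realize $P$ as a bounded intersection of finitely many halfspaces, I would invoke the hyperplane separation theorem: for every $x \notin P$ there exists a closed halfspace containing $P$ but excluding $x$, so $P$ equals the (a priori infinite) intersection of all halfspaces containing it. The main task is then to reduce to finitely many. I would define the facets of $P$ as the maximal proper faces, i.e.\ intersections of $P$ with supporting hyperplanes, and show that (a) each facet is itself the convex hull of a subset of $\{v_1, \ldots, v_k\}$, yielding at most $2^k$ facets, and (b) the corresponding facet halfspaces already cut out $P$, by showing that any exterior point $x$ can be pushed along the line from an interior point of $P$ until it exits through a facet, furnishing the needed separating halfspace.

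For the reverse direction, suppose $P = \bigcap_{i=1}^m H_i^+$ is a bounded intersection of closed halfspaces. I would define the vertices of $P$ to be its extreme points, and then characterize them as exactly those points of $P$ at which $n$ linearly independent bounding hyperplanes among the $H_i$ meet. Since any choice of $n$ such hyperplanes meets in at most one point, this produces at most $\binom{m}{n}$ vertices, so the vertex set $V$ is finite. The inclusion $\mathrm{conv}(V) \subseteq P$ is immediate from convexity of $P$. For the reverse inclusion, I would proceed by induction on $\dim P$: given $x \in P$, choose any line through $x$ and extend it until it meets the boundary at two points $y_1, y_2$; each $y_j$ lies in a proper face of $P$, which, by restricting the original halfspace description to its affine hull, is itself a bounded $\mathcal{H}$-polytope of strictly smaller dimension whose vertex set is contained in $V$. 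By induction each $y_j \in \mathrm{conv}(V)$, and hence $x$ lies in $\mathrm{conv}(V)$ as well. The base case $\dim P = 0$ is a single point.

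The main obstacle I expect lies in the inductive step of the reverse direction: carefully verifying that every proper face of $P$ inherits a finite $\mathcal{H}$-description whose vertices form a subset of $V$. This requires some care when the defining hyperplanes are not in general position, so that multiple hyperplanes may pass through a single vertex and one must argue via dimension counts rather than a naive counting bound. In the forward direction, the analogous subtlety is ensuring that each boundary point of $P$ lies in a facet-defining supporting hyperplane, not merely an arbitrary supporting hyperplane; this can be handled by an appeal to Carath\'eodory's theorem to reduce to the combinatorial structure of the facets.
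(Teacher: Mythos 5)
The paper does not actually prove this statement: it is quoted as \cite[Theorem 1.1]{Ziegler95}, and Ziegler's proof there proceeds by Fourier--Motzkin elimination (projections of $\mathcal{H}$-polyhedra are $\mathcal{H}$-polyhedra, and cones/intersections of $\mathcal{V}$-polyhedra are $\mathcal{V}$-polyhedra), a purely algebraic elimination argument that sidesteps the geometric face structure entirely. Your plan is the classical geometric route instead, and its skeleton is sound: the reverse direction (extreme points are the points where $n$ linearly independent active constraints meet, hence finitely many; then induction on $\dim P$ by passing a line through $x$ and landing in proper faces, which inherit a bounded $\mathcal{H}$-description and whose extreme points are extreme points of $P$) is a complete and standard argument, provided you work with the \emph{relative} boundary throughout so that the non-full-dimensional case does not collapse.

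The genuine gap is in the forward direction, and it sits exactly where you located it but is not repaired by the tool you propose. Your argument needs the lemma that the (relative) boundary of $P=\mathrm{conv}\{v_1,\dots,v_k\}$ is covered by its facets, i.e.\ that the exit point $z$ of the segment from an interior point $y$ to an exterior point $x$ lies on a \emph{facet-defining} hyperplane rather than merely on some supporting hyperplane. Without this, separation only produces one halfspace per exterior point, and finiteness is lost. Carath\'eodory's theorem says nothing about this: it controls how many of the $v_i$ are needed to express a point of $P$, not whether a supporting hyperplane at a boundary point can be enlarged to one supporting a face of codimension one. The standard repairs are either (a) to prove directly that every proper face of a $\mathcal{V}$-polytope is contained in a facet (e.g.\ by perturbing the supporting functional within the normal cone at $z$, or by an induction on codimension), or (b) to abandon the facet argument and use Fourier--Motzkin as Ziegler does. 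You should also record the easy but necessary step that when $P$ is not full-dimensional, its affine hull must first be written as an intersection of halfspaces before the facet halfspaces (taken inside $\mathrm{aff}(P)$) can cut out $P$ in $\mathbb{R}^n$. With the covering lemma supplied, the rest of your forward direction (faces are convex hulls of subsets of $\{v_1,\dots,v_k\}$, hence finitely many facets) goes through.
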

\begin{proof}
This is \cite[Theorem 1.1]{Ziegler95}. 
\end{proof}

Both definitions are helpful as some properties of polytopes are easier to understand when viewing them one way versus the other. For example, the following definition relates more to the halfspace definition of a polytope and can be found in Ziegler's book \cite{Ziegler95}.

\begin{defn}
Let $P\subseteq \mathbb{R}^n$ be a polytope. A \textbf{face} $F$ of $P$ is either $P$ itself or the intersection of $P$ with a hyperplane such that $P$ is contained in one of the two closed halfspaces determined by the hyperplane. The faces of dimension 0, 1, and $i$ are called the \textbf{vertices}, \textbf{edges}, and \textbf{$i$-faces} of $P$ respectively.
\end{defn}

In this thesis we will investigate the properties of the following polytope.

\begin{defn}
Let $J(n,k)$ be the polytope equal to the convex hull of the points in $\mathbb{R}^n$ with exactly $k$ 1's and $n-k$ 0's. For any value of $n$ and $k$ such that $1\leq k\leq n-1$, this polytope is called a \textbf{hypersimplex}.
\end{defn}

We will show later on that this set of points is equal to the set of vertices of $J(n,k)$. In fact, the purpose of the remainder of this section is to fully describe the face lattice of this polytope. 

\begin{prop}
Let $P\subseteq \mathbb{R}^n$ be a polytope. Every polytope is the convex hull of its vertices, $\mathrm{vert}(P)$, and every face $F$ of $P$ is a polytope with $\mathrm{vert}(F) = F \cap \mathrm{vert}(P)$.
\end{prop}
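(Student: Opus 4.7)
The plan is to work with a finite presentation $P = \mathrm{conv}(S)$, identify the vertices of $P$ with the extreme points of this set, and then use supporting hyperplanes to describe faces. Call a point $v \in P$ \emph{extreme} if whenever $v = tx + (1-t)y$ with $x, y \in P$ and $t \in (0,1)$, one must have $x = y = v$. First I would observe that every extreme point lies in $S$: if $v = \sum_i \lambda_i s_i$ is a convex combination with $s_i \in S$ and some $s_{i_0} \neq v$ has $\lambda_{i_0} \in (0,1)$, then splitting off that term writes $v$ as a proper convex combination of two points of $P$, contradicting extremality. Consequently the set $E$ of extreme points is finite and contained in $S$. Removing any non-extreme point of $S$ (which is by definition a proper convex combination of other points of $P$, hence, after iterating, of $S \setminus \{s\}$) preserves the convex hull, so $P = \mathrm{conv}(E)$.

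Next I would identify $E$ with $\mathrm{vert}(P)$. If $\{v\} = P \cap H$ is a vertex, write $H = \{z : \langle a, z\rangle = c\}$ with $\langle a, w\rangle \leq c$ throughout $P$. Given $v = tx + (1-t)y$ with $x, y \in P$ and $t \in (0,1)$, the equality chain $c = \langle a, v\rangle = t\langle a, x\rangle + (1-t)\langle a, y\rangle \leq c$ forces $\langle a, x\rangle = \langle a, y\rangle = c$, so $x, y \in P \cap H = \{v\}$, showing $v \in E$. Conversely, given $v \in E$, the point $v$ is not in the compact convex set $K = \mathrm{conv}(E \setminus \{v\})$, since otherwise $v$ would be a convex combination of the remaining extreme points, contradicting extremality. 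The separation theorem produces a hyperplane strictly separating $v$ from $K$; translating it to pass through $v$ yields a supporting hyperplane $H$ with $P \cap H = \{v\}$, so $\{v\}$ is a zero-dimensional face, i.e.\ $v \in \mathrm{vert}(P)$.

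Finally, let $F = P \cap H$ be a proper face, with $H = \{x : \langle a, x\rangle = c\}$ and $\langle a, w\rangle \leq c$ throughout $P$. For any $p \in F$ expressed as $\sum_i \lambda_i v_i$ with $v_i \in \mathrm{vert}(P)$ and $\lambda_i > 0$, the relation $c = \langle a, p\rangle = \sum_i \lambda_i \langle a, v_i\rangle$ together with each $\langle a, v_i\rangle \leq c$ forces $\langle a, v_i\rangle = c$; hence $F = \mathrm{conv}(\mathrm{vert}(P) \cap H)$, which is a polytope because $\mathrm{vert}(P) \cap H$ is finite. Applying the first half of the proposition to $F$ identifies $\mathrm{vert}(F)$ with the extreme points of $F$, which are contained in $\mathrm{vert}(P) \cap H$, and conversely any $v \in F \cap \mathrm{vert}(P)$ is extreme in $P$ and hence in the subset $F$, giving $\mathrm{vert}(F) = F \cap \mathrm{vert}(P)$. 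The main obstacle is the separation-theorem step showing every extreme point is genuinely a face-theoretic vertex; everything else amounts to tracking convex combinations against the defining inequality of a supporting hyperplane.
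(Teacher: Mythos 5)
Your argument is correct. The paper itself gives no proof here --- it simply cites Ziegler's Propositions 2.2 and 2.3 --- and what you have written is essentially the standard argument that the reference contains: identify vertices with extreme points, show extreme points must appear in any finite generating set (so $P=\mathrm{conv}(E)$), use a separating hyperplane to realize each extreme point as a $0$-dimensional face, and then push a supporting hyperplane's defining inequality through convex combinations to see that a face is the hull of the vertices it contains. The only place worth tightening is the ``after iterating'' step: to remove a non-extreme $s\in S$ you should note that writing $s=tx+(1-t)y$ with $x\neq y$ in $P$ and expanding $x,y$ over $S$ gives $s=\sum_i\lambda_i s_i$ in which the coefficient of $s$ itself is strictly less than $1$ (it equals $1$ only if $x=y=s$), whence $s\in\mathrm{conv}(S\setminus\{s\})$; since extreme points can never be discarded, finitely many such removals terminate exactly at $E$. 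You should also record the trivial case $F=P$, which the paper's definition of face allows. With those two small touches the proof is complete.
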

\begin{proof}
This is \cite[Propositions 2.2 and 2.3]{Ziegler95}.
\end{proof}

It follows that every face of a polytope $P$ corresponds to a subset of the vertices of $P$ and is also itself a polytope. However not every subset of the vertices of $P$ corresponds to a face, as we see in the following example.

\begin{eg}
Consider the space $\mathbb{R}^2$ and let $P$ be equal to the convex hull of the finite subset given by $V = \{(0,0), (1,0), (0,1), (1,1)\}$. Clearly $P$ is just a square (with its interior). The faces of $P$ are as follows: the four points (0,0), (1,0), (0,1), (1,1) make up the vertices of $P$, the four edges of $P$ are equal to the convex hull of the sets $\{(0,0), (0,1)\}$, $\{(0,0), (1,0)\}$, $\{(1,0), (1,1)\}$, and $\{(0,1), (1,1)\}$, and the convex hull of $V$ makes up the unique $2$-face of $P$.

Next consider the convex hull of the subset $\{(0,0), (1,1)\} \subset V$. The only hyperplane containing this set of points is the line in $\mathbb{R}^2$ that contains both $(0,0)$ and $(1,1)$. However, $P$ is not contained in one of the two closed halfspaces determined by this line as $P$ contains points on either side of the line. Therefore the convex hull of $\{(0,0), (1,1)\}$ is not a face of $P$.
\end{eg}

Now in order to describe the face lattice of $J(n,k)$ we first make the observation that if we let $v_0 = (1_1,\ldots, 1_k,0_{k+1},\ldots,0_n)\in\mathbb{R}^n$ and let the symmetric group $S_n$ act on $v_0$ by permuting the coordinates, then $J(n,k)$ is equal to the convex hull of this group orbit. Since $S_n$ is the Coxeter group of type $A_{n-1}$ we can use Theorem \ref{thm:Casselman} below to tell us exactly which subsets of the vertices correspond to faces of $J(n,k)$.

For the remainder of this section, let $(W,S)$ be an arbitrary Coxeter system such that $W$ is a finite Coxeter group, let $V$ be a finite dimensional real vector space such that the elements of $s\in S$ act on it by reflections, and let $\Delta = \{\alpha_s\in V \ | \ s\in S\}$ be a set of vectors corresponding to the reflections as in Section \ref{s:Coxeter}.

\begin{defn}
Let $\delta \subset \Delta$ and consider the Coxeter graph $\Gamma_W$. A subset $\kappa \subset \Delta$ is said to be \textbf{$\delta$-connected} if every one of the vertices in $\Gamma_W$ corresponding to an element of $\kappa$ can be connected to a vertex corresponding to an element of $\delta$ by a path inside itself.
\end{defn}

\begin{eg}
Let $W$ be the Coxeter group of type $A_6$. For this example we will label the vertices of $\Gamma_W$ to help illustrate the above definition. In Figure \ref{fig:deltaconnected} below we have two cases and in both of them the circled vertices indicate the set $\kappa$ while the vertices colored black indicate the set $\delta$. 

On the left we have $\kappa=\{\alpha_1,\alpha_2\}$ and $\delta=\{\alpha_2,\alpha_4\}$. Notice that $\alpha_2\in\delta$ and $\alpha_1$ is connected to $\alpha_2$ by a path inside $\kappa$. Therefore in this case, we see that $\kappa$ is $\delta$-connected.

On the right we have $\kappa=\{\alpha_1,\alpha_2,\alpha_5,\alpha_6\}$ and $\delta=\{\alpha_2,\alpha_4\}$. Notice that $\alpha_5$ is only connected to itself and $\alpha_6$ by a path inside $\kappa$, and since neither of these are elements of $\delta$, it follows that $\kappa$ is not $\delta$-connected.

\vspace{.15in}
\begin{figure}[htbp]
\begin{center}
\begin{tikzpicture}[scale=1]
\def \height{.1}

\node (a) at (0,0) [circle, draw, inner sep=2pt] {};
\node (b) at (1,0) [circle, draw, inner sep=2pt] {};
\draw (a) to (b);
\node (c) at (2,0) [circle, draw, inner sep=2pt] {};
\draw (b) to (c);
\node (d) at (3,0) [circle, draw, inner sep=2pt] {};
\draw (c) to (d);
\node (e) at (4,0) [circle, draw, inner sep=2pt] {};
\draw (d) to (e);
\node (f) at (5,0) [circle, draw, inner sep=2pt] {};
\draw (e) to (f);

\node at (0,\height) [above] {$\alpha_1$};
\node at (1,\height) [above] {$\alpha_2$};
\node at (2,\height) [above] {$\alpha_3$};
\node at (3,\height) [above] {$\alpha_4$};
\node at (4,\height) [above] {$\alpha_5$};
\node at (5,\height) [above] {$\alpha_6$};

\draw (.5,0) ellipse (1 and .2);
\fill [black] (1,0) circle (3pt);
\fill [black] (3,0) circle (3pt);

\end{tikzpicture}
%
%
%
%
\hspace{.6in}
\begin{tikzpicture}[scale=1]
\def \height{.1}

\node (a) at (0,0) [circle, draw, inner sep=2pt] {};
\node (b) at (1,0) [circle, draw, inner sep=2pt] {};
\draw (a) to (b);
\node (c) at (2,0) [circle, draw, inner sep=2pt] {};
\draw (b) to (c);
\node (d) at (3,0) [circle, draw, inner sep=2pt] {};
\draw (c) to (d);
\node (e) at (4,0) [circle, draw, inner sep=2pt] {};
\draw (d) to (e);
\node (f) at (5,0) [circle, draw, inner sep=2pt] {};
\draw (e) to (f);

\node at (0,\height) [above] {$\alpha_1$};
\node at (1,\height) [above] {$\alpha_2$};
\node at (2,\height) [above] {$\alpha_3$};
\node at (3,\height) [above] {$\alpha_4$};
\node at (4,\height) [above] {$\alpha_5$};
\node at (5,\height) [above] {$\alpha_6$};

\draw (.5,0) ellipse (1 and .2);
\draw (4.5,0) ellipse (1 and .2);
\fill [black] (1,0) circle (3pt);
\fill [black] (3,0) circle (3pt);

\end{tikzpicture}
\caption{$\delta$-connected}
\label{fig:deltaconnected}
\end{center}
\end{figure}
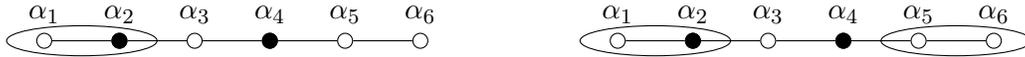
\end{eg}

\begin{thm}
\label{thm:Casselman}
Let $\delta = \{\alpha\in\Delta \ | \ (\alpha, v) \neq 0\}$ for some $v\in V$ and let $W_\kappa = \langle s\in S \ | \ \alpha_s\in\kappa \rangle$ for some $\kappa \subset \Delta$. If $v\in V$ lies in the fundamental domain 
$$D=\{\lambda\in V \ | \ (\lambda,\alpha)\geq0 \ for \ all \ \alpha\in\Delta\},$$ 
then the map taking $\kappa$ to the convex hull $F_\kappa$ of $W_\kappa\cdot v$ is a bijection between the $\delta$-connected subsets of $\Delta$ and the set of W-representatives of the faces of the convex hull of $W\cdot v$.
\end{thm}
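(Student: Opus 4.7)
\noindent
My plan is to classify every face of $P = \mathrm{conv}(W\cdot v)$ by cutting it out with a supporting linear functional placed, after a $W$-action, into the fundamental domain $D$, and then reading off $\kappa$ from the set of walls of $D$ containing that functional.

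First I would split the statement into three claims: (1) for every $\delta$-connected $\kappa\subset\Delta$ the set $F_\kappa=\mathrm{conv}(W_\kappa\cdot v)$ is actually a face of $P$; (2) every face of $P$ is $W$-equivalent to $F_\kappa$ for some $\delta$-connected $\kappa$; (3) two distinct $\delta$-connected subsets give $W$-inequivalent faces. Claims (1) and (2) I would handle together. Any face $F$ of $P$ is the locus where some linear functional $\phi$ attains its maximum on $P$. Because every orbit $W\cdot\phi$ meets the closed fundamental chamber $D$, I may assume $\phi\in D$. I would then use the standard identity
\[
v-wv \;=\; \sum_{\beta\in N(w)} c_\beta(v)\,\beta, \qquad N(w)=\Phi^{+}\cap w\Phi^{-},
\]
where the coefficients $c_\beta(v)$ are nonnegative precisely because $v\in D$. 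Pairing with $\phi\in D$ gives $(\phi,v-wv)=\sum c_\beta(v)(\phi,\beta)\geq 0$, so the maximum of $\phi$ on $P$ is $(\phi,v)$. Moreover the equality $(\phi,wv)=(\phi,v)$ forces every $\beta\in N(w)$ with $c_\beta(v)>0$ to be orthogonal to $\phi$. Since the roots orthogonal to $\phi$ form the root subsystem generated by $\gamma:=\{\alpha\in\Delta : (\phi,\alpha)=0\}$, this analysis (together with the standard subword description of $N(w)$) would let me conclude that the vertex set of $F$ is exactly $W_\gamma\cdot v$.

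Next I would reduce $\gamma$ to a $\delta$-connected subset. Split $\gamma$ into its connected components in $\Gamma_W$; components in different pieces lie in different connected components of $\Gamma_W\restriction\gamma$ and therefore commute, so $W_\gamma=\prod_C W_C$. Any component $C$ with $C\cap\delta=\emptyset$ consists entirely of simple roots orthogonal to $v$, so $W_C\subset\mathrm{Stab}_W(v)=W_{\Delta\setminus\delta}$ (this stabilizer computation is the one consequence of $v\in D$ I would cite). Deleting those components from $\gamma$ does not change the orbit $W_\gamma\cdot v$, and what remains, call it $\kappa$, is $\delta$-connected by construction. Thus $F=F_\kappa$, which simultaneously shows that $F_\kappa$ is a face (proving (1) by running the construction in reverse, starting from any $\phi\in D$ whose zero set among $\Delta$ is exactly $\kappa$) and that every face of $P$ is a $W$-translate of such an $F_\kappa$ (proving (2)).

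For (3) I would recover $\kappa$ intrinsically from $F_\kappa$. The key point is that the set of vertices of $F_\kappa$ is $W_\kappa\cdot v$, so two $\delta$-connected $\kappa,\kappa'$ with $F_\kappa$ and $F_{\kappa'}$ in the same $W$-orbit would give $wW_\kappa v=W_{\kappa'}v$ for some $w\in W$, and after adjusting $w$ by Lemma~\ref{lem:Coxeter} (applied to $W^{\Delta\setminus\delta}$) I would reduce to the case $w=1$ and then argue that the unique minimal parabolic subgroup $W_{\kappa}$ whose orbit on $v$ agrees with a prescribed set, subject to $\kappa$ being $\delta$-connected, is determined by that orbit; concretely, $\kappa$ is the union of the $\delta$-meeting components of any $\gamma$ yielding the same orbit, so the $\delta$-connected representative is unique. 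The main obstacle I expect is in claim (2): tracking exactly which $w\in W$ satisfy $(\phi,v-wv)=0$ requires combining the positivity of $c_\beta(v)$ with the positivity of $(\phi,\beta)$ on $\Phi^{+}$ and then translating the resulting condition on $N(w)$ back into membership in $W_\gamma$; this is the technical heart of the argument, while the $\delta$-connected trimming and the injectivity are essentially bookkeeping once that step is in hand.
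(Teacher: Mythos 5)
First, a point of comparison: the paper does not actually prove this theorem --- it quotes it from Casselman, who in turn attributes the argument to Satake and Borel--Tits --- so there is no in-paper proof to measure yours against. Your outline follows what is essentially the standard route to the result (move the supporting functional $\phi$ into the closed chamber $D$, use the nonnegativity of $v-wv$ as a combination of positive roots, identify the simple roots killed by $\phi$, then trim to the $\delta$-connected part). The component-trimming step is done correctly: components of $\gamma$ missing $\delta$ generate subgroups of $\mathrm{Stab}_W(v)=W_{\Delta\setminus\delta}$ and so do not change the orbit $W_\gamma\cdot v$, and what remains is $\delta$-connected.

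However, the two steps you defer are precisely what make this a theorem rather than an exercise, and as written they are gaps. (i) From $(\phi,v-wv)=0$ you only learn that every $\beta\in N(w)$ with $c_\beta(v)>0$ is orthogonal to $\phi$; the roots with $c_\beta(v)=0$ are exactly those arising from letters of a reduced word whose simple root is orthogonal to $v$, and these need not be orthogonal to $\phi$. Concluding $wv\in W_\gamma\cdot v$ therefore requires an induction on $\ell(w)$ that alternately strips off generators of $W_\gamma$ and of $W_{\Delta\setminus\delta}$ (or an equivalent double-coset argument); you acknowledge this is the technical heart but do not supply it. (ii) Injectivity is not bookkeeping. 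To show that distinct $\delta$-connected $\kappa\neq\kappa'$ yield $W$-inequivalent faces you must recover $\kappa$ from $F_\kappa$. The natural invariant is the linear span of $F_\kappa-v$, and you need the nontrivial fact that this span equals the span of $\kappa$ exactly when $\kappa$ is $\delta$-connected (equivalently $\dim F_\kappa=|\kappa|$, a fact the paper relies on later), followed by an argument that an element of $W_{\Delta\setminus\delta}$ carrying one such span to the other forces $\kappa=\kappa'$. Neither claim is hard to believe, but both require proof, and your sketch contains neither; your appeal to Lemma~\ref{lem:Coxeter} only reduces $w$ to an element of $\mathrm{Stab}_W(v)$, which is where the real work begins, not where it ends.
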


\begin{proof}
This can be found in \cite[Theorem 3.1]{Casselman}, although Casselman states that the proof is implicit in the work of Satake in \cite[Lemma 5]{Satake} and Borel--Tits in \cite[\S 12.16]{BT}.
\end{proof}

This theorem tells us several important facts. It says that every face is $W$-conjugate to exactly one of the $F_\kappa$. It also implies that $F_\kappa$, and hence each of its $W$-conjugates, is of dimension equal to the cardinality of $\kappa$. Furthermore, it says that an increasing chain of $\delta$-connected subsets corresponds to an increasing chain of faces. This information is sufficient to describe the face lattice of $J(n,k)$. However, it should be noted that while the description given in Proposition \ref{prop:faces} is assumed to be known, a reference could not be found.

\begin{prop}
\label{prop:faces}
Let $W = S_n$, $v_0 = (1_1,\ldots, 1_k,0_{k+1},\ldots,0_n)\in \mathbb{R}^n$, and $F_I = \{W_ I \cdot v_0\}$ for some $I\subset S$. The faces of $J(n,k)$ are as follows:
\begin{enumerate}
\item [\rm(i)] $\binom{n}{k}$ 0-faces (vertices) given by $\{(x_1, x_2,\ldots,x_n)\in\mathbb{R}^n \ | \ x_j\in\{0,1\}$ and $\displaystyle\sum_{j=1}^{n} x_j=k\}$;
\item [\rm(ii)] $\binom{n}{i+1}$$\binom{n-i-1}{j-1}$ i-faces for every set $I = \{s_j,s_{j+1},\ldots, s_{j+i-1}\}\subset S$ such that $1\leq j\leq k\leq j+i-1\leq n-1$, given by the convex hull of $w\cdot F_I$ for some $w\in W$.

\end{enumerate}
\end{prop}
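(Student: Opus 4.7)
The plan is to apply Theorem \ref{thm:Casselman} with $W = S_n$ and $v = v_0$, using the simple system from Example \ref{eg:fundom}. First I would note that $v_0\in D$ since its coordinates are weakly decreasing. Next, since $\alpha_i = \epsilon_i - \epsilon_{i+1}$, we have $(v_0,\alpha_i) = (v_0)_i - (v_0)_{i+1}$, which is nonzero precisely when $i = k$. Hence $\delta = \{\alpha_k\}$. Because the Coxeter graph of $A_{n-1}$ is a path $\alpha_1 - \alpha_2 - \cdots - \alpha_{n-1}$, a subset $\kappa\subseteq\Delta$ is $\delta$-connected iff it is either empty or a contiguous interval $\{\alpha_j,\alpha_{j+1},\ldots,\alpha_{j+i-1}\}$ with $1\leq j\leq k\leq j+i-1\leq n-1$. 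Theorem \ref{thm:Casselman} then identifies the $W$-representatives of the faces of $J(n,k)$: they are $F_\emptyset = \{v_0\}$ together with the $F_I = \mathrm{conv}(W_I\cdot v_0)$ for these intervals, and $\dim F_I = |I|$.

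For (i), the $0$-faces are the $W$-translates of $v_0$. The stabilizer of $v_0$ in $S_n$ permutes the first $k$ and last $n-k$ coordinates separately, so it has order $k!(n-k)!$, giving $\binom{n}{k}$ vertices, namely all $0/1$-vectors with exactly $k$ ones. For (ii), I would count the $W$-translates of $F_I$ by orbit-stabilizer. Each vertex of $F_I$ has a $1$ in positions $1,\ldots,j-1$, a $0$ in positions $j+i+1,\ldots,n$, and its restriction to positions $\{j,\ldots,j+i\}$ ranges over all arrangements of $k-j+1$ ones and $j+i-k$ zeros. Therefore a permutation $\sigma\in S_n$ preserves this vertex set iff it preserves each of the three blocks $\{1,\ldots,j-1\}$, $\{j,\ldots,j+i\}$, $\{j+i+1,\ldots,n\}$. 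This gives a setwise stabilizer isomorphic to $S_{j-1}\times S_{i+1}\times S_{n-j-i}$ of order $(j-1)!(i+1)!(n-j-i)!$, and dividing $n!$ by this order yields $\binom{n}{i+1}\binom{n-i-1}{j-1}$, as required.

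The main obstacle is the stabilizer computation in (ii): one must rule out any $\sigma\in S_n$ that mixes the three blocks. The key observation is that among the positions of vertices in $F_I$, exactly those in $\{1,\ldots,j-1\}$ always evaluate to $1$ and exactly those in $\{j+i+1,\ldots,n\}$ always evaluate to $0$; any symmetry of the vertex set must permute these intrinsically defined sets among themselves, forcing $\sigma$ into the block-diagonal form claimed. Once this is established, the arithmetic identity $n!/((j-1)!(i+1)!(n-j-i)!) = \binom{n}{i+1}\binom{n-i-1}{j-1}$ finishes the count.
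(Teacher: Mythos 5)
Your proposal is correct and follows essentially the same route as the paper: both apply Theorem \ref{thm:Casselman} with $v_0$ in the fundamental domain, compute $\delta=\{\epsilon_k-\epsilon_{k+1}\}$, and identify the $\delta$-connected subsets as the connected intervals of the $A_{n-1}$ graph containing $\alpha_k$. The only difference is in the final count for (ii), where you use orbit--stabilizer with the block stabilizer $S_{j-1}\times S_{i+1}\times S_{n-j-i}$ (justified by the intrinsic always-$1$/always-$0$/varying partition of the coordinates), while the paper counts the same orbit directly by choosing the $i+1$ permuted positions and then the $j-1$ positions of the fixed $1$'s; the two counts agree, and your stabilizer argument in fact makes explicit the injectivity that the paper's direct count implicitly assumes.
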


\begin{proof}
Let $V$ be equal to $\mathbb{R}^n$ equipped with the dot product. We start by letting $W$ act on $V$ by permuting the coordinates of each vector and noting that $J(n,k)$ is equal to the convex hull of $W\cdot v_0$.  As we saw in Example \ref{eg:fundom}, the set $\Delta = \{\epsilon_j-\epsilon_{j+1}$ $|$ $1\leq j \leq n-1 \}$ is a simple system for the finite reflection group associated with $W$ where the $\epsilon_j$ are the standard basis vectors for $\mathbb{R}^n$.

We also saw in Example \ref{eg:fundom} that the set $D=\{\lambda\in V \ | \ (\lambda,\alpha)\geq0 \ \mathrm{for\ all}\ \alpha\in\Delta\}$ is equal to $\{(x_1, x_2, \ldots, x_n)\in\mathbb{R}^n$ $|$ $x_i\geq x_{i+1}$ for all $i\}$ and so $v_0\in D$. Therefore the hypotheses of Theorem \ref{thm:Casselman} are satisfied and some quick calculations (for example, $(\epsilon_j - \epsilon_{j+1}, v_0) = 1-1=0$ when $j<k$) show that the set $\delta = \{\alpha\in\Delta \ | \ (\alpha, v_0) \neq 0\}$ is equal to the single element set $\{\epsilon_k - \epsilon_{k+1}\}$.

The $0$-faces can be found by first identifying all of the $\delta$-connected subsets of $\Delta$ of cardinality 0. The empty set is clearly the only set that satisfies this and so by Theorem \ref{thm:Casselman} and the fact that $W_\emptyset = 1$, there is a single $W$-representative of the 0-faces given by $\{W_\emptyset\cdot v_0\} = \{v_0\}$. Hence, every 0-face is of the form $ w\cdot\{v_0\}$ for some $w\in W$ which gives us the $\binom{n}{k}$ vectors described in (i).

Showing (ii) is only different in that there are more possibilities for $\delta$-connected subsets of $\Delta$ of cardinality $i>0$. There are two key ideas that help us finish the proof. First of all, it follows from the definition that every $\delta$-connected set $\kappa$ must contain at least one element of $\delta$ and since $\delta$ is a single element set, $\kappa$ must contain $\{\epsilon_k - \epsilon_{k+1}\}$. Second of all, if we consider the subgraph $\Gamma_\kappa$ of the Coxeter graph $\Gamma_W$ created by only using the vertices corresponding to elements of $\kappa$ and the edges between them, then $\Gamma_\kappa$ must be connected.

Using these two ideas and looking at $\Gamma_W$ shown in Figure \ref{fig:CgraphA} above, we see that any $\delta$-connected subset $\kappa$ must correspond to a set of generators $I = \{s_j,s_{j+1},\ldots, s_{j+i-1}\}$ such that $1\leq j\leq k\leq j+i-1\leq n-1$ as desired. For every set $I$ there exists a $W$-representative given by the convex hull of $\{W_I \cdot v_0\}$. 

The size of this orbit can be found by first realizing that every face in this orbit is uniquely defined by the set of vertices of which the face is the convex hull and so we just need to count the number of unique sets of vertices we can get by acting on the set of vertices in the representative, $\{W_I \cdot v_0\}$. Notice that in the set $\{W_I \cdot v_0\}$, the first $j-1$ coordinates of every vertex are 1's, the next $i+1$ coordinates are the ones being permuted, and the remaining coordinates of every vertex are 0's. If we act on $\{W_I \cdot v_0\}$ by $W$, there are $\binom{n}{i+1}$ possible locations for the coordinates that are being permuted and $\binom{n-i-1}{j-1}$ for the 1's.
\end{proof}

\section{CW complexes and Cellular Homology}
\label{s:CW}

In this section we show how to naturally construct a CW complex from a hypersimplex in a way that generalizes to any convex polytope. The action of $S_n$ on the hypersimplex will extend to this CW complex and then will induce an action onto the resulting homology groups that arise from any subcomplex. Identifying the corresponding characters later on is one of the main goals of this thesis.

We start by recalling several definitions.

\begin{defn}
\
\begin{enumerate}
\item [(i)] An \textbf{$n$-cell}, $e = e^n$ is a homeomorphic copy of the open $n$-disk $D^n - S^{n-1}$, where $D^n$ is the closed unit ball in Euclidean $n$-space and $S^{n-1}$ is its boundary, the unit $(n-1)$-sphere. We call $e$ a \textbf{cell} if it is an $n$-cell for some $n$ and define dim$(e)=n$.
\item [(ii)] If a topological space $X$ is a disjoint union of cells $X=\bigcup\{e \ | \ e\in E\}$, then for each $k\geq 0 $, we define the \textbf{$k$-skeleton} $X^{(k)}$ of $X$ by $X^{(k)}=\bigcup\{e\in E \ | \ $dim$(e)\leq k \}$.
\end{enumerate}
\end{defn}

The basic idea behind CW complexes is to start with a discrete set $X^0$, whose points are regarded as $0$-cells, and then to inductively form $X^n$ from $X^{n-1}$ by attaching $n$-cells via attaching maps $\phi:S^{n-1}\rightarrow X^{n-1}$.

\begin{defn}
A \textbf{finite CW complex} is an ordered triple $(X,E,\Phi)$, where $X$ is a Hausdorff space, $E$ is a family of cells in $X$, and $\{\Phi_e \ | \ e\in E\}$ is a family of maps, such that
\begin{enumerate}
\item [(i)] $X=\bigcup\{e \ | \ e\in E\}$ is a disjoint union;
\item [(ii)] for each $k$-cell $e\in E$, the map $\Phi_e : D^k \longrightarrow e\cup X^{(k-1)}$ is a continuous map such that $\Phi_e(S^{k-1})\subseteq X^{(k-1)}$ and $\Phi_e|_{D^k-S^{k-1}}:D^k-S^{k-1} \longrightarrow e$ is a homeomorphism.
\end{enumerate}
If the maps $\Phi_e$ are all homeomorphisms, the CW complex is called \textbf{regular}. In this thesis, we will only consider CW complexes that are regular.
\end{defn}

\begin{defn}
\label{def:subcomplex}
A \textbf{subcomplex} of the CW complex $(X,E,\Phi)$ is a triple $(|E'|,E',\Phi')$,  where $E'\subset E$, 
$|E'|:=\bigcup\{e\ | \ e\in E'\}\subset X,$
$\Phi'=\{\Phi_e \ | \ e\in E'\}$, and Im $\Phi_e \subset |E'|$ for every $e\in E'$. 
\end{defn}

\begin{thm}
Let $K$ be the hypersimplex $J(n,k)$ regarded as a subspace of $\mathbb{R}^n$, and let $E$ be the union of the following two sets:
\begin{enumerate}
\item [\rm(i)] the set of vertices of $J(n,k)$;
\item [\rm(ii)] the set of interiors of all $i$-faces of $J(n,k)$ for all $0<i<n$.
\end{enumerate}
Then $(K,E,\Phi)$ is a regular CW complex, where the maps $\Phi_e$ are the natural identifications.
\end{thm}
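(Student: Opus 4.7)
The plan is to verify each axiom of a regular CW complex in turn, leaning on standard polytope theory together with the face classification from Proposition \ref{prop:faces}.

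First I would check the easy set-theoretic axioms: $K$ is Hausdorff since $K \subseteq \mathbb{R}^n$, and $E$ partitions $K$ because every point of a convex polytope lies in the relative interior of a unique face (the minimal face containing it, cut out by the supporting hyperplanes active at the point; see \cite[Chapter~2]{Ziegler95}). Thus $K = \bigsqcup_{e \in E} e$, so part (i) of the definition holds.

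Second, I would identify what each cell looks like intrinsically. For an $i$-face $F$ of $J(n,k)$ with relative interior $e \in E$, the closure $\bar{F}$ is a compact convex subset of its $i$-dimensional affine hull with nonempty relative interior, so by a standard radial-projection argument there is a homeomorphism $\Phi_e \colon D^i \longrightarrow \bar{F}$ sending $D^i - S^{i-1}$ homeomorphically onto $e$ and $S^{i-1}$ homeomorphically onto the relative topological boundary $\partial \bar{F}$. Explicitly, one can fix an interior point $p \in e$ (say the barycenter of the vertex set of $F$), use radial projection from $p$ to identify $\partial \bar{F}$ with $S^{i-1}$, and extend linearly along rays. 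For $i=0$ the map is just a point, which is trivially regarded as a homeomorphism $D^0 \to \{v\}$.

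Third, I would verify the incidence condition $\Phi_e(S^{i-1}) \subseteq X^{(i-1)}$. This reduces to the fact that $\partial \bar{F}$ is the union of the proper faces of $F$, and that every proper face of $F$ is itself a face of $J(n,k)$ of dimension strictly less than $i$. The face-of-a-face property can be read off directly from Theorem \ref{thm:Casselman} and Proposition \ref{prop:faces}: a proper subface of $F_\kappa$ corresponds to a $\delta$-connected proper subset of $\kappa$, which is also a $\delta$-connected subset of $\Delta$ and hence indexes a face of $J(n,k)$. Thus $\partial \bar{F} \subseteq X^{(i-1)}$.

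Finally, regularity is immediate since the $\Phi_e$ constructed above are homeomorphisms on all of $D^i$, not merely on the open disk. The main technical obstacle is the convex-geometric lemma in the second step — that any compact convex body in $\mathbb{R}^i$ with nonempty interior is homeomorphic to $D^i$ via a map carrying interior to interior and boundary to boundary — but this is a classical fact and I would just cite it rather than reprove it. Everything else is bookkeeping against the face-lattice description in Proposition \ref{prop:faces}.
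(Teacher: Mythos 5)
Your argument is correct, but it takes a genuinely different route from the paper: the paper disposes of this theorem in one line by citing the standard fact that the faces of a convex polytope form a regular CW complex (pointing to \cite[\S 1.3]{Forman04}), whereas you reprove that fact from scratch. Your three ingredients are exactly the right ones — the partition of a polytope into the relative interiors of its faces, the radial-projection homeomorphism $D^i \to \bar{F}$ carrying $S^{i-1}$ onto the relative boundary, and the fact that the proper faces of a face of $P$ are again faces of $P$ of strictly smaller dimension — and each is standard polytope theory available in \cite{Ziegler95}. One mild caution: for the boundary condition it is cleaner to invoke the general statement that the faces of a face $F$ of $P$ are precisely the faces of $P$ contained in $F$ (which follows from \cite[Propositions 2.2 and 2.3]{Ziegler95}) rather than routing through Theorem \ref{thm:Casselman}, since that theorem only parametrizes $W$-orbit representatives $F_\kappa$ and you would need to conjugate back to an arbitrary face. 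What your approach buys is self-containedness and a clear record of which face-lattice properties are actually used; what the paper's approach buys is brevity, since the result is classical and not specific to the hypersimplex.
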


\begin{proof}
It is a standard result that the faces of a convex polytope form a regular CW complex; more details can be found in \cite[\S 1.3]{Forman04}.
\end{proof}

Later on it will be necessary to compute the homology groups that arise from subcomplexes of the CW complex corresponding to $J(n,k)$. 
Cellular homology is a convenient theory for doing exactly that.
We do not recall the full definition here, but instead direct the reader to \cite[\S 2.2]{Hatcher} and \cite[Proposition 5.3.10]{Geoghegan} for more details.

\begin{defn}
A \textbf{chain complex} is a sequence of abelian groups or modules $(C_i)_{i\in \mathbb{Z}}$, connected by homomorphisms (called boundary operators) $\partial_i : C_i\rightarrow C_{i-1}$, such that the composition of any two consecutive maps is zero: $\partial_i \circ \partial_{i+1} = 0$ for all $i$.
A \textbf{chain map} between two chain complexes $(C_i)_{i\in \mathbb{Z}}$ and $(C'_i)_{i\in \mathbb{Z}}$ is a sequence $(f_i)_{i\in \mathbb{Z}}$ of module homomorphisms $f_i:C_i\rightarrow C'_i$ for each $i$ such that $f_{i-1}\circ\partial_i = \partial'_{i}\circ f_i$.
\end{defn}

Let $X$ be a regular CW complex.
The basic idea of cellular homology is to introduce a chain complex $(C_i)_{i\in \mathbb{Z}}$, called the cellular chain complex of $X$.
The groups $C_i$ are all free abelian and have basis in one-to-one correspondence with the $i$-cells of $X$. The maps $\partial_i$ have the form 
$$\partial_i(e_\tau) = \displaystyle\sum_{\sigma\in X^{(i-1)}}[\tau:\sigma]e_\sigma$$ 
where $[\tau:\sigma] = 0$ if $\sigma$ is not a face of $\tau$ and $[\tau:\sigma] = \pm 1$ if $\sigma$ is a face of $\tau$, dependent on an orientation of the faces. The homology groups $H_i(X) =$ ker$(\partial_i) / $Im$(\partial_{i+1})$ are called the cellular homology groups and are equivalent to the homology groups obtained using singular homology. 
It should also be noted that we can extend $X$ by considering $\emptyset$ as a unique cell of dimension $-1$. The resulting cellular chain complex then leads to the reduced homology groups which are denoted by $\widetilde{H}_i(X)$.

It turns out that many CW complexes that arise in combinatorics are homotopy equivalent to a wedge of spheres of the same dimension. The following well-known proposition will allow us to state which $S_n$-invariant subcomplexes of the CW complex corresponding to $J(n,k)$ have this property.

\begin{prop}
\label{prop:wedge}
If $X$ is a topological space that has a CW decomposition consisting of exactly one 0-cell and $k$ $d$-cells such that $d>0$, then $X$ is homotopy equivalent to a wedge of $k$ $d$-spheres.
\end{prop}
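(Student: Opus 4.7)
The plan is to identify $X$ directly as the wedge of spheres using the given CW structure, rather than building a homotopy equivalence. The key observation is that the hypothesis ``exactly one 0-cell and $k$ $d$-cells'' (and implicitly no cells of other dimensions) forces the attaching maps to be as trivial as possible, so $X$ is actually homeomorphic to $\bigvee_{i=1}^{k} S^{d}$.

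First I would fix notation: let $v$ denote the unique 0-cell and $e_{1}, \ldots, e_{k}$ denote the $d$-cells, with characteristic maps $\Phi_{i} : D^{d} \to X$. Since $d > 0$ and the CW decomposition contains no cells of dimension $1, 2, \ldots, d-1$, the skeleton $X^{(d-1)}$ consists only of the single point $v$. By the definition of a CW complex, each attaching map satisfies $\Phi_{i}(S^{d-1}) \subseteq X^{(d-1)} = \{v\}$, so the attaching map $\phi_{i} := \Phi_{i}|_{S^{d-1}} : S^{d-1} \to \{v\}$ is the constant map.

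Next I would use the standard description of a CW complex as an iterated pushout: $X$ is the quotient of $\{v\} \sqcup \bigsqcup_{i=1}^{k} D^{d}_{i}$ obtained by identifying, for each $i$, every point of $\partial D^{d}_{i} = S^{d-1}$ with $v$ via $\phi_{i}$. Collapsing the boundary of each disk $D^{d}_{i}$ to a single point yields the quotient $D^{d}_{i}/S^{d-1} \cong S^{d}$, with the collapsed point serving as the basepoint. Performing these identifications simultaneously, all the basepoints are glued to the common point $v$, and the resulting quotient is by definition the wedge sum $\bigvee_{i=1}^{k} S^{d}$.

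The ``hard part,'' such as it is, is only the bookkeeping of verifying that the CW topology on $X$ agrees with the quotient topology used to form the wedge; this is immediate from the characteristic-map definition and the fact that $X$ is finite-dimensional with only finitely many cells (so both topologies coincide with the natural identification topology). Hence $X$ is in fact homeomorphic, and in particular homotopy equivalent, to a wedge of $k$ copies of $S^{d}$.
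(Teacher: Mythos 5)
Your argument is correct: since there are no cells in dimensions $1,\dots,d-1$, the $(d-1)$-skeleton is the single point $v$, every attaching map is constant, and the resulting identification space is precisely $\bigvee_{i=1}^{k}S^{d}$. The paper itself does not give a proof but simply cites Forman's Example 1.2; your write-up supplies the standard argument behind that citation, so there is nothing to compare beyond noting that you have made explicit what the paper outsources.
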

\begin{proof}
One proof of this can be seen in \cite[Example 1.2]{Forman02}.
\end{proof}

\section{Discrete Morse Theory}

Even though cellular homology simplifies finding the homology groups by some amount, it would make things much easier if our CW complex always had the property that no two of its cells were in adjacent dimensions. 
In general this does not happen and we can not just modify a CW complex and expect it to produce the same homology groups. However the techniques introduced in this section, which were invented by Forman \cite{Forman02}, will give us a CW complex that is homotopic to ours and that have this desired property in most of the cases that we focus on later in the thesis.

\begin{defn}
Let $K$ be a finite regular CW complex. A \textbf{discrete vector field} on $K$ is a collection of pairs of cells $(K_1, K_2)$ such that
\begin{enumerate}
\item [(i)] $K_1$ is a face of $K_2$ of codimension 1 and
\item [(ii)] every cell of $K$ lies in at most one such pair.
\end{enumerate}
We call a cell of $K$ \textbf{matched} if it lies in one of the above pairs and \textbf{unmatched} otherwise.
\end{defn}

\begin{defn}
If $V$ is a discrete vector field on a regular CW complex $K$, a \textbf{$V$-path} is a sequence of cells 
\begin{center}
$a_0, b_0, a_1, b_1, a_2,\ldots,b_r,a_{r+1}$
\end{center}
such that for each $i=0,\ldots, r$, $a_i$ and $a_{i+1}$ are each a codimension 1 face of $b_i$, each of the pairs $(a_i, b_i)$ belongs to $V$ (hence $a_i$ is matched with $b_i$), and $a_i \neq a_{i+1}$ for all $0\leq i \leq r$. If $r\geq 0$, we call the $V$-path \textbf{nontrivial} and if $a_0 = a_{r+1}$, we call the $V$-path \textbf{closed}.
\end{defn}

Note that all of the faces $a_i$ in the sequence above have the same dimension, $p$ say, and all of the faces $b_i$ have dimension $p+1$.

\begin{rem}
\label{rem:hasse}
Let $V$ be a discrete vector field on a regular CW complex $K$. Consider the set of cells of $K$ together with the empty cell $\emptyset$, which we consider as a cell of dimension $-1$. This gives us a partially ordered set ordered under inclusion. We can create a directed Hasse diagram, $H(V)$, from this by pointing all of the edges towards the larger cell and then reversing the direction of any edge in which the smaller cell is matched with the larger cell.
\end{rem}

\begin{eg}
\label{eg:Hasse0}
Consider the hypersimplex $J(3,1)$. This is just a $2$-dimensional simplex equal to the convex hull of the points $(1,0,0), (0,1,0),$ and $(0,0,1)$ in $\mathbb{R}^3$. Label the vertices by $A, B,$ and $C$, label the edges as $AB, AC,$ and $AB$ where the edge $AB$ is the edge joining $A$ and $B$ and likewise for the other two, and label the unique 2-face of the simplex by $ABC$ as shown on the left of Figure \ref{fig:Hasse0} below.

Let $V_1$ be the discrete vector field made up of the following pairs of faces:
\begin{center}
$(A, AB), (B, BC),$ and $(C, AC).$
\end{center}
The corresponding directed Hasse diagram $H(V_1)$ can also be seen in Figure \ref{fig:Hasse0}. Note that this is only a partial matching since the faces $ABC$ and $\emptyset$ are unmatched. The $V_1$-path $A, AB, B$ is a nontrivial $V_1$-path, but is not closed while the $V_1$-path $A, AB, B, BC, C, AC, A$ is a nontrivial closed $V_1$-path.
\end{eg}

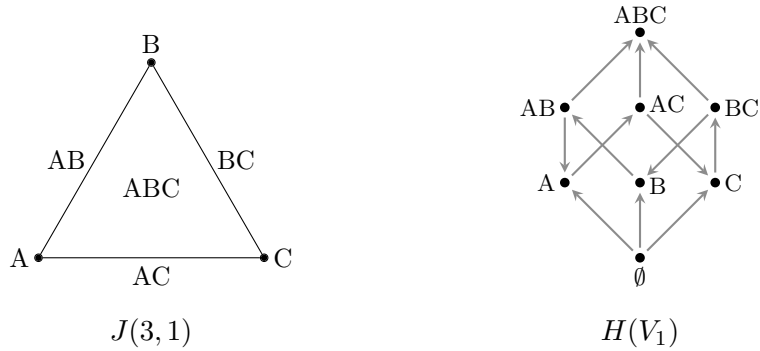
\begin{figure}[htbp]
\begin{center}
\begin{tikzpicture}[scale=1]

\def \tscale{3}

\small

\node (A) at (0,0) [circle, draw, inner sep=1pt] {};
\node (B) at (\tscale* .5, \tscale * 3^.5/2) [circle, draw, inner sep=1pt] {};
\node (C) at (\tscale,0) [circle, draw, inner sep=1pt] {};
\node at (A) [left] {A};
\node at (B) [above] {B};
\node at (C) [right] {C};

\node (D) at (\tscale* .5, \tscale * 3^.5/4 * .9) [below] {ABC};

\fill [black] (A) circle (1.2pt);
\fill [black] (B) circle (1.2pt);
\fill [black] (C) circle (1.2pt);

\draw (A) -- node [left] {AB} (B);
\draw (B) -- node [right] {BC} (C);
\draw (A) -- node [below] {AC} (C);

\normalsize

\node at (\tscale/2,-1) {$J(3,1)$};

\end{tikzpicture}
\hspace{1 in}
\begin{tikzpicture}[scale=1, >=stealth, thick, shorten >=2pt, shorten <=2pt]
\tikzset{shade t/.style ={gray!90}}
\tikzset{nodecirc/.style ={circle, draw, inner sep=1pt, fill=black}}
\node (e) at (0,0) [nodecirc] {};
\node (A) at (-1,1) [nodecirc] {};
\node (B) at (0,1) [nodecirc] {};
\node (C) at (1,1) [nodecirc] {};
\node (AB) at (-1,2) [nodecirc] {};
\node (AC) at (0,2) [nodecirc] {};
\node (BC) at (1,2) [nodecirc] {};
\node (ABC) at (0,3) [nodecirc] {};

\footnotesize
\node at (e) [below] {$\emptyset$};
\node at (A) [left] {A};
\node at (0,.95) [right] {B};
\node at (C) [right] {C};
\node at (AB) [left] {AB};
\node at (0,2.05) [right] {AC};
\node at (BC) [right] {BC};
\node at(ABC) [above] {ABC};
\normalsize

\draw [->] [shade t] (e) -- (A);
\draw [->] [shade t] (e) -- (B);
\draw [->] [shade t] (e) -- (C);
\draw [->] [shade t] (A) -- (AC);
\draw [<-] [shade t] (A) -- (AB);
\draw [->] [shade t] (B) -- (AB);
\draw [<-] [shade t] (B) -- (BC);
\draw [<-] [shade t] (C) -- (AC);
\draw [->] [shade t] (C) -- (BC);
\draw [->] [shade t] (AC) -- (ABC);
\draw [->] [shade t] (AB) -- (ABC);
\draw [->] [shade t] (BC) -- (ABC);

\node at (0,-1) {$H(V_1)$};

\end{tikzpicture}
\caption{Example of a Hasse diagram}
\label{fig:Hasse0}
\end{center}
\end{figure}

\begin{defn}
Consider the directed Hasse diagram $H(V)$ described in Remark \ref{rem:hasse}. If $H(V)$ has no directed cycles then we say that $V$ is an \textbf{acyclic matching} of the Hasse diagram of $K$. We call such a matching a \textbf{partial} matching if not every cell is paired with another and say it is a \textbf{complete} matching otherwise.
\end{defn}

\begin{thm}[\textbf{Forman}]
\label{thm:Forman}
Let $V$ be a discrete vector field on a regular CW complex $K$.
\begin{enumerate}
\item [\rm(i)] There are no nontrivial closed $V$-paths if and only if $V$ is an acyclic matching of the Hasse diagram of $K$.
\item [\rm(ii)] Suppose that $V$ is an acyclic partial matching of the Hasse diagram of $K$ in which the empty set is unpaired. Let $u_p$ denote the number of unpaired $p$-cells. Then $K$ is homotopic to a CW complex with exactly $u_p$ cells of dimension $p$ for each $p\geq 0$.
\end{enumerate}
\end{thm}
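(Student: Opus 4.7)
I would handle the two parts separately. Part (i) is essentially a direct translation between two descriptions of the same combinatorial data; part (ii) is the substantive topological statement, which I would prove by induction on the size of the matching via elementary collapses.

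For part (i), the Hasse diagram $H(V)$ has two kinds of directed edges: \emph{standard} edges $\sigma \to \tau$ (for codim-1 face relations $\sigma \prec \tau$ with $(\sigma,\tau) \notin V$) going up in dimension, and \emph{reversed} edges $\tau \to \sigma$ (for $(\sigma,\tau) \in V$) going down. A closed $V$-path $a_0, b_0, \ldots, b_r, a_{r+1}=a_0$ immediately yields the directed cycle $a_0 \to b_r \to a_r \to \cdots \to b_0 \to a_0$ in $H(V)$. For the converse, I would analyze how a given cell $c$ can appear in a directed cycle. Since each cell belongs to at most one matched pair, the two incident edges at $c$ in the cycle are severely constrained: an unpaired cell forces both incident edges to be standard (net dimension change $+2$ through $c$), the small cell of a pair must have an incoming reversed edge and an outgoing standard edge (net $0$), and the large cell of a pair is symmetric (also net $0$). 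Since the total dimension change around a cycle is $0$, every cell in the cycle must be paired and the cycle must alternate between standard and reversed edges at two fixed consecutive dimensions $p,p+1$; such an alternating cycle unwinds directly into a closed $V$-path.

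For part (ii), I would induct on $m = |V|$. The base case $m = 0$ is immediate. For the inductive step, the plan is to locate a pair $(a,b) \in V$ such that $a$ is a \emph{free face} of $b$ in $K$ (that is, $b$ is the unique cell of $K$ strictly containing $a$). Any such pair admits an elementary collapse, producing a regular CW subcomplex $K' \subset K$ onto which $K$ deformation retracts. The restriction of $V$ to $K'$ has the same unpaired cells, has $m-1$ pairs, and remains acyclic (removing cells cannot create new directed cycles), so the inductive hypothesis applies and transitivity of homotopy equivalence finishes the argument.

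The main obstacle, and the heart of the theorem, is the existence lemma: any nonempty acyclic matching contains a pair $(a,b)$ whose small cell is a free face. My approach is to define a partial order $\preceq$ on cells by directed reachability in $H(V)$; acyclicity guarantees this is genuinely antisymmetric, so $\preceq$-maximal elements exist. Choosing a matched pair $(a,b) \in V$ whose large cell $b$ is $\preceq$-maximal, I would argue that $a$ must then be a free face of $b$: any other cell $c \neq b$ strictly containing $a$ would give, via the face chain from $a$ up to $c$, an upward sequence of standard edges in $H(V)$ from $a$ to $c$; combined with the reversed edge $b \to a$ this would produce a directed path from $b$ to $c$, contradicting maximality of $b$. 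The delicate point is that the chain from $a$ to $c$ may pass through matched intermediate cells, and one must check that the corresponding edges in $H(V)$ still assemble into a directed path — this is the technical core of Forman's original argument in \cite{Forman02}.
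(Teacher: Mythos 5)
The paper itself offers no proof of this theorem --- it simply cites \cite[Theorems 6.2 and 6.3]{Forman02} --- so the only question is whether your argument stands on its own. Part (i) essentially does: a closed $V$-path traversed backwards is a directed cycle, and conversely the per-cell dimension change around any directed cycle is $+2$ or $0$ (never negative, since a cell cannot be the small cell of one pair and the large cell of another), so the net-zero constraint forces strict alternation between two consecutive dimensions, which unwinds into a closed $V$-path. One imprecision: the small cell of a pair is not \emph{forced} to be entered along its reversed edge --- a cycle could a priori pass through it via two standard edges, contributing $+2$ --- but your summation argument excludes that case anyway.

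Part (ii) has a fatal gap: the existence lemma at the heart of your induction is false, and the collapse-only strategy cannot prove the theorem. Let $K$ be the boundary of a triangle with vertices $A,B,C$ and edges $AB,BC,AC$, and let $V=\{(B,AB),(C,BC)\}$. This is an acyclic matching with unmatched cells $A$ and $AC$, yet neither pair admits an elementary collapse: $B$ lies in both $AB$ and $BC$, and $C$ lies in both $BC$ and $AC$, so neither small cell is a free face. Your maximality argument does not rescue this --- reachability gives the directed path $AB\to B\to BC\to C\to AC$, so the ``maximal'' large cell $BC$ still has a directed path to the coface $AC\supset C$; since $AC$ is unmatched rather than the large cell of another pair, no contradiction arises, and indeed $C$ is not free. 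The deeper obstruction is that your induction, if it terminated, would exhibit $K$ as collapsing onto the subcomplex spanned by the unmatched cells, but those cells need not form a subcomplex at all (here $AC$ is unmatched while its face $C$ is matched). Forman's theorem asserts homotopy equivalence to a \emph{newly built} CW complex in which each critical cell is reattached along a modified attaching map obtained by pushing its boundary along the discrete gradient flow; it is not a deformation retraction onto a subcomplex of $K$. The correct induction therefore proceeds level by level (sublevel complexes of a discrete Morse function, or skeleta): matched pairs are collapsed relative to what has already been built, and each critical $p$-cell is attached to the resulting quotient by composing its attaching map with the collapse. Your reachability partial order is a reasonable bookkeeping device for that induction, but it cannot manufacture a free pair, because none need exist.
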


\begin{proof}
Part (i) is \cite[Theorem 6.2]{Forman02} and part (ii) is \cite[Theorem 6.3]{Forman02}.
\end{proof}

Finding a complete acyclic matching for the CW complex associated with the hypersimplex will be the focus of Section \ref{s:Construction}. Theorem \ref{thm:Forman} (i) makes it much easier to show that this matching will be acyclic and this will be the focus of Section \ref{s:acyclicproof}.

\begin{eg}
Consider the hypersimplex $J(3,1)$ and let $V_1$ be the discrete vector field as in Example \ref{eg:Hasse0}. It can be seen by using brute force that this is not an acyclic partial matching because there exist several directed cycles, for example, there is one that follows the path $A,AC,C,BC,B,AB,A$. We could also prove this by using Theorem \ref{thm:Forman} (i) since $A$, $AB$, $B$, $BC$, $C$, $AC$, $A$ is a nontrivial closed $V_1$-path. Notice that the second path is the same as the first except in reverse order. This is because a directed cycle follows the arrows, while a $V$-path goes against the arrows by definition.

On the other hand let $V_2$ be the discrete vector field made up of the following pairs of faces:
\begin{center}
$(\emptyset, A), (B, AB), (C, AC),$ and $(BC, ABC).$
\end{center}
This does turn out to be an acyclic matching and in this case it is even a complete matching. We can see that this is acyclic by once again checking for any cycles in the diagram in Figure \ref{fig:Hasse} through brute force. Proving this otherwise takes a little bit more work, as will be seen in Section \ref{s:acyclicproof}.

\end{eg}

\begin{figure}[htbp]
\begin{center}
\begin{tikzpicture}[scale=1, >=stealth, thick, shorten >=2pt, shorten <=2pt]
\tikzset{shade t/.style ={gray!90}}
\tikzset{nodecirc/.style ={circle, draw, inner sep=1pt, fill=black}}
\node (e) at (0,0) [nodecirc] {};
\node (A) at (-1,1) [nodecirc] {};
\node (B) at (0,1) [nodecirc] {};
\node (C) at (1,1) [nodecirc] {};
\node (AB) at (-1,2) [nodecirc] {};
\node (AC) at (0,2) [nodecirc] {};
\node (BC) at (1,2) [nodecirc] {};
\node (ABC) at (0,3) [nodecirc] {};

\footnotesize
\node at (e) [below] {$\emptyset$};
\node at (A) [left] {A};
\node at (0,.95) [right] {B};
\node at (C) [right] {C};
\node at (AB) [left] {AB};
\node at (0,2.05) [right] {AC};
\node at (BC) [right] {BC};
\node at(ABC) [above] {ABC};
\normalsize

\draw [->] [shade t] (e) -- (A);
\draw [->] [shade t] (e) -- (B);
\draw [->] [shade t] (e) -- (C);
\draw [->] [shade t] (A) -- (AC);
\draw [<-] [shade t] (A) -- (AB);
\draw [->] [shade t] (B) -- (AB);
\draw [<-] [shade t] (B) -- (BC);
\draw [<-] [shade t] (C) -- (AC);
\draw [->] [shade t] (C) -- (BC);
\draw [->] [shade t] (AC) -- (ABC);
\draw [->] [shade t] (AB) -- (ABC);
\draw [->] [shade t] (BC) -- (ABC);

\node at (0,-1) {$H(V_1)$};

\end{tikzpicture}
\hspace{1.3 in}
\begin{tikzpicture}[scale=1, >=stealth, thick, shorten >=2pt, shorten <=2pt]
\tikzset{shade t/.style ={gray!90}}
\tikzset{nodecirc/.style ={circle, draw, inner sep=1pt, fill=black}}
\node (e) at (0,0) [nodecirc] {};
\node (A) at (-1,1) [nodecirc] {};
\node (B) at (0,1) [nodecirc] {};
\node (C) at (1,1) [nodecirc] {};
\node (AB) at (-1,2) [nodecirc] {};
\node (AC) at (0,2) [nodecirc] {};
\node (BC) at (1,2) [nodecirc] {};
\node (ABC) at (0,3) [nodecirc] {};

\footnotesize
\node at (e) [below] {$\emptyset$};
\node at (A) [left] {A};
\node at (0,.95) [right] {B};
\node at (C) [right] {C};
\node at (AB) [left] {AB};
\node at (0,2.05) [right] {AC};
\node at (BC) [right] {BC};
\node at(ABC) [above] {ABC};
\normalsize

\draw [<-] [shade t] (e) -- (A);
\draw [->] [shade t] (e) -- (B);
\draw [->] [shade t] (e) -- (C);
\draw [->] [shade t] (A) -- (AC);
\draw [->] [shade t] (A) -- (AB);
\draw [<-] [shade t] (B) -- (AB);
\draw [->] [shade t] (B) -- (BC);
\draw [<-] [shade t] (C) -- (AC);
\draw [->] [shade t] (C) -- (BC);
\draw [->] [shade t] (AC) -- (ABC);
\draw [->] [shade t] (AB) -- (ABC);
\draw [<-] [shade t] (BC) -- (ABC);

\node at (0,-1) {$H(V_2)$};

\end{tikzpicture}
\vspace{.3 in}
\caption{Example of a cyclic and an acyclic matching}
\label{fig:Hasse}
\end{center}
\end{figure}
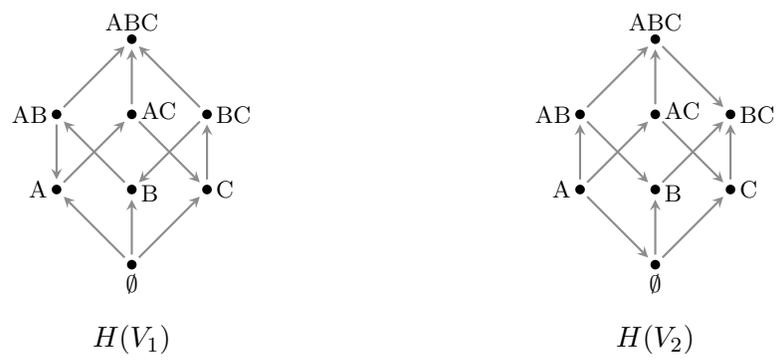


\chapter{A Family of Complete Acyclic Matchings for $J(n,k)$}
\label{chap:matching} 

Consider a hypersimplex $J(n,k)$ and the corresponding CW complex $K$ associated with the faces of $J(n,k)$ as described in Chapter \ref{c:intro}. In Section \ref{s:Construction} we will construct a family of discrete vector fields on $K$ or in other words describe how to pair the faces of $J(n,k)$ together in a ``nice" way. 
In Section \ref{s:matchingprop} we will then show that these matchings are complete matchings.
The purpose of Section \ref{s:acyclicproof} is to prove that this matching is acyclic so that we can use Theorem \ref{thm:Forman} to get information about the corresponding homology groups of the subcomplexes of $K$. 
We finish the chapter by giving more insight into the matchings and introducing further notation.

\section{Construction of Matchings}
\label{s:Construction}

We start by introducing a more convenient way to refer to the faces of $J(n,k)$.

\begin{defn}
Let $S$ be a sequence of length $n$ made up of 0's, 1's, and~*'s. Define $F(S)$ to be the face of $J(n,k)$ equal to the convex hull of the set of vertices of this polytope whose coordinates written out in a sequence differ from $S$ only where $S$ has a *. Define $S(0)$ and $S(1)$ to be the number of 0's and 1's in the sequence $S$ respectively.
\end{defn}

\begin{eg}
Let $S$ = 11***00 with $n = 7$ and $k = 3$. Then $F(S)$ is equal to the convex hull of $(1,1,1,0,0,0,0)$, $(1,1,0,1,0,0,0)$, and $(1,1,0,0,1,0,0)$.
\end{eg}

At this point we will define a family of matchings dependent on two integers $m_0$ and $m_1$, where $0\leq m_0\leq n-k-1$ and $1\leq m_1\leq k-1$. It will not be completely clear what their purpose is until we introduce some diagrams in Section \ref{s:subdiagrams}, but essentially they will allow us to make adjustments to the matching relative to the faces $F(S)$ where $S(0)=m_0$ and $S(1)=m_1$.

We start by matching the vertex $v_0 = (1,\ldots,1,0,\ldots,0)$ with $\emptyset$. 
Next we fix $m_0$ and $m_1$ and then match $F(S)$ with $F(S')$ where $S'$ is obtained from $S$ by doing one of the following replacements to $S$:

\begin{enumerate}
  \item If $S(1)$ $\leq k-1$ and there is a 1 to the right of the rightmost * and one of the following is true:
  \begin{enumerate}
    \item $S(1) \neq m_1$;
    \item $S(1) = m_1$ and $S(0)>m_0$; 
    \item $S(1) = m_1$, $S(0)=m_0$, and there is not a 0 to the left of the leftmost *;
  \end{enumerate}
then replace the rightmost 1 with a *.

  \item If $S(1)$ $\leq k-2$ and there is no 1 right of the rightmost *, and one of the following is true:
  \begin{enumerate}
    \item $S(1) \neq m_1 - 1$;
    \item $S(1) = m_1 - 1$ and $S(0)>m_0$;
    \item $S(1) = m_1 - 1$, $S(0)=m_0$, and there is not a 0 to the left of the leftmost *;
  \end{enumerate}
then replace the rightmost * with a 1. 
  \item If $S(1) = k-1$, $S(0)\leq n-k-1$, there is no 1 to the right of the rightmost *, and there is a 0 to the left of the leftmost *, then replace the leftmost 0 with a *.
  \item If $S(1) = k-1$, $S(0)\leq n-k-2$, there is no 1 to the right of the rightmost *, and there is no 0 to the left of the leftmost *, then replace the leftmost * with a 0. 
  \item If $S(1) = m_1$, $S(0)\leq m_0$, there is a 1 to the right of the rightmost *, and there is a 0 to the left of the leftmost *, then replace the leftmost 0 with a *.
  \item If $S(1) = m_1$, $S(0)<m_0$, there is a 1 to the right of the rightmost *, and there is no 0 to the left of the leftmost *, then replace the leftmost * with a 0. 
  \item If $S(1) = m_1 - 1$, $S(0)\leq m_0$, there is no 1 to the right of the rightmost *, and there is a 0 to the left of the leftmost *, then replace the leftmost 0 with a *.
  \item If $S(1) = m_1 - 1$, $S(0)<m_0$, there is no 1 to the right of the rightmost *, and there is no 0 to the left of the leftmost *, then replace the leftmost * with a 0. 
  \item If $S(1)=k$, $S(0)=n-k$, and $F(S)\neq v_0= (1,\ldots,1,0,\ldots,0)$, then replace the leftmost 0 and the rightmost 1 with a *.
  \item If $S(1)=k-1$, $S(0)=n-k-1$, there is no 1 to the right of rightmost *, and there is no 0 to the left of the leftmost *, then replace the leftmost * with a 0 and the remaining * with a 1.
 
\end{enumerate}

\begin{eg}
Let $n=8$, $k=3$, and consider the matching with $m_0=2$ and $m_1=1$, then:
\begin{center}
$F$(0100*0*1) and $F$(0100*0**) are matched by (1)(a) and (2)(a);
\\
$F$(0*00*0*1) and $F$(0*00*0**) are matched by (1)(b) and (2)(b);
\\
$F$(***0*0*1) and $F$(***0*0**) are matched by (1)(c) and (2)(c);
\\
$F$(0100*1*0) and $F$(*100*1*0) are matched by (3) and (4);
\\
$F$(0****0*1) and $F$(*****0*1) are matched by (5) and (6);
\\
$F$(0****0**) and $F$(*****0**) are matched by (7) and (8);
\\
$F$(10010100) and $F$(1*010*00) are matched by (9) and (10).
\end{center}
\end{eg}

We will later on refer back to these rules to describe faces. A face $F(S)$ of type 1 is one such that $S$ satisfies the conditions of (1)(a), (b), or (c), a face of type 3 is one such that $S$ satisfies the conditions of (3), and so on. 

\begin{rem}
\label{rem:matching}
There are a few things here worth pointing out. 
First of all, if $F(S)$ is a face of type 1--8 or 10, then $S(0)<n-k$ and $S(1)<k$ and hence $S$ contains at least two *'s. Therefore $F(S)$ is the convex hull of more than one point and cannot be a vertex.
Also notice that if we are given a face $F(S)$ then (1), (3), (5), and (7) replace a 0 or 1 in $S$ with a * and hence match this face to one of higher dimension. On the other hand (2), (4), (6), and (8) replace a * in $S$ with a 0 or 1 which matches the given face to one of lower dimension. Similarly (9) matches a vertex with an edge, which is a face of higher dimension, and (10) matches an edge to a vertex.

\end{rem}

\begin{lem}
\label{l:wd}
For any $m_0$ and $m_1$, the ten rules above partition the set of faces of $J(n,k)$, other than $\{v_0\}$, into subsets of faces of type $i$ where $1\leq i\leq10$.
\end{lem}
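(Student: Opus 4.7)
The plan is to fix an arbitrary face $F(S)\neq\{v_0\}$ of $J(n,k)$ and to verify by a case analysis that $S$ satisfies the hypotheses of exactly one of the ten rules. For brevity let $A(S)$ abbreviate the condition ``there is a $1$ to the right of the rightmost $*$ of $S$'' and let $B(S)$ abbreviate ``there is a $0$ to the left of the leftmost $*$ of $S$''. First, if $S$ contains no $*$ then $F(S)$ is a vertex, forcing $S(1)=k$ and $S(0)=n-k$; each of rules (1)--(8) and (10) either mentions the leftmost or rightmost $*$ or enforces one of $S(1)\le k-1$, $S(0)\le n-k-1$, and is therefore inapplicable. Since $F(S)\neq\{v_0\}$, only rule (9) applies. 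Otherwise $S$ contains at least two $*$'s and satisfies $S(1)\le k-1$ and $S(0)\le n-k-1$ (cf.\ Remark~\ref{rem:matching}), which rules out (9), so it remains to handle this non-vertex case.

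For the non-vertex case I would split according to the truth values of $A$ and $B$. A useful preliminary observation is that, since $1\le m_1\le k-1$, the three values $k-1$, $m_1$, $m_1-1$ are pairwise distinct, so the $S(1)$-conditions used in the rules cannot be satisfied simultaneously. In the subcase $A\wedge B$, only rules (1) and (5) can apply: rule (5) applies precisely when $S(1)=m_1$ and $S(0)\le m_0$, and in the complementary situation exactly one of the clauses (a) or (b) of (1) holds. In the subcase $A\wedge\neg B$, only rules (1) and (6) can apply; now clause (c) of (1) becomes available and together with (a) and (b) covers exactly the complement of the condition for (6). In the subcase $\neg A\wedge B$, the possibilities are (2), (3), and (7), which are separated by the value of $S(1)$: if $S(1)=k-1$ then (3) applies, if $S(1)=m_1-1$ then either (7) or clause (b) of (2) applies according as $S(0)\le m_0$ or $S(0)>m_0$, and in all other cases clause (a) of (2) applies. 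Finally, in the subcase $\neg A\wedge\neg B$ the possibilities are (2), (4), (8), and (10); rules (4) and (10) split the case $S(1)=k-1$ according as $S(0)\le n-k-2$ or $S(0)=n-k-1$, while rule (8) together with clauses (b) and (c) of (2) split the case $S(1)=m_1-1$ according as $S(0)<m_0$, $S(0)>m_0$, or $S(0)=m_0$; all remaining values of $S(1)$ are handled by clause (a) of (2).

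The hard part will be pure bookkeeping: one must check that the three clauses (a), (b), (c) of rules (1) and (2) have been arranged so as to cover exactly the residual cases not handled by the ``special'' rules (5)--(8). This is essentially forced by the way the disjunctions are written, and a table indexed by $(A,B)$ together with the value of $S(1)$ relative to the pivots $k-1$, $m_1$, $m_1-1$ makes the verification entirely mechanical.
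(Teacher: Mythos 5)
Your proposal is correct and follows essentially the same route as the paper's proof: both are exhaustive case analyses that first dispose of the vertices via rule (9) and then sort the remaining faces according to whether there is a 1 to the right of the rightmost $*$ and a 0 to the left of the leftmost $*$, refining by the value of $S(1)$ relative to the pivots $k-1$, $m_1$, $m_1-1$ and by $S(0)$ relative to $m_0$ (the paper organizes the cases as $A$, then $\neg A$ with $S(1)=k-1$, then $\neg A$ with $S(1)<k-1$, while you organize them by the four truth values of $(A,B)$, but the content is identical). One small inaccuracy: when $m_1=k-1$ the values $k-1$, $m_1$, $m_1-1$ are \emph{not} pairwise distinct, but this is harmless because the rules pivoting on $S(1)=m_1$ all require a 1 right of the rightmost $*$ while those pivoting on $S(1)=k-1$ all require the opposite, so your split by $(A,B)$ already keeps them apart and the only distinctness actually needed is $k-1\neq m_1-1$.
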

\begin{proof}
Let $F(S)$ be a face of $J(n,k)$ other than $\{v_0\}$.
Notice that if $S$ contains no *'s then it only satisfies the conditions of (9). If we assume that $S$ has a 1 to the right of the rightmost * then it either satisfies (1), (5), or (6). 
If $S(1)\neq m_1$ then $S$ must satisfy (1)(a). If $S(1)=m_1$ and $S(0)>m_0$ then $S$ must satisfy (1)(b). If $S(1)= m_1$, $S(0)\leq m_0$, and there is not a 0 to the left of the leftmost * then $S$ satisfies (1)(c) when $S(0)=m_0$ and (6) otherwise. If $S(1)= m_1$, $S(0)\leq m_0$, and there is a 0 to the left of the leftmost * then $S$ satisfies (5).

Next we assume that $S$ does not have a 1 to the right of the rightmost * and that $S(1)=k-1$ and so it either satisfies (3), (4), or (10). If there
is no 0 to the left of the leftmost * in $S$ then $S$ satisfies (4) when $S(0)\leq n-k-2$ and satisfies (10) when $S(0)=n-k-1$. If there is a 0 to the left of the leftmost * in $S$ then $S$ only satisfies (3).

Finally, we assume that $S$ does not have a 1 to the right of the rightmost * and that $S(1)<k-1$ and so it either satisfies (2), (7), or (8) since $m_1-1 < k-1$. If $S(1)\neq m_1-1$ then $S$ must satisfy (2)(a). If $S(1)= m_1-1$ and $S(0)>m_0$ then $S$ must satisfy (2)(b). If $S(1)= m_1-1$, $S(0)\leq m_0$, and there is not a 0 to the left of the leftmost * then $S$ satisfies (2)(c) when $S(0)=m_0$ and (8) otherwise. If $S(1)= m_1-1$, $S(0)\leq m_0$, and there is a 0 to the left of the leftmost * then $S$ satisfies (7).
\end{proof}

\section{Properties of the Matchings}
\label{s:matchingprop}

Now that we have a description of the matchings that we will use, we need to show that they satisfy the conditions of being a complete acyclic matching. If we let $V$ be a collection of pairs of faces matched together by the rules in Section \ref{s:Construction}, then showing that $V$ is complete amounts to showing that every face appears in exactly one pair in $V$. Showing that $V$ is acyclic will be shown in Section \ref{s:acyclicproof}. For the remainder of this section and the next we will assume that we have a fixed matching, or in other words, that $m_0$ and $m_1$ are fixed.

\begin{defn}
Let $i,j\in\mathbb{Z}$ such that $1\leq i,j\leq 10$ and suppose that rule $(i)$ and rule $(j)$ are two of the ten rules from Section \ref{s:Construction}. 
We say that rule $(i)$ and rule $(j)$ are \textbf{inverses} of each other if both of the following are true:
\begin{enumerate}
\item [(i)] If $F(S)$ is matched with $F(S')$ by $(i)$, then $F(S')$ is matched with $F(S)$ by $(j)$.
\item [(ii)] If $F(S')$ is matched with $F(S)$ by $(j)$ , then $F(S)$ is matched with $F(S')$ by $(i)$.
\end{enumerate}
\end{defn}

\begin{lem} Consider the ten rules from Section \ref{s:Construction}.
\label{l:inv1}
\begin{enumerate}
\item [\rm{(i)}] The rules (1)(a) and (2)(a) are inverses of each other.
\item [\rm{(ii)}] The rules (1)(b) and (2)(b) are inverses of each other.
\item [\rm{(iii)}] The rules (1)(c) and (2)(c) are inverses of each other.
\end{enumerate}
\end{lem}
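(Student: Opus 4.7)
The plan is to verify all three inverse relations by a direct case-analysis following the same template in each case. For each pair, I will show both directions: (i) if rule (1)(x) transforms $S$ into $S'$, then rule (2)(x) is applicable to $S'$ and returns $S$; and (ii) if rule (2)(x) transforms $S'$ into $S$, then rule (1)(x) is applicable to $S$ and returns $S'$. Throughout, I will rely on the numerical relations $S(0) = S'(0)$ and $S(1) = S'(1) + 1$, which are immediate since all four rules only change a single symbol in the right-hand portion of the sequence.

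The first thing I would establish is a pair of structural observations that will do most of the work simultaneously for the three sub-cases. First, when rule (1)(x) acts, the rightmost $1$ of $S$ is strictly to the right of the rightmost $*$ of $S$, so after being replaced by a $*$ it becomes the unique new rightmost $*$ of $S'$; moreover, since it was the rightmost $1$, nothing to its right is a $1$, hence $S'$ has no $1$ to the right of its rightmost $*$. Second, in the reverse direction, when rule (2)(x) replaces the rightmost $*$ of $S'$ by a $1$, the hypothesis ``no $1$ to the right of the rightmost $*$ in $S'$'' guarantees that the newly created $1$ is rightmost in $S$ and that the old second-rightmost $*$ (if any) now plays the role of rightmost $*$ in $S$ with the new $1$ to its right. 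For (c) I also need the leftmost $*$ to be preserved; this follows because in both directions only positions strictly to the right of the rightmost $*$ are affected, and the existence of a second $*$ is guaranteed by the numerical inequality $S'(0) + S'(1) \le (n-k) + (k-2) = n-2$ coming from $S'(1) \le k-2$, so at least two $*$'s are always present. The cardinality bound $m_0 \le n-k-1$, $m_1 \le k-1$ is used similarly to ensure the relevant rules are non-vacuous.

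With these observations in hand, the sub-condition matching is then routine arithmetic. For (a), the condition $S(1) \ne m_1$ of (1)(a) is equivalent via $S'(1) = S(1) - 1$ to the condition $S'(1) \ne m_1 - 1$ of (2)(a), and the other numerical bounds translate identically. For (b), $S(1) = m_1$ becomes $S'(1) = m_1 - 1$ and the condition $S(0) > m_0$ is preserved as $S'(0) > m_0$ since $S(0) = S'(0)$. For (c), both $S(1) = m_1$ ($\Leftrightarrow$ $S'(1) = m_1 - 1$) and $S(0) = m_0$ ($\Leftrightarrow$ $S'(0) = m_0$) transfer, and the preservation of the leftmost $*$ noted above makes the ``no $0$ to the left of the leftmost $*$'' clause carry over in both directions.

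The main (minor) obstacle is not any deep fact but rather the bookkeeping needed to certify that the positions designated ``rightmost $*$'' and ``rightmost $1$'' really swap correctly under the two operations, and that in case (c) the leftmost $*$ remains in place. Once the two structural observations above are stated cleanly, each of the three parts reduces to substituting $S(1) = S'(1) + 1$ and $S(0) = S'(0)$ into the defining clauses of (1)(x) and (2)(x) and noting their literal equivalence, which finishes the proof.
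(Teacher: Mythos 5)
Your proposal is correct and takes essentially the same direct-verification route as the paper: check that applying (1)(x) produces a sequence satisfying (2)(x) and vice versa, using $S'(1)=S(1)-1$, $S'(0)=S(0)$, and the fact that the replaced symbol becomes (or was) the rightmost $*$. If anything, your explicit justification that $S'$ retains a second $*$ (via $S'(1)\le k-2$) and that the leftmost $*$ is untouched in case (c) is slightly more careful than the paper's brief treatment of the reverse direction.
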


\begin{proof}
First suppose that $S$ satisfies the conditions of (1)(a) and so $S(1)\neq m_1$, $S(1)\leq k-1$, and there is a 1 to the right of the rightmost * in $S$. Then by (1), $S'$ is obtained from $S$ by replacing the rightmost 1 with a *, so $S'(1) = S(1)-1 \neq m_1-1$, $S'(1) = S(1)-1 \leq k-2$, and there is no 1 to the right of the rightmost * in $S'$. Therefore $S'$ satisfies the conditions of (2)(a) and hence will be matched to the face $F(S'')$ where $S''$ is obtained from $S'$ by replacing the rightmost * with a 1. This gives us $S'' = S$ as desired.

Next suppose that $S$ satisfies the conditions of (1)(b) and so $S(1) = m_1$, $S(1)\leq k-1$, $S(0)>m_0$, and there is a 1 to the right of the rightmost * in $S$. Then by (1), $S'$ is obtained from $S$ by replacing the rightmost 1 with a *, so $S'(1) = S(1)-1 = m_1-1$, $S'(1) = S(1)-1 \leq k-2$, $S'(0) = S(0)>m_0$, and there is no 1 to the right of the rightmost * in $S'$. Therefore $S'$ satisfies the conditions of (2)(b) and hence will be matched to the face $F(S'')$ where $S''$ is obtained from $S'$ by replacing the rightmost * with a 1. This gives us $S'' = S$ as desired.

Next suppose that $S$ satisfies the conditions of (1)(c) and so $S(1) = m_1$, $S(1)\leq k-1$, $S(0) = m_0$, there is a 1 to the right of the rightmost * in $S$, and there is not a 0 to the left of the leftmost *. Then by (1), $S'$ is obtained from $S$ by replacing the rightmost 1 with a *, so $S'(1) = S(1)-1 = m_1-1$, $S'(1) = S(1)-1 \leq k-2$, $S'(0) = S(0) = m_0$, there is no 1 to the right of the rightmost * in $S'$, and there is not a 0 to the left of the leftmost *. Therefore $S'$ satisfies the conditions of (2)(c) and hence will be matched to the face $F(S'')$ where $S''$ is obtained from $S'$ by replacing the rightmost * with a 1. This gives us $S'' = S$ as desired.

What remains to be shown in this case is that if we have a face $F(S')$ of type 2 then there is some face $F(S)$ of type 1 that matches with it. Notice that $S'$ has no 1 to the right of the rightmost * and hence when we consider the sequence $S$ obtained by replacing the rightmost * in $S'$ with a 1, we see that $S$ satisfies the conditions of (1) and that $F(S)$ will be matched with $F(S')$ as desired.
\end{proof}

\begin{lem}
\label{l:inv3}
The rules (3) and (4) are inverses of each other.
\end{lem}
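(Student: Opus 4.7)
The plan is to mirror the argument structure used in Lemma \ref{l:inv1} for (1)(a)--(2)(a), etc., checking both directions of the inverse relation separately. Each direction amounts to showing that performing the prescribed replacement produces a sequence satisfying the hypotheses of the other rule, and that the second rule's replacement undoes the first.

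First I would suppose $S$ satisfies the hypotheses of (3), so $F(S)$ is matched with $F(S')$, where $S'$ is obtained by replacing the leftmost $0$ of $S$ with a $*$. I would verify: (a) $S'(1) = S(1) = k-1$ is unchanged; (b) $S'(0) = S(0) - 1 \leq n-k-2$; (c) because rule (3) guarantees the leftmost $0$ of $S$ lies to the left of the leftmost $*$ of $S$, the new $*$ becomes the leftmost $*$ of $S'$, and since the leftmost $0$ of $S$ was the first $0$ in $S$, there is no $0$ to its left, hence no $0$ to the left of the leftmost $*$ in $S'$; (d) the rightmost $*$ of $S$ is still the rightmost $*$ of $S'$, so no $1$ lies to its right. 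These four facts confirm the hypotheses of (4), and because rule (4) replaces the leftmost $*$ of $S'$---which is precisely the position we just altered---with a $0$, we recover $S$.

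For the reverse direction, suppose $S'$ satisfies the hypotheses of (4), so $F(S')$ matches $F(S'')$ where $S''$ replaces the leftmost $*$ of $S'$ with a $0$. The count check gives $S''(1) = k-1$ and $S''(0) = S'(0) + 1 \leq n-k-1$. Since $S'$ has no $0$ to the left of its leftmost $*$, every entry before that position is a $1$, so the position just changed becomes the leftmost $0$ of $S''$, giving a $0$ to the left of every remaining $*$. For the rightmost $*$ to still exist in $S''$, I would count: $S'(0) + S'(1) \leq (n-k-2) + (k-1) = n-3$, so $S'$ contains at least three $*$'s, and thus the rightmost $*$ of $S'$ persists in $S''$ with no $1$ to its right. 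This shows $S''$ satisfies (3), and rule (3) applied to $S''$ replaces its leftmost $0$, which is exactly the position we just changed, with a $*$, returning $S'$.

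The only obstacle is the bookkeeping around which position becomes the leftmost or rightmost $*$ after each swap; both cases dissolve quickly once one observes that under (3) the leftmost $0$ lies strictly left of the leftmost $*$, and under (4) everything left of the leftmost $*$ consists of $1$'s, so the modified position is unambiguously identified in each direction.
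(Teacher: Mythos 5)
Your proof is correct and follows essentially the same route as the paper's: verify in each direction that the replacement produces a sequence satisfying the hypotheses of the other rule and that the other rule's replacement lands on the same position, hence undoes the first. The extra bookkeeping you include (identifying the new leftmost $*$ or leftmost $0$ explicitly, and counting that at least one $*$ survives rule (4)) is detail the paper leaves implicit, but the argument is the same.
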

\begin{proof}
Suppose first that $S$ satisfies the conditions of (3) and so $S(1)=k-1$, $S(0)\leq n-k-1$, there is no 1 to the right of the rightmost *, and there is a 0 to the left of the leftmost *. Then by (3), $S'$ is obtained from $S$ by replacing the leftmost 0 with a *, so $S'(1) = S(1)=k-1$, $S'(0)= S(0)-1\leq n-k-2$, there is no 1 to the right of the rightmost *, and there is no 0 to the left of the leftmost *. Therefore $S'$ satisfies the conditions of (4) and hence will be matched to the face $F(S'')$ where $S''$ is obtained from $S'$ by replacing the leftmost * with a 0. This gives us $S'' = S$ as desired.

Now let $S$ satisfy the conditions of (4) and so $S(1)=k-1$, $S(0)\leq n-k-2$, there is no 1 to the right of the rightmost *, and there is no 0 to the left of the leftmost *. Then by (4), $S'$ is obtained from $S$ by replacing the leftmost * with a 0, so $S'(1) = S(1)=k-1$, $S'(0)= S(0)+1\leq n-k-1$, there is no 1 to the right of the rightmost *, and there is a 0 to the left of the leftmost *. Therefore $S'$ satisfies the conditions of (3) and hence will be matched to the face $F(S'')$ where $S''$ is obtained from $S'$ by replacing the leftmost 0 with a *. This gives us $S'' = S$ as desired.
\end{proof}

\begin{lem}
\label{l:inv5}
The rules (5) and (6) are inverses of each other and the rules (7) and (8) are inverses of each other.
\end{lem}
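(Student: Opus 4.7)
The plan is to follow the same template used for Lemmas \ref{l:inv1} and \ref{l:inv3}: for each pair of rules, take a sequence $S$ satisfying the hypotheses of one rule, explicitly form $S'$ by the prescribed replacement, verify that $S'$ satisfies the hypotheses of the proposed inverse rule, and check that applying the inverse rule recovers $S$. Because each rule here prescribes a single, local replacement (turn the leftmost 0 into a $*$, or turn the leftmost $*$ into a 0), the verification reduces to book-keeping on the counts $S(0)$, $S(1)$ and on the positions of the leftmost and rightmost $*$.

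For (5)$\leftrightarrow$(6): if $S$ satisfies (5), then replacing the leftmost 0 of $S$ by a $*$ produces $S'$ with $S'(1)=S(1)=m_1$ and $S'(0)=S(0)-1<m_0$. Since the rightmost $*$ of $S$ lies to the right of the old leftmost 0, it is unchanged in $S'$, so a 1 still sits to its right. The new leftmost $*$ of $S'$ occupies the position of the old leftmost 0, and there are no 0's further to its left (by definition of ``leftmost 0''); thus $S'$ satisfies (6), and (6) tells us to turn that $*$ back into a 0, recovering $S$. The reverse direction is symmetric: if $S$ satisfies (6), replacing its leftmost $*$ by a 0 raises $S(0)$ by one (staying $\leq m_0$), keeps $S(1)=m_1$, leaves the rightmost $*$ untouched, and creates a 0 to the left of the new leftmost $*$, so that (5) applies and inverts the move. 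The argument for (7)$\leftrightarrow$(8) is identical after substituting $m_1-1$ for $m_1$; the extra clause ``no 1 to the right of the rightmost $*$'' is trivially preserved because neither replacement introduces a 1 or moves the rightmost $*$.

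The main point that needs a short justification, and the only step where something could conceivably go wrong, is that the ``rightmost $*$'' and the ``leftmost $*$'' remain well-defined and distinct after the replacement, so that the condition on the rightmost $*$ and the condition on the leftmost $*$ can be checked independently. This is controlled by a count of $*$'s: if $S$ satisfies the hypotheses of (6) or (8), then $S(0)<m_0\le n-k-1$ and $S(1)\le m_1\le k-1$, so the number of $*$'s in $S$ is at least $n-(n-k-2)-(k-1)=3$. Thus the leftmost $*$ differs from the rightmost $*$, and flipping one of them does not affect the other or the ``1 to the right''/``no 1 to the right'' hypothesis. With that observation in hand the four verifications are routine and the lemma follows.
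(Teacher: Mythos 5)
Your proof is correct and follows exactly the template the paper intends, since the paper's own proof of this lemma is just the remark that it is similar to the proof of Lemma \ref{l:inv3}. Your explicit count showing that a sequence satisfying (6) or (8) has at least three $*$'s, so that the leftmost and rightmost $*$ are distinct and the replacement does not disturb the ``1 to the right of the rightmost $*$'' condition, is a detail the paper leaves implicit, and it is checked correctly.
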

\begin{proof}
This proof is similar to that of Lemma \ref{l:inv3}.
\end{proof}

\begin{lem}
\label{l:inv9}
The rules (9) and (10) are inverses of each other.
\end{lem}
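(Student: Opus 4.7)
The plan is to verify the two required directions of the inverse relationship, following the template set by Lemmas \ref{l:inv1}, \ref{l:inv3}, and \ref{l:inv5}.

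For the forward direction, I will start with a sequence $S$ satisfying the hypotheses of (9), so that $S$ consists of exactly $k$ ones and $n-k$ zeros (no stars) and $F(S)\neq\{v_0\}$. The crucial preliminary observation is that the condition $S\neq(1,\ldots,1,0,\ldots,0)$ is equivalent to saying the leftmost $0$ in $S$ appears strictly before the rightmost $1$ in $S$; this is because $v_0$ is precisely characterized as the unique such sequence in which every $1$ precedes every $0$. Denoting those positions by $i<j$, applying (9) produces $S'$ whose stars sit exactly at positions $i$ and $j$, giving $S'(1)=k-1$ and $S'(0)=n-k-1$. I will then verify that $S'$ satisfies the hypotheses of (10): there is no $1$ to the right of position $j$ (since $j$ was the rightmost $1$ in $S$, and no other entries were altered past it), and no $0$ to the left of position $i$ (since $i$ was the leftmost $0$). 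Applying (10) then replaces the leftmost star with a $0$ and the other star with a $1$, returning exactly $S$.

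For the backward direction, I will begin with $S'$ satisfying (10), so $S'$ contains exactly two stars at some positions $i<j$, together with $S'(1)=k-1$, $S'(0)=n-k-1$, no $1$ right of position $j$, and no $0$ left of position $i$. Applying (10) inserts a $0$ at position $i$ and a $1$ at position $j$, yielding a sequence $S$ with $S(1)=k$, $S(0)=n-k$, and no stars. Because $i<j$, this $S$ contains a $0$ preceding a $1$, so $S\neq v_0$ and hence $S$ satisfies the hypotheses of (9). The conditions on $S'$ ensure that the newly placed $0$ at position $i$ is the leftmost $0$ of $S$ and the newly placed $1$ at position $j$ is the rightmost $1$, so applying (9) restores stars at exactly positions $i$ and $j$, returning $S'$.

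The only mildly delicate point is the opening observation linking $F(S)\neq\{v_0\}$ to the ordering of the leftmost $0$ and rightmost $1$; once that is established, the remainder is straightforward symbol manipulation essentially identical in structure to the preceding lemmas, and no further subtleties are expected.
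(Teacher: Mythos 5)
Your proposal is correct and follows essentially the same argument as the paper: verify both directions by tracking the positions of the leftmost $0$/leftmost $*$ and rightmost $1$/rightmost $*$, with the key observation (handled contrapositively in the paper) that $F(S)\neq\{v_0\}$ corresponds exactly to the leftmost $0$ preceding the rightmost $1$, equivalently to the two $*$'s of $S'$ being in the right order. No gaps.
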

\begin{proof}
Suppose first that $S$ satisfies the conditions of (9) and so $S(1)=k$, $S(0)=n-k$, and $S\neq 1\ldots 10\ldots 0$. Then by (9), $S'$ is obtained from $S$ by replacing the leftmost 0 and the rightmost 1 with a *, so $S'(1)=S(1)-1=k-1$, $S'(0)=S(0)-1=n-k-1$, there is no 1 to the right of rightmost *, there is no 0 to the left of the leftmost *. Therefore $S'$ satisfies the conditions of (10) and hence will be matched to the face $F(S'')$ where $S''$ is obtained from $S'$ by replacing the leftmost * with a 0 and the remaining * with a 1. This gives us $S'' = S$ as desired.

Next let $S$ satisfy the conditions of (10) and so $S(1) = k-1$, $S(0) = n-k-1$, there is no 1 to the right of rightmost *, and there is no 0 to the left of the leftmost *. Then by (10), $S'$ is obtained from $S$ by replacing the leftmost * with a 0 and the rightmost * with a 1. 
If $F(S')=F(1\ldots 10\ldots 0)=\{v_0\}$ then that would mean the leftmost * in $S$ was to the right of the rightmost * in $S$ which is a contradiction. So $S'(1)=S(1)+1=k$, $S'(0)=S(0)+1=n-k$, and $F(S')\neq \{v_0\}$. Therefore $S'$ satisfies the conditions of (9) and hence will be matched to the face $F(S'')$ where $S''$ is obtained from $S'$ by replacing the leftmost 0 and the rightmost 1 with a *. This gives us $S''=S$ as desired.
\end{proof}

\begin{prop}
Fix $m_0$ and $m_1$ and let $V$ be the corresponding collection of pairs of matched faces. Every face appears in exactly one pair in $V$ and hence $V$ is a discrete vector field and furthermore, $V$ is a complete matching.
\end{prop}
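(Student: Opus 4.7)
The plan is to assemble Lemma~\ref{l:wd} with the four inverse lemmas to obtain existence and uniqueness of a match for every cell, and then to check the codimension-$1$ condition by direct inspection of the rules.

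First I would dispose of the pair $(\emptyset, v_0)$ that is declared by hand at the start of Section~\ref{s:Construction}. The vertex $v_0 = F(1\ldots 10\ldots 0)$ contains no $*$'s, so none of rules (1)--(8) can apply (each requires at least one $*$); rule (9) excludes $v_0$ explicitly; and rule (10) demands $S(1) = k-1$, while $v_0$ has $S(1) = k$. Thus $v_0$ is not matched by any other rule, and $\emptyset$ is not a face $F(S)$ so cannot appear as the output of any rule.

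For any face $F(S) \neq v_0$, Lemma~\ref{l:wd} gives $F(S)$ a unique type $i \in \{1,\ldots,10\}$, and applying rule $(i)$ yields a well-defined matched face $F(S')$; this produces at least one pair containing $F(S)$. For uniqueness, suppose $F(S)$ were also in a pair $(F(T), F(S))$ arising from rule $(j)$ applied to $F(T)$. Then $F(T)$ is of type $j$, and by the appropriate one of Lemmas~\ref{l:inv1}, \ref{l:inv3}, \ref{l:inv5}, \ref{l:inv9} the inverse rule $(j')$ applied to $F(S)$ returns $F(T)$. Since $F(S)$ has a unique type, $(j')$ must coincide with $(i)$, which forces $F(T) = F(S')$. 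Hence $F(S)$ lies in exactly one pair of $V$.

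Finally, to confirm that $V$ is a discrete vector field I would check codimension $1$ for each pair. For rules (1)--(8) the sequences $S$ and $S'$ differ in a single position, so the number of $*$'s (and therefore the dimension of the face) changes by exactly one, and the smaller cell is visibly a facet of the larger (it is obtained by specifying the value at the new $*$). For rules (9)--(10), a vertex ($S$ with no $*$'s, dimension $0$) is paired with an edge ($S'$ with two $*$'s, dimension $1$), and by construction that vertex is an endpoint of the edge. Combined with $\emptyset$ (dimension $-1$) being a codimension-$1$ ``face'' of $v_0$ under the convention of Section~\ref{s:CW}, this shows $V$ consists entirely of valid codimension-$1$ pairs; together with the preceding paragraph, every cell of $K$ lies in exactly one such pair, so $V$ is a complete matching. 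I do not anticipate a deep obstacle here --- the substance has already been absorbed into Lemma~\ref{l:wd} and the inverse lemmas --- so the only care needed is the bookkeeping of matching each type $i$ with its correct inverse among the ten rules.
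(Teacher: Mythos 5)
Your proposal is correct and follows essentially the same route as the paper: Lemma~\ref{l:wd} gives each face a unique type (hence at least one pair), Lemmas~\ref{l:inv1}--\ref{l:inv9} give uniqueness of the pair, and the codimension-$1$ check is the content of Remark~\ref{rem:matching}. Your explicit verification that $v_0$ is untouched by rules (1)--(10) is a small point the paper leaves implicit, but the argument is the same.
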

\begin{proof}
Lemma \ref{l:wd} shows that every faces is matched with at least one other face and
Lemmas \ref{l:inv1}--\ref{l:inv9} show that every face is matched to at most one other face. Remark \ref{rem:matching} points out that for every pair of faces, one face is a codimension 1 face of the other and the assertions follow.
\end{proof}

\section{Proof that the Matchings are Acyclic}
\label{s:acyclicproof}

First recall that $S$ is a sequence of 0's, 1's, and *'s. For notational purposes it will be helpful to break up $S$ into subsequences of only 0's and 1's and subsequences of only *'s. From now let $f_i$ denote either a sequence of 0's and 1's or $\emptyset$ where $1\leq i\leq n$. Also let $f'_i$ be a subsequence of $f_i$, let $0 \cdots 0$ denote a sequence of 0's or possibly $\emptyset$, and let $1 \cdots 1$ denote a sequence of 1's or possibly~$\emptyset$. 

\begin{eg}
Suppose we have an edge given by $F(S)$. Since $F(S)$ is an edge, $S$ has exactly two *'s and therefore we can write $F(S)=F(f_1*f_2*f_3)$. Suppose also that $1\in f_3$, then in order to help show the location of the rightmost 1 in $f_3$ we could also write $f_3 = f'_310 \cdots 0$ since it is only possible for either a sequence of 0's or $\emptyset$ to be to the right of the rightmost 1 in $f_3$.
\end{eg}

Next, following the language of Forman \cite{Forman02} introduced in Chapter \ref{c:intro}, if $K$ is the CW complex formed by the faces of $J(n,k)$ then let $V$ be the discrete vector field on $K$ defined by the collection of pairs of matched faces. Recall that a $V$-path is a sequence of cells
\begin{center}
$a_0,b_0,a_1,b_1,a_2,\ldots,b_r,a_{r+1}$
\end{center}
such that for each $i = 0,\ldots,r,$ each of $a_i$ and $a_{i+1}$ is a codimension 1 face of $b_i$, each $(a_i,b_i)$ belongs to $V$ (hence $a_i$ is matched with $b_i$), and $a_i\neq a_{i+1}$ for all $0\leq i\leq r$. If $r\geq 0$, we call the $V$-path nontrivial, and if $a_0 = a_{r+1}$, we call the $V$-path closed.

\begin{defn}
\label{def:concat}
Given two $V$-paths $a_0,b_0,a_1,\ldots,b_r,a_{r+1}$ and $a'_0,b'_0,a'_1,\ldots,b'_s,a'_{s+1}$ such that $a_{r+1} = a'_0$, define their \emph{concatenation} to be the following $V$-path: 
\begin{center}
$a_0,b_0,a_1,\ldots,b_r,a'_0,b'_0,a'_1,\ldots,b'_s,a'_{s+1}$.
\end{center}
\end{defn}

We will now show in Lemmas \ref{lem:acyc1}--\ref{lem:acyc7} that there are no nontrivial, closed $V$-paths when $a_0$ is not a vertex and deal with the case when it is a vertex in Lemma \ref{lem:acyc8}.

\begin{lem}
\label{lem:acyc1}
Given a $V$-path $a_0,b_0,a_1$ where $a_0 = F(S) = F(f_1*\cdots f_i*f_{i+1})$ is of type 1, either $a_1 = F(S')$ is of type 1, 2, 5, 6, or 10 and going from $S$ to $S'$ replaces the rightmost 1 with a 0 or $a_1 = F(S')$ is of type 2, 3, 4, 7, 8, or 10, going from $S$ to $S'$ replaces the rightmost 1 with a *, and this new * is the rightmost * in $S'$.
\end{lem}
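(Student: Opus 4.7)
The plan is to work out explicitly what $b_0$ looks like, enumerate the codimension-$1$ faces of $b_0$, and then identify which rule matches each candidate $a_1$. Since $a_0 = F(S)$ is of type $1$, rule~(1) provides $b_0 = F(S^\sharp)$, where $S^\sharp$ is obtained from $S$ by turning the rightmost $1$ of $S$ (at some position $p$, say) into a~$*$. The $*$'s of $S^\sharp$ are the original $*$'s of $S$ at positions $p_1 < \cdots < p_m$ together with the new $*$ at $p > p_m$, so $p$ is the rightmost $*$ of $S^\sharp$. Every codimension-$1$ face of $b_0$ arises by filling one of these $m+1$ $*$'s with a $0$ or a $1$. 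Since $a_1 \neq a_0$ and filling position $p$ with a~$1$ returns $S$, exactly two cases remain: (i) fill position $p$ with~$0$, giving $S'$ equal to $S$ with the rightmost $1$ replaced by~$0$; or (ii) fill some $p_j$ (with $j \leq m$) by $0$ or $1$, in which case position $p$ of $S'$ is still a $*$ and, since $p$ is strictly to the right of every surviving $*$, it is the rightmost $*$ of $S'$. This immediately yields the two structural descriptions asserted in the lemma.

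To finish case (i), I would narrow the type of $F(S')$ using the arithmetic of $S'$. From $S'(1) = S(1) - 1 \leq k - 2$ and $S'(*) = S(*) \geq 1$, I immediately exclude rules (3), (4), (9), and~(10), each of which demands either $S'(1) = k-1$ or $S'(*) = 0$. Rules (7) and (8) require $S'(1) = m_1 - 1$, which forces $S(1) = m_1$; under type-$1$ this can only occur through subclause (1)(b) or (1)(c), and both give $S(0) \geq m_0$, whence $S'(0) = S(0) + 1 > m_0$, contradicting the $S'(0) \leq m_0$ requirement of (7) and~(8). The remaining possibilities are rules (1), (2), (5), (6), which is contained in the lemma's list for case (i).

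To finish case (ii), I would observe that the positions to the right of $p$ in $S'$ are the same as in $S$, and by the type-$1$ hypothesis every such coordinate is a $0$. Combined with the fact that $p$ is the rightmost $*$ of $S'$, this shows $S'$ has no $1$ to the right of its rightmost $*$, ruling out rules (1), (5), and~(6). Since $S'(*) = S(*) \geq 1$, rule~(9) is also excluded. The remaining options are precisely types $2, 3, 4, 7, 8, 10$, matching the lemma's second list.

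The main obstacle is the case-(i) exclusion of rules (7) and (8): this is the one point where the $m_0$--$m_1$ constraints buried in the subclauses of rule~(1) must be chased carefully through the single-coordinate modification $S \to S'$. Everything else reduces to dimension bookkeeping in terms of $S'(1)$, $S'(0)$, $S'(*)$ and to locating the rightmost $*$ of $S'$.
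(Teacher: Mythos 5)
Your proof is correct and follows essentially the same route as the paper's: describe $b_0$ explicitly, enumerate its codimension-$1$ faces as the two cases ``fill the new rightmost $*$ with $0$'' versus ``fill an original $*$,'' and then eliminate rule types by the arithmetic of $S'(0)$, $S'(1)$ and the position of the rightmost $*$. Your case-(i) analysis is in fact slightly sharper than the paper's (you also exclude type $10$, which the paper leaves in its list), and your exclusion of rules (7)--(8) is the paper's contradiction argument run in the forward direction; both are harmless variations.
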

\begin{proof}
In this case $f_{i+1} = f'_{i+1}10 \cdots 0$ and so 
\begin{center}
$a_0 = F(f_1*\cdots f_i*f'_{i+1}10 \cdots 0$) and $b_0 = F(f_1*\cdots f_i*f'_{i+1}*0 \cdots 0)$.
\end{center}
In order to choose $a_1$, we can replace any * except the rightmost with 0 or 1 or replace the rightmost * with 0 since $a_1\neq a_0$. 
Either way, $S'(1) \leq S(1) \leq k-1$ and so $a_1$ cannot be of type 9.
First consider the case when we replace the rightmost * with 0 making 
\begin{center}
$a_1 = F(f_1*\cdots f_i*f'_{i+1}00 \cdots 0) = F(S')$.
\end{center}

If we assume that $a_1$ is of type 7 or 8 then $S(0)=S'(0)-1<m_0$ and $S(1)=S'(1)+1=m_1$ and hence $a_0$ would not be of type 1 which is a contradiction. Also, $a_1$ cannot be of type 3 or 4 since $S'(1) < S(1) \leq k-1$ and so $a_1$ must be of type 1, 2, 5, 6, or 10. Either way the result from $S$ to $S'$ was that we replaced the rightmost 1 with a 0.

Next consider the case when we replace any * except the rightmost with a 0 or 1, then we will write $f_1*\cdots f_i*f'_{i+1}$ as $f_1\cdots f'_{i+1}$ (since we do not know and it will not matter which * was replaced) and hence
\begin{center}
$a_1 = F(f_1\cdots f'_{i+1}*0 \cdots 0)$.
\end{center}

Notice that there is no 1 to the right of the rightmost * and so $a_1$ is of type 2, 3, 4, 7, 8, or 10. Either way the result from $S$ to $S'$ was that we replaced the rightmost 1 with a *.
\end{proof}

\begin{lem}
\label{lem:acyc2}
Given a $V$-path $a_0,b_0,a_1$ where $a_0 = F(S) = F(f_1*\cdots f_i*f_{i+1})$ is of type 3, either $a_1 = F(S')$ is of type 4 or 10 and going from $S$ to $S'$ does not change the rightmost * or $a_1 = F(S')$ is of type 1, 4, 6, or 10 and going from $S$ to $S'$ replaces the rightmost * with a 0.
\end{lem}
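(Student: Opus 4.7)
The plan is to mirror the approach of Lemma 2.4.1. First, I would use the defining conditions of type 3 (namely $S(1) = k-1$, $S(0) \leq n-k-1$, no 1 to the right of the rightmost *, and a 0 to the left of the leftmost *) to refine the notation, writing $S = 1\cdots 1 \, 0 \, f_1' * f_2 * \cdots * f_i * 0 \cdots 0$, where the initial run of 1's precedes the leftmost 0 and the trailing run of 0's lies to the right of the rightmost *. Rule (3) then immediately gives $b_0 = F(1\cdots 1 * f_1' * f_2 * \cdots * f_i * 0\cdots 0)$, which has $i+1$ stars.

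Next I would enumerate the codimension-1 faces of $b_0$ that could play the role of $a_1$. Since $b_0$ has $k-1$ fixed ones and $i+1$ stars (exactly one of which must supply the remaining 1), fixing any * to 1 forces every other * to be 0, producing a single vertex; this is codimension-1 in $b_0$ only when $i = 1$, and since $a_0$ being of type 3 forces $i \geq 2$ (otherwise $a_0$ would itself be a vertex), such replacements never yield a valid $a_1$. Hence every admissible $a_1$ arises from fixing a * in $b_0$ to 0, and excluding the choice that recovers $a_0$ (fixing the new leftmost *) leaves $i$ options.

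Then I would split these options into two sub-cases according to whether the * being fixed to 0 is the rightmost * of $b_0$ (equivalently, the rightmost * of $S$) or not. When it is not the rightmost, the rightmost * of $S$ survives in $S'$ and nothing to its right changes, so $S'$ satisfies $S'(1) = k-1$, $S'(0) = S(0) \leq n-k-1$, no 1 to the right of the rightmost *, and no 0 to the left of the leftmost * (the leftmost * of $S'$ now sits at the position of the former leftmost 0, preceded only by 1's); checking the ten rules against these data leaves only (4) and (10), depending on whether $S'(0) \leq n-k-2$ or $S'(0) = n-k-1$.

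In the sub-case where the rightmost * of $S$ is replaced by 0, I would further split based on whether $f_i$ contains a 1. If $f_i$ is a string of 0's (possibly empty), then $S'$ still has no 1 to the right of its new rightmost * and the same analysis gives type 4 or 10. If $f_i$ contains a 1, then $S'$ has a 1 to the right of its rightmost * while still having no 0 to the left of its leftmost *, ruling out types 5, 7, and 8; a further case analysis on whether $k-1 = m_1$ and on the size of $S'(0)$ relative to $m_0$ shows that $S'$ falls under rule (1)(a), (1)(b), (1)(c), or (6), giving type 1 or 6. Combining all sub-cases yields exactly the dichotomy stated in the lemma. The most tedious part will be the bookkeeping in the final case distinction against the parameters $m_0$ and $m_1$, but it is mechanical once the structural picture of $S'$ is in hand.
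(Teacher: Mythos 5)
Your proposal is correct and follows essentially the same route as the paper's proof: write $S$ and $b_0$ in the refined form dictated by the type-3 conditions, observe that $a_1$ can only arise by replacing a non-leftmost $*$ of $b_0$ with a $0$ (the paper justifies this simply via $S(1)=k-1$, while you add the dimension count showing a $1$-replacement yields a vertex), and then split on whether the replaced $*$ is the rightmost one. Your further sub-split on whether $f_i$ contains a $1$ merely refines the paper's conclusion that the second case yields type 1, 4, 6, or 10.
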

\begin{proof}
In this case $f_1 = 1 \cdots 10f'_1$, $f_{i+1} = 0\cdots0$, and $S(1) = k-1$ and so 
\begin{center}
$a_0 = F(1 \cdots 10f'_1*\cdots f_i*0\cdots0)$ and $b_0 = F(1 \cdots 1*f'_1*\cdots f_i*0\cdots0)$.
\end{center}
In order to choose $a_1 = F(S')$ we can replace any * except the leftmost only with a 0 since $S(1) = k-1$. 
No matter what is replaced, $S(1) = S'(1) = k-1$ and so $a_1$ cannot be of type 2, 7, 8, or 9.

First consider the case when any * but the leftmost or rightmost is replaced by a 0, then 
\begin{center}
$a_1 = F(1 \cdots 1*f'_1\cdots f_i*0\cdots0)$.
\end{center}
In this case there is no 1 to the right of the rightmost~* in $S'$ and so $a_1$ cannot be of type 1, 5, or 6. Similarly there is no 0 to the left of the leftmost~* in $S'$ and so $a_1$ cannot be of type 3. Therefore $a_1$ is of type 4 or 10.
Also, going from $S$ to $S'$ does not change the rightmost~*.

Next consider the case when the rightmost * is replaced by a 0 and so
\begin{center}
$a_1 = F(1 \cdots 1*f'_1*\cdots *f_i00\cdots0)$.
\end{center}
There is no 0 to the left of the leftmost * in $S'$ and so $a_1$ is not of type 3 or 5. Therefore $a_1$ can only be of type 1, 4, 6, or 10. Either way going from $S$ to $S'$ replaces the rightmost * with a 0.
\end{proof}

\begin{lem}
\label{lem:acyc3}
Given a $V$-path $a_0,b_0,a_1$ where $a_0 = F(S) = F(f_1*\cdots f_i*f_{i+1})$ is of type 5, $a_1 = F(S')$ is of type 1 or 6.
\end{lem}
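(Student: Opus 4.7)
The plan is to proceed as in Lemmas \ref{lem:acyc1} and \ref{lem:acyc2}, enumerating all codim 1 faces $a_1 \neq a_0$ of $b_0$ and checking each against the criteria of rules (1) and (6). Since $a_0 = F(S)$ is of type 5, we have $f_1 = 1\cdots 1 0 f'_1$ (because there is a 0 to the left of the leftmost *) and $f_{i+1}$ contains a 1 (because there is a 1 to the right of the rightmost *). Rule (5) replaces this leftmost 0 with a new *, which I will call $*_0$. Thus $a_0 = F(1\cdots 1 0 f'_1 * f_2 * \cdots * f_{i+1})$ and $b_0 = F(1\cdots 1 * f'_1 * f_2 * \cdots * f_{i+1})$, with $*_0$ as the leftmost * of $S_b$.

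A codim 1 face of $b_0$ arises by replacing one * in $S_b$ with a 0 or 1; replacing $*_0$ with 0 simply restores $S$, so this option is excluded. First, if we replace $*_0$ with 1, then $S'(1) = m_1 + 1 \neq m_1$, the prefix becomes $1\cdots 11$, and a 1 persists to the right of the rightmost * (inherited from $f_{i+1}$), so $S'$ satisfies rule (1)(a). Second, if we replace any * other than $*_0$ with a 0, then $*_0$ remains as the leftmost * of $S'$ and is preceded only by 1's, so there is no 0 to the left of the leftmost *; combined with $S'(1) = m_1$, $S'(0) = S(0) \leq m_0$, and a 1 still to the right of the rightmost *, $S'$ satisfies rule (1)(c) when $S(0) = m_0$ and rule (6) when $S(0) < m_0$. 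Third, replacing any * other than $*_0$ with a 1 is analogous to the first case and again yields a face of type (1)(a).

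The main obstacle is the potential degeneracy in the first and third cases when $m_1 = k - 1$: there $S'(1) = k$ would force all remaining *'s of $S'$ to be 0, collapsing $F(S')$ to a vertex. I would rule this out by observing that such a vertex cannot be a codim 1 face of $b_0$, because the constraints $S(0) \leq m_0 \leq n - k - 1$ together with $S(1) = k - 1$ force $S$ to have at least two *'s, so $b_0$ has dimension at least two. Therefore only the non-degenerate subcases contribute valid $a_1$'s, and every valid $a_1$ is of type 1 or 6, as claimed.
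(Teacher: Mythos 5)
Your proof is correct and follows essentially the same route as the paper's: enumerate the codimension-one faces $a_1\neq a_0$ of $b_0$ according to which * is replaced and by what, then classify the result. The only differences are stylistic --- you verify the defining conditions of rules (1) and (6) directly where the paper instead eliminates the other types, and you explicitly dispose of the degenerate case $m_1=k-1$ (where replacing a * by a 1 would collapse $F(S')$ to a vertex, which cannot be a codimension-one face of $b_0$), a point the paper leaves implicit.
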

\begin{proof} 
In this case $f_1 = 1\cdots 10f'_1$ and $f_{i+1} = f'_{i+1}10\cdots 0$ and so we have
\begin{center}
$a_0 = F(1\cdots 10f'_1*\cdots*f'_{i+1}10\cdots 0)$ and $b_0 = F(1\cdots 1*f'_1*\cdots*f'_{i+1}10\cdots 0)$.
\end{center}
There are again two cases for choosing $a_1$, but either way $S'(0) \leq S(0) \leq m_0 \leq n-k-1$ and so $a_1$ cannot be of type 9. 
For the first case, replace any * except the leftmost by a 0. Since there is a 1 to the right of the rightmost *, $a_1$ cannot be of type 2, 3, 4, 7, 8, or 10. Also since there is not a 0 to the left of the leftmost *, $a_1$ cannot be of type 5 and so $a_1$ can be of type 1 or 6.

On the other hand if we replace any * by a 1 we again have that there is a 1 to the right of the rightmost * and so $a_1$ cannot be of type 2, 3, 4, 7, 8, or 10. However this time $S'(1) = S(1)+1 = m_1+1$ and so $a_1$ cannot be of type 5 or 6 either. Therefore $a_1$ must be of type 1.
\end{proof}

\begin{lem}
\label{lem:acyc4}
Given a $V$-path $a_0,b_0,a_1$ where $a_0 = F(S) = F(f_1*\cdots f_i*f_{i+1})$ is of type 7, one of three things can happen:
\begin{enumerate}
\item [\rm{(i)}] $a_1 = F(S')$ is of type 1, 2, or 8 and going from $S$ to $S'$ replaces the rightmost * with a 0; 
\item [\rm{(ii)}] $a_1 = F(S')$ is of type 2, 3, 4, 8, or 10 and going from $S$ to $S$ leaves the rightmost * unchanged; or
\item [\rm{(iii)}] $a_1 = F(S')$ is of type 6 and going from $S$ to $S'$ replaces the rightmost * with a 1.
\end{enumerate}

\end{lem}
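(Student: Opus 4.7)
My plan is to mirror the case analyses already carried out in Lemmas \ref{lem:acyc1}--\ref{lem:acyc3}, exploiting the fact that a face of type 7 has a very explicit shape. Since $a_0 = F(S)$ is of type 7 we have $S(1)=m_1-1$, $S(0)\leq m_0$, no 1 to the right of the rightmost *, and a 0 to the left of the leftmost *. So I can write $S$ as $1\cdots 10f'_1 * f_2 * \cdots f_i * 0\cdots 0$, and by rule (7) we have $b_0 = F(1\cdots 1*f'_1*f_2*\cdots f_i * 0\cdots 0)$. Thus $b_0$ has one more *, located strictly to the left of every * of $S$, and $a_1 = F(S')$ is obtained by replacing one of the *'s of $b_0$ by a 0 or 1 (and is not allowed to simply re-insert the 0 we just deleted). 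I would split on what happens to the rightmost * of $S$, which equals the rightmost * of $b_0$.

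For statement (i), suppose the rightmost * of $S$ is replaced by a 0. A direct calculation gives $S'(1)=m_1-1\leq k-2$, $S'(0)=S(0)\leq m_0$, leftmost * of $S'$ lies at the old leftmost-0 position so has no 0 to its left, and the new rightmost * of $S'$ may or may not have a 1 to its right depending on whether any of $f_2,\dots,f_i$ contains a 1 between the old second-rightmost and rightmost *'s. Matching against the ten rules, if such a 1 is present then (1)(a) applies so $a_1$ is of type 1; if no such 1 is present then (2)(b),(2)(c) or (8) applies depending on whether $S(0)=m_0$ or $S(0)<m_0$, giving type 2 or 8. In every subcase the rightmost * of $S$ has become a 0 in $S'$.

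For statement (iii), suppose the rightmost * of $S$ is replaced by a 1. Then $S'(1)=m_1$, $S'(0)=S(0)-1<m_0$, the inserted 1 provides a 1 to the right of the new rightmost * of $S'$, and there is still no 0 to the left of the new leftmost * of $S'$ (since $p_0$ was the leftmost 0 of $S$). These are exactly the hypotheses of rule (6), so $a_1$ is of type 6.

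For statement (ii), the rightmost * of $S$ is left untouched; equivalently, either we replace the new leftmost * of $b_0$ with a 1, or we replace one of the interior *'s of $S$ (not the rightmost) with a 0 or 1. I would compute $S'(0)$, $S'(1)$, the location of the leftmost * of $S'$, and whether a 0 lies to its left in each situation, then compare against Lemma \ref{l:wd}. Replacing the new leftmost * with a 1 yields $S'(1)=m_1$ and reduces the stock of 0's to the left of the leftmost * by one; depending on $m_1=k-1$ or not and on how many 0's originally sat left of the leftmost *, this produces type 2, 3, or 4. Replacing an interior * with a 0 gives $S'(1)=m_1-1$ and leads to type 2 or 8; replacing an interior * with a 1 gives $S'(1)=m_1$ and, again depending on whether $m_1=k-1$ and whether $S'(0)=n-k-1$, leads to type 2, 4, or 10. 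I expect this last case to be the main obstacle, because of the branching on $m_1$ versus $k-1$ and on $S(0)$ versus $m_0$; the key point that makes it tractable is that the ten rules partition the faces (Lemma \ref{l:wd}), so after fixing the relevant parameters of $S'$ the type is forced uniquely.
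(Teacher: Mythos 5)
Your proposal is correct and takes essentially the same approach as the paper: write out the explicit forms of $a_0$ and $b_0$ under rule (7), split on which $*$ of $b_0$ is replaced and by what, and use the fact that the ten rules partition the faces (Lemma \ref{l:wd}) to pin down the type of $a_1$ in each sub-case. Two harmless over-inclusions in your type lists: in case (i), rule (2)(b) cannot actually occur since $S'(0)=S(0)\leq m_0$, and in case (ii), type 10 cannot occur since $S'(0)\leq m_0-1<n-k-1$; neither affects the conclusion because the types you name still lie inside the lemma's allowed sets.
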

\begin{proof} 
In this case $f_1 = 1\cdots 10f'_1$ and $f_{i+1} = 0\cdots 0$ and so we have
\begin{center}
$a_0 = F(1\cdots 10f'_1*\cdots f_i*0\cdots 0)$ and $b_0 = F(1\cdots 1*f'_1*\cdots f_i*0\cdots 0)$.
\end{center}
This time choosing $a_1$ will be broken up into three cases, but in all of them $S'(0) \leq S(0) \leq m_0 \leq n-k-1$ and so $a_1$ cannot be of type 9. In the first case replace the rightmost * by 0 and so $S'(1) = S(1) = m_1-1 \leq k-2$. This means $a_1$ cannot be of type 3, 4, 5, 6, or 10. Also there is no 0 to the left of the leftmost * and so $a_1$ cannot be of type 7 either. Therefore $a_1$ can be of type 1, 2, or 8.

For the second case either replace any * but the leftmost or rightmost by a 0 or replace any * but the rightmost by a 1. Either way going from $S$ to $S'$ leaves the rightmost * unchanged. Next note that there is no 1 to the right of the rightmost * and hence $a_1$ cannot be of type 1, 5, or 6. Also if any * but the leftmost or rightmost is replaced by a 0 then there is no 0 to the left of the leftmost *, whereas if any * but the rightmost is replaced by a 1 then $S'(1) = m_1$, so in either case $a_1$ cannot be of type 7. Therefore $a_1$ can be of type 2, 3, 4, 8, or 10.

Last of all, we can choose $a_1$ by replacing the rightmost * by a 1.
In this case $S'(1) = S(1)+1 = m_1$ and $S'(0)=S(1)-1<m_0$ so $a_1$ cannot be of type 1, 7, or 8. 
Since there is a 1 to the right of the rightmost * in $S'$, $a_1$ cannot be of type 2, 3, 4, or 10.
Since there is not a 0 to the left of the leftmost * in $S'$, $a_1$ cannot be of type 5 either. 
Therefore $a_1$ can only be of type 6.
\end{proof}

\begin{lem}
\label{lem:acyc5}
Let $a_0,b_0,a_1,\ldots,b_r,a_{r+1}$ be a $V$-path such that $a_0 = F(S)$ is of type 1 and $a_j = F(S^{(j)})$ for each $j\geq0$. If, while going from $S$ to $S^{(j)}$ for any $j\geq1$, the rightmost 1 of $S$ is replaced with a 0, then $a_0\neq a_{r+1}$.
\end{lem}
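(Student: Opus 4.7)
Let $p$ denote the position of the rightmost $1$ in $S$. Since $a_0 = F(S)$ is of type $1$, the rightmost $*$ of $S$ lies strictly to the left of position $p$, so every entry of $S$ strictly to the right of $p$ is $0$. My plan is to establish, by induction on $\ell$, two invariants on the sequences $S^{(\ell)}$ along the $V$-path:
\begin{enumerate}
\item[(I1)] every entry of $S^{(\ell)}$ at a position strictly greater than $p$ is $0$, for all $\ell \geq 0$;
\item[(I2)] the entry of $S^{(\ell)}$ at position $p$ is $0$, for all $\ell \geq j$.
\end{enumerate}
Together these force $S^{(r+1)}$ to have a $0$ at position $p$ while $S$ has a $1$ there, so $a_{r+1} \neq a_0$.

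Both inductive steps rely on the structural fact that since $(a_\ell,b_\ell) \in V$ with $a_\ell$ a codimension-$1$ face of $b_\ell$, the face $a_\ell$ must be of type $1$, $3$, $5$, $7$, or $9$ (the types that pair a face with a larger one). For (I1) I would check case-by-case that each such type's moves only affect positions $\leq p$. Invariant (I1) forbids any $*$ in $S^{(\ell)}$ at a position $>p$, so both the leftmost and rightmost $*$ lie at positions $\leq p$. This forces the ``$0$ to the left of the leftmost $*$" required by types $3$, $5$, $7$ to sit at a position strictly less than $p$. For type $9$, (I1) together with the condition $a_\ell \neq v_0$ rules out the scenario in which all of the vertex's $0$'s lie strictly to the right of $p$, so the leftmost $0$ is at a position $\leq p$ as well, and the rightmost $1$ is at a position $\leq p$ by (I1). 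The subsequent filling $b_\ell \to S^{(\ell+1)}$ then cannot introduce a nonzero entry at any position $>p$, since $b_\ell$ carries no $*$'s there.

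For (I2) the same case analysis goes through, but I also need to rule out the leftmost $0$ of $S^{(\ell)}$ being at position $p$ itself. For types $3$, $5$, $7$ this would contradict the existence of a $0$ strictly to the left of the leftmost $*$ (using (I1) and (I2) to confine all $*$'s to positions $\leq p-1$), while for type $9$ it would force $S^{(\ell)} = v_0$, which is excluded by the rule. Hence each transition leaves position $p$ untouched, so $b_\ell$ carries a $0$ (not a $*$) at position $p$, and the final filling preserves this. The main obstacle will be carrying out this case analysis carefully: one has to use invariant (I1) in tandem with the rule hypotheses (particularly ``$0$ to the left of the leftmost $*$" for types $3$, $5$, $7$, and ``$a_\ell \neq v_0$" for type $9$) to eliminate the edge cases where a rule's targeted position might coincide with $p$.
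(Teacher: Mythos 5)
Your proposal is correct and is essentially the paper's own argument: the paper likewise shows that the $0$'s at positions to the right of the rightmost $1$ of $S$ are never altered (because rules (3), (5), (7) only convert the leftmost $0$, which must lie to the left of the leftmost $*$), and that once position $p$ becomes $0$ it has no $*$'s to its right and hence stays fixed. The only cosmetic difference is that the paper dismisses type 9 at the outset by noting that all the $a_i$ share the dimension of $a_0$ and so are not vertices, whereas you handle it inside the case analysis via the $a_\ell\neq v_0$ condition.
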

\begin{proof}
First notice that in order to get from $S^{(j)}$ to $S^{(j+1)}$ for any $j\geq0$ either a 0 or 1 in $S^{(j)}$ is changed to a * based on the ten rules for matching faces and then a * is replaced by a 0 or 1. 
Furthermore, every $a_i$ has the same dimension as $a_0$ by definition of a $V$-path and since $a_0$ is a face of type 1 and hence not a vertex, none of the $a_i$ are vertices. Therefore we will never utilize rule (9).

Recall that a face of type 1 looks like
\begin{center}
$a_0 = F(f_1*\cdots f_i*f'_{i+1}10\cdots 0)$.
\end{center}
If there are any 0's to the right of the rightmost 1 in $S$ then they remain unchanged while going from $S^{(j)}$ to $S^{(j+1)}$ for any $j\geq0$, since the only rules other than (9) that involve replacing a 0 with a * require that the 0 be to the left of the leftmost *. 

If the rightmost 1 of $S$ is replaced with a 0 while going from $S$ to $S^{(j)}$ for any $j\geq1$, then there are no *'s to the right of that 0 in $S^{(j)}$. Therefore that 0 will remain fixed while going from $S^{(j)}$ to $S^{(r+1)}$ because again, the only rules other than (9) that involve replacing a 0 with a * require that the 0 be to the left of the leftmost~* and so $a_0\neq a_{r+1}$.
\end{proof}

\begin{lem}
\label{lem:acyc6}
There are no nontrivial, closed $V$-paths $a_0,b_0,a_1,\ldots,b_r,a_{r+1}$ such that $a_0 = F(S)$ is of type 1.
\end{lem}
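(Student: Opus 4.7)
\emph{Plan.} I would argue by contradiction, assuming a nontrivial closed $V$-path $a_0, b_0, a_1, \ldots, b_r, a_{r+1}$ with $a_0 = a_{r+1} = F(S)$ of type 1; the condition $a_i \neq a_{i+1}$ forces $r \geq 1$. Let $p$ denote the position of the rightmost 1 in $S$. First I would observe that for $0 \leq i \leq r$, the pair $(a_i, b_i) \in V$ makes $a_i$ the smaller cell of a matched pair, so $S^{(i)}$ must satisfy one of the ``upward'' rules (1), (3), (5), (7), (9); by Lemma \ref{l:wd} this forces $a_i$ to have odd type.

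Next I would apply Lemma \ref{lem:acyc1} to $a_0, b_0, a_1$. The odd-type possibilities for $a_1$ are type 1 or 5 (arising when the rightmost 1 of $S$ is replaced by $0$) or type 3 or 7 (arising when the rightmost 1 of $S$ is replaced by a $*$ that becomes the rightmost $*$ of $S^{(1)}$). In the first two cases the position $p$ holds a $0$ in $S^{(1)}$, so Lemma \ref{lem:acyc5} immediately yields $a_0 \neq a_{r+1}$, a contradiction. Hence $a_1$ has type 3 or 7 and the rightmost $*$ of $S^{(1)}$ sits at $p$.

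I would then iterate, using Lemmas \ref{lem:acyc2} and \ref{lem:acyc4} while tracking the position of the rightmost $*$. If $a_1$ has type 3, Lemma \ref{lem:acyc2} forces $a_2$ either to be of even type 4, 6, or 10 (which cannot equal $a_0$ of type 1 and is forbidden for intermediate indices) or to be of type 1 obtained by replacing the rightmost $*$ (at $p$) with $0$, which Lemma \ref{lem:acyc5} rules out. If $a_1$ has type 7, Lemma \ref{lem:acyc4} permits as an odd-type continuation only $a_2$ of type 3 with the rightmost $*$ unchanged (still at $p$); every other branch either places a $0$ at $p$ or produces an even type other than 1 that is incompatible with $a_0 = a_{r+1}$. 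From this new type-3 face the argument of the previous sentence recurs, now applied to $a_2, b_2, a_3$.

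Thus every permissible chain of odd-type faces beginning at $a_0$ eventually forces a $0$ at position $p$ in some $S^{(j)}$, and Lemma \ref{lem:acyc5} then closes the contradiction. The main obstacle will be the case bookkeeping: verifying that no branch of Lemmas \ref{lem:acyc1}, \ref{lem:acyc2}, and \ref{lem:acyc4} is overlooked, and confirming that the rightmost $*$ stays at position $p$ throughout the chain until it is forced either to change to $0$ (triggering Lemma \ref{lem:acyc5}) or to close the loop back to type 1 (which also requires the same forbidden change).
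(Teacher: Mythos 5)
Your proposal is correct and follows essentially the same route as the paper: it uses Lemmas \ref{lem:acyc1}--\ref{lem:acyc4} to restrict the chain to odd-type faces, tracks the position of the rightmost 1 of $S$ (first as a $*$, then as a $0$), and invokes Lemma \ref{lem:acyc5} to conclude the path cannot close. The paper phrases the same case analysis as an explicit enumeration of path segments (type $1\to1$, $1\to3\to1$, $1\to5\to1$, $1\to7\to1$, $1\to7\to3\to1$) and their concatenations, but the underlying argument is identical.
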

\begin{proof}
Suppose there is a nontrivial, closed $V$-path $a_0,b_0,a_1,\ldots,b_r,a_{r+1}$ with $a_0$ being of type 1. Then $a_{r+1} = a_0$ and hence is also of type 1. Note that if for any $j$, $a_j$ is of type 2, 4, 6, 8, or 10 then the $V$-path is not closed since faces of those types are paired with faces of lower dimension.

By Lemmas \ref{lem:acyc1}--\ref{lem:acyc4}, this $V$-path must look like one of the following: 
\begin{enumerate}
\item [(i)] $a_0,b_0,a_1$ where $a_1$ is of type 1,
\item [(ii)] $a_0,b_0,a_1,b_1,a_2$ where $a_1$ is of type 3 and $a_2$ is of type 1,
\item [(iii)] $a_0,b_0,a_1,b_1,a_2$ where $a_1$ is of type 5 and $a_2$ is of type 1,
\item [(iv)] $a_0,b_0,a_1,b_1,a_2$ where $a_1$ is of type 7 and $a_2$ is of type 1,
\item [(v)] $a_0,b_0,a_1,b_1,a_2,b_2,a_3$ where $a_1$ is of type 7, $a_2$ is of type 3, and $a_3$ is of type 1,
\end{enumerate}
or a concatenation of several of these in the sense of Definition \ref{def:concat}. 

If we let $F(S')$ be the last face in each $V$-path above, then 
we finish this proof by showing case by case that the rightmost 1 in $S$ is replaced by a 0 while going from $S$ to $S'$.
This implies $a_0\neq a_{r+1}$ by Lemma \ref{lem:acyc5}, contradicting the assumption that the $V$-path is closed.

If our $V$-path starts as in (i) or (iii), then Lemma \ref{lem:acyc1} shows that the rightmost 1 in $S$ is replaced by a 0 while going from $a_0$ to $a_1$. 

If our $V$-path starts as in (ii), then Lemma \ref{lem:acyc1} shows that the rightmost 1 in $S$ is replaced by a * while going from $a_0$ to $a_1$ and Lemma \ref{lem:acyc2} shows that this * is then replaced by a 0 while going from $a_1$ to $a_2$. 

If our $V$-path starts as in (iv), then Lemma \ref{lem:acyc1} shows that the rightmost 1 in $S$ is replaced by a * while going from $a_0$ to $a_1$ and Lemma \ref{lem:acyc4} shows that this * is then replaced by a 0 while going from $a_1$ to $a_2$. 

If our $V$-path starts as in (v), then Lemma \ref{lem:acyc1} shows that the rightmost 1 in $S$ is replaced by a * while going from $a_0$ to $a_1$; Lemma \ref{lem:acyc4} shows that this * is unchanged while going from $a_1$ to $a_2$; and finally Lemma \ref{lem:acyc2} shows that this * is then replaced by a 0 while going from $a_2$ to $a_3$.
\end{proof}

\begin{lem}
\label{lem:acyc7}
There are no nontrivial, closed $V$-paths $a_0,b_0,a_1,\ldots,b_r,a_{r+1}$ such that $a_0 = F(S)$ is of type 3, 5, or 7.
\end{lem}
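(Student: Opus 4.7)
The plan is to reduce the claim to Lemma \ref{lem:acyc6} by a cyclic-rotation argument rather than by another direct tracking argument like the one used for type 1. Suppose for contradiction that $a_0, b_0, a_1, b_1, \ldots, b_r, a_{r+1}$ is a nontrivial closed $V$-path with $a_0$ of type 3, 5, or 7. For every $i = 0, \ldots, r$ the pair $(a_i, b_i)$ belongs to $V$ with $b_i$ of one larger dimension, so each such $a_i$ must be of one of the ``upward'' types 1, 3, 5, 7, or 9; because the path is closed, the same holds of $a_{r+1} = a_0$. Type 9 applies only to vertices, but all of the $a_i$ share the positive dimension of $a_0$, so that case is excluded. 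Hence each $a_i$ is of type 1, 3, 5, or 7.

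My next step would be to locate a cell of type 1 somewhere along the path. By Lemmas \ref{lem:acyc2} and \ref{lem:acyc3}, the types listed for the successor of a type-3 or type-5 cell are all downward-matched (4, 6, or 10) except for type 1, so whenever $a_i$ has type 3 or 5 the cell $a_{i+1}$ must be of type 1. By Lemma \ref{lem:acyc4}, the successor of a type-7 cell is of type 1 or type 3, and in the latter case the preceding observation forces $a_{i+2}$ to be of type 1. Hence within at most two steps some $a_j$ is of type 1.

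Given such a $j$, I would cyclically rotate the closed $V$-path to begin at $a_j$, producing the sequence
\[
a_j, b_j, a_{j+1}, b_{j+1}, \ldots, b_r, a_{r+1}, b_0, a_1, \ldots, b_{j-1}, a_j.
\]
Because $a_{r+1} = a_0$, every matched pair $(a_i, b_i) \in V$ and every codimension-one incidence $a_i, a_{i+1} \subset b_i$ with $a_i \ne a_{i+1}$ is inherited from the original, so the rotated sequence is again a nontrivial closed $V$-path. But its initial cell $a_j$ is of type 1, contradicting Lemma \ref{lem:acyc6}.

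The main obstacle here has already been absorbed into Lemmas \ref{lem:acyc2}--\ref{lem:acyc4}: their combined content is that the ten matching rules are restrictive enough that any $V$-path passing through a type-3, type-5, or type-7 cell is forced back into a type-1 cell within at most two steps. Once this ``funneling into type 1'' is in hand, the rotation reduction is essentially automatic, and nothing analogous to the monovariant of Lemma \ref{lem:acyc5} needs to be redeveloped for the new starting types.
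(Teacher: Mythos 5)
Your proof is correct and follows essentially the same route as the paper: both arguments observe that every $a_i$ in a closed path must be of an upward-matched type, use Lemmas \ref{lem:acyc2}--\ref{lem:acyc4} to locate a type-1 cell within one or two steps, and then cyclically rotate the closed $V$-path to contradict Lemma \ref{lem:acyc6}. Your treatment of the type-7 case (funneling through type 3 in two steps) just makes explicit what the paper leaves as "a similar argument."
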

\begin{proof}
Suppose there is a nontrivial, closed $V$-path $a_0,b_0,a_1,\ldots,b_r,a_{r+1}$ with $a_0$ being of type 3. Again note that if for any $j$, $a_j$ is of type 2, 4, 6, or 8 then the $V$-path is not closed since faces of those types are paired with faces of lower dimension. 
By Lemma \ref{lem:acyc2} we must have that $a_1$ is of type 1 and hence $a_1,\ldots,b_r,a_0,b_0,a_1$ is a nontrivial, closed $V$-path with $a_1$ being of type 1. 
This contradicts Lemma \ref{lem:acyc6}. 
A similar argument deals with the cases when $a_0$ is of type 5 (using Lemma \ref{lem:acyc3}) and when $a_0$ is of type 7 (using Lemma \ref{lem:acyc4}).
\end{proof}

\begin{lem}
\label{lem:acyc8}
There are no nontrivial, closed $V$-paths $a_0,b_0,a_1,\ldots,b_r,a_{r+1}$ such that $a_0 = F(S)$ is a vertex.
\end{lem}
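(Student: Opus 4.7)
The plan is to exhibit a strict monovariant along any such $V$-path, namely the position of the rightmost 1, and conclude that no closed $V$-path of this form can exist. I will start by observing that since every $a_i$ has the same dimension as $a_0$, each $a_i$ is a vertex of $J(n,k)$ and each $b_i$ is an edge. Among the ten matching rules, only rule (9) pairs a vertex with an edge, and it applies to every vertex except $v_0 = (1,\ldots,1,0,\ldots,0)$, which is instead paired with $\emptyset$. Because $(a_i, b_i) \in V$ and $b_i$ is an edge, we must have $a_i \neq v_0$ for each $i \in \{0,\ldots,r\}$, so rule (9) governs every pair $(a_i, b_i)$ in the path.

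Next I will describe the transition from $a_i$ to $a_{i+1}$ explicitly. Writing $a_i = F(S^{(i)})$, let $P_i$ denote the set of positions of the 1's in $S^{(i)}$, let $p_i$ be the leftmost 0-position in $S^{(i)}$, and let $q_i = \max P_i$ be the rightmost 1-position. Because $a_i \neq v_0$, we have $p_i < q_i$. Rule (9) produces $b_i$ by starring positions $p_i$ and $q_i$, so the two vertex faces of $b_i$ are $a_i$ itself and the vertex obtained by swapping, i.e.\ placing a 1 at position $p_i$ and a 0 at position $q_i$. Since $a_{i+1}$ is a vertex face of $b_i$ distinct from $a_i$, it must be this other vertex, giving $P_{i+1} = (P_i \setminus \{q_i\}) \cup \{p_i\}$.

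The argument then concludes with the key observation that $q_{i+1} < q_i$ for each $i \in \{0,\ldots,r\}$. Indeed, $\max(P_i \setminus \{q_i\}) < q_i$ since $q_i$ was the maximum, and $p_i < q_i$ from above; hence $q_{i+1} = \max P_{i+1} < q_i$. The sequence $q_0, q_1, \ldots, q_{r+1}$ is therefore strictly decreasing, which forces $a_{r+1} \neq a_0$ and contradicts the hypothesis that the $V$-path is closed. The only delicate point is ensuring that $v_0$ never appears as any $a_i$, and this is immediate from the structure of the matching; once this is in hand the monovariant does all the work, so no case analysis akin to that in Lemmas \ref{lem:acyc1}--\ref{lem:acyc7} is required.
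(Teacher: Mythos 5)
Your proof is correct and follows essentially the same route as the paper: both arguments reduce to rule (9), note that $v_0$ cannot occur along the path, and observe that each step moves the rightmost 1 strictly to the left, so the path cannot close. Your explicit monovariant $q_i$ is just a slightly more formal packaging of the paper's statement that "going from $S^{(i)}$ to $S^{(i+1)}$ always moves the rightmost 1 to its left."
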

\begin{proof}
If $a_0 = \{v_0\} = \{(1,\ldots,1,0,\ldots,0)\}$ then we are done since $\{v_0\}$ is not matched with an edge. If $a_0 = F(S)$ is any other vertex, then the rightmost 1 has at least one 0 to its left in $S$ and so
$$a_0 = F(1\cdots10f_110\cdots0), \ b_0 = F(1\cdots1*f_1*0\cdots0),$$
and
$$a_1 = F(1\cdots11f_100\cdots0) = F(S').$$
Notice the net result from $S$ to $S'$ was that the rightmost 1 was moved to its left.

Assume that $a_0,b_0,a_1,\ldots,b_r,a_{r+1}$ is a closed $V$-path such that $a_0$ is a vertex and that $a_i = F(S^{(i)})$ for $0\leq i\leq r+1$. For the same reasons as above none of the $a_i$ can be equal to $\{v_0\}$ and going from $S^{(i)}$ to $S^{(i+1)}$ always moves the rightmost 1 to its left and therefore $a_0 \neq a_{r+1}$, which is a contradiction.
\end{proof}

\begin{thm}
\label{thm:acyc}
The matchings described in Section \ref{s:Construction} are acyclic.
\end{thm}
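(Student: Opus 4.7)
The plan is to apply Theorem~\ref{thm:Forman}(i), which reduces acyclicity of the matching to showing that $K$ admits no nontrivial closed $V$-paths. So I would aim to exhaust all possible starting cells $a_0$ and appeal to the preceding lemmas.

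First I would observe that in any $V$-path $a_0,b_0,a_1,\ldots,b_r,a_{r+1}$, each $a_i$ must be the smaller cell of the matched pair $(a_i,b_i)$. Inspecting the ten rules in Section~\ref{s:Construction}, the rules that pair a face with one of strictly higher dimension are exactly (1), (3), (5), (7), and (9) — the ones that replace a 0 or 1 by a~$*$. Consequently every $a_i$ in a $V$-path must be a face of type~1, 3, 5, 7, or a vertex (type~9). This narrows the case analysis considerably.

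Next I would split into cases according to the type of $a_0$. If $a_0$ is a vertex, Lemma~\ref{lem:acyc8} directly forbids any nontrivial closed $V$-path. If $a_0$ is of type~1, then Lemma~\ref{lem:acyc6} applies. If $a_0$ is of type~3, 5, or~7, then Lemma~\ref{lem:acyc7} applies. Since these possibilities cover every admissible choice of $a_0$, no nontrivial closed $V$-path exists, and by Theorem~\ref{thm:Forman}(i) the matching $V$ is acyclic.

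The theorem itself is essentially a wrap-up: the heavy lifting has been done in Lemmas~\ref{lem:acyc1}--\ref{lem:acyc8}. The real obstacle was Lemma~\ref{lem:acyc6}, where one has to track how the rightmost~1 of the sequence $S$ evolves along the path and show that once it has been replaced by a~0 it cannot be recovered; once that invariant is in place, the remaining types (3, 5, 7) collapse back to the type~1 case by a cyclic shift of the path (as in Lemma~\ref{lem:acyc7}), and vertices are handled separately by a monotonicity argument on the position of the rightmost~1 (Lemma~\ref{lem:acyc8}). So for the theorem statement itself, no additional calculation is required beyond assembling these lemmas into a single case distinction.
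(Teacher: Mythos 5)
Your proposal is correct and matches the paper's argument: the theorem is proved exactly by invoking Theorem~\ref{thm:Forman}(i) and citing Lemmas~\ref{lem:acyc6}--\ref{lem:acyc8}, which together cover every admissible starting cell $a_0$. Your extra observation that each $a_i$ must be matched upward (hence of type 1, 3, 5, 7, or a vertex) is a correct justification of why those lemmas exhaust all cases, which the paper leaves implicit.
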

\begin{proof}
This follows from Theorem \ref{thm:Forman} (i) which says that there are no nontrivial, closed $V$-paths if and only if $V$ is an acyclic matching of the Hasse diagram of $K$. We have already shown there are no nontrivial closed $V$-paths in Lemmas \ref{lem:acyc6}--\ref{lem:acyc8} and so we are done.
\end{proof}


\section{Subcomplex Diagrams}
\label{s:subdiagrams}

In this section we will be interested in the action of $S_n$ on $\mathbb{R}^n$ defined by permutation of the coordinates. This action will induce an action of $S_n$ on the set of faces of $J(n,k)$ defined by $g\cdot F(S) = F(g\cdot S)$ where $F(S)$ is a face of $J(n,k)$, $g\in S_n$, and $S_n$ acts on sequences $S$ of length $n$ by permuting the elements of the sequence.

\begin{rem}
\label{rem:subcomp}
If $g\cdot F(S) = F(S')$ for some $g\in S_n$, then we have $S(0)=S'(0)$ and $S(1)=S'(1)$, and hence the orbit of faces containing $F(S)$ can be uniquely defined by $S(0)$ and $S(1)$. Also note that a face $F(S)$ has dimension equal to $n-S(0)-S(1)-1$ by Theorem \ref{thm:Casselman} unless $F(S)$ is a vertex in which case it has dimension 0. We will also be interested in the $S_n$-invariant subcomplexes that arise from this action. Recall from Definition \ref{def:subcomplex}
that a subset of faces of $J(n,k)$ will correspond to a subcomplex $K'$ if and only if for every face $F(S)$ in $K'$, $K'$ also contains the faces of $J(n,k)$ that are contained in $F(S)$.
Therefore if $K'$ is an $S_n$-invariant subcomplex and $F(S)$ is in $K'$, then so is every face $F(S')$ such that $S(0)=S'(0)$ and $S(1)=S'(1)$ and so is every face $F(S')$ such that $S'$ can be obtained from $S$ by replacing any number of *'s with 0's or 1's.
\end{rem}

Given the polytope $J(n,k)$, let the ($n-k$) $\times$ ($k$) grid of squares represent this polytope in the following way. First, we will ignore the vertices and the $\emptyset$ that was added because every nontrivial $S_n$-invariant subcomplex contains these anyway. Next, each square will represent an orbit of faces starting with the bottom left square which will represent the faces $F(S)$ such that $S(0)=(n-k-1)$ and $S(1)=(k-1)$, or in other words the edges.

The other squares represent the remaining orbits as follows. Consider a square representing the faces $F(S)$ such that $S(0)=i$ and $S(1)=j$. If there is a square immediately above it then this new square represents all of the faces $F(S')$ such that $S'(0)=i$ and $S'(1)=j-1$.
On the other hand if there is a square immediately to its right then this new square represents all of the faces $F(S'')$ such that $S''(0)=i-1$ and $S''(1)=j$.

\begin{eg}
Consider the polytope $J(18,8)$ and so in this example, $n-k = 10$ and $k = 8$.
Figure \ref{fig:labeledgrid} shows the 10 $\times$ 8 grid such that the square labeled by the pair $(i, j)$ represents the faces $F(S)$ such that 
$S(0)=i$ and $S(1)=j$.
It is also worth pointing out that each diagonal of faces labeled by $(i, j)$ where $i+j$ is constant represents faces of the same dimension, since the dimension of $F(S)$ is equal to $n-S(0)-S(1)-1$. For example, the diagonal made up of the squares labeled (9,5), (8,6), and (7,7) represent every face of dimension 3.
\end{eg}

\begin{figure}[htbp]
\begin{center}
\begin{tikzpicture}[scale=1]
\def \length{10}
\def \height{8}
\def \lengthb{9} 
\def \heightb{7} 

\draw [step=1,thin,gray!40] (0,0) grid (\length, \height);
\draw [gray!40] (0,0) -- (0,\height);
\draw [gray!40] (0,0) -- (\length,0); 
\draw [gray!40] (\length, \height) -- (0,\height);
\draw [gray!40] (\length, \height) -- (\length,0);

\foreach \x in {0,...,\lengthb}
  \foreach \y in {0,...,\heightb}
  {
  \draw [xshift=.5cm, yshift=.5cm] ($(\length,\height) - (\x,\y) -(1,1)$) node {\x,\y}; 
  }
\end{tikzpicture}
\caption{Grid representing $J(18,8)$.}
\label{fig:labeledgrid}
\end{center}
\end{figure}

It will also sometimes be helpful to add lines over the top of the grid to represent one of the matchings described in Section \ref{s:Construction}. If a line crosses from one square to another one immediately above, below, or to its sides it means that there are faces from one of the squares matched with faces of the other.

\begin{eg}
In Figure \ref{fig:gridmatchlines} we see two different matchings on $J(18,8)$. On the left diagram the matching with $m_0=0$ is seen and hence has no faces of types 5--8. The vertical lines indicate faces of types 1 and 2 while the single horizontal line indicates faces of type 3 and 4.

On the right diagram the matching with $m_0=4$ and $m_1=5$ is seen. Once again the vertical lines indicate faces of types 1 and 2. However this time there are three horizontal lines. The lowest one indicates faces of type 3 and 4, the middle one indicates faces of types 5 and 6, and the highest one indicates faces of types 7 and 8. A circle has been added to the diagram just to help point out which square is the one that represents all the faces $F(S)$ such that $S(0)=m_0$ and $S(1)=m_1$.

\end{eg}

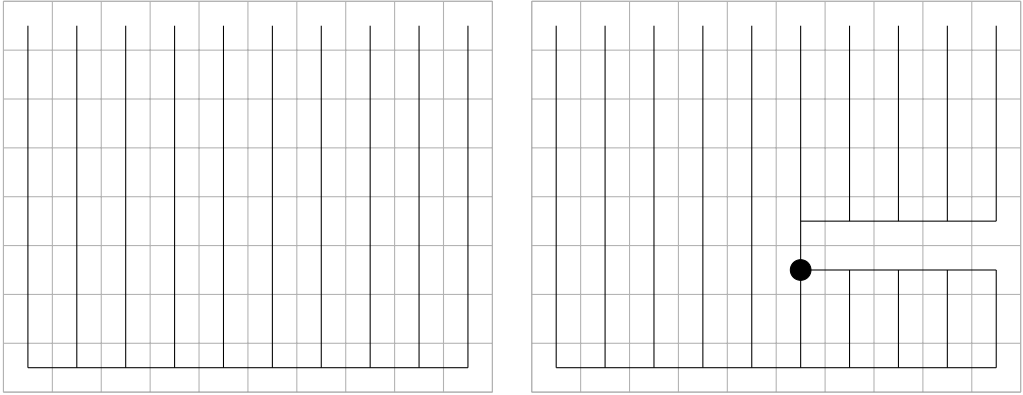
\begin{figure}[htbp]
\begin{center}
\begin{tikzpicture}[scale=.65]
\def \length{10}
\def \height{8}
\def \lengthb{9} 
\def \heightb{7} 

\draw [step=1,thin,linee] (0,0) grid (\length, \height);
\draw [linee] (0,0) -- (0,\height);
\draw [linee] (0,0) -- (\length,0);
\draw [linee] (\length, \height) -- (0,\height);
\draw [linee] (\length, \height) -- (\length,0);

\foreach \x in {0,...,\lengthb}
\draw ($(.5,.5)+(\x,0)$) -- ($(.5,7.5)+(\x,0)$);
\draw (.5,.5)--(9.5,.5);

\end{tikzpicture}
\hspace{.1in}
\begin{tikzpicture}[scale=.65]

\def \length{10}
\def \height{8}
\def \lengthb{9} 
\def \heightb{7} 

\filldraw (5.5,2.5) circle (6pt);

\draw [step=1,thin,linee] (0,0) grid (\length, \height);
\draw [linee] (0,0) -- (0,\height);
\draw [linee] (0,0) -- (\length,0);
\draw [linee] (\length, \height) -- (0,\height);
\draw [linee] (\length, \height) -- (\length,0);

\foreach \x in {0,...,5}
\draw ($(.5,.5)+(\x,0)$) -- ($(.5,7.5)+(\x,0)$);
\foreach \x in {6,...,9}
\draw ($(.5,.5)+(\x,0)$) -- ($(.5,2.5)+(\x,0)$);
\foreach \x in {6,...,9}
\draw ($(.5,3.5)+(\x,0)$) -- ($(.5,7.5)+(\x,0)$);
\draw (.5,.5)--(9.5,.5);
\draw (5.5,2.5)--(9.5,2.5);
\draw (5.5,3.5)--(9.5,3.5);

\end{tikzpicture}
\caption{Two different matchings on $J(18,8)$.}
\label{fig:gridmatchlines}
\end{center}
\end{figure}

The last addition to these diagrams allows us to point out a specific subset $K'$ of faces of $J(n,k)$ by shading the squares representing faces in $K'$. Notice that each set $K'$ is trivially $S_n$-invariant since it is defined by a set of orbits. Also, the squares shaded slightly darker will still indicate that the faces represented by this square are in $K'$, but will also mean that some of the faces are unmatched. When we refer to a square as ``shaded", we will mean either type of shading.

We will only be concerned with diagrams that represent subcomplexes of $K$.
Furthermore, for the remainder of this thesis we will only consider $S_n$-invariant subcomplexes and so for the sake of convenience we will refer to $S_n$-invariant subcomplexes simply as subcomplexes.

\begin{rem}
\label{rem:downleft}
A diagram will correspond to an ($S_n$-invariant) subcomplex precisely when, for each shaded square in the diagram, every square below it and to its left is also shaded in by Remark \ref{rem:subcomp}. 
This implies that if $Q$ denotes the highest shaded square in its column, then the highest shaded square in any column to the right of the column containing $Q$ must be below or in the row containing $Q$.
If a square in the subcomplex diagram of $K'$ is shaded in, then we will say that this square is in $K'$.
\end{rem}

\begin{eg}
\label{eg:difmatch}
Consider the polytope $J(18,8)$ again except this time also consider the subcomplex indicated by the shaded squares shown on the left diagram in Figure \ref{fig:subcomplex}.
On the right diagram of Figure \ref{fig:subcomplex}, the same subcomplex is shown except we see the matching with $m_0 = 4$ and $m_1 = 5$ represented as well. Notice how with this matching all of the unmatched faces are along the same diagonal and hence all of the same dimension. We will later see how Theorem \ref{thm:Forman} (ii) implies that this subcomplex has its reduced homology groups concentrated in a single degree. 

Unfortunately, the alternative in which a subcomplex has unmatched faces of several different dimensions does not necessarily imply that its reduced homology groups are not concentrated in a single degree. For instance, if we had used the matching with $m_0=0$ as in Figure \ref{fig:subcomplex2} instead, we see that there are unmatched faces in several dimensions, but as we have already said, this subcomplex has its reduced homology groups concentrated in a single degree.

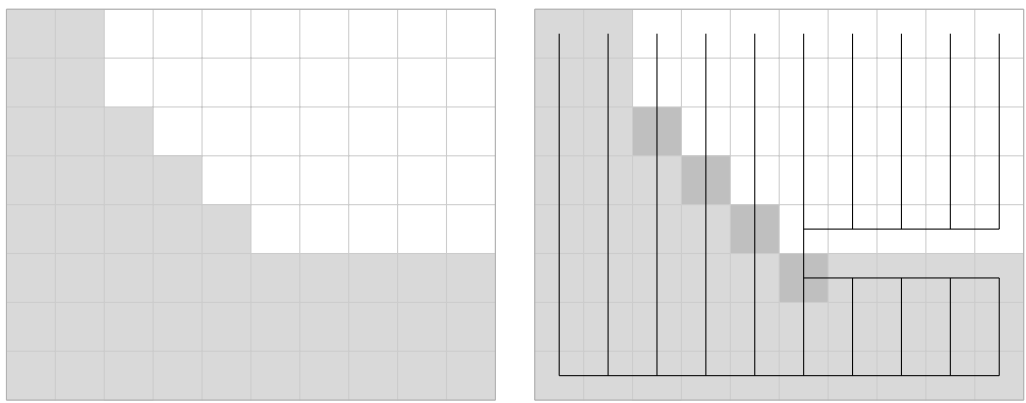
\begin{figure}[htbp]
\begin{center}
\begin{tikzpicture}[scale=.65]
\def \length{10}
\def \height{8}
\def \lengthb{9} 
\def \heightb{7} 

\fill [shade] (0,0) rectangle (1,8);
\fill [shade] (1,0) rectangle (2,8);
\fill [shade] (2,0) rectangle (3,6);
\fill [shade] (3,0) rectangle (4,5);
\fill [shade] (4,0) rectangle (5,4);
\fill [shade] (5,0) rectangle (6,3);
\fill [shade] (6,0) rectangle (7,3);
\fill [shade] (7,0) rectangle (8,3);
\fill [shade] (8,0) rectangle (9,3);
\fill [shade] (9,0) rectangle (10,3);

\draw [step=1,thin,gray!40] (0,0) grid (\length, \height);
\draw [linee] (0,0) -- (0,\height);
\draw [linee] (0,0) -- (\length,0);
\draw [linee] (\length, \height) -- (0,\height);
\draw [linee] (\length, \height) -- (\length,0);

\end{tikzpicture}
\hspace{.1in}
\begin{tikzpicture}[scale=.65]
\def \length{10}
\def \height{8}
\def \lengthb{9} 
\def \heightb{7} 

\fill [shade] (0,0) rectangle (1,8);
\fill [shade] (1,0) rectangle (2,8);
\fill [shade] (2,0) rectangle (3,6);
\fill [shade] (3,0) rectangle (4,5);
\fill [shade] (4,0) rectangle (5,4);
\fill [shade] (5,0) rectangle (6,3);
\fill [shade] (6,0) rectangle (7,3);
\fill [shade] (7,0) rectangle (8,3);
\fill [shade] (8,0) rectangle (9,3);
\fill [shade] (9,0) rectangle (10,3);

\fill [shade b] (2,5) rectangle (3,6);
\fill [shade b] (3,4) rectangle (4,5);
\fill [shade b] (4,3) rectangle (5,4);
\fill [shade b] (5,2) rectangle (6,3);

\draw [step=1,thin,gray!40] (0,0) grid (\length, \height);
\draw [linee] (0,0) -- (0,\height);
\draw [linee] (0,0) -- (\length,0);
\draw [linee] (\length, \height) -- (0,\height);
\draw [linee] (\length, \height) -- (\length,0);

\foreach \x in {0,...,5}
\draw ($(.5,.5)+(\x,0)$) -- ($(.5,7.5)+(\x,0)$);
\foreach \x in {6,...,9}
\draw ($(.5,.5)+(\x,0)$) -- ($(.5,2.5)+(\x,0)$);
\foreach \x in {6,...,9}
\draw ($(.5,3.5)+(\x,0)$) -- ($(.5,7.5)+(\x,0)$);
\draw (.5,.5)--(9.5,.5);
\draw (5.5,2.5)--(9.5,2.5);
\draw (5.5,3.5)--(9.5,3.5);

\end{tikzpicture}
\caption{Subcomplex diagram along with a matching.}
\label{fig:subcomplex}
\end{center}
\end{figure}

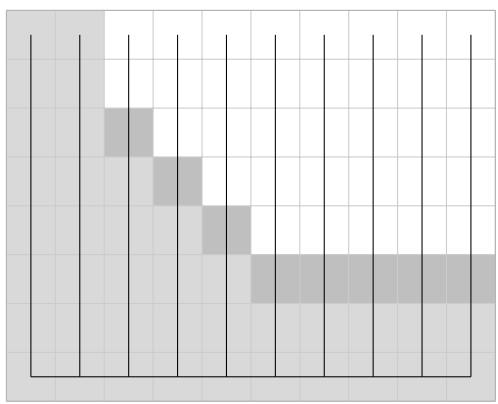
\begin{figure}[htbp!]
\begin{center}
\begin{tikzpicture}[scale=.65]
\def \length{10}
\def \height{8}
\def \lengthb{9} 
\def \heightb{7} 

\fill [shade] (0,0) rectangle (1,8);
\fill [shade] (1,0) rectangle (2,8);
\fill [shade] (2,0) rectangle (3,6);
\fill [shade] (3,0) rectangle (4,5);
\fill [shade] (4,0) rectangle (5,4);
\fill [shade] (5,0) rectangle (6,3);
\fill [shade] (6,0) rectangle (7,3);
\fill [shade] (7,0) rectangle (8,3);
\fill [shade] (8,0) rectangle (9,3);
\fill [shade] (9,0) rectangle (10,3);

\fill [shade b] (2,5) rectangle (3,6);
\fill [shade b] (3,4) rectangle (4,5);
\fill [shade b] (4,3) rectangle (5,4);
\fill [shade b] (5,2) rectangle (6,3);
\fill [shade b] (6,2) rectangle (7,3);
\fill [shade b] (7,2) rectangle (8,3);
\fill [shade b] (8,2) rectangle (9,3);
\fill [shade b] (9,2) rectangle (10,3);

\draw [step=1,thin,gray!40] (0,0) grid (\length, \height);
\draw [linee] (0,0) -- (0,\height);
\draw [linee] (0,0) -- (\length,0);
\draw [linee] (\length, \height) -- (0,\height);
\draw [linee] (\length, \height) -- (\length,0);

\foreach \x in {0,...,9}
\draw ($(.5,.5)+(\x,0)$) -- ($(.5,7.5)+(\x,0)$);
\draw (.5,.5)--(9.5,.5);

\end{tikzpicture}
\caption{Subcomplex diagram with a different matching.}
\label{fig:subcomplex2}
\end{center}
\end{figure}

\end{eg}

The last thing that needs to be decided in this section is which matching should be used for each subcomplex, since there are several choices. 
As we saw in Example \ref{eg:difmatch}, choosing the appropriate matching can lead us to the conclusion that a subcomplex has its reduced homology groups concentrated in a single degree much faster than a matching that is poorly chosen.

\begin{defn}
\label{defn:canon}
Consider the subcomplex diagram for some subcomplex $K'$.
The \textbf{canonical matching} is chosen in the following way.
If there are no rows in this diagram that are completely shaded in or the entire diagram is shaded in, then we will let $m_0=0$ and $m_1=0$. Otherwise, let $m_0=i$ and $m_1=j$ where the square labeled by $(i,j)$ is in the highest row that is completely shaded in and is the leftmost square such that the square above it is not shaded.
\end{defn}

For the remainder of this thesis if we are given a subcomplex $K'$, then we will only consider it with the canonical matching unless otherwise stated.

\begin{eg}
Consider the polytope $J(18,8)$ and the same subcomplex as in the previous example. We see in Figure \ref{fig:howto} why the matching with $m_0 = 4$ and $m_1 = 5$ is the appropriate one to use according to the rules above.
\end{eg}

\begin{figure}[htbp]
\begin{center}
\begin{tikzpicture}[scale=.65]
\def \length{10}
\def \height{8}
\def \lengthb{9} 
\def \heightb{7} 
\fill [shade] (0,0) rectangle (1,8);
\fill [shade] (1,0) rectangle (2,8);
\fill [shade] (2,0) rectangle (3,6);
\fill [shade] (3,0) rectangle (4,5);
\fill [shade] (4,0) rectangle (5,4);
\fill [shade] (5,0) rectangle (6,3);
\fill [shade] (6,0) rectangle (7,3);
\fill [shade] (7,0) rectangle (8,3);
\fill [shade] (8,0) rectangle (9,3);
\fill [shade] (9,0) rectangle (10,3);
\draw [step=1,thin,gray!40] (0,0) grid (\length, \height);
\draw [linee] (0,0) -- (0,\height);
\draw [linee] (0,0) -- (\length,0);
\draw [linee] (\length, \height) -- (0,\height);
\draw [linee] (\length, \height) -- (\length,0);
\draw (0,2) rectangle (10,3);
\draw (10,2.5) -- (11.5,3.5) node [right] {highest completely shaded in row};
\draw [<-] (5.5,2.5) -- (11.5,.5) node [right, text width=6cm] {leftmost square in the highest \\completely shaded in row such that the square above it is not in the \\subcomplex};
\end{tikzpicture}
\caption{How to choose a matching.}
\label{fig:howto}
\end{center}
\end{figure}
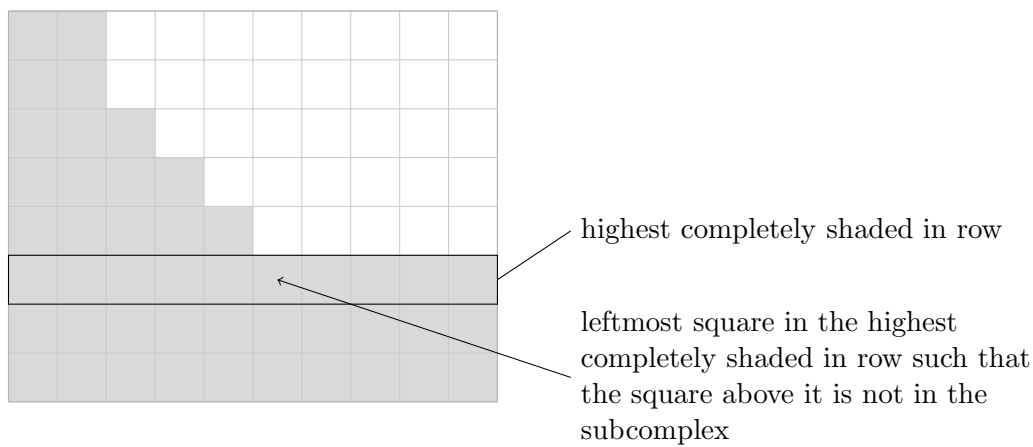

Next consider the polytope $J(35,15)$ and the subcomplex corresponding to the diagram in Figure \ref{fig:howto2}. In this case we have $m_0=2$ and $m_1=13$. The square containing the faces $F(S)$ such that $S(0)=2$ and $S(1)=13$ is marked with a circle and the corresponding matching is also shown. However, if we consider the subcomplex corresponding to the diagram in Figure \ref{fig:howto3}, we see that there are no rows that are completely shaded in and hence $m_0=0$ and $m_1=0$. 

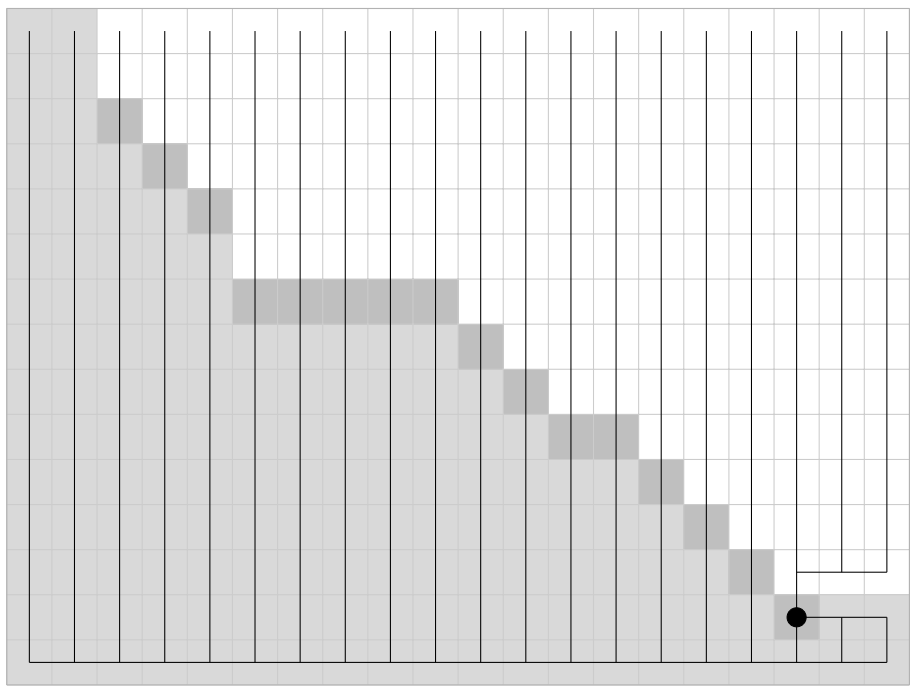
\begin{figure}[htbp]
\begin{center}
\begin{tikzpicture}[scale=.6]
\def \length{20}
\def \height{15}
\def \lengthb{9} 
\def \heightb{7} 
\fill [shade] (0,0) rectangle (1,15);
\fill [shade] (1,0) rectangle (2,15);
\fill [shade] (2,0) rectangle (3,13);
\fill [shade] (3,0) rectangle (4,12);
\fill [shade] (4,0) rectangle (5,11);
\fill [shade] (5,0) rectangle (6,9);
\fill [shade] (6,0) rectangle (7,9);
\fill [shade] (7,0) rectangle (8,9);
\fill [shade] (8,0) rectangle (9,9);
\fill [shade] (9,0) rectangle (10,9);
\fill [shade] (10,0) rectangle (11,8);
\fill [shade] (11,0) rectangle (12,7);
\fill [shade] (12,0) rectangle (13,6);
\fill [shade] (13,0) rectangle (14,6);
\fill [shade] (14,0) rectangle (15,5);
\fill [shade] (15,0) rectangle (16,4);
\fill [shade] (16,0) rectangle (17,3);
\fill [shade] (17,0) rectangle (18,2);
\fill [shade] (18,0) rectangle (19,2);
\fill [shade] (19,0) rectangle (20,2);
\fill [shade b] (2,12) rectangle (3,13);
\fill [shade b] (3,11) rectangle (4,12);
\fill [shade b] (4,10) rectangle (5,11);
\fill [shade b] (5,8) rectangle (6,9);
\fill [shade b] (6,8) rectangle (7,9);
\fill [shade b] (7,8) rectangle (8,9);
\fill [shade b] (8,8) rectangle (9,9);
\fill [shade b] (9,8) rectangle (10,9);
\fill [shade b] (10,7) rectangle (11,8);
\fill [shade b] (11,6) rectangle (12,7);
\fill [shade b] (12,5) rectangle (13,6);
\fill [shade b] (13,5) rectangle (14,6);
\fill [shade b] (14,4) rectangle (15,5);
\fill [shade b] (15,3) rectangle (16,4);
\fill [shade b] (16,2) rectangle (17,3);
\fill [shade b] (17,1) rectangle (18,2);
\filldraw [shade c] (17.5,1.5) circle (6pt);
\draw [step=1,thin,gray!40] (0,0) grid (\length, \height);
\draw [linee] (0,0) -- (0,\height);
\draw [linee] (0,0) -- (\length,0);
\draw [linee] (\length, \height) -- (0,\height);
\draw [linee] (\length, \height) -- (\length,0);
\foreach \x in {0,...,17}
\draw ($(.5,.5)+(\x,0)$) -- ($(.5,14.5)+(\x,0)$);
\foreach \x in {18,...,19}
\draw ($(.5,.5)+(\x,0)$) -- ($(.5,1.5)+(\x,0)$);
\foreach \x in {18,...,19}
\draw ($(.5,2.5)+(\x,0)$) -- ($(.5,14.5)+(\x,0)$);
\draw (17.5,2.5)--(19.5,2.5); 
\draw (17.5,1.5)--(19.5,1.5); 
\draw (.5,.5)--(19.5,.5); 
\end{tikzpicture}
\caption{How to choose a matching II.}
\label{fig:howto2}
\end{center}
\end{figure}

\begin{figure}[htbp]
\begin{center}
\begin{tikzpicture}[scale=.6]
\def \length{20}
\def \height{15}
\def \lengthb{9} 
\def \heightb{7} 
\fill [shade] (0,0) rectangle (1,15);
\fill [shade] (1,0) rectangle (2,15);
\fill [shade] (2,0) rectangle (3,13);
\fill [shade] (3,0) rectangle (4,12);
\fill [shade] (4,0) rectangle (5,11);
\fill [shade] (5,0) rectangle (6,9);
\fill [shade] (6,0) rectangle (7,9);
\fill [shade] (7,0) rectangle (8,9);
\fill [shade] (8,0) rectangle (9,9);
\fill [shade] (9,0) rectangle (10,9);
\fill [shade] (10,0) rectangle (11,8);
\fill [shade] (11,0) rectangle (12,7);
\fill [shade] (12,0) rectangle (13,6);
\fill [shade] (13,0) rectangle (14,6);
\fill [shade b] (2,12) rectangle (3,13);
\fill [shade b] (3,11) rectangle (4,12);
\fill [shade b] (4,10) rectangle (5,11);
\fill [shade b] (5,8) rectangle (6,9);
\fill [shade b] (6,8) rectangle (7,9);
\fill [shade b] (7,8) rectangle (8,9);
\fill [shade b] (8,8) rectangle (9,9);
\fill [shade b] (9,8) rectangle (10,9);
\fill [shade b] (10,7) rectangle (11,8);
\fill [shade b] (11,6) rectangle (12,7);
\fill [shade b] (12,5) rectangle (13,6);
\fill [shade b] (13,5) rectangle (14,6);
\fill [shade b] (13,0) rectangle (14,1);
\filldraw [shade c] (19.5,14.5) circle (6pt);
\draw [step=1,thin,gray!40] (0,0) grid (\length, \height);
\draw [linee] (0,0) -- (0,\height);
\draw [linee] (0,0) -- (\length,0);
\draw [linee] (\length, \height) -- (0,\height);
\draw [linee] (\length, \height) -- (\length,0);
\foreach \x in {0,...,19}
\draw ($(.5,.5)+(\x,0)$) -- ($(.5,14.5)+(\x,0)$);
\draw (.5,.5)--(19.5,.5); 
\end{tikzpicture}
\caption{How to choose a matching III.}
\label{fig:howto3}
\end{center}
\end{figure}
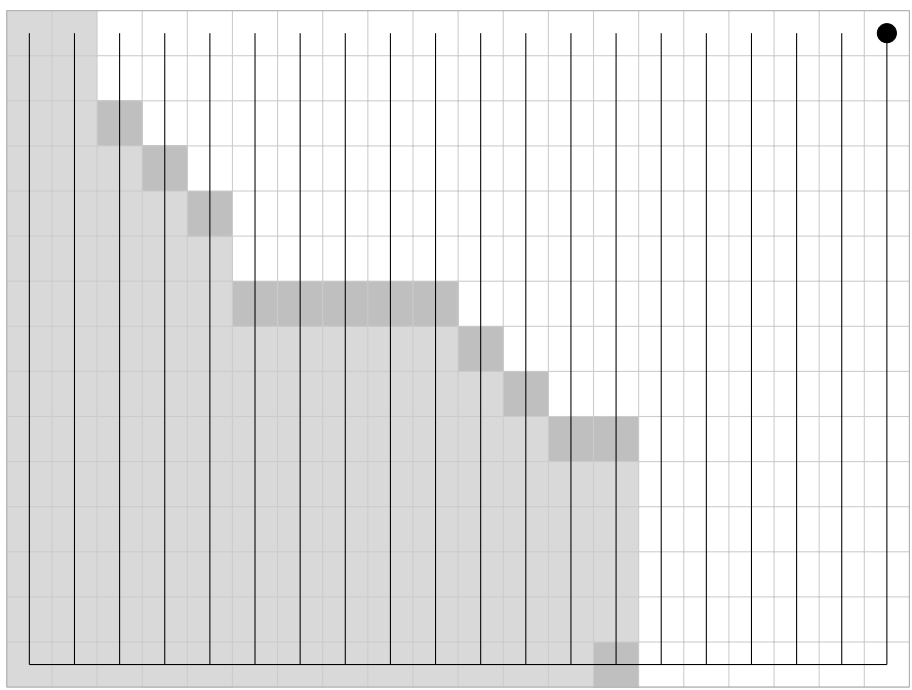


\chapter[Subcomplexes whose Reduced Homology is Concentrated in a Single Degree]{Subcomplexes whose Reduced Homology \\ is Concentrated in a Single Degree}
\label{chap:class}

The goal of this chapter is to consider the CW complex $K$ obtained from the faces of $J(n,k)$ and classify all $S_n$-invariant subcomplexes whose reduced homology groups are concentrated in a single degree.
We will show in Theorem \ref{thm:class} that $S_n$-invariant subcomplexes have this property if and only if they have unmatched faces in a single dimension using the canonical matching.
The ``if" direction is easier, and is accomplished in Proposition \ref{prop:K'reduced}.  The ``only if" direction is harder, and occupies Sections \ref{s:single} and \ref{s:class2}.

\section{Subcomplexes with Unmatched Faces in a Single Dimension}
\label{s:class}
In this section we will identify every subcomplex containing unmatched faces in a single dimension and then show that 
their reduced homology groups are concentrated in a single degree.

\begin{defn}
Given a subcomplex $K'$ of $K$, let $M_{K'}$ denote the square that contains the faces $F(S)$ such that $S(0)=m_0$ and $S(1)=m_1$. When the subcomplex $K'$ is clear from context, we will refer to $M_{K'}$ simply as $M$.
\end{defn}

\begin{lem}
\label{lem:unmatch}
Let $K'$ be a subcomplex of $K$ such that $K'$ contains the edges of $J(n,k)$ and
consider its subcomplex diagram with the canonical matching.
A square in $K'$ contains unmatched faces if and only if that square is one of the following:
\begin{enumerate}
  \item [\rm(i)] the highest shaded square in each column left of and including the column containing $M$ unless that square is in the top row; or
  \item [\rm(ii)] the rightmost shaded square in the bottom row unless the bottom row is completely shaded in.
\end{enumerate}
\end{lem}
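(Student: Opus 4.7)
The plan is to track, for each face $F(S) \in K'$, which of the ten rules of Section~\ref{s:Construction} matches it and whether its partner also lies in $K'$. Because $K'$ is closed under taking subfaces, the ``downward'' rules $(2),(4),(6),(8),(10)$ automatically pair faces inside $K'$, and rule $(9)$ pairs each non-$v_0$ vertex with an edge, which lies in $K'$ by hypothesis. Hence only the ``upward'' rules $(1),(3),(5),(7)$ can leave a face of $K'$ unmatched; their partners sit in $(i,j-1)$, $(i-1,k-1)$, $(i-1,m_1)$ and $(i-1,m_1-1)$ respectively.

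I next use the canonical choice of $(m_0,m_1)$ from Definition~\ref{defn:canon} to eliminate rules $(5)$ and $(7)$. Since row $m_1$ is completely shaded, the partner $(i-1,m_1)$ of a rule~$(5)$ face is automatically in $K'$. Moreover, the maximality of $m_0$ combined with Remark~\ref{rem:downleft} gives $(i,m_1-1) \notin K'$ for every $i \le m_0$, so no face of $K'$ is ever governed by rule~$(7)$. Thus only rules $(1)$ and $(3)$ can produce an unmatched face in $K'$, and they do so precisely when the partner square $(i,j-1)$ (resp.\ $(i-1,k-1)$) fails to be shaded---that is, exactly when $(i,j)$ is topmost in its column (resp.\ $(i,k-1)$ is rightmost in the bottom row). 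This gives the ``only if'' direction of the lemma.

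For the converse I must exhibit, in each square listed in (i) or (ii), at least one face whose partner lies outside $K'$. For a topmost shaded $(i,j^*)$ with $i > m_0$ and $j^* \ge 1$, the sequence $S = 0^{\,i}\,*^{\,n-i-j^*}\,1^{\,j^*}$ satisfies rule~$(1)(a)$ and is paired with a face in the unshaded $(i,j^*-1)$. For $i = m_0$ the topmost shaded square is forced to be $(m_0,m_1)$ (because $(m_0,m_1-1) \notin K'$), and I will take $S = *\,0^{\,m_0}\,*^{\,n-m_0-m_1-1}\,1^{\,m_1}$ (or $*\,0^{\,m_0}\,1^{\,m_1}$ when only one $*$ fits): the leading $*$ removes the ``$0$ to the left of the leftmost $*$'' condition, sending $S$ into rule~$(1)(c)$ rather than rule~$(5)$, and its partner again lies in the unshaded $(m_0,m_1-1)$. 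For a rightmost shaded bottom-row square $(i,k-1)$ with $i \ge 1$ (which is the content of (ii) when the bottom row is not fully shaded), the sequence $S = 1^{\,k-1}\,0^{\,i}\,*^{\,n-i-k+1}$ falls under rule~$(3)$ and is paired with the unshaded $(i-1,k-1)$.

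The principal subtlety is the column $i = m_0$, where rules $(1)$ and $(5)$ coexist on row $m_1$; one must verify that some face of $(m_0,m_1)$ genuinely falls under rule~$(1)(c)$ rather than being absorbed by rule~$(5)$. Lemma~\ref{l:wd}, together with the observation that rules $(5)$ and $(1)(c)$ impose opposite conditions on the position of the leftmost $*$ relative to the $0$'s, makes this precise and validates the construction above. The remaining small cases---$m_1 = k-1$ so that row $m_1$ coincides with the bottom row, and the degenerate $m_0 = m_1 = 0$ arising when no row is completely shaded and rules $(5)$--$(8)$ do not fire at all---reduce to the same case analysis.
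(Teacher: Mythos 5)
Your overall strategy is the same as the paper's: classify each face of $K'$ by which of the ten rules matches it, observe that the downward rules $(2),(4),(6),(8),(10)$ and rule $(9)$ never leave a face of $K'$ unmatched, eliminate rule $(5)$ because its partner lies in the completely shaded row $m_1$, eliminate rule $(7)$ because the squares $(i,m_1-1)$ with $i\le m_0$ lie outside $K'$, and reduce everything to rules $(1)$ and $(3)$. Your explicit witness sequences for the converse are correct (the paper argues existence of type~1 and type~3 faces more implicitly), and your handling of the $1(c)$-versus-$(5)$ dichotomy at the square $M$ is right.

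There is, however, a gap in the ``only if'' direction. You conclude that rules $(1)$ and $(3)$ produce unmatched faces ``exactly when $(i,j)$ is topmost in its column (resp.\ rightmost in the bottom row),'' but that set is strictly larger than the lemma's list: item (i) is restricted to columns \emph{left of and including} the column containing $M$ and excludes the top row. When $m_1\neq 0$ the rows $m_1,\dots,k-1$ are completely shaded, so every column strictly to the right of $M$'s column has a topmost shaded square, namely $(i,m_1)$ with $i<m_0$, whose partner square $(i,m_1-1)$ is unshaded; your criterion would declare it unmatched, yet the lemma (correctly) says it is not. The missing observation is that such a square contains no type~1 faces at all: with $S(1)=m_1$ and $S(0)<m_0$, none of conditions $(1)(a)$, $(1)(b)$, $(1)(c)$ can hold, so every face there with a 1 right of its rightmost $*$ is of type 5 or 6 and is matched. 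Likewise a completely shaded column has its topmost square in the top row, where $S(1)=0$ forces every face to be of type 2 (hence matched), which is why the ``unless that square is in the top row'' clause is needed. Since the statement is an ``if and only if,'' you must show these extra topmost squares are matched; this is exactly the case analysis the paper's proof carries out for squares in the top row and for squares in or to the right of the column containing $M$. The fix is short, but without it the forward implication only lands in a superset of (i)$\cup$(ii).
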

\begin{proof}
Since $K'$ contains the edges, it must also contain the vertices and hence every face of type 9 or 10 is matched. Let $F(S)$ be any face in $K'$ of type 1--8 and let $Q$ denote the square containing $F(S)$. If $F(S)$ is of type 1, then it is matched with a face $F(S')$ in the square above $Q$ since we replace a 1 in $S$ with a * to get $S'$. If $F(S)$ is of type 3, 5, or 7 then it is matched with a face $F(S')$ in the square to the right of $Q$ since we replace a 0 in $S$ with a * to get $S'$. If $F(S)$ is of type 2, 4, 6, or 8 then it is matched with a face $F(S')$ in the square below or to the left of $Q$ since we replace a * in $S$ with a 0 or 1 to get $S'$. However $K'$ is a subcomplex and so if $Q$ is in $K'$ then any square below $Q$ or to the left of $Q$ must also be in $K'$. Therefore if $F(S)$ is unmatched, then either $F(S)$ is of type 1 and the square above $Q$ is not in $K'$ or $F(S)$ is of type 3, 5, or 7 and the square to the right of $Q$ is not in $K'$.

Suppose that $Q$ is in any column left of the column containing $M$ and so $S(0)>m_0\geq 0$. 
If $Q$ is in the top row then $S(1)=0$ and so $Q$ only contains faces of type 2. If $Q$ is in any row except the top or bottom then $0<S(1)<k-1$ and so $Q$ contains faces of types 1 and 2 and no others. 
If $Q$ is in the bottom row then $S(1)=k-1$ and so $Q$ contains faces of types 1 and 3, $Q$ contains faces of type 4 if it is not in the leftmost column, and $Q$ does not contain any other types of faces. 
Therefore $Q$ only contains unmatched faces if $Q$ is the highest shaded square in its column and is not in the top row or $Q$ is the rightmost shaded square in the bottom row and the bottom row is not completely shaded in.

Next, suppose that $Q$ is in the same column as $M$ or one to its right and that $m_1\neq 0$.
In this case $M$ is not in the top row. The row containing $M$ is completely shaded by Definition \ref{defn:canon}. This implies every row below the row containing $M$ is completely shaded since $K'$ is a subcomplex.
If $Q$ is in a row above $M$ then the faces of $Q$ are not in $K'$ by Definition \ref{defn:canon}.
If $Q=M$ then $0<S(1)=m_1\leq k-1$. This implies that $Q$ contains unmatched faces of type 1 since the square above $M$ is not in $K'$. 
If $Q$ is in the same row as $M$ but not equal to $M$ then $S(1)=m_1$ and $S(0)<m_0$. This implies that $Q$ does not contain faces of type 1 or 7, but could contain faces of type 3 or 5. 
However, either the square to the right of $Q$ is in $K'$ or $Q$ is in the rightmost column and has no faces of type 3 or 5, which implies $Q$ has no unmatched faces.
If $Q$ is in a row below $M$ then $Q$ cannot contain unmatched faces since the square above $Q$ is in $K'$ and either the square to the right of $Q$ is in $K'$ or $Q$ is in the rightmost column.
 
Finally, suppose that $Q$ is in the same column as $M$ and that $m_1=0$. In this case $M$ is in the top row and so either the entire top row is shaded, meaning $K'=K$, or there are no rows that are completely shaded. If $K'=K$ then every face is matched. If there are no rows that are completely shaded, then $M$ is in the rightmost column and no square in this column is in $K'$ anyway.
\end{proof}

We can see an example of Lemma \ref{lem:unmatch} by looking at Figures \ref{fig:howto2} and \ref{fig:howto3}. The square with the circle in it indicates $M$ and the darker shaded squares show that every square containing unmatched faces was described in this lemma.

\begin{lem}
\label{lem:subtypes1}
Suppose that the diagram of a subcomplex $K'$ using the canonical matching does not have any rows that are completely shaded in. 
If the only unmatched faces are in a single dimension $d>0$, then the leftmost $i$ columns are completely shaded in where $0\leq i\leq d$ and the squares under and including the diagonal containing faces of dimension $d$ are shaded in. Furthermore, no other squares are shaded in.
\end{lem}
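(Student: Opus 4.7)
The plan is to translate the hypothesis that every unmatched face has the single dimension $d$ into rigid numerical constraints on the shape of the shaded region, using Lemma \ref{lem:unmatch} and the dimension formula $\dim F(S) = n - S(0) - S(1) - 1$.

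I would begin by observing that, since no row of the subcomplex diagram is completely shaded, Definition \ref{defn:canon} yields $m_0 = m_1 = 0$, so the square $M$ sits in the rightmost column and no face of $K'$ is of type 5, 6, 7, or 8. Consequently ``left of and including the column containing $M$'' in Lemma \ref{lem:unmatch} covers every column, and the squares of $K'$ containing an unmatched face are exactly the topmost shaded square in each column whenever that square is not in the top row, together with the rightmost shaded square in the bottom row.

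For each column $i$ with at least one shaded square I would let $h(i)$ denote the row index of its topmost shaded square. By Remark \ref{rem:downleft}, $h$ is non-increasing in $i$ and the set of shaded columns forms an interval $\{i_0, i_0+1, \ldots, n-k-1\}$. Requiring each unmatched top square $(i, h(i))$ to have dimension $d$ forces either $h(i) = 0$ (so that square lies in the top row and is not actually unmatched) or $h(i) = n-d-1-i$, while requiring the rightmost bottom-row square $(i_0, k-1)$ to have dimension $d$ forces $i_0 = n-k-d$. Monotonicity of $h$ combined with this dichotomy then yields a threshold $i^*$ such that columns with $i \geq i^*$ are completely shaded and columns with $i_0 \leq i < i^*$ are partially shaded with top on the dimension-$d$ diagonal $i+j = n-d-1$; the subcomplex condition fills each such column downward to the bottom row. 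Setting $l := n-k-i^*$, the inequality $i^* \geq i_0 = n-k-d$ gives $0 \leq l \leq d$, the leftmost $l$ columns are exactly the fully shaded ones, and every other shaded square lies on or below the dimension-$d$ diagonal; conversely, nothing outside this region is shaded, by the definition of $h$.

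The main bookkeeping hurdle will be justifying the threshold decomposition: one must verify that the dichotomy $h(i) \in \{0,\, n-d-1-i\}$ together with $h$ being non-increasing really forbids any configuration in which a fully shaded column sits to the right of a partially shaded one, and that the resulting three-part structure -- unshaded, diagonal-or-below, fully shaded -- is the only one compatible with the subcomplex condition. Once this is in place, all claims of the lemma follow directly.
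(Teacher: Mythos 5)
Your proposal is correct and follows essentially the same route as the paper: invoke Lemma \ref{lem:unmatch} to locate the unmatched squares, force each to lie on the dimension-$d$ diagonal via $\dim F(S)=n-S(0)-S(1)-1$, and let the subcomplex (staircase) condition of Remark \ref{rem:downleft} determine the rest; your height function $h$ and threshold $i^*$ just make the paper's terser argument explicit, and the monotonicity of $h$ resolves the ``bookkeeping hurdle'' you flag. The only step worth adding is the paper's one-line observation that $d>0$ forces $K'$ to contain the edges, which is needed before Lemma \ref{lem:unmatch} applies at all.
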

\begin{proof}
The square $M$ is in the top right corner since the diagram does not have any rows that are completely shaded in.
Also, $K'$ contains at least the edges of $J(n,k)$ since $d>0$.
Therefore the rightmost shaded square in the bottom row contains unmatched faces by Lemma \ref{lem:unmatch}
and so this square must contain faces of dimension $d$. This implies that only the leftmost $d$ columns contain shaded squares.

Notice that the highest shaded square in each of the leftmost $d$ columns
contains unmatched faces unless that square is in the top row by Lemma \ref{lem:unmatch}.
Therefore the highest shaded square in each of the leftmost $d$ columns is either in the top row or contains faces of dimension $d$.

Since $K'$ is a subcomplex then for each shaded square in the diagram, every square below it and to its left is also shaded in. 
Therefore, the diagram of $K'$ must shade in every square under and including the diagonal containing faces of dimension $d$ and has between zero and $d$ consecutive columns, starting with the leftmost one, completely shaded in as desired.
\end{proof}

\begin{eg}
Consider $J(18,8)$. Figure \ref{fig:subcomplextypes1} below shows some of the types of subcomplexes described in Lemma \ref{lem:subtypes1}.
\end{eg}

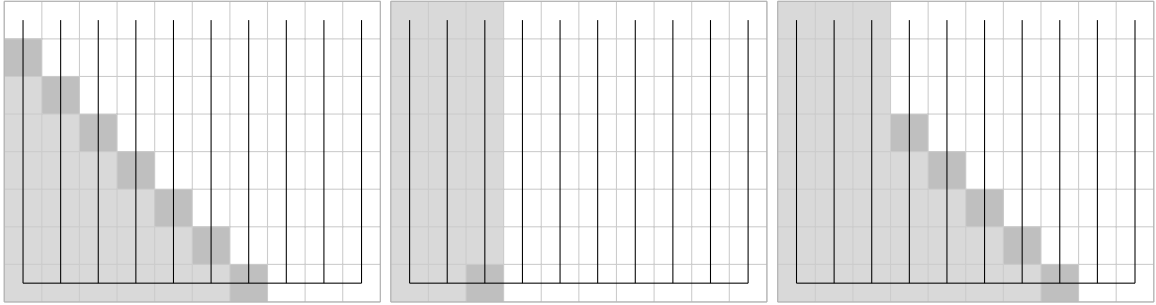
\begin{figure}[htbp]
\def\sca{.5}
\def\split{.4}
\begin{center}
\begin{tikzpicture}[scale=\sca]
\def \length{10}
\def \height{8}
\def \lengthb{9} 
\def \heightb{7} 
\fill [shade] (0,0) rectangle (1,7);
\fill [shade] (1,0) rectangle (2,6);
\fill [shade] (2,0) rectangle (3,5);
\fill [shade] (3,0) rectangle (4,4);
\fill [shade] (4,0) rectangle (5,3);
\fill [shade] (5,0) rectangle (6,2);
\fill [shade] (6,0) rectangle (7,1);
\foreach \x in {0,...,6}
\fill [shade b] ($(\x,7)-(0,\x)$) rectangle ($(\x,6)-(-1,\x)$);
\draw [step=1,thin,gray!40] (0,0) grid (\length, \height);
\draw [linee] (0,0) -- (0,\height);
\draw [linee] (0,0) -- (\length,0);
\draw [linee] (\length, \height) -- (0,\height);
\draw [linee] (\length, \height) -- (\length,0);
\foreach \x in {0,...,9}
\draw [linea]($(.5,.5)+(\x,0)$) -- ($(.5,7.5)+(\x,0)$);
\draw [linea] (.5,.5)--(9.5,.5);
\end{tikzpicture}
\begin{tikzpicture}[scale=\sca]
\def \length{10}
\def \height{8}
\def \lengthb{9} 
\def \heightb{7} 
\fill [shade] (0,0) rectangle (1,8);
\fill [shade] (1,0) rectangle (2,8);
\fill [shade] (2,0) rectangle (3,8);
\fill [shade b] (2,0) rectangle (3,1);
\draw [step=1,thin,gray!40] (0,0) grid (\length, \height);
\draw [linee] (0,0) -- (0,\height);
\draw [linee] (0,0) -- (\length,0);
\draw [linee] (\length, \height) -- (0,\height);
\draw [linee] (\length, \height) -- (\length,0);
\foreach \x in {0,...,9}
\draw [linea]($(.5,.5)+(\x,0)$) -- ($(.5,7.5)+(\x,0)$);
\draw [linea] (.5,.5)--(9.5,.5);
\end{tikzpicture}
\begin{tikzpicture}[scale=\sca]
\def \length{10}
\def \height{8}
\def \lengthb{9} 
\def \heightb{7} 
\foreach \x in {0,1,2}
\fill [shade] (\x,0) rectangle ($(1,8)+(\x,0)$);
\foreach \x in {3,...,7}
\fill [shade] (\x,0) rectangle ($(\x,8)-(-1,\x)$);
\foreach \x in {3,...,7}
\fill [shade b] ($(\x,7)-(0,\x)$) rectangle ($(\x,8)-(-1,\x)$);
\draw [step=1,thin,gray!40] (0,0) grid (\length, \height);
\draw [linee] (0,0) -- (0,\height);
\draw [linee] (0,0) -- (\length,0);
\draw [linee] (\length, \height) -- (0,\height);
\draw [linee] (\length, \height) -- (\length,0);
\foreach \x in {0,...,9}
\draw [linea]($(.5,.5)+(\x,0)$) -- ($(.5,7.5)+(\x,0)$);
\draw [linea] (.5,.5)--(9.5,.5);
\end{tikzpicture}
\caption{Different types of subcomplexes described in Lemma \ref{lem:subtypes1}.}
\label{fig:subcomplextypes1}
\end{center}
\end{figure}

The following lemma is the converse of Lemma \ref{lem:subtypes1}.

\begin{lem}
\label{lem:subtypes1b}
Consider the diagram of a subcomplex $K'$ using the canonical matching that does not have any rows that are completely shaded in. 
Suppose the leftmost $i$ columns are completely shaded in where $0\leq i\leq d$ and the squares under and including the diagonal containing faces of dimension $d$ are also shaded in. If these are the only shaded squares then the only unmatched faces are in a single dimension $d>0$.
\end{lem}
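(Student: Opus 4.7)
The plan is to rule out unmatched faces in every dimension except $d$ by directly inspecting the characterization in Lemma \ref{lem:unmatch}. First I would pin down the canonical matching: since no row is completely shaded, Definition \ref{defn:canon} forces $m_0 = m_1 = 0$, so $M$ sits in the top-right square of the diagram. A quick side observation is that rules (5)--(8) become vacuous in this case — they require $S(1) = m_1 = 0$ together with a 1 to the right of the rightmost *, or $S(1) = m_1 - 1 = -1$, both impossible — though this is not strictly needed for the argument.

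Next I would translate the hypothesis into coordinates: a square $(a,b)$ is shaded precisely when $a \geq n-k-i$ (one of the leftmost $i$ columns) or $a+b \geq n-d-1$ (on or below the dimension-$d$ diagonal). By Lemma \ref{lem:unmatch}, the only squares that can contain unmatched faces are the highest shaded square in each column (if it is not in the top row) and the rightmost shaded square in the bottom row (if that row is not fully shaded), so these are the only squares I need to inspect.

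The body of the proof is then a short case analysis. In the leftmost $i$ columns the highest shaded square sits in the top row and contributes nothing. In any column with $a < n-k-i$, the highest shaded square either lies in the top row (when $a \geq n-d-1$) and again contributes nothing, or it is $(a, n-d-1-a)$, which lies on the dimension-$d$ diagonal and so contains faces of dimension $n-a-(n-d-1-a)-1 = d$. For the bottom row, the rightmost shaded square has $a$-coordinate $n-k-d$ (here I would use the hypothesis $i \leq d$ to conclude that the leftmost-columns block does not extend further left in the bottom row than the diagonal does), and this square also lies on the dimension-$d$ diagonal. In every case the unmatched faces have dimension $d$, as required.

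There is no serious obstacle — the argument is a straightforward verification against Lemma \ref{lem:unmatch} — and the only mild subtlety is keeping track of where the highest shaded square of each column actually sits, and using $i \leq d$ to confirm that the rightmost shaded square in the bottom row lies on the diagonal rather than being absorbed into the block of fully shaded columns.
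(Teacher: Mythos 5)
Your proof is correct and takes essentially the same route as the paper: both observe that no completely shaded row forces $m_0=m_1=0$ and then verify, square by square via Lemma \ref{lem:unmatch}, that every square containing unmatched faces lies on the dimension-$d$ diagonal. The only cosmetic difference is that the paper splits into the cases $i=d$ and $i<d$ while you run a uniform column-by-column check in coordinates; the content is identical.
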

\begin{proof} 
Consider the diagram of $K'$. 
Since there are no rows that are completely shaded in, we have $m_0=m_1=0$. 
This implies $M$ is in the top row and rightmost column. 

Suppose first that $i=d$. In this case, the leftmost $d$ columns are completely shaded in and these are the only shaded squares. The only square in $K'$ containing unmatched faces is then the rightmost shaded square in the bottom row by Lemma \ref{lem:unmatch}. This square contains faces of dimension $d$ as desired.

Next suppose that $i<d$. In this case, the leftmost $i$ columns are completely shaded in, the squares under and including the diagonal containing faces of dimension $d$ are shaded in, and no other squares are shaded in.
By Lemma \ref{lem:unmatch}
the leftmost $i$ columns do not contain any unmatched faces while the highest shaded square in each remaining column and the rightmost shaded square in the bottom row each contain unmatched faces. These squares all contain faces of dimension $d$, as desired.
\end{proof}

\begin{lem}
\label{lem:subtypes2}
Suppose the diagram of a subcomplex $K'$ has precisely the bottom $j$ rows completely shaded in where $0<j<k$.
If $K'$ uses the canonical matching and has unmatched faces in a single dimension $d$ where $d<n-1$,
then the following squares are all shaded in:
\begin{enumerate}
  \item [\rm(i)] the bottom $j$ rows; 
  \item [\rm(ii)] the leftmost $i$ columns for some $0\leq i < n-k-m_0$; and
  \item [\rm(iii)] the squares under and including the diagonal containing faces of dimension $d$.
\end{enumerate}
Furthermore, no other squares are shaded in.
\end{lem}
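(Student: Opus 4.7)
The plan is to combine Lemma \ref{lem:unmatch} (which locates unmatched faces) with the single-dimension hypothesis and the subcomplex staircase property, mirroring the strategy of Lemma \ref{lem:subtypes1} but now in the presence of completely shaded rows. Since $0 < j < k$, we have $m_1 = k - j \in \{1,\ldots,k-1\}$, the bottom row is fully shaded (so every edge lies in $K'$ and Lemma \ref{lem:unmatch} applies), and $M = (m_0, m_1)$ is not in the top row. Because the bottom row is completely shaded, clause (ii) of Lemma \ref{lem:unmatch} is vacuous, so every unmatched face lives in the highest shaded square of some column with index $\geq m_0$ (with the convention that larger $i$ is farther left), and that square is not in row $0$.

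First I would show the highest shaded square of column $m_0$ is $M$ itself: the canonical matching forces $(m_0, m_1 - 1)$ to be unshaded while row $m_1$ is entirely shaded, so $M$ contains unmatched faces. By the single-dimension hypothesis these have dimension $d$, and since faces in $(i', j')$ have dimension $n - i' - j' - 1$, we read off $d = n - m_0 - m_1 - 1$. Hence the diagonal of dimension $d$ is $\{(i', j') : i' + j' = m_0 + m_1\}$, passing exactly through $M$, which ties (iii) to the parameters of the canonical matching.

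Next I would analyze each column with index $i \in \{m_0, \ldots, n - k - 1\}$. Letting $r_i$ denote the row of the highest shaded square there, the single-dimension hypothesis combined with Lemma \ref{lem:unmatch} forces $r_i \in \{0, m_0 + m_1 - i\}$ (either the column is entirely shaded, or its top is on the diagonal). The subcomplex staircase property makes $r_i$ weakly decreasing in $i$; moreover, when $i > m_0 + m_1$ only $r_i = 0$ is feasible since the diagonal has left the grid. Consequently there is a threshold $i^\star \in \{m_0 + 1, \ldots, \min(m_0 + m_1 + 1,\, n-k)\}$ such that columns $m_0, \ldots, i^\star - 1$ have their top on the diagonal and columns $i^\star, \ldots, n-k-1$ are entirely shaded; writing $\ell = n - k - i^\star$, one gets $0 \leq \ell < n - k - m_0$ as required by (ii), and rows $\geq m_0 + m_1 - i$ in the non-entirely-shaded columns fill out exactly the under-diagonal squares of (iii).

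It remains to treat columns with index $i' < m_0$ (to the right of $M$) and show they are shaded only in the bottom $j$ rows, supplying (i) and completing the ``no other squares are shaded'' clause. If $(i', j')$ with $i' < m_0$ and $j' < m_1$ were shaded, the staircase property applied leftward (since $m_0 > i'$) would give $(m_0, j')$ shaded; but $(m_0, m_1 - 1)$ is unshaded, and the staircase property applied upward then forces $(m_0, j'')$ to be unshaded for all $j'' \leq m_1 - 1$, a contradiction. I expect the main obstacle to be keeping the row/column index conventions straight while juggling the staircase, diagonal, and canonical-matching constraints simultaneously; once those are organized, the result is a direct case analysis guided by Lemma \ref{lem:unmatch}.
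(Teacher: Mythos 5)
Your proposal is correct and follows essentially the same route as the paper: identify $M$ as an unmatched square forced to lie on the dimension-$d$ diagonal, use Lemma \ref{lem:unmatch} to pin the top of each column left of $M$ to either the top row or that diagonal, invoke the staircase property to make the fully shaded columns consecutive from the left, and use Definition \ref{defn:canon} with the subcomplex property to rule out shading above row $m_1$ to the right of $M$. Your version simply makes explicit (via the threshold $i^\star$ and the contradiction argument for columns right of $M$) what the paper's terser proof leaves implicit.
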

\begin{proof}
Since $j<k$, the top row is not completely shaded in and so $K'\neq K$.
This implies that $m_1\neq 0$ and so $M$ is in the $(n-k-m_0)$-th column from the left and not in the top row. 
Therefore $M$ contains unmatched faces by Lemma \ref{lem:unmatch} and so $M$ must contain faces of dimension $d$. 

Next consider the set of squares in the same row as $M$ including and to the right of $M$.
None of the squares above the squares in this set are in $K'$ by Definition \ref{defn:canon}. So from now on we only need to consider the columns strictly to the left of $M$. 
The highest shaded square in each of these columns is either in the top row or contains faces of dimension $d$ by Lemma \ref{lem:unmatch}. However, $K'$ is a subcomplex and so if any columns are completely shaded in, then they must be consecutive and start from the left, as desired.
\end{proof}

\begin{eg}
Consider $J(18,8)$. Figure \ref{fig:subcomplextypes2} below shows some of the types of subcomplexes described in Lemma \ref{lem:subtypes2}. In each diagram, the square $M$ is denoted by a circle.
\end{eg}

\begin{figure}[htbp]
\def\sca{.55}
\def\split{.4}
\begin{center}
\begin{tikzpicture}[scale=\sca]
\def \length{10}
\def \height{8}
\def \lengthb{9} 
\def \heightb{7} 
\fill [shade] (0,0) rectangle (10,7);
\foreach \x in {0}
\fill [shade b] ($(\x,7)-(0,\x)$) rectangle ($(\x,6)-(-1,\x)$);
\draw [step=1,thin,gray!40] (0,0) grid (\length, \height);
\draw [linee] (0,0) -- (0,\height);
\draw [linee] (0,0) -- (\length,0);
\draw [linee] (\length, \height) -- (0,\height);
\draw [linee] (\length, \height) -- (\length,0);
\foreach \x in {0,...,9}
\draw [linea]($(.5,.5)+(\x,0)$) -- ($(.5,6.5)+(\x,0)$);
\draw [linea] (.5,6.5)--(.5,7.5);
\draw [linea] (.5,.5)--(9.5,.5);
\draw [linea] (.5,6.5)--(9.5,6.5);
\draw [linea] (.5,7.5)--(9.5,7.5);
\filldraw [shade c] (.5,6.5) circle (6pt);
\end{tikzpicture}
\noindent
\hspace{\split in}
\begin{tikzpicture}[scale=\sca]
\def \length{10}
\def \height{8}
\def \lengthb{9} 
\def \heightb{7} 
\fill [shade] (0,0) rectangle (1,7);
\fill [shade] (1,0) rectangle (2,6);
\fill [shade] (2,0) rectangle (3,5);
\fill [shade] (3,0) rectangle (10,4);
\foreach \x in {0,...,3}
\fill [shade b] ($(\x,7)-(0,\x)$) rectangle ($(\x,6)-(-1,\x)$);
\draw [step=1,thin,gray!40] (0,0) grid (\length, \height);
\draw [linee] (0,0) -- (0,\height);
\draw [linee] (0,0) -- (\length,0);
\draw [linee] (\length, \height) -- (0,\height);
\draw [linee] (\length, \height) -- (\length,0);
\foreach \x in {0,...,3}
\draw [linea]($(.5,.5)+(\x,0)$) -- ($(.5,7.5)+(\x,0)$);
\foreach \x in {4,...,9}
\draw [linea]($(.5,.5)+(\x,0)$) -- ($(.5,3.5)+(\x,0)$);
\foreach \x in {4,...,9}
\draw [linea]($(.5,.5)+(\x,4)$) -- ($(.5,7.5)+(\x,0)$);
\draw [linea] (.5,.5)--(9.5,.5);
\draw [linea] (3.5,3.5)--(9.5,3.5);
\draw [linea] (3.5,4.5)--(9.5,4.5);
\filldraw [shade c] (3.5,3.5) circle (6pt);
\end{tikzpicture}
\end{center}

\begin{center}
\begin{tikzpicture}[scale=\sca]
\def \length{10}
\def \height{8}
\def \lengthb{9} 
\def \heightb{7} 
\fill [shade] (0,0) rectangle (3,8);
\fill [shade] (0,0) rectangle (10,5);
\fill [shade b] (3,4) rectangle (4,5);
\draw [step=1,thin,gray!40] (0,0) grid (\length, \height);
\draw [linee] (0,0) -- (0,\height);
\draw [linee] (0,0) -- (\length,0);
\draw [linee] (\length, \height) -- (0,\height);
\draw [linee] (\length, \height) -- (\length,0);
\def\ma{3} 
\def\mb{4} 
\foreach \x in {0,...,\ma}
\draw [linea]($(.5,.5)+(\x,0)$) -- ($(.5,7.5)+(\x,0)$);
\foreach \x in {\ma,...,9}
\draw [linea]($(.5,.5)+(\x,0)$) -- ($(.5,.5)+(\x,\mb)$);
\foreach \x in {\ma,...,9}
\draw [linea]($(.5,1.5)+(\x,\mb)$) -- ($(.5,7.5)+(\x,0)$);
\draw [linea] ($(\ma,\mb)+(.5,.5)$) -- ($(9.5,.5)+(0,\mb)$);
\draw [linea] ($(\ma,\mb)+(.5,1.5)$) -- ($(9.5,1.5)+(0,\mb)$);
\draw [linea] (.5,.5)--(9.5,.5);
\filldraw [shade c] (3.5,4.5) circle (6pt);
\end{tikzpicture}
\hspace{\split in}
\begin{tikzpicture}[scale=\sca]
\def \length{10}
\def \height{8}
\def \lengthb{9} 
\def \heightb{7} 
\fill [shade] (0,0) rectangle (3,8);
\foreach \x in {3,4,5}
\fill [shade] (\x,0) rectangle ($(\x,8)-(-1,\x)$);
\fill [shade] (6,0) rectangle (10,3);
\foreach \x in {3,4,5}
\fill [shade b] ($(\x,7)-(0,\x)$) rectangle ($(\x,8)-(-1,\x)$);
\draw [step=1,thin,gray!40] (0,0) grid (\length, \height);
\draw [linee] (0,0) -- (0,\height);
\draw [linee] (0,0) -- (\length,0); 
\draw [linee] (\length, \height) -- (0,\height);
\draw [linee] (\length, \height) -- (\length,0);
\def\ma{5} 
\def\mb{2} 
\foreach \x in {0,...,\ma}
\draw [linea]($(.5,.5)+(\x,0)$) -- ($(.5,7.5)+(\x,0)$);
\foreach \x in {\ma,...,9}
\draw [linea]($(.5,.5)+(\x,0)$) -- ($(.5,.5)+(\x,\mb)$);
\foreach \x in {\ma,...,9}
\draw [linea]($(.5,1.5)+(\x,\mb)$) -- ($(.5,7.5)+(\x,0)$);
\draw [linea] ($(\ma,\mb)+(.5,.5)$) -- ($(9.5,.5)+(0,\mb)$);
\draw [linea] ($(\ma,\mb)+(.5,1.5)$) -- ($(9.5,1.5)+(0,\mb)$);
\draw [linea] (.5,.5)--(9.5,.5);
\filldraw [shade c] (5.5,2.5) circle (6pt);
\end{tikzpicture}
\caption{Different types of subcomplexes described in Lemma \ref{lem:subtypes2}.}
\label{fig:subcomplextypes2}
\end{center}
\end{figure}
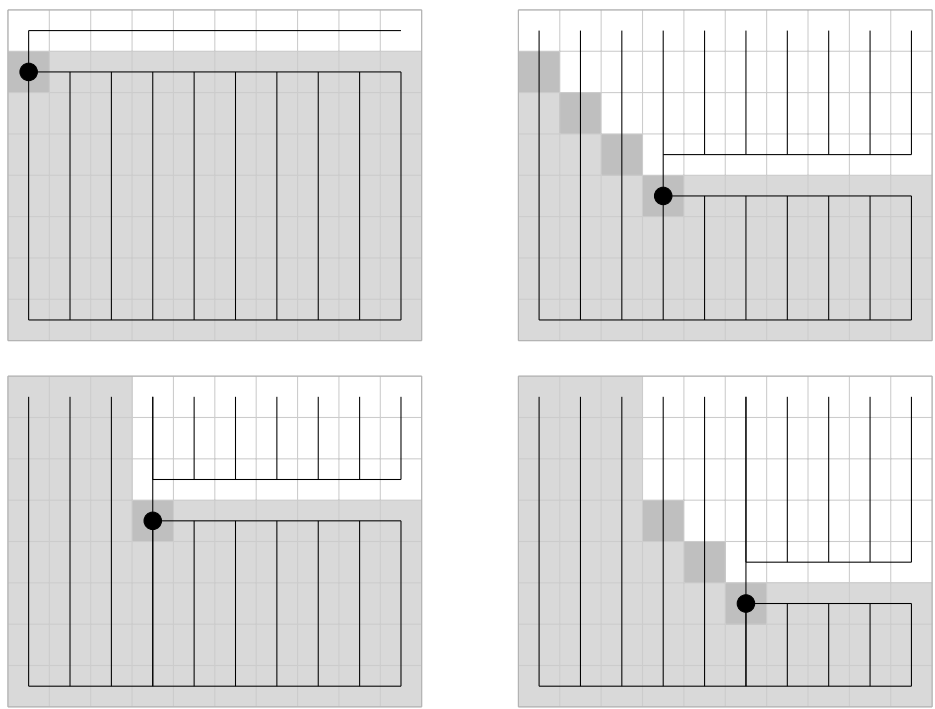

The following lemma is the converse of Lemma \ref{lem:subtypes2}.

\begin{lem}
\label{lem:subtypes2b}
Suppose the diagram of a subcomplex $K'$ has precisely the bottom $j$ rows completely shaded in where $0<j<k$.
Suppose the following squares are shaded in and that they are the only squares that are shaded in:
\begin{enumerate}
  \item [\rm(i)] the bottom $j$ rows;
  \item [\rm(ii)] the leftmost $i$ columns for some $0\leq i < n-k-m_0$; and
  \item [\rm(iii)] the squares under and including the diagonal containing faces of dimension $d$.
\end{enumerate}
If $d<n-1$ and the diagonal containing faces of dimension $d+1$ is not completely shaded in, then $K'$ has unmatched faces in a single dimension $d$ using the canonical matching.
\end{lem}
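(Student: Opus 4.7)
The plan is to apply Lemma \ref{lem:unmatch} directly to the subcomplex diagram, after identifying the canonical matching parameters $(m_0, m_1)$ for this particular $K'$. Since the bottom $j$ rows are precisely those completely shaded and $0 < j < k$, the highest completely shaded row corresponds to $S(1) = k-j$, giving $m_1 = k-j$. To compute $m_0$, I would examine when the square at position $(S(0), k-j-1)$ (the neighbor above a square in row $k-j$) is unshaded: since $k-j-1 < k-j$ excludes it from the bottom $j$ rows, it is shaded iff $S(0) \geq n-k-i$ (leftmost columns) or $S(0) + (k-j-1) \geq n-d-1$ (below the dim $d$ diagonal), i.e.\ $S(0) \geq n-d-k+j$. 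The leftmost square in row $k-j$ whose neighbor above is unshaded therefore has $S(0) = \min(n-k-i,\, n-d-k+j) - 1$, so $m_0 = \min(n-k-i,\, n-d-k+j) - 1$.

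Next I would use the hypothesis that the dim $d+1$ diagonal is not completely shaded to rule out the case $i > d-j$. A point $(S(0), n-d-2-S(0))$ on the dim $d+1$ diagonal is unshaded exactly when $n-d-k+j-1 \leq S(0) \leq n-k-i-1$, so such an $S(0)$ exists iff $i \leq d-j$, equivalently $n-d-k+j \leq n-k-i$. Substituting into the formula above yields $m_0 = n-d-k+j-1$, so $M = (m_0, k-j)$ satisfies $S(0) + S(1) = n-d-1$ and hence $\dim F = n - m_0 - (k-j) - 1 = d$; in other words, $M$ lies precisely on the dim $d$ diagonal.

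Now I would apply Lemma \ref{lem:unmatch}. Because $j \geq 1$, the bottom row is fully shaded, so case (ii) contributes no unmatched faces. The unmatched faces therefore live in the highest shaded square of each column with $S(0) \geq m_0$, excluding squares in the top row. For columns with $S(0) \geq n-k-i$ (the leftmost $i$ columns), the column is completely shaded, so the highest shaded square sits in the top row and is excluded. For the remaining columns $m_0 \leq S(0) < n-k-i$, the highest shaded square has $S(1)$-coordinate $\min(k-j,\, n-d-1-S(0))$. Since $S(0) \geq m_0 = n-d-k+j-1$ forces $n-d-1-S(0) \leq k-j$, the minimum equals $n-d-1-S(0)$, so the highest shaded square lies on the dim $d$ diagonal and represents faces of dimension $d$.

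Finally, since $m_1 = k-j > 0$, the square $M$ is not in the top row, so it actually contributes unmatched faces of dimension $d$; hence unmatched faces exist and all lie in the single dimension $d$. The main obstacle in carrying this out is bookkeeping the two possible expressions for $m_0$ and verifying that the hypothesis on the dim $d+1$ diagonal is precisely what rules out the unwanted case where $M$ would sit strictly above the dim $d$ diagonal, producing unmatched faces of dimension larger than $d$ and breaking concentration in a single dimension.
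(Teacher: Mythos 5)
Your proof is correct and takes essentially the same route as the paper's: determine the canonical square $M$, apply Lemma \ref{lem:unmatch}, and verify that every square contributing unmatched faces lies on the dimension-$d$ diagonal (with $M$ itself guaranteeing such faces exist). The only difference is that you do this by explicit coordinate computation of $(m_0,m_1)$ and a direct use of the dimension-$(d+1)$ hypothesis, whereas the paper argues qualitatively via a case split on whether $i=n-k-m_0-1$.
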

\begin{proof}
Since $j<k$, the top row is not completely shaded in and so $K'\neq K$.
This implies that $m_1\neq 0$ and so $M$ is in the $(n-k-m_0)$-th column from the left and not in the top row. 

If $i=n-k-m_0-1$, then every column left of $M$ is completely shaded. 
In this case, $M$ is the only square in $K'$ that contains unmatched faces by Lemma \ref{lem:unmatch}.
Since the square above $M$ is not shaded by Definition \ref{defn:canon}, the diagonal containing $M$ is completely shaded while the diagonal containing the square above $M$ is not. This implies that $M$ contains faces of dimension $d$ as desired.

On the other hand, if $i<n-k-m_0-1$ and no other columns are completely shaded, then consider the columns between the leftmost $i$ columns and the column containing $M$. In each of these columns the row above $M$ is shaded by Definition \ref{defn:canon}. None of these columns are completely shaded in by our assumption and so the highest shaded square in each of these columns contains faces of dimension $d$ by (iii). Therefore the square diagonally above and to the left of $M$ is either under or on the diagonal containing faces of dimension $d$. This implies that $M$ is either under or on the diagonal containing faces of dimension $d$, but we already know that the square above $M$ is not shaded in and so $M$ contains faces of dimension $d$. The result now follows by Lemma \ref{lem:unmatch} which says that the only unmatched faces occur in $M$ and in the highest shaded squares in each column left of $M$ that is not in the top row.
\end{proof}

\begin{prop}
\label{prop:subtypes}
A subcomplex $K'$ of $K$ using the canonical matching has unmatched faces in a single dimension if and only if $K'$ can be described completely by one or more of the following properties:
\begin{enumerate}
  \item [\rm(i)]$K'$ contains every face $F(S)$ of dimension at most $d$, $0\leq d\leq n-2$;
  \item [\rm(ii)]$K'$ contains every face $F(S)$ such that $S(1)\geq j$, $1\leq j\leq k-1$; 
  \item [\rm(iii)]$K'$ contains every face $F(S)$ such that $S(0)\geq i$, $1\leq i\leq n-k-1$. 
\end{enumerate}
Figures \ref{fig:subcomplextypes1} and \ref{fig:subcomplextypes2} show the possible subcomplex diagrams that arise from these $K'$.
\end{prop}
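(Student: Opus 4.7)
The plan is to derive this proposition by assembling the four preceding structural lemmas (\ref{lem:subtypes1}, \ref{lem:subtypes1b}, \ref{lem:subtypes2}, \ref{lem:subtypes2b}), which already classify the subcomplex diagrams with unmatched faces in a single dimension under the canonical matching. First I would translate the three listed properties into the language of subcomplex diagrams. Property (i) says precisely that every square on or below the diagonal of faces of dimension $d$ is shaded. Property (ii) with bound $j$ says the bottom $k-j$ rows of the diagram are completely shaded, since these rows are exactly those indexed by $S(1)\in\{j,j+1,\ldots,k-1\}$. Property (iii) with bound $i$ says the leftmost $n-k-i$ columns are completely shaded. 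The phrase ``described completely by one or more of the following properties'' means that $K'$ is the smallest subcomplex whose shaded set is the union of the shaded sets corresponding to the chosen properties.

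For the forward direction, suppose $K'$ uses the canonical matching and has unmatched faces in a single dimension $d$. I would split on whether any row of the diagram of $K'$ is completely shaded. If no row is completely shaded, then Lemma \ref{lem:subtypes1} gives the exact picture: the leftmost $i'$ columns are completely shaded for some $0\le i'\le d$, and the remaining shaded squares are exactly those on or under the diagonal of dimension $d$. This diagram is precisely the one obtained by combining property (i) with $d$ and property (iii) with $i = n-k-i'$ (the latter being vacuous when $i'=0$). If some row below the top is completely shaded, Lemma \ref{lem:subtypes2} gives three ingredients---the bottom $j'$ rows, the leftmost $i'$ columns, and the squares on or below the diagonal of dimension $d$---which is precisely the diagram obtained by combining (i), (ii) with $j = k-j'$, and (iii) with $i = n-k-i'$ (again the last two can be vacuous). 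The case $K' = K$ does not arise under the hypothesis, since then every face is matched and there are no unmatched faces at all.

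For the converse direction, assume $K'$ is described by some combination of (i), (ii), (iii) with parameters $d\le n-2$, $j\le k-1$, $i\le n-k-1$. I would again split on whether the diagram has a completely shaded row. If no row is completely shaded, then only (i) and possibly (iii) are in force, and the resulting diagram is exactly the hypothesis of Lemma \ref{lem:subtypes1b}, which gives unmatched faces in the single dimension $d$. If at least one row (below the top) is completely shaded, then (ii) is in force and the diagram matches the hypothesis of Lemma \ref{lem:subtypes2b}, except that I must verify the extra condition that the diagonal containing faces of dimension $d+1$ is not completely shaded. This holds because at least one of (ii) or (iii) strictly bounds $j$ or $i$ away from the maximal value that would force the entire $d+1$ diagonal to be shaded---otherwise $K'$ would be all of $K$, contradicting the bounds on the parameters.

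The main obstacle I expect is bookkeeping: verifying that the parameter ranges in the proposition ($0\le d\le n-2$, $1\le j\le k-1$, $1\le i\le n-k-1$) are exactly what is needed so that the canonical matching's $m_0, m_1$ agree with the squares prescribed by the lemmas, and that the three properties genuinely combine consistently (for instance, that when both (i) and (ii) are asserted, the dimension $d$ is compatible with the shape forced by the completely shaded rows). A careful check that each allowed combination of the three properties corresponds bijectively to a diagram listed in Figures \ref{fig:subcomplextypes1} and \ref{fig:subcomplextypes2} completes the proof.
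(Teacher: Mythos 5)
Your proposal is correct and follows essentially the same route as the paper: translate the three properties into statements about shaded diagonals, rows, and columns of the diagram, then split on whether any row is completely shaded and invoke Lemmas \ref{lem:subtypes1}, \ref{lem:subtypes2} for the forward direction and Lemmas \ref{lem:subtypes1b}, \ref{lem:subtypes2b} for the converse. You are in fact slightly more careful than the paper in flagging the extra hypothesis of Lemma \ref{lem:subtypes2b} (that the $d+1$ diagonal is not fully shaded), which is guaranteed by the requirement that the description be complete, i.e., that $d$ is taken maximal.
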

\begin{proof}
If $K'$ is made up of only the vertices of $J(n,k)$ then the lemma is trivially true. 
Next notice that (i) is equivalent to saying that the diagram of $K'$ contains every square under and including the diagonal of squares containing faces of dimension $d$, (ii) is equivalent to saying that the diagram of $K'$ contains every square under and including some row, and (iii) equivalent to saying that the diagram of $K'$ contains every square to the left of and including some column. The reason that $d$ cannot be $n-1$ and that $j$ or $i$ cannot be zero is that otherwise $K'$ would be equal to $K$ which has no unmatched faces. 

First assume that $K'$ has unmatched faces in a single dimension. If $K'$ does not have any rows that are completely shaded then the assertion follows from Lemma \ref{lem:subtypes1}. If $K'$ has precisely the bottom $j$ rows completely shaded where $0<j<k$ then the assertion follows from Lemma \ref{lem:subtypes2}.

Next assume that $K'$ can be described completely by one or more of (i), (ii), and (iii). If $K'$ does not have any rows that are completely shaded then the assertion follows from Lemma \ref{lem:subtypes1b}. If $K'$ has precisely the bottom $j$ rows completely shaded where $0<j<k$ then the assertion follows from Lemma \ref{lem:subtypes2b}.
\end{proof}

\begin{prop}
\label{prop:K'reduced}
If $K'$ is a subcomplex of $K$ with unmatched faces in a single dimension $d$, then
$K'$ has its reduced homology concentrated in degree $d$.
Furthermore, if $K'$ has $u_d$ unmatched faces in dimension $d>0$, then $\widetilde{H}_d(K')\cong\mathbb{Z}^{u_d}$.
\end{prop}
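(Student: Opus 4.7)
The plan is to apply Theorem \ref{thm:Forman} (ii) to the canonical matching restricted to $K'$, and then use Proposition \ref{prop:wedge} to identify the resulting minimal CW complex up to homotopy. By Theorem \ref{thm:acyc} the canonical matching on $K$ is acyclic, so its restriction to $K'$ remains an acyclic partial matching, with a face of $K'$ paired in the restriction exactly when both it and its canonical partner lie in $K'$.

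Forman's theorem requires the empty cell to be unpaired, so I will drop the canonical pair $(\emptyset, v_0)$ from the restricted matching. In the resulting matching on $K'$ the unpaired cells are: the vertex $v_0$ (in dimension $0$), the given $u_d$ unmatched faces (in dimension $d$), and by hypothesis nothing else of positive dimension. The key point to verify is that $v_0$ is the \emph{only} unmatched $0$-cell when $d > 0$. For this, note that $K'$ contains at least one unmatched face of dimension $d \geq 1$ and, being a subcomplex, contains every face of it; in particular $K'$ contains at least one edge of $J(n,k)$. Since the edges of $J(n,k)$ form a single $S_n$-orbit (they correspond to the unique square with $S(0)=n-k-1,\ S(1)=k-1$) and $K'$ is $S_n$-invariant, $K'$ must contain every edge. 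Rule $(9)$ of Section \ref{s:Construction} pairs every vertex other than $v_0$ with an edge, and these pairings are preserved in the restriction, so every vertex other than $v_0$ is matched.

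Therefore the modified matching has $u_0 = 1$, $u_d$ unmatched cells in dimension $d$, and $u_p = 0$ for all other $p$. Theorem \ref{thm:Forman} (ii) then shows that $K'$ is homotopy equivalent to a CW complex having exactly one $0$-cell, $u_d$ cells of dimension $d$, and no other cells. When $d > 0$, Proposition \ref{prop:wedge} identifies this CW complex as a wedge of $u_d$ copies of $S^d$, whose reduced homology is $\mathbb{Z}^{u_d}$ concentrated in degree $d$, yielding $\widetilde{H}_d(K') \cong \mathbb{Z}^{u_d}$ and $\widetilde{H}_p(K')=0$ for $p \neq d$. When $d = 0$, the same argument produces a CW complex that is simply a discrete set of points, whose reduced homology is automatically concentrated in degree $0$.

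The only real subtlety is the bookkeeping around the $(\emptyset, v_0)$ pair: one must remove it in order to meet Forman's hypothesis, and then separately argue that no other $0$-cells become unmatched, which is where the $S_n$-invariance of $K'$ together with rule $(9)$ does the work. Once this is arranged, the proof is an application of the discrete Morse and wedge-of-spheres machinery already available.
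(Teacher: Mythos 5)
Your proposal is correct and follows the same basic strategy as the paper: unpair $(\emptyset, v_0)$, apply Theorem \ref{thm:Forman} (ii), and read off the homology of the resulting minimal complex. The one place where you genuinely diverge is the endgame. The paper does not invoke Proposition \ref{prop:wedge} here; it computes directly from the cellular chain complex, observing that for $d\neq 1$ the group $\mathbb{Z}^{u_d}$ sits between zeros, and handling $d=1$ separately by showing $\mathrm{im}\,\partial_1'=0$ via the connectivity argument $H_0(K')\cong H_0(K)\cong\mathbb{Z}$ (which uses $C(K')_i=C(K)_i$ for $i=0,1$). Your route through Proposition \ref{prop:wedge} absorbs the $d=1$ case uniformly and is arguably cleaner; the paper saves that proposition for Theorem \ref{thm:class} (ii). You also make explicit two points the paper leaves implicit: that the restriction of an acyclic matching to a subcomplex stays acyclic, and that $v_0$ is the only unmatched $0$-cell after unpairing, which you justify correctly via $S_n$-invariance forcing all edges (a single orbit) into $K'$ and rule (9) matching every other vertex. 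Both approaches are sound; yours trades the explicit boundary-map computation for the wedge-of-spheres identification.
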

\begin{proof}
If we unpair the empty set with the vertex $v_0$ described in Section \ref{s:Construction},
then Theorem \ref{thm:Forman} (ii) implies that $K'$ is homotopic to a CW complex with one cell of dimension 0, $u_d$ cells of dimension $d$, and zero cells in every other dimension.

When $d\neq 1$ the result is obvious since the $d$-th group in the chain complex is nonzero and has zeros on either side of it. Assume that $d=1$ and let $(C(K')_i)_{i\in\mathbb{Z}}$ be the corresponding cellular chain complex. Also, let $(C(K)_i)_{i\in\mathbb{Z}}$ be the corresponding cellular chain complex of $K$. 
Notice that $H_0(K)\cong\mathbb{Z}$ (and so $K$ is connected) since Theorem \ref{thm:Forman} (ii) implies $K$ is homotopic to a CW complex with one cell of dimension 0 and no other cells. This implies $H_0(K')\cong\mathbb{Z}$ (and so $K'$ is also connected) as well since $C(K')_i=C(K)_i$ when $i=0,1$. 

Next consider the CW complex that $K'$ is homotopic to by Theorem \ref{thm:Forman} (ii):

\begin{center}
$\cdots\longerrightarrow 0 \stackrel{\partial_{3}'}{\longerrightarrow} 0 \stackrel{\partial_{2}'}{\longerrightarrow} \mathbb{Z}^{u_1}\stackrel{\partial_{1}'}{\longerrightarrow} \mathbb{Z} \stackrel{\partial_0'}{\longerrightarrow} 0$.
\end{center}
We see that $\mathrm{im}\ \partial_{2}'=0$ which implies that $H_1(K') \cong \mathrm{ker} \ \partial_1'$ and hence is free abelian. In order to figure out its rank we first recall that 
$\mathbb{Z} \cong H_0(K')$ from above and by definition $H_0(K')=\mathrm{ker} \ \partial_0'/\mathrm{im} \ \partial_1'$.
Therefore $\mathbb{Z} \cong\mathrm{ker} \ \partial_0'/\mathrm{im} \ \partial_1'$ and since $\mathrm{ker} \ \partial_0'=\mathbb{Z}$ we have $\mathrm{im}\ \partial_{1}'=0$.
That means $H_1(K') \cong \mathrm{ker} \ \partial_1' = \mathbb{Z}^{u_1}$ as desired.
\end{proof}


\section{Adding a Single Square to a Subcomplex Diagram}
\label{s:single}
Now that we know each of the subcomplexes described in Proposition \ref{prop:subtypes} has their reduced homology concentrated in a single degree, our goal is to show that every other subcomplex does not. We will start by taking one of the subcomplexes described in Proposition \ref{prop:subtypes} and shading in a specific single square to create a new subcomplex $X$ that has unmatched faces in two dimensions. The following two lemmas give us some conditions that such a square must satisfy. Note that each subcomplex described in this section will always use the canonical matching.

\begin{lem}
\label{lem:X}
Let $K'$ be a subcomplex with unmatched faces in a single dimension and
let $X$ be a subcomplex whose diagram is obtained from the diagram of $K'$ by shading in a single square denoted by $Q$.
If $X$ contains unmatched faces in exactly two dimensions, then $Q$ is not in the top row, the rightmost column, or the lower left square.
\end{lem}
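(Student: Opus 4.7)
The plan is to handle each of the three excluded positions for $Q$ in turn, using Lemma \ref{lem:unmatch} to pinpoint which squares of $X$ can contain unmatched faces, and exploiting the classification of $K'$ in Proposition \ref{prop:subtypes}. The guiding principle is that $K'$'s unmatched faces all lie on a single diagonal of the diagram; adding a single square $Q$ can only disturb the canonical matching locally, and I will show that in each restricted position the unmatched faces of $X$ are forced to lie on at most one diagonal.

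First, suppose $Q$ is in the top row. By Lemma \ref{lem:unmatch}(i), the top row never contributes unmatched faces, so $Q$ itself adds none. The square $P$ directly below $Q$ has its previously-unmatched type-1 faces paired in $X$ with the type-2 faces of $Q$, so those cease to be unmatched. If $Q$ completes the top row then, by the subcomplex propagation in Remark \ref{rem:downleft}, every square becomes shaded and $X = K$, which has no unmatched faces since the matching is complete. Otherwise the highest completely shaded row of $X$ agrees with that of $K'$, and a short check shows that the canonical matching parameter $m_0$ is unchanged unless $m_1^{K'}=1$, in which case it shifts by exactly one position; in both situations the surviving unmatched squares still lie on a single diagonal, so $X$ has unmatched faces in at most one dimension.

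Next, suppose $Q = (0, j_Q)$ is in the rightmost column. The subcomplex condition on $X$ forces $(i, j_Q) \in K'$ for every $i > 0$, so row $j_Q$ becomes fully shaded in $X$, and no row with smaller $j$ can be fully shaded (else $(0,j_Q)$ would already be in $K'$). Hence $m_1^X = j_Q$, and $m_0^X$ is read off row $j_Q - 1$ of $K'$. Using the explicit shape of $K'$ from Proposition \ref{prop:subtypes}, the highest-shaded squares in the columns with $i \geq m_0^X$ form a staircase along a single diagonal, and this persists after $Q$ is added; thus every unmatched square of $X$ lies on one diagonal and the unmatched faces lie in one dimension.

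Finally, suppose $Q = (n-k-1, k-1)$ is the lower-left square. Remark \ref{rem:downleft} implies that any shaded square of the $K'$ diagram would propagate to the lower-left corner, so $Q \notin K'$ forces the diagram of $K'$ to be empty. Then $K'$ consists only of the vertices and $\emptyset$, and its unmatched faces are exactly the non-$v_0$ vertices, lying in dimension zero. Adding $Q$ puts every edge of $J(n,k)$ into $X$: the non-$v_0$ vertices are matched with edges via rule~(10), edges of type~10 are matched with vertices, but edges of type~1 or~3 remain unmatched because their partners under rules~(1) and~(3) are two-dimensional faces absent from $X$. Hence $X$'s unmatched faces are exactly these edges, all lying in dimension one. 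The main obstacle I expect is the rightmost-column case, where both $m_0^X$ and $m_1^X$ can change simultaneously; tracking this shift carefully and verifying that the resulting staircase of highest-shaded squares still sits on a single diagonal — using the specific forms of $K'$ from Proposition \ref{prop:subtypes} — is the delicate step.
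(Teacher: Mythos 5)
Your three-way case division ($Q$ in the top row, in the rightmost column, at the lower-left) is exactly the paper's, and your conclusions in each case are the right ones. The lower-left case is essentially complete (modulo a small slip: vertices are matched \emph{to} edges by rule (9), not (10)). The problem is with the first two cases, where the actual content of the lemma --- that $X$ would still have unmatched faces in only one dimension --- is asserted rather than proved. Phrases such as ``a short check shows that $m_0$ is unchanged unless $m_1^{K'}=1$'' and ``the staircase \ldots persists after $Q$ is added'' are precisely the claims that need an argument: tracking how the canonical parameters $(m_0,m_1)$ move when a square is added, and then re-running Lemma \ref{lem:unmatch} on the new diagram, is a genuinely fiddly computation, and as written you have not done it.

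The paper sidesteps all of this by using the \emph{if} direction of Proposition \ref{prop:subtypes} rather than Lemma \ref{lem:unmatch} directly. If $Q$ is in the top row, the subcomplex condition forces the entire column of $Q$ (and every column to its left) to be shaded, so the diagram of $X$ is obtained from that of $K'$ by completing exactly one more column; hence $X$ is again described by properties (i)--(iii) of Proposition \ref{prop:subtypes} with $i$ replaced by $i-1$, and that proposition immediately gives unmatched faces in a single dimension --- no analysis of $m_0$, $m_1$, or diagonals is needed. The rightmost-column case is symmetric (one more complete row, $j$ replaced by $j_Q$). I would recommend restructuring your first two cases along these lines: show the new diagram is literally one of the classified shapes, then quote Proposition \ref{prop:subtypes}, rather than re-deriving the single-dimension property by hand.
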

\begin{proof}
Suppose that $K'$ contains every face $F(S)$ of dimension at most $d$, every face $F(S)$ such that $S(1)\geq j$ for some $j$, every face $F(S)$ such that $S(0)\geq i$ for some $i$, and no others.
Since $X$ is a subcomplex, the faces in the squares to the left and below $Q$ must be in $K'$ by Remark \ref{rem:downleft}. If $Q$ is in the top row then $X$ contains every face $F(S)$ of dimension at most $d$, every face $F(S)$ such that $S(1)\geq j$, every face $F(S)$ such that $S(0)\geq i-1$, and no others. In other words, the diagram of $X$ has exactly one more completely shaded column than the diagram of $K'$. Therefore $X$ has unmatched faces in a single dimension by Proposition \ref{prop:subtypes}.

The case when $Q$ is the rightmost column is proven in a similar way except that the diagram of $X$ has exactly one more completely shaded row than the diagram of $K'$.
If $Q$ is the lower left square, then $K'$ must be the subcomplex made up of only the vertices of $J(n,k)$. Therefore $X$ contains every face $F(S)$ of dimension at most $1$ and no others which means that $X$ has unmatched faces in a single dimension by Proposition \ref{prop:subtypes}.
\end{proof}

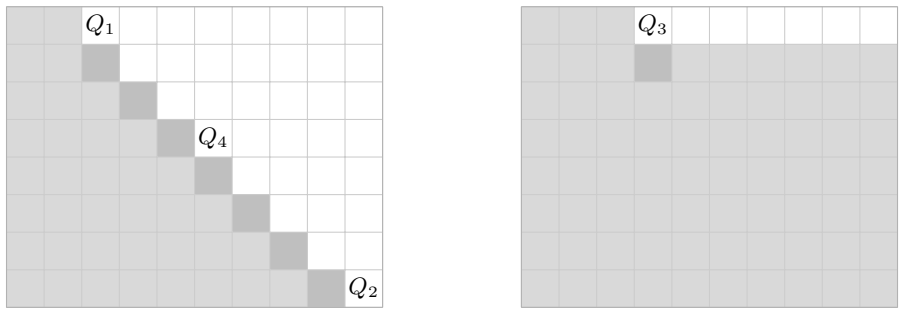
\begin{figure}[htbp]
\def\sca{.5}
\def\split{.4}
\begin{center}
\begin{tikzpicture}[scale=\sca]
\def \length{10}
\def \height{8}
\def \lengthb{9} 
\def \heightb{7} 
\fill [shade] (0,0) rectangle (2,8);
\fill [shade] (2,0) rectangle (3,7);
\fill [shade] (3,0) rectangle (4,6);
\fill [shade] (4,0) rectangle (5,5);
\fill [shade] (5,0) rectangle (6,4);
\fill [shade] (6,0) rectangle (7,3);
\fill [shade] (7,0) rectangle (8,2);
\fill [shade] (8,0) rectangle (9,1);
\foreach \x in {2,...,8}
\fill [shade b] ($(\x,7)-(0,\x-2)$) rectangle ($(\x,6)-(-1,\x-2)$);
\draw [step=1,thin,gray!40] (0,0) grid (\length, \height);
\draw [linee] (0,0) -- (0,\height);
\draw [linee] (0,0) -- (\length,0);
\draw [linee] (\length, \height) -- (0,\height);
\draw [linee] (\length, \height) -- (\length,0);
\footnotesize
\draw (2.5,7.5) node {$Q_1$}; 
\draw (9.5,.5) node {$Q_2$}; 
\draw (5.5,4.5) node {$Q_4$}; 
\end{tikzpicture}
\hspace{.6in}
\begin{tikzpicture}[scale=\sca]
\def \length{10}
\def \height{8}
\def \lengthb{9} 
\def \heightb{7} 
\fill [shade] (0,0) rectangle (3,8);
\fill [shade] (0,0) rectangle (10,7);
\fill [shade b] (3,6) rectangle (4,7);
\draw [step=1,thin,gray!40] (0,0) grid (\length, \height);
\draw [linee] (0,0) -- (0,\height);
\draw [linee] (0,0) -- (\length,0);
\draw [linee] (\length, \height) -- (0,\height);
\draw [linee] (\length, \height) -- (\length,0);
\footnotesize
\draw (3.5,7.5) node {$Q_3$}; 
\end{tikzpicture}
\caption{Possible diagrams for the subcomplex $X$ described in Lemma \ref{lem:X}.}
\label{fig:subcomplextypesX}
\end{center}
\end{figure}

\begin{eg}
Consider $J(18,8)$ and the two subcomplexes given by the diagrams in Figure \ref{fig:subcomplextypesX}. Notice by inspection that if we shade in one of the squares labeled by $Q_1, \ Q_2, \ \mathrm{or} \ Q_3$ then the resulting subcomplex will still have unmatched faces in a single dimension. However if we shade in the square labeled by $Q_4$, then the resulting subcomplex will have unmatched faces in two dimensions as is the case with $X$ in Lemma \ref{lem:X}. 
\end{eg}

\begin{lem}
\label{lem:Xd}
Let $K'$ be a subcomplex with unmatched faces in a single dimension $d$, and let $X$ be a subcomplex whose diagram is obtained from the diagram of $K'$ by shading in a single square denoted by $Q$.
If $X$ contains unmatched faces in exactly two dimensions then $Q$ contains faces of dimension $d+1$ and some of them are unmatched.
\end{lem}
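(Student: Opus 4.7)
The plan is to argue in two stages. I start from the identity that a face in the square at tikz position $(c,j)$ has dimension $k+c-j$, so the square immediately below $Q$ and the square immediately to the left of $Q$ both have dimension $\dim Q - 1$. Remark \ref{rem:downleft} forces these two neighbours to lie in $K'$ unless $Q$ is on the bottom row or in the leftmost column.

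By Proposition \ref{prop:subtypes}, together with Lemmas \ref{lem:subtypes1} and \ref{lem:subtypes2}, every square on or below the dimension-$d$ diagonal lies in $K'$. So $\dim Q\le d$ would give $Q\in K'$, contradicting that $Q$ is new; hence $\dim Q\ge d+1$. To rule out $\dim Q\ge d+2$, I observe that the two neighbours of $Q$, which have dimension at least $d+1$, are not on the dimension-$\le d$ diagonal, so they can only lie in $K'$ by being inside one of the fully shaded columns (let there be $c_0$ of them) or fully shaded rows (let there be $r_0$ of them) supplied by Proposition \ref{prop:subtypes}. A short case split settles the remaining possibilities: if the neighbour below $Q$ lies in a fully shaded column, then $Q$ lies in a fully shaded column (contradicting newness); if the neighbour to the left of $Q$ lies in a fully shaded row, then $Q$ lies in a fully shaded row (contradicting newness); otherwise the neighbour below $Q$ sits in a fully shaded row and the neighbour to its left in a fully shaded column, which pins the tikz position $(c_Q,j_Q)$ of $Q$ to $c_Q\le c_0$ and $j_Q\ge k-r_0-1$. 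Since $K'$ has unmatched faces in the single dimension $d$, the dimension-$(d+1)$ diagonal of $K'$ cannot be entirely inside $K'$ (otherwise $d$ could be replaced by a larger value); this unshaded-square condition translates into the inequality $c_0\le d-r_0$, so $\dim Q=k+c_Q-j_Q\le c_0+r_0+1\le d+1$, a contradiction. The boundary cases where $Q$ is on the bottom row or in the leftmost column reduce to the same dimension arithmetic. Hence $\dim Q=d+1$.

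For the second conclusion, $Q$ is by construction the topmost shaded square of its column in $X$, and by Lemma \ref{lem:X} it is not in the top row. I plan to verify that $c_Q\le c_{M_X}$ in every legal placement: when $j_Q<k-r_0-1$ the new square $Q$ is strictly above the row directly above the highest fully shaded row, so the canonical matching is unchanged and $M_X=M_{K'}$, which leaves $c_Q<c_{M_{K'}}=c_{M_X}$; when $j_Q=k-r_0-1$ the square $Q$ is the final square added to that row on the left, and $M_X$ shifts one column to the right of $M_{K'}$, still leaving $c_Q\le c_{M_X}$. Lemma \ref{lem:unmatch}(i) then supplies an unmatched face inside $Q$. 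The main obstacle throughout the argument is keeping the case analysis in the second paragraph compact, which depends on extracting the constraint $c_0\le d-r_0$ from the single-unmatched-dimension hypothesis to finish the $\dim Q\ge d+2$ contradiction cleanly.
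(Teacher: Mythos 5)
Your argument reaches the right conclusions and is essentially sound, but the route to the claim $\dim Q = d+1$ is genuinely different from the paper's. The paper argues locally: since $X$ is a subcomplex, the square below $Q$ (or, in the bottom row, to its left) lies in $K'$, and because $Q\notin K'$ that neighbour is the highest shaded square of its column (resp.\ the rightmost shaded square of the bottom row) of $K'$; by Lemma \ref{lem:unmatch} it therefore carries unmatched faces of $K'$, forcing its dimension to be $d$ and hence $\dim Q=d+1$ in one step. You instead invoke the global classification of $K'$ from Proposition \ref{prop:subtypes} as a union of fully shaded columns, fully shaded rows, and the sub-diagonal region, get $\dim Q\ge d+1$ from the fact that all squares of dimension $\le d$ are shaded (Lemma \ref{lem:alld}), and then rule out $\dim Q\ge d+2$ by coordinate arithmetic pinned down by the inequality $c_0+r_0\le d$. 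That inequality is correct and does follow from the single-dimension hypothesis (if it failed, every dimension-$(d+1)$ square would lie in a fully shaded row or column, so the whole $(d+1)$-diagonal would be shaded and Lemma \ref{lem:unmatch} would put all unmatched faces in dimension $\ge d+1$), but you only gesture at this translation; you should spell it out, since it is the load-bearing step of your case split and is not needed at all in the paper's more local argument. For the second conclusion your treatment is actually more careful than the paper's: you explicitly check that the column of $Q$ does not exceed the column of $M_X$ (distinguishing whether $Q$ sits directly above $M_{K'}$, in which case $M_X$ shifts one column right), which is the hypothesis Lemma \ref{lem:unmatch}(i) really requires and which the paper's proof leaves implicit. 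The trade-off: the paper's proof is shorter and coordinate-free; yours buys a self-contained structural picture at the cost of the extra extraction of $c_0+r_0\le d$.
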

\begin{proof}
Assume first that the diagram of $K'$ does not have any rows that are completely shaded in and so $m_0=m_1=0$. In this case the highest shaded square in each column, unless that square is in the top row, and the rightmost shaded square in the bottom row each contain unmatched faces by Lemma \ref{lem:unmatch}.
If $Q$ contains faces of dimension $d'\neq d+1$ and is not in the bottom row then the square below $Q$ contains faces of dimension $d'-1\neq d$.
Since $X$ is a subcomplex, the square below $Q$ contains faces in $X$ and so by definition, also contain faces in $K'$. However, this would imply that the square below $Q$ is the highest shaded square in its column in $K'$ and so $K'$ contains unmatched faces in dimension $d'-1\neq d$ which is a contradiction.
If $Q$ contains faces of dimension $d'\neq d+1$ and is in the bottom row then the square to the left of $Q$ contains faces of dimension $d'-1\neq d$. Similarly, this leads to a contradiction that the square to the left of $Q$ is the rightmost shaded square in the bottom row in $K'$ and so $K'$ would again contain unmatched faces in dimension $d'-1\neq d$.
We conclude that $Q$ contains faces of dimension $d+1$.

Assume next that the diagram of $K'$ has the bottom $j>0$ rows completely shaded in. 
In this case the highest shaded square in each column left of and including the column containing $M$, unless that square is in the top row, contain unmatched faces by Lemma \ref{lem:unmatch}.
If $Q$ is in one of the columns either containing $M$ or to the right of it, then $Q$ must be the square above $M$ since otherwise the square to the left of $Q$ would not be shaded and that would contradict $X$ being a subcomplex. 
Therefore if $Q$ is not in the bottom row, then the square below $Q$ is the highest shaded square in its column in $K'$.

In either case, $Q$ is not in the top row or rightmost column by Lemma \ref{lem:X}. 
However $Q$ is either the highest shaded square in its column in $X$ or the rightmost shaded square in the bottom row, which is not completely shaded in this case, and so $Q$ contains unmatched faces by Lemma \ref{lem:unmatch}.
\end{proof}

\begin{eg}
Consider $J(18,8)$ and the subcomplex $K'$ given by the diagram in Figure \ref{fig:subcomplextypesXd}. Notice by inspection that the squares labeled by $Q$ are the only ones in which we can add a single square to $K'$ such that this new subcomplex has unmatched faces in two dimensions and every square $Q$ contains faces exactly one dimension higher than the unmatched faces of $K'$.
\end{eg}

\begin{figure}[htbp]
\def\sca{.5}
\def\split{.4}
\begin{center}
\begin{tikzpicture}[scale=\sca]
\def \length{10}
\def \height{8}
\def \lengthb{9} 
\def \heightb{7} 
\fill [shade] (0,0) rectangle (2,8);
\fill [shade] (2,0) rectangle (3,7);
\fill [shade] (3,0) rectangle (4,6);
\fill [shade] (4,0) rectangle (5,5);
\fill [shade] (5,0) rectangle (6,4);
\fill [shade] (6,0) rectangle (7,3);
\fill [shade] (7,0) rectangle (8,3);
\fill [shade] (8,0) rectangle (9,3);
\fill [shade] (9,0) rectangle (10,3);
\foreach \x in {2,...,6}
\fill [shade b] ($(\x,7)-(0,\x-2)$) rectangle ($(\x,6)-(-1,\x-2)$);
\draw [step=1,thin,gray!40] (0,0) grid (\length, \height);
\draw [linee] (0,0) -- (0,\height);
\draw [linee] (0,0) -- (\length,0);
\draw [linee] (\length, \height) -- (0,\height);
\draw [linee] (\length, \height) -- (\length,0);
\footnotesize
\draw (3.5,6.5) node {$Q$}; 
\draw (4.5,5.5) node {$Q$};
\draw (5.5,4.5) node {$Q$};
\draw (6.5,3.5) node {$Q$};
\end{tikzpicture}
\caption{Dimension of faces in $Q$.}
\label{fig:subcomplextypesXd}
\end{center}
\end{figure}
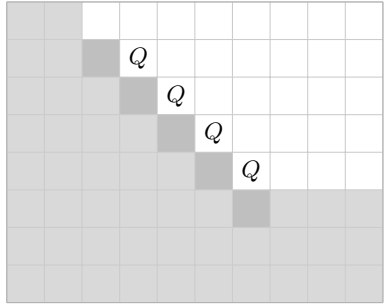

Now that we have some idea what the subcomplex $X$ looks like, we still need to prove that its reduced homology is not concentrated in a single degree. We will go about this by using two more subcomplexes $A$ and $B$ that are obtained from $X$ and do have their reduced homology concentrated in a single degree. The following two lemmas will give us a construction of $A$ and $B$ and a few properties of $A$ and $B$ that follow from this construction.

\begin{defn}
\label{defn:AB}
Let $K'$ be a subcomplex with unmatched faces in a single dimension $d$. Let $X$ be a subcomplex whose diagram is obtained from the diagram of $K'$ by shading in a single square denoted by $Q$ such that $X$ contains unmatched faces in exactly two dimensions. Define $A_{X}$ to be the subcomplex whose diagram is obtained from the diagram of $X$ by adding every square that is either in the column containing $Q$ or in a column to the left of the column containing $Q$. Define $B_{X}$ to be the subcomplex whose diagram is obtained from the diagram of $X$ by adding every square that is either in the row containing $Q$ or in a row below the row containing $Q$. When the subcomplex $X$ is clear from context, we will refer to $A_X$ and $B_X$ as $A$ and $B$ respectively.
\end{defn}

\begin{eg}
Consider the subcomplexes $K'$ and $X_i$ shown in Figure \ref{fig:XAB} below. 
Given these subcomplexes we see the corresponding $A_i$ and $B_i$ as described in Definition \ref{defn:AB}. A black dot has been added to help remind us where the single square $Q$ added to the diagram of $K'$ to obtain the diagram of $X_i$ is.
\end{eg}

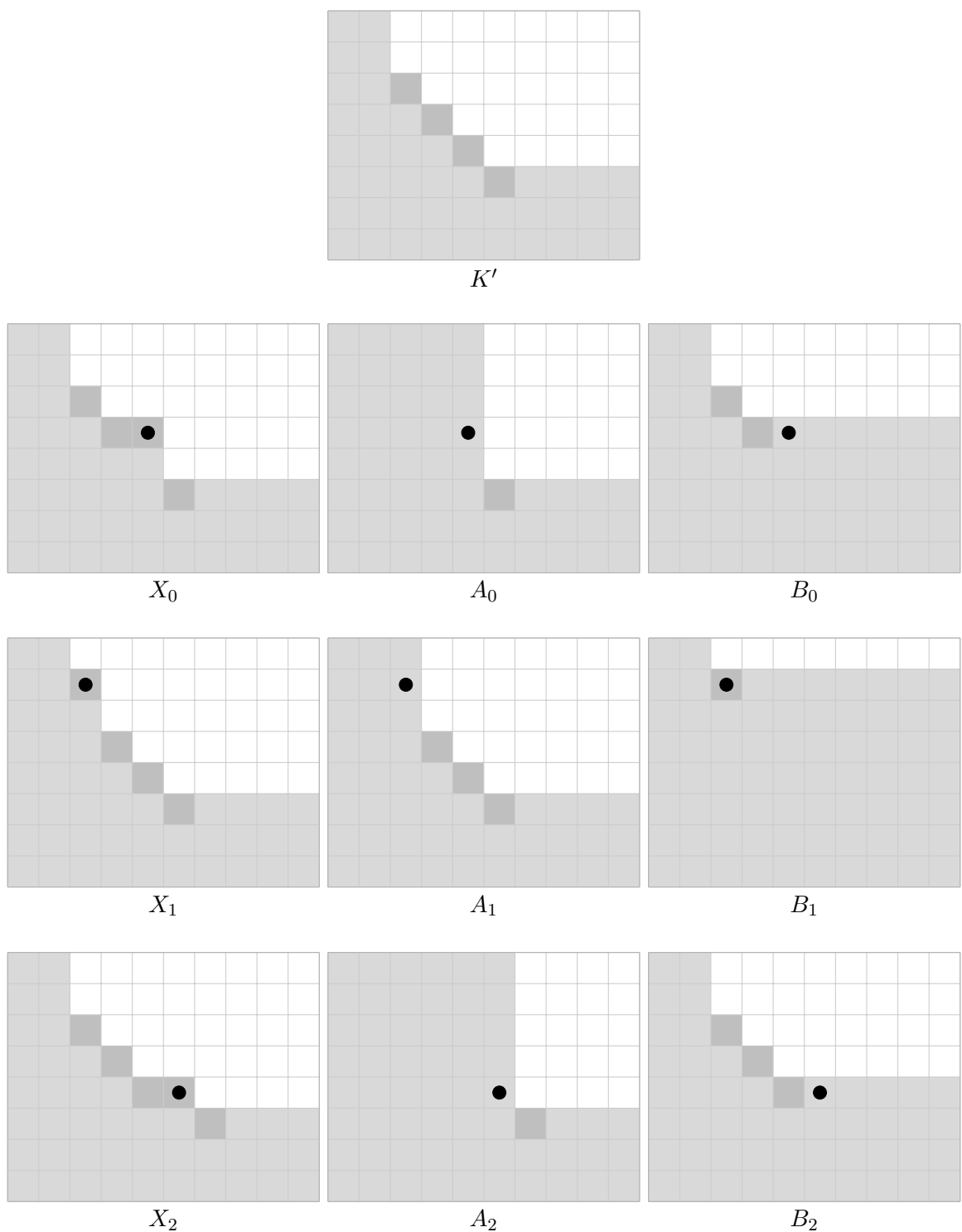
\begin{figure}[htbp]
\def\sca{.5}
\begin{center}
\begin{tikzpicture}[scale=\sca]
\def \length{10}
\def \height{8}
\def \lengthb{9} 
\def \heightb{7} 
\fill [shade] (0,0) rectangle (2,8);
\foreach \x in {2,...,5}
\fill [shade] (\x,0) rectangle ($(\x,8)-(-1,\x)$);
\fill [shade] (6,0) rectangle (10,3);
\foreach \x in {2,...,5}
\fill [shade b] ($(\x,7)-(0,\x)$) rectangle ($(\x,8)-(-1,\x)$);
\draw [step=1,thin,gray!40] (0,0) grid (\length, \height);
\draw [linee] (0,0) -- (0,\height);
\draw [linee] (0,0) -- (\length,0) node [midway, below, black] {$K'$};
\draw [linee] (\length, \height) -- (0,\height);
\draw [linee] (\length, \height) -- (\length,0);
\end{tikzpicture}
\end{center}
\begin{center}
\begin{tikzpicture}[scale=\sca]
\def \length{10}
\def \height{8}
\def \lengthb{9} 
\def \heightb{7} 
\fill [shade] (0,0) rectangle (2,8);
\foreach \x in {2,...,5}
\fill [shade] (\x,0) rectangle ($(\x,8)-(-1,\x)$);
\fill [shade] (6,0) rectangle (10,3);
\foreach \x in {2,3,5}
\fill [shade b] ($(\x,7)-(0,\x)$) rectangle ($(\x,8)-(-1,\x)$);
\fill [shade b] (4,4) rectangle (5,5);
\filldraw [shade c] (4.5,4.5) circle (6pt);
\draw [step=1,thin,gray!40] (0,0) grid (\length, \height);
\draw [linee] (0,0) -- (0,\height);
\draw [linee] (0,0) -- (\length,0) node [midway, below, black] {$X_0$};
\draw [linee] (\length, \height) -- (0,\height);
\draw [linee] (\length, \height) -- (\length,0);
\end{tikzpicture}
\begin{tikzpicture}[scale=\sca]
\def \length{10}
\def \height{8}
\def \lengthb{9} 
\def \heightb{7} 
\fill [shade] (0,0) rectangle (5,8);
\fill [shade] (0,0) rectangle (10,3);
\foreach \x in {5,...,5}
\fill [shade b] ($(\x,7)-(0,\x)$) rectangle ($(\x,8)-(-1,\x)$);
\filldraw [shade c] (4.5,4.5) circle (6pt);
\draw [step=1,thin,gray!40] (0,0) grid (\length, \height);
\draw [linee] (0,0) -- (0,\height);
\draw [linee] (0,0) -- (\length,0) node [midway, below, black] {$A_0$};
\draw [linee] (\length, \height) -- (0,\height);
\draw [linee] (\length, \height) -- (\length,0);
\end{tikzpicture}
\begin{tikzpicture}[scale=\sca]
\def \length{10}
\def \height{8}
\def \lengthb{9} 
\def \heightb{7} 
\fill [shade] (0,0) rectangle (2,8);
\foreach \x in {2,...,5}
\fill [shade] (\x,0) rectangle ($(\x,8)-(-1,\x)$);
\fill [shade] (3,0) rectangle (10,5);
\foreach \x in {2,...,3}
\fill [shade b] ($(\x,7)-(0,\x)$) rectangle ($(\x,8)-(-1,\x)$);
\filldraw [shade c] (4.5,4.5) circle (6pt);
\draw [step=1,thin,gray!40] (0,0) grid (\length, \height);
\draw [linee] (0,0) -- (0,\height);
\draw [linee] (0,0) -- (\length,0) node [midway, below, black] {$B_0$};
\draw [linee] (\length, \height) -- (0,\height);
\draw [linee] (\length, \height) -- (\length,0);
\end{tikzpicture}
\end{center}
%
%
%
%
\begin{center}
\begin{tikzpicture}[scale=\sca]
\def \length{10}
\def \height{8}
\def \lengthb{9} 
\def \heightb{7} 
\fill [shade] (0,0) rectangle (2,8);
\foreach \x in {2,...,5}
\fill [shade] (\x,0) rectangle ($(\x,8)-(-1,\x)$);
\fill [shade] (6,0) rectangle (10,3);
\foreach \x in {3,...,5}
\fill [shade b] ($(\x,7)-(0,\x)$) rectangle ($(\x,8)-(-1,\x)$);
\fill [shade b] (2,6) rectangle (3,7);
\filldraw [shade c] (2.5,6.5) circle (6pt);
\draw [step=1,thin,gray!40] (0,0) grid (\length, \height);
\draw [linee] (0,0) -- (0,\height);
\draw [linee] (0,0) -- (\length,0) node [midway, below, black] {$X_1$};
\draw [linee] (\length, \height) -- (0,\height);
\draw [linee] (\length, \height) -- (\length,0);
\end{tikzpicture}
%
%
\begin{tikzpicture}[scale=\sca]
\def \length{10}
\def \height{8}
\def \lengthb{9} 
\def \heightb{7} 
\fill [shade] (0,0) rectangle (3,8);
\foreach \x in {2,...,5}
\fill [shade] (\x,0) rectangle ($(\x,8)-(-1,\x)$);
\fill [shade] (0,0) rectangle (10,3);
\foreach \x in {3,...,5}
\fill [shade b] ($(\x,7)-(0,\x)$) rectangle ($(\x,8)-(-1,\x)$);
\filldraw [shade c] (2.5,6.5) circle (6pt);
\draw [step=1,thin,gray!40] (0,0) grid (\length, \height);
\draw [linee] (0,0) -- (0,\height);
\draw [linee] (0,0) -- (\length,0) node [midway, below, black] {$A_1$};
\draw [linee] (\length, \height) -- (0,\height);
\draw [linee] (\length, \height) -- (\length,0);
\end{tikzpicture}
%
%
\begin{tikzpicture}[scale=\sca]
\def \length{10}
\def \height{8}
\def \lengthb{9} 
\def \heightb{7} 
\fill [shade] (0,0) rectangle (2,8);
\foreach \x in {2,...,5}
\fill [shade] (\x,0) rectangle ($(\x,8)-(-1,\x)$);
\fill [shade] (2,0) rectangle (10,7);
\foreach \x in {2,...,3}
\fill [shade b] (2,6) rectangle (3,7);
\filldraw [shade c] (2.5,6.5) circle (6pt);
\draw [step=1,thin,gray!40] (0,0) grid (\length, \height);
\draw [linee] (0,0) -- (0,\height);
\draw [linee] (0,0) -- (\length,0) node [midway, below, black] {$B_1$};
\draw [linee] (\length, \height) -- (0,\height);
\draw [linee] (\length, \height) -- (\length,0);
\end{tikzpicture}
\end{center}
%
%
%
%
\begin{center}
\begin{tikzpicture}[scale=\sca]
\def \length{10}
\def \height{8}
\def \lengthb{9} 
\def \heightb{7} 
\fill [shade] (0,0) rectangle (2,8);
\foreach \x in {2,...,5}
\fill [shade] (\x,0) rectangle ($(\x,8)-(-1,\x)$);
\fill [shade] (6,0) rectangle (10,3);
\foreach \x in {2,...,4}
\fill [shade b] ($(\x,7)-(0,\x)$) rectangle ($(\x,8)-(-1,\x)$);
\fill [shade b] (5,3) rectangle (6,4);
\fill [shade b] (6,2) rectangle (7,3);
\filldraw [shade c] (5.5,3.5) circle (6pt);
\draw [step=1,thin,gray!40] (0,0) grid (\length, \height);
\draw [linee] (0,0) -- (0,\height);
\draw [linee] (0,0) -- (\length,0) node [midway, below, black] {$X_2$};
\draw [linee] (\length, \height) -- (0,\height);
\draw [linee] (\length, \height) -- (\length,0);
\end{tikzpicture}
%
%
\begin{tikzpicture}[scale=\sca]
\def \length{10}
\def \height{8}
\def \lengthb{9} 
\def \heightb{7} 
\fill [shade] (0,0) rectangle (6,8);
\foreach \x in {2,...,5}
\fill [shade] (\x,0) rectangle ($(\x,8)-(-1,\x)$);
\fill [shade] (0,0) rectangle (10,3);
\fill [shade b] (6,3) rectangle (7,2);
\filldraw [shade c] (5.5,3.5) circle (6pt);
\draw [step=1,thin,gray!40] (0,0) grid (\length, \height);
\draw [linee] (0,0) -- (0,\height);
\draw [linee] (0,0) -- (\length,0) node [midway, below, black] {$A_2$};
\draw [linee] (\length, \height) -- (0,\height);
\draw [linee] (\length, \height) -- (\length,0);
\end{tikzpicture}
%
%
\begin{tikzpicture}[scale=\sca]
\def \length{10}
\def \height{8}
\def \lengthb{9} 
\def \heightb{7} 
\fill [shade] (0,0) rectangle (2,8);
\foreach \x in {2,...,5}
\fill [shade] (\x,0) rectangle ($(\x,8)-(-1,\x)$);
\fill [shade] (0,0) rectangle (10,4);
\foreach \x in {2,...,4}
\fill [shade b] ($(\x,7)-(0,\x)$) rectangle ($(\x,8)-(-1,\x)$);
\filldraw [shade c] (5.5,3.5) circle (6pt);
\draw [step=1,thin,gray!40] (0,0) grid (\length, \height);
\draw [linee] (0,0) -- (0,\height);
\draw [linee] (0,0) -- (\length,0) node [midway, below, black] {$B_2$};
\draw [linee] (\length, \height) -- (0,\height);
\draw [linee] (\length, \height) -- (\length,0);
\end{tikzpicture}
\caption{Possible diagrams for $A$ and $B$ as described in Definition \ref{defn:AB}.}
\label{fig:XAB}
\end{center}
\end{figure}

\begin{lem}
\label{lem:A}
Let $K'$ be a subcomplex with unmatched faces in a single dimension $d$. Let $X$ be a subcomplex whose diagram is obtained from the diagram of $K'$ by shading in a single square denoted by $Q$ such that $X$ contains unmatched faces in exactly two dimensions.
If $Q$ is in the lowest row in which $K'$ does not contain every square, then the unmatched faces of $A$ are in dimension $d+1$. Otherwise, they are in dimension $d$.
\end{lem}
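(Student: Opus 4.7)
The plan is to pinpoint the reference square $M_A$ of $A$'s canonical matching and then read off the unmatched cells via Lemma~\ref{lem:unmatch}. By Lemma~\ref{lem:Xd}, $Q$ contains faces of dimension $d+1$, so $Q$ lies on the diagonal one step above the dim-$d$ diagonal of the subcomplex diagram. By Proposition~\ref{prop:subtypes}, $K'$ is the union of the squares of dimension at most $d$ with some completely shaded bottom rows (say $j_*$ of them) and possibly some completely shaded leftmost columns. The lowest row in which $K'$ does not contain every square is thus the row immediately above the $j_*$ bottom rows, so Case~1 of the lemma corresponds to $Q$ lying in that row and Case~2 to $Q$ lying strictly higher. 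Lemma~\ref{lem:X} ensures that $Q$ is not in the top row, the rightmost column, or the lower-left, so there is always at least one column strictly to the right of $Q$'s column to examine.

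The subcomplex $A$ is obtained from $X$ by completely shading the column of $Q$ and every column to its left, while leaving columns to the right of $Q$ unchanged from $K'$. In these right-hand columns the shading of $K'$ has its top at the maximum of the bottom-rows contribution and the dim-$d$ triangle contribution, a non-increasing function of the column index from left to right whose value in the rightmost column equals the top of the bottom rows. Consequently the highest completely shaded row of $A$ is that same top (with the convention that no row is completely shaded when $j_*=0$, in which case $M_A$ is the top-right square by Definition~\ref{defn:canon}). Within that row, $M_A$ is the leftmost column whose square above is unshaded: columns at or to the left of $Q$'s are fully shaded in $A$, so $M_A$ is the leftmost column strictly to the right of $Q$'s column whose $K'$-height already sits at the top of the bottom rows.

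In Case~1, the triangle and the bottom-rows top meet exactly at $Q$'s column, so $M_A$ sits in the column immediately to the right of $Q$'s column; a direct computation using $Q$'s position on the dim-$(d+1)$ diagonal shows $M_A$ is itself on the dim-$(d+1)$ diagonal. Every column to the left of $M_A$ is fully shaded in $A$, placing its top in the top row where it is matched, so by Lemma~\ref{lem:unmatch} the sole unmatched square of $A$ is $M_A$, in dimension $d+1$. In Case~2 the dim-$d$ staircase of $K'$ continues past $Q$'s column, and $M_A$ is the leftmost column strictly to the right of $Q$'s column whose triangle top has already dropped to the bottom-rows top; this $M_A$ lies on the dim-$d$ diagonal, while each column between $Q$'s and $M_A$'s retains its $K'$ triangle top on the dim-$d$ diagonal as a further unmatched cell. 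By Lemma~\ref{lem:unmatch} these together constitute every unmatched cell of $A$, all of dimension $d$. The boundary case $j_*=0$ needs no essential change: Lemma~\ref{lem:unmatch}(ii)'s ``rightmost in the bottom row'' supplies the unmatched cell in place of ``leftmost in the row above the bottom rows'', and the identity that puts $Q$ on the dim-$(d+1)$ diagonal keeps the dimension count correct in either case.
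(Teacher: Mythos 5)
Your proof is correct and takes essentially the same route as the paper's: in each case you locate $M_A$ (the square diagonally below and to the right of $Q$ when $Q$ lies in the lowest incomplete row of $K'$, and $M_{K'}$ otherwise), read off the unmatched squares from Lemma \ref{lem:unmatch}, and dispose of the bottom-row/no-complete-rows boundary via Lemma \ref{lem:unmatch}(ii). The extra appeal to Proposition \ref{prop:subtypes} for the staircase shape of $K'$ is harmless but adds nothing beyond what the paper's argument already uses implicitly.
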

\begin{proof}
Consider the diagram of $K'$ and assume $Q$ is in the lowest row in which $K'$ does not contain every square.
Notice that there is at least one column to the right of the column containing $Q$ by Lemma \ref{lem:X}.
Consider the diagram of $A$, and note that the entire column that contains $Q$ is shaded in.
If $Q$ is in the bottom row, then $Q$ is the rightmost shaded square in the bottom row and every column containing squares in $A$ is completely shaded. This implies $Q$ contains the only unmatched faces of $A$ by Lemma \ref{lem:unmatch}.
If $Q$ is not in the bottom row, then denote the square diagonally below and to the right of $Q$ by $Q'$. By our assumption $Q'$ is in the highest completely shaded row and every column left of $Q'$ is completely shaded. Therefore $Q'=M_A$ and $Q'$ contains the only unmatched faces of $A$ by Lemma \ref{lem:unmatch}.
Either way the only unmatched faces of are in dimension $d+1$ as desired.

Next assume that there is a row below $Q$ that is not completely shaded in. 
If the diagram of $K'$ has no rows that are completely shaded in, then the rightmost shaded square in the bottom row of $K'$ is still the rightmost shaded square in the bottom row of $A$. Also, the highest shaded squares in each column strictly to the right of $Q$ in the diagram of $K'$ are still the highest shaded squares in each column strictly to the right of $Q$ in the diagram of $A$. 
If the diagram of $K'$ does have rows that are completely shaded in, then $Q$ is not in the row above $M_{K'}$ since the row containing $M_{K'}$ is completely shaded in. 
Therefore $M_A=M_{K'}$ and the only difference between the diagrams of $K'$ and $A$ is that the diagram of $A$ has more columns completely shaded in.
Either way the only unmatched faces of $A$ are also unmatched faces of $K'$. These faces have dimension $d$ as desired.
\end{proof}

\begin{lem}
\label{lem:B}
Let $K'$ be a subcomplex with unmatched faces in a single dimension $d$. Let $X$ be a subcomplex whose diagram is obtained from the diagram of $K'$ by shading in a single square denoted by $Q$ such that $X$ contains unmatched faces in exactly two dimensions.
If $Q$ is in the leftmost column in which $K'$ does not contain every square, then the unmatched faces of $B$ are in dimension $d+1$. Otherwise, they are in dimension $d$.
\end{lem}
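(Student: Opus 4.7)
The plan is to mirror the argument used for Lemma \ref{lem:A}, interchanging the roles of rows and columns. Write $c_Q=S(0)_Q$ and $r_Q=S(1)_Q$ for the column and row coordinates of $Q$, so that $c_Q+r_Q=n-d-2$ by Lemma \ref{lem:Xd}. Passing from $X$ to $B$ completely shades every row from row $r_Q$ downward while leaving the rows strictly above $r_Q$ unchanged. Since $Q\notin K'$, row $r_Q$ was not completely shaded in $K'$, and so row $r_Q$ becomes the highest completely shaded row of $B$.

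For the first case, assume $Q$ lies in the leftmost column of $K'$ that is not completely shaded. Then every column strictly to the left of $c_Q$ is completely shaded, while in column $c_Q$ the highest shaded square of $K'$ lies strictly below $Q$ (or column $c_Q$ is empty in $K'$, if $Q$ sits in the bottom row). In either situation the square directly above $Q$ lies outside $K'$ and hence outside $B$, and moving left from $Q$ within row $r_Q$ one only encounters completely shaded columns, whose entries in the row above are shaded. Therefore $M_B=Q$. By Lemma \ref{lem:unmatch} the unmatched squares of $B$ are the highest shaded squares in columns at or left of $M_B$ (outside the top row), together with the rightmost shaded square of the bottom row unless that row is completely shaded. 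The bottom row of $B$ is completely shaded because it lies at or below row $r_Q$, and every column strictly left of $Q$ has its highest shaded entry in the top row. Hence the unique unmatched square is $Q$, whose faces have dimension $d+1$ by Lemma \ref{lem:Xd}.

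For the second case, assume there is a non-completely-shaded column of $K'$ strictly to the left of $c_Q$; writing $i$ for the number of completely shaded columns of $K'$, this forces $c_Q\le n-k-i-2$. By Lemma \ref{lem:subtypes1} or Lemma \ref{lem:subtypes2} a square $(c,r_Q-1)$ lies in $K'$ exactly when $c\ge n-k-i$ (a completely shaded column) or $c\ge c_Q+2$ (on or below the dimension-$d$ diagonal, via $c_Q+r_Q=n-d-2$). The inequality $c_Q+2\le n-k-i$ then makes $c_Q+2$ the binding threshold, so the leftmost $c$ for which $(c,r_Q-1)$ is unshaded is $c=c_Q+1$; hence $M_B=(c_Q+1,r_Q)$, which lies on the dimension-$d$ diagonal.

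To finish the second case I again apply Lemma \ref{lem:unmatch} to $B$: the bottom row is completely shaded, so rule (ii) is vacuous, and rule (i) confines attention to columns at or left of $M_B$. For any such column that is not completely shaded, the highest shaded entry of $B$ sits at $(c,n-d-1-c)$ on the dimension-$d$ diagonal, since $c\ge c_Q+1$ forces $n-d-1-c\le r_Q$. Completely shaded columns contribute only top-row squares, which are excluded. Thus every unmatched face of $B$ has dimension $d$. The principal obstacle is the bookkeeping required to pin down $M_B$ above row $r_Q$, which in turn relies on the explicit shape descriptions from Lemmas \ref{lem:subtypes1} and \ref{lem:subtypes2}; once $M_B$ is identified, the description of unmatched squares closes the argument.
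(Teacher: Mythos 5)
Your proof is correct and follows essentially the same strategy as the paper's: identify $Q$'s row as the highest completely shaded row of $B$, locate $M_B$ (which is $Q$ in the first case and the square immediately left of $Q$ in the second), and read off the unmatched squares from Lemma \ref{lem:unmatch}. The only difference is cosmetic — you pin down $M_B$ in the second case via coordinate arithmetic and the shape classification of Lemmas \ref{lem:subtypes1} and \ref{lem:subtypes2}, whereas the paper argues locally from the dimensions of the squares adjacent to $Q$ — but both arrive at the same $M_B$ and the same conclusion.
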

\begin{proof}
Recall that $Q$ is not in the top row by Lemma \ref{lem:X}.
This implies that there is a square above $Q$ and it is not in $X$, so by definition, the square above $Q$ is not in $B$ either, and $B\neq K$.

Consider the diagram of $K'$ and assume $Q$ is in the leftmost column that is not completely shaded in.
Now consider the diagram of $B$. 
The entire row that contains $Q$ is shaded in and $Q$ is the leftmost square in this row in which the square above is not shaded.
Therefore $M_B=Q$ and the only unmatched faces of $B$ will be contained in $Q$ by Lemma \ref{lem:unmatch}. These faces have dimension $d+1$ by Lemma \ref{lem:Xd} as desired.

From now on, assume that there is a column left of $Q$ that is not completely shaded in. 
Consider the diagram of $X$. 
Notice that the row containing $Q$ is not completely shaded in since $Q$ cannot be in the rightmost column by Lemma \ref{lem:X}.
This implies that the highest shaded square in every column left of $Q$ contains unmatched faces of dimension $d$ unless it is in the top row by Lemma \ref{lem:unmatch}. Since $Q$ contains faces of dimension $d+1$, the square left of $Q$ contains faces of dimension $d$. Denote the square to the left of $Q$ by $Q_L$. By our assumption, the column containing $Q_L$ is not completely shaded and so $Q_L$ must be the highest shaded square in this column. 

Now consider the diagram of $B$, in which the row containing $Q_L$ and $Q$ is completely shaded in by definition.
Also by definition, the columns left of $Q$ in the diagram of $B$ are the same as those in $X$.
So in the diagram of $B$, the highest shaded square in every column left of $Q$ contains unmatched faces of dimension $d$ unless it is in the top row.
If $Q_L$ is in the leftmost column, then $Q_L=M_B$ by Definition \ref{defn:canon}.

If $Q_L$ is not in the leftmost column, then there exists a shaded square to the left of $Q_L$ containing faces of dimension $d-1$.
It follows that the square to the left of $Q_L$ must not be the highest shaded square in its column.
This implies $Q_L=M_B$ by Definition \ref{defn:canon} as well.

Either way, by Lemma \ref{lem:unmatch}, the only squares in $B$ that contain unmatched faces are $Q_L$ and the highest shaded square in each column left of $Q_L$ that is not in the top row.
These squares contain faces of dimension $d$ as desired.
\end{proof}

\begin{eg}
Consider the subcomplexes $K'$ and $X_i$ shown in Figure \ref{fig:XAB} again. 
Notice that the unmatched faces of $K'$ are all in dimension 8 while the $X_i$ have unmatched faces in dimension 8 and 9.
However the dimensions of the unmatched faces of the $A_i$ and $B_i$ depend on the choice of $Q$.
In $X_0$, $Q$ is in neither the lowest row in which $K'$ does not contain every square nor the leftmost column in which $K'$ does not contain every square. This is why the unmatched faces of $A_0$ and $B_0$ are in dimension 8.
However in $X_1$, $Q$ is in the leftmost column in which $K'$ does not contain every square and we see that the unmatched faces of $B_1$ are in dimension 9. 
Likewise in $X_2$, $Q$ is in the lowest row in which $K'$ does not contain every square and we see that the unmatched faces of $A_2$ are in dimension 9.
\end{eg}

\begin{lem}
\label{lem:ABdim}
Let $K'$ be a subcomplex with unmatched faces in a single dimension $d$. Let $X$ be a subcomplex whose diagram is obtained from the diagram of $K'$ by shading in a single square denoted by $Q$ such that $X$ contains unmatched faces in exactly two dimensions.
If the unmatched faces of $A$ or $B$ are in dimension $d+1$ then the unmatched faces of the other must be in dimension~$d$.
\end{lem}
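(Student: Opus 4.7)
The plan is a proof by contradiction. Suppose that the unmatched faces of both $A$ and $B$ lie in dimension $d+1$. By Lemmas~\ref{lem:A} and~\ref{lem:B} this forces $Q$ to sit simultaneously in the lowest row of $K'$ that is not completely shaded \emph{and} in the leftmost column of $K'$ that is not completely shaded. I will derive a contradiction by showing that under this assumption $X$ itself has unmatched faces in only a single dimension, contradicting the hypothesis of the lemma.

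Let $a$ denote the number of completely shaded bottom rows of the diagram of $K'$ and $b$ the number of completely shaded leftmost columns. Then $Q$ occupies row $a+1$ from the bottom and column $b+1$ from the left. By Lemma~\ref{lem:Xd}, $Q$ contains faces of dimension $d+1$, and combining this with the formula $\dim F(S) = n - S(0) - S(1) - 1$ applied to the coordinates of $Q$ yields the relation $a + b = d$.

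Next I will read off the canonical matching of $X$. Since $Q$ lies neither in the top row nor in the rightmost column (Lemma~\ref{lem:X}), and the squares to the right of $Q$ in $Q$'s row lie strictly above the dimension-$d$ diagonal and outside every completely shaded column, adding $Q$ does not complete any new row of the diagram. Hence when $a \geq 1$ the highest completely shaded row of $X$ is still the $a$-th row from the bottom, but now $M_X$ is shifted one column to the right of $M_{K'}$: the column of $Q$ is disqualified because the square above its $a$-th row is $Q$ itself, so $M_X$ sits at row $a$ from the bottom and column $b+2$ from the left. When $a = 0$ there are no completely shaded rows in $X$ at all, so by Definition~\ref{defn:canon} $M_X$ is the top-right square.

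Finally I will apply Lemma~\ref{lem:unmatch} to enumerate the unmatched squares of $X$. The completely shaded leftmost $b$ columns contribute no unmatched faces because their highest shaded squares lie in the top row. The column containing $Q$ has $Q$ as its highest shaded square, yielding an unmatched square of dimension $d+1$. When $a \geq 1$, the column of $M_X$ has its highest shaded square at row $a$ from the bottom, and the coordinate computation $\dim = n - (n-k-b-2) - (k-a) - 1 = a + b + 1 = d + 1$ shows this square too has dimension $d+1$; the bottom row is then completely shaded and contributes nothing further. When $a = 0$, no column to the right of $Q$ contains any shaded square and the bottom row's rightmost shaded square is $Q$ itself, so $Q$ is the only unmatched face. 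In every case every unmatched face of $X$ has dimension $d+1$, contradicting the assumption that $X$ has unmatched faces in exactly two dimensions. The main obstacle is simply the careful bookkeeping required to identify $M_X$ and to verify that each candidate unmatched square has dimension $d+1$; this is routine once the relation $a + b = d$ is established.
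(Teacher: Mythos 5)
Your proof is correct and follows essentially the same route as the paper's: assume both $A$ and $B$ have their unmatched faces in dimension $d+1$, use Lemmas \ref{lem:A} and \ref{lem:B} to pin $Q$ into the lowest incomplete row and leftmost incomplete column of $K'$, split on whether $Q$ is in the bottom row, and then apply Lemma \ref{lem:unmatch} to show that the only unmatched squares of $X$ are $Q$ (and, when $a\geq 1$, the square diagonally below and to its right, which is $M_X$), all of dimension $d+1$ --- contradicting that $X$ has unmatched faces in two dimensions. Your explicit coordinate bookkeeping via $a+b=d$ replaces the paper's observation that $Q$ and $M_X$ lie on the same diagonal, but the argument is the same.
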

\begin{proof}
Assume that the unmatched faces of $A$ and $B$ are both in dimension $d+1$ and recall that 
Consider the subcomplex diagram of $X$.

If $Q$ is in the bottom row, then by Lemma \ref{lem:X}, $Q$ cannot also be in the leftmost column. 
Also by Lemma \ref{lem:X}, $Q$ is not in the rightmost column and so the bottom row is not completely shaded, and $M_{X}$ is in the top right corner of the diagram.
By Lemma \ref{lem:B}, $Q$ is in the leftmost column in which $K'$ does not contain every square.
This implies that there exist columns to the left of $Q$, and they are all completely shaded in. 
Since $Q$ is in the bottom row, there are no shaded squares other than $Q$ in the column containing $Q$ or to the right of $Q$ since those squares would also be shaded in the diagram of $K'$, but $Q$ is not in $K'$ and so $K'$ would not be a subcomplex.
It follows that $Q$ is both the rightmost shaded square in the bottom row and the highest shaded square in its column.
Therefore by Lemma \ref{lem:unmatch}, $Q$ is the only square that contains unmatched faces in $X$.
So $X$ only contains unmatched faces in a single dimension, which is a contradiction.

Now suppose that $Q$ is not in the bottom row.
By Lemma \ref{lem:A}, $Q$ is in the lowest row in which $K'$ does not contain every square and this implies that the rows below $Q$ are all completely shaded in.
Notice that the square to the right of $Q$ and the squares above $Q$ are not shaded since those squares would also be shaded in the diagram of $K'$, but $Q$ is not in $K'$ and so $K'$ would not be a subcomplex.
Also, $Q$ is not in the rightmost column which means there exists a square diagonally below and to the right of $Q$. 
Denote this square by $Q'$. 
Since $Q'$ is in the highest completely shaded row and is the leftmost square in that row such that the square above it is not shaded, we have $Q'=M_X$.
By Lemma \ref{lem:B}, $Q$ is in the leftmost column in which $K'$ does not contain every square.
This implies that if there exist columns to the left of $Q$, then they are all completely shaded in. 
So the bottom row of $X$ is completely shaded and $Q$ and $Q'$ are the only highest shaded squares in any column left of and including the column containing $M_X$ that are not in the top row.
Therefore by Lemma \ref{lem:unmatch}, $Q$ and $Q'$ are the only squares that contain unmatched faces in $X$.  
However, this is a contradiction since they both contain faces of the same dimension.

The proof now follows from Lemmas \ref{lem:A} and \ref{lem:B}.
\end{proof}

\begin{eg}
Consider the subcomplexes $X_i$ shown in Figure \ref{fig:ABdim} below. 
In each diagram the square marked with a circle indicates $Q$.
Notice how in each case every column left of $Q$ is completely shaded and every row below $Q$ is completely shaded.
In $X_0$ we see the case when $Q$ is in the bottom row.
In $X_1$ we see the case when $Q$ is in the leftmost column.
In $X_2$ we see the case when $Q$ is in neither the bottom row or the leftmost column.
Also notice that in each case, the unmatched faces of $X_i$ are all in the same dimension.
\end{eg}

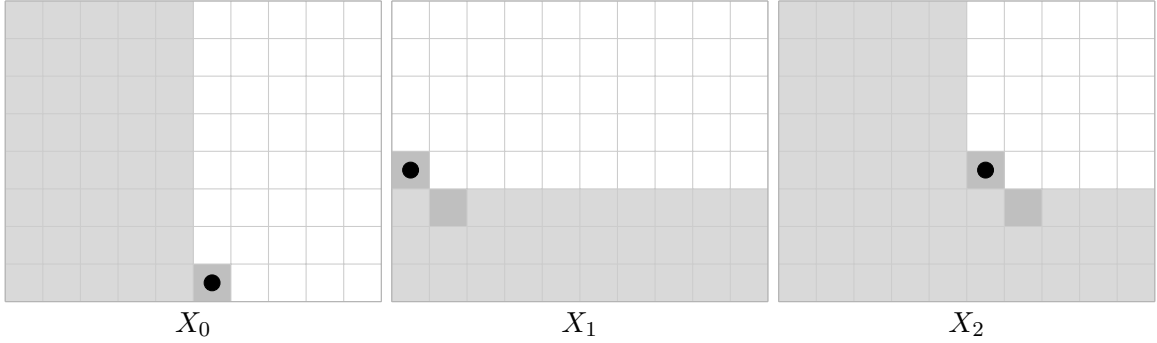
\begin{figure}[htbp]
\def\sca{.5}
\def \length{10}
\def \height{8}
\begin{center}
\begin{tikzpicture}[scale=\sca]
\fill [shade] (0,0) rectangle (5,8);
\fill [shade] (5,0) rectangle (6,1);
\fill [shade b] (5,0) rectangle (6,1);
\filldraw [shade c] (5.5,.5) circle (6pt);
\draw [step=1,thin,gray!40] (0,0) grid (\length, \height);
\draw [linee] (0,0) -- (0,\height);
\draw [linee] (0,0) -- (\length,0) node [midway, below, black] {$X_0$};
\draw [linee] (\length, \height) -- (0,\height);
\draw [linee] (\length, \height) -- (\length,0);
\end{tikzpicture}
\begin{tikzpicture}[scale=\sca]
\fill [shade] (0,0) rectangle (10,3);
\fill [shade b] (0,3) rectangle (1,4);
\filldraw [shade c] (0.5,3.5) circle (6pt);
\fill [shade b] (1,2) rectangle (2,3);
\draw [step=1,thin,gray!40] (0,0) grid (\length, \height);
\draw [linee] (0,0) -- (0,\height);
\draw [linee] (0,0) -- (\length,0) node [midway, below, black] {$X_1$};
\draw [linee] (\length, \height) -- (0,\height);
\draw [linee] (\length, \height) -- (\length,0);
\end{tikzpicture}
\begin{tikzpicture}[scale=\sca]
\fill [shade] (0,0) rectangle (5,8);
\fill [shade] (0,0) rectangle (10,3);
\fill [shade b] (5,3) rectangle (6,4);
\filldraw [shade c] (5.5,3.5) circle (6pt);
\fill [shade b] (6,2) rectangle (7,3);
\draw [step=1,thin,gray!40] (0,0) grid (\length, \height);
\draw [linee] (0,0) -- (0,\height);
\draw [linee] (0,0) -- (\length,0) node [midway, below, black] {$X_2$};
\draw [linee] (\length, \height) -- (0,\height);
\draw [linee] (\length, \height) -- (\length,0);
\end{tikzpicture}
\caption{Possible diagrams arising in proof of Lemma \ref{lem:ABdim}.}
\label{fig:ABdim}
\end{center}
\end{figure}

\begin{lem}
\label{lem:Xnotred}
Let $K'$ be a subcomplex with unmatched faces in a single dimension $d$. Let $X$ be a subcomplex whose diagram is obtained from the diagram of $K'$ by shading in a single square denoted by $Q$ such that $X$ contains unmatched faces in exactly two dimensions.
The reduced homology of $X$ is not concentrated in a single degree. Specifically, we have $\widetilde{H}_d(X)\neq 0$ and $\widetilde{H}_{d+1}(X)\neq0$.
\end{lem}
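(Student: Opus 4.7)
The plan is to exhibit $X$ as the intersection $A \cap B$ and deduce its reduced homology from a Mayer--Vietoris argument in which the three other subcomplexes $A$, $B$, and $A \cup B$ all have reduced homology concentrated in a single degree. By Lemmas \ref{lem:A}, \ref{lem:B}, and \ref{lem:ABdim}, I may assume without loss of generality that $A$ has unmatched faces in dimension $d$ and $B$ has unmatched faces in dimension $d+1$. Propositions \ref{prop:subtypes} and \ref{prop:K'reduced} then give $\widetilde{H}_d(A) \cong \mathbb{Z}^{u_A}$ and $\widetilde{H}_{d+1}(B) \cong \mathbb{Z}^{u_B}$ with $u_A, u_B \geq 1$, and all other reduced homology groups of $A$ and $B$ vanish.

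First I would verify $X = A \cap B$ directly from Definition \ref{defn:AB}: a square in $A \setminus X$ sits in a column at or to the left of the column containing $Q$, while a square in $B \setminus X$ sits in a row at or below the row containing $Q$, so a square in $(A \cap B) \setminus X$ would have to satisfy both conditions; but Remark \ref{rem:downleft} forces any such square to have dimension at most $d+1$, and hence either to lie in $K' \subseteq X$ (if its dimension is less than $d+1$) or to coincide with $Q$ itself.

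I would then turn to $Y = A \cup B$. Rewriting its shading shows that $Y$ is still captured by Proposition \ref{prop:subtypes}, so its reduced homology is concentrated in a single degree $d_Y$; the main technical step of the proof is to identify $d_Y$. Using Lemma \ref{lem:subtypes2b} together with the inequalities $c, r \geq 1$ from Lemma \ref{lem:X} (where $(c, r)$ denotes the position of $Q$), one checks that every square on the diagonals of dimensions $d+1$ and $d+2$ lies in $Y$, while the square $(c-1, r-1)$ on the $(d+3)$-diagonal does not. Consequently $d_Y = d + 2$, and Proposition \ref{prop:K'reduced} gives $\widetilde{H}_n(Y) = 0$ for $n \neq d+2$, with $\widetilde{H}_{d+2}(Y)$ free abelian of some rank $u_Y \geq 0$.

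Finally I would feed these inputs into the reduced Mayer--Vietoris long exact sequence
$$\cdots \to \widetilde{H}_n(X) \to \widetilde{H}_n(A) \oplus \widetilde{H}_n(B) \to \widetilde{H}_n(Y) \to \widetilde{H}_{n-1}(X) \to \cdots$$
At $n = d$, the surrounding groups $\widetilde{H}_{d+1}(Y)$, $\widetilde{H}_d(B)$, and $\widetilde{H}_d(Y)$ all vanish, collapsing the sequence to $\widetilde{H}_d(X) \cong \mathbb{Z}^{u_A} \neq 0$. At $n = d+1$, using also $\widetilde{H}_{d+2}(A) = \widetilde{H}_{d+2}(B) = 0$, the sequence becomes a short exact sequence $0 \to \mathbb{Z}^{u_Y} \to \widetilde{H}_{d+1}(X) \to \mathbb{Z}^{u_B} \to 0$, forcing $\widetilde{H}_{d+1}(X) \neq 0$ since $u_B \geq 1$. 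The diagrammatic bookkeeping needed to pin down $d_Y = d + 2$ is the main obstacle; the rest is routine exact sequence chasing.
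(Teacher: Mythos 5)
Your overall strategy is the same as the paper's: realize $X=A\cap B$, show $A\cup B$ has its reduced homology concentrated in degree $d+2$, and run the reduced Mayer--Vietoris sequence. However, there is a genuine gap at the step where you ``assume without loss of generality that $A$ has unmatched faces in dimension $d$ and $B$ has unmatched faces in dimension $d+1$.'' Lemmas \ref{lem:A}, \ref{lem:B}, and \ref{lem:ABdim} do \emph{not} guarantee that either of $A$, $B$ has its unmatched faces in dimension $d+1$; they only guarantee that at least one of them has its unmatched faces in dimension $d$ (Lemma \ref{lem:ABdim} rules out \emph{both} being in dimension $d+1$, not both being in dimension $d$). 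The case where both $A$ and $B$ have unmatched faces in dimension $d$ genuinely occurs: the triple $X_0, A_0, B_0$ in Figure \ref{fig:XAB} is exactly such an example.

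In that case your argument for $\widetilde{H}_{d+1}(X)\neq 0$ collapses. The segment of the sequence at $n=d+1$ becomes $0\to\widetilde{H}_{d+2}(A\cup B)\to\widetilde{H}_{d+1}(X)\to 0$, so $\widetilde{H}_{d+1}(X)\cong\mathbb{Z}^{u_Y}$, and you have only asserted $u_Y\geq 0$. To finish you must prove $u_Y\geq 1$, i.e.\ that $A\cup B$ actually has unmatched faces in dimension $d+2$. This requires identifying the square to the right of $Q$ as $M_{A\cup B}$: the square above-and-to-the-right of $Q$ cannot be in $X$ (otherwise the square above $Q$ would be in $K'$ by Remark \ref{rem:downleft}, forcing $Q\in K'$), so the row of $Q$ is the highest completely shaded row of $A\cup B$ and the square to the right of $Q$ is its leftmost square with unshaded square above; Lemma \ref{lem:unmatch} then gives unmatched faces there, whence $\widetilde{H}_{d+2}(A\cup B)\neq 0$ by Proposition \ref{prop:K'reduced}. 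This is precisely the step the paper carries out and the one your proposal omits. (Your claim that every square on the $(d+1)$- and $(d+2)$-diagonals lies in $A\cup B$ is also not needed and not true in general; what matters is locating $M_{A\cup B}$, not filling whole diagonals.)
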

\begin{proof}
Recall that $Q$ is not in the rightmost column by Lemma \ref{lem:X} and $Q$ contains faces of dimension $d+1$ by Lemma \ref{lem:Xd}.
Also by Lemma \ref{lem:X}, $Q$ is not the lower left square in the diagram of $X$ which implies that $d+1>1$ and hence $d>0$.

Consider the subcomplexes $A$ and $B$ obtained from $X$ as in Definition \ref{defn:AB} and let $A\cup B$ be the subcomplex containing any face contained in either $A$ or $B$. 
In other words, $A\cup B$ is the subcomplex obtained from $X$ by completely shading the row containing $Q$, any row under that, the column containing $Q$, and any column left of that.
Therefore the square to the right of $Q$ in $A\cup B$ is the leftmost shaded square in the highest completely shaded row such that the square above it is not shaded. This implies that the only unmatched faces in $A\cup B$ are in the square to the right of $Q$ by Lemma \ref{lem:unmatch} which contains faces of dimension $d+2$. 
So $\widetilde{H}_l(A\cup B)\cong 0$ if and only if $l\neq d+2$ by Proposition \ref{prop:K'reduced}.

Next consider the diagrams of $A$ and $B$ and notice that the only squares that $A$ and $B$ have in common are the squares of $K'$ and $Q$.
If we let $A\cap B$ be the subcomplex containing only the faces that are in both $A$ and $B$, then $A\cap B=X$.

Since both $A$ and $B$ contain the vertices of $J(n,k)$, $A$ and $B$ have non-empty intersection.
So we can use $X=A\cap B, A, B$ and $A\cup B$ to construct the following long exact sequence of homology groups known as the Mayer--Vietoris sequence for reduced homology:
\begin{center}
$\cdots\longrightarrow \widetilde{H}_{l}(X) \longrightarrow \widetilde{H}_{l}(A)\oplus \widetilde{H}_{l}(B)\longrightarrow \widetilde{H}_{l}(A\cup B)\longrightarrow \widetilde{H}_{l-1}(X)\cdots$ 
\end{center}
\begin{flushright}
$\cdots\longrightarrow \widetilde{H}_0(A\cup B)\longrightarrow 0$
\end{flushright}

Recall that $A$ and $B$ have unmatched faces in a single dimension of either $d$ or $d+1$. Therefore $\widetilde{H}_{l}(A)\oplus \widetilde{H}_{l}(B)\cong 0$ unless $l=d$ or $l=d+1$.
So if we consider the Mayer--Vietoris sequence when $l=d$ or $l=d+1$ we get the following two subsequences:
\begin{center}
$0\longrightarrow \widetilde{H}_{d+2}(A\cup B)\longrightarrow \widetilde{H}_{d+1}(X) \longrightarrow \widetilde{H}_{d+1}(A)\oplus \widetilde{H}_{d+1}(B) \longrightarrow 0$;
\end{center}
\begin{center}
$0\longrightarrow \widetilde{H}_{d}(X)\longrightarrow \widetilde{H}_{d}(A)\oplus \widetilde{H}_{d}(B) \longrightarrow 0$.
\end{center}

Since these are exact sequences, 
$\widetilde{H}_{d}(X)\cong \widetilde{H}_{d}(A)\oplus \widetilde{H}_{d}(B) \neq 0$, since by Lemma \ref{lem:ABdim} at least one of $A$ and $B$ has unmatched faces in dimension $d$. 
Also, if $\widetilde{H}_{d+1}(X)\cong 0$ then $\widetilde{H}_{d+2}(A\cup B)\cong 0$ which is a contradiction. 
Therefore the reduced homology of $X$ is concentrated in degrees $d$ and $d+1$, as desired.
\end{proof}


\section{Classification of Subcomplexes}
\label{s:class2}

We have seen in Lemma \ref{lem:Xnotred} that if we add a single square to a subcomplex containing unmatched faces in a single dimension and we get a subcomplex containing unmatched faces in two dimensions, then the reduced homology of this new subcomplex will not be concentrated in a single degree. Our next goal is to show that if, conversely, we remove some squares, then we end up with the same result.

\begin{lem}
\label{lem:HomContain}
Let $D$ be a CW complex and let $C$ be a subcomplex of $D$. Also let $C$ and $D$ be represented by the following two chain complexes respectively in the standard way:
\begin{center}
$\cdots\longerrightarrow C_{l+1}\stackrel{\partial_{C,l+1}}{\longerrightarrow} C_{l} \stackrel{\partial_{C,l}}{\longerrightarrow} C_{l-1}\stackrel{\partial_{C,l-1}}{\longerrightarrow}\cdots\stackrel{\partial_{C,0}}{\longerrightarrow} C_{-1}\stackrel{\partial_{C,-1}}{\longerrightarrow} 0$;
\end{center}
\begin{center}
$\cdots\longerrightarrow D_{l+1}\stackrel{\partial_{D,l+1}}{\longerrightarrow} D_{l} \stackrel{\partial_{D,l}}{\longerrightarrow} D_{l-1}\stackrel{\partial_{D,l-1}}{\longerrightarrow}\cdots\stackrel{\partial_{D,0}}{\longerrightarrow} D_{-1}\stackrel{\partial_{D,-1}}{\longerrightarrow} 0$.
\end{center}
\begin{enumerate}
  \item [\rm(i)] If $C_{l-1} = D_{l-1}$, $C_{l} \subset D_{l}$, and $H_{l-1}(D)\neq 0$ then $H_{l-1}(C)\neq 0$.
  \item [\rm(ii)] If $C_{l+1} = D_{l+1}$, $C_{l} \subset D_{l}$, and $H_{l}(C)\neq 0$ then $H_{l}(D)\neq 0$.
\end{enumerate}
\end{lem}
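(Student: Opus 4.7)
The plan is to exploit the compatibility of the two chain complexes coming from the fact that $C$ is a subcomplex of $D$. Namely, for every $i$ there is an inclusion $C_i \hookrightarrow D_i$, and the restriction of $\partial_{D,i}$ to $C_i$ agrees with $\partial_{C,i}$ (after composition with the inclusion $C_{i-1} \hookrightarrow D_{i-1}$). The two equalities $C_{l-1} = D_{l-1}$ in (i) and $C_{l+1} = D_{l+1}$ in (ii) then give extra information about kernels and images that combines with the hypothesis $C_l \subset D_l$ via a short cycle chase.

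For part (i), since $C_{l-1} = D_{l-1}$ and the boundary operators agree on this common group, the kernels coincide: $\ker \partial_{C,l-1} = \ker \partial_{D,l-1}$. From $C_l \subset D_l$ I get $\mathrm{im}\, \partial_{C,l} \subset \mathrm{im}\, \partial_{D,l}$. Thus I pick a representative $x \in \ker \partial_{D,l-1}$ of a nonzero class in $H_{l-1}(D)$; then $x \in \ker \partial_{C,l-1}$, and $x$ cannot lie in the smaller set $\mathrm{im}\, \partial_{C,l}$ because it does not even lie in $\mathrm{im}\, \partial_{D,l}$. So $x$ represents a nonzero class in $H_{l-1}(C)$.

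For part (ii), since $C_{l+1} = D_{l+1}$ and $\partial_{D,l+1}$ takes $C_{l+1}$ into $C_l$ (again by the subcomplex property), the images coincide: $\mathrm{im}\, \partial_{C,l+1} = \mathrm{im}\, \partial_{D,l+1}$. On the other hand, $C_l \subset D_l$ forces $\ker \partial_{C,l} \subset \ker \partial_{D,l}$, since the boundary maps agree on $C_l$. Starting from a representative $x$ of a nonzero class in $H_l(C)$, I obtain $x \in \ker \partial_{C,l} \subset \ker \partial_{D,l}$ and $x \notin \mathrm{im}\, \partial_{C,l+1} = \mathrm{im}\, \partial_{D,l+1}$, so $x$ represents a nonzero class in $H_l(D)$.

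There is no real obstacle; the argument is pure diagram chasing in a single degree, and the two parts are formally dual — one upgrades a nonzero cycle from $D$ to $C$ using that kernels agree and images shrink, the other upgrades a nonzero cycle from $C$ to $D$ using that images agree and kernels grow. The only point requiring a little care is to spell out that the boundary operator of the larger complex restricts correctly to the smaller, which is exactly what it means for $C$ to be a subcomplex of $D$.
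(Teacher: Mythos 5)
Your argument is correct and is essentially the paper's own proof: both rest on the observations that $\ker\partial_{C,l-1}=\ker\partial_{D,l-1}$ and $\mathrm{im}\,\partial_{C,l}\subset\mathrm{im}\,\partial_{D,l}$ for (i), and $\mathrm{im}\,\partial_{C,l+1}=\mathrm{im}\,\partial_{D,l+1}$ and $\ker\partial_{C,l}\subset\ker\partial_{D,l}$ for (ii). The only difference is that you chase an explicit nonzero cycle while the paper phrases the same comparison as a contradiction, which is an immaterial distinction.
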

\begin{proof}
We first prove part (i) by assuming $C_{l-1} = D_{l-1}$, $C_{l} \subset D_{l}$, and $H_{l-1}(D)\neq 0$ and recalling that $H_{l-1}(D) = $ ker $\partial_{D,l-1} / $im $\partial_{D,l}$. 
Notice that $\mathrm{ker} \ \partial_{C,l-1}=\mathrm{ker} \ \partial_{D,l-1}$ and im $\partial_{C,l}\subset$ im $\partial_{D,l}$.
Therefore, if $H_{l-1}(C) = $ ker $\partial_{C,l-1} / $im $\partial_{C,l}=0$, then $H_{l-1}(D) = $ ker $\partial_{C,l-1} / $im $\partial_{D,l} = 0$ which is a contradiction.

Similarly, we prove part (ii) by assuming $C_{l+1} = D_{l+1}$, $C_{l} \subset D_{l}$, and $H_{l}(C)\neq 0$ and recalling that $H_{l}(C) = $ ker $\partial_{C,l} / $im $\partial_{C,l+1}$.
Notice that $\mathrm{im} \ \partial_{C,l+1}=\mathrm{im} \ \partial_{D,l+1}$ and ker $\partial_{C,l}\subset$ ker $\partial_{D,l}$.
Therefore, if $H_{l}(D) = $ ker $\partial_{D,l} / $im $\partial_{D,l+1}=0$ then $H_{l}(C) = $ ker $\partial_{C,l} / $im $\partial_{D,l+1}=0$ which is a contradiction.
\end{proof}

\begin{lem}
\label{lem:remove}
Let $K'$ be a subcomplex with unmatched faces in a single dimension $d$. 
Let $X$ be a subspace of $K'$ whose diagram is obtained from the diagram of $K'$ by unshading a single square denoted by $Q$.
The space $X$ is a subcomplex if and only if $Q$ is one of the following squares:
\begin{enumerate}
  \item [\rm(i)] the highest square in the rightmost completely shaded column;
  \item [\rm(ii)] the highest shaded square in each column strictly between the rightmost completely shaded column and the column containing $M$; or
  \item [\rm(iii)] the rightmost square in the highest completely shaded row.
\end{enumerate}
\end{lem}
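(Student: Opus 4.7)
The plan is to reduce the claim to a combinatorial question about the shape of the shaded region of $K'$. By Remark \ref{rem:downleft}, a diagram represents a subcomplex if and only if the set of shaded squares is closed under moving down and to the left. Consequently, unshading a single square $Q$ from $K'$ yields a subcomplex if and only if no shaded square of $K'$ lies directly above $Q$ in its column or directly to its right in its row; equivalently, $Q$ must simultaneously be the highest shaded square in its column and the rightmost shaded square in its row. The forward direction is immediate, since such a square would have an unshaded square below it or to its left after $Q$ is unshaded; the reverse is equally direct, because no new down-and-left violation can be introduced by removing a square whose column has nothing shaded above and whose row has nothing shaded to the right.

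It then remains to identify these \emph{corner} squares in $K'$ and to verify that they are precisely the three kinds listed. I would invoke Proposition \ref{prop:subtypes} to recall that the shape of $K'$ is the union of (possibly zero) completely shaded leftmost columns, (possibly zero) completely shaded bottom rows, and the region on or below the diagonal of dim-$d$ squares. This makes the column-height function of $K'$ constant at $k$ over the leftmost portion, strictly decreasing by one per column over the staircase in the middle, and constant at $j$ over the rightmost portion. A direct inspection shows that the corners are exactly the top of the last column of height $k$, the top of each column in the staircase region, and the right end of the last row of height $j$, matching types (i), (ii), and (iii) respectively.

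The main obstacle is simply bookkeeping for degenerate cases: when $K'$ has no completely shaded columns (so (i) is vacuous), when $K'$ has no completely shaded rows (so (iii) is vacuous and the staircase of (ii) descends to the bottom row), and when the column of $M$ is immediately adjacent to the rightmost completely shaded column (so (ii) is vacuous). I would also verify that $M$ itself is never a corner: by Definition \ref{defn:canon}, $M$ lies in the highest completely shaded row, so the square immediately to its right is shaded and $M$ cannot be rightmost in its row. Identifying the column of $M$ with $x_M = n-k-m_0-1$ (equivalently $d - j$ when $j > 0$) and the rightmost completely shaded column with $\max(i-1,\, d-k)$ confirms that the phrase ``strictly between'' in (ii) singles out exactly the staircase portion, completing the matching of corners to the three listed families.
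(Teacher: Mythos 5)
Your proposal is correct and follows essentially the same route as the paper: both reduce the claim, via Remark \ref{rem:downleft}, to the criterion that $Q$ may be unshaded precisely when neither the square above it nor the square to its right is shaded, and then identify those ``corner'' squares (the paper does this column-by-column using Lemma \ref{lem:unmatch} and the single-dimension hypothesis, while you read the corners off the explicit staircase shape supplied by Proposition \ref{prop:subtypes}, which amounts to the same thing). The only slip is in an inessential aside: $M$ \emph{can} be a corner when it lies in the rightmost column (there is then no square to its right), but in that case it coincides with the type-(iii) square, so your characterization is unaffected.
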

\begin{proof}
Consider the diagram of $K'$. Notice by Remark \ref{rem:downleft} that we can unshade a square $Q$ and end up with another subcomplex if and only if the square above $Q$ and the square to the right of $Q$ are not shaded.

If $Q$ is in any of the fully shaded columns then the only square that does not have either of the squares to the right of it or above it shaded is the top right of these.

Next suppose that $Q$ is in one of the columns strictly between the rightmost completely shaded column and the column containing $M$. The only shaded square in such a column that does not have the square above it shaded is the highest shaded square. 
So consider one of these squares; denote it by $Q'$ and the square to its right by $Q'_R$. 
Since $Q'$ is the highest shaded square in its column, it contains unmatched faces by Lemma \ref{lem:unmatch}.
This implies that $Q'$ contains faces of dimension $d$ and so $Q'_R$ contains faces of dimension $d+1$.
So if $Q'_R$ is shaded, then $Q'_R$ is the highest shaded square in its column by Remark \ref{rem:downleft} and so $K'$ contains unmatched faces of dimension $d+1$ which is a contradiction.
Therefore the square above $Q'$ is not shaded and the square to the right of $Q'$ is not shaded.

Finally, suppose that $Q$ is in one of the columns including or to the right of the column containing $M$. If $K'$ has no completely shaded rows, then $M$ is in the top right corner and there are no shaded squares in these columns. If $K'$ does have a completely shaded row, then by Definition \ref{defn:canon} the highest shaded square in each of these columns is in the same row as $M$. However, the only square in the row containing $M$ that does not have the square to the right of it shaded is the rightmost of them.
\end{proof}

\begin{defn}
\label{defn:Y}
Let $X$ be a subcomplex with unmatched faces in exactly two dimensions $d$ and $d+1$. 
Define $R_X$ to be the set of the highest shaded squares in each column strictly between the rightmost completely shaded column and the column containing $M_X$. 
Define $Y_X$ to be the space whose diagram is obtained from the diagram of $X$ by unshading each square in $R_X$ that contains faces of dimension $d+1$ except the rightmost of them.
Define $Y_{0,X}$ to be the space whose diagram is obtained from the diagram of $Y_X$ by unshading the remaining square in $R_X$ that contains faces of dimension $d+1$.
When the subcomplex $X$ is clear from context, we will refer to $R_X$, $Y_X$, and $Y_{0,X}$ simply as $R$, $Y$, and $Y_{0}$ respectively.
\end{defn}

\begin{eg}
Consider the subcomplexes $X$, $Y$, and $Y_0$ shown in Figure \ref{fig:XY} below. 
Notice how $X$ has unmatched faces in dimensions 7 and 8 and that $X$ does not have a single row that is completely shaded in, so $M$ is in the top right corner.
In these diagrams the squares marked with an $R$ denote every square that is in a column strictly between the rightmost completely shaded column and the column containing $M$ such that this square is the highest shaded square in its column and contains faces of dimension 8. 
To obtain $Y$ from $X$, we unshade each of these except the rightmost.
To obtain $Y_0$ from $X$, we also unshade the rightmost square in $R$.
\end{eg}

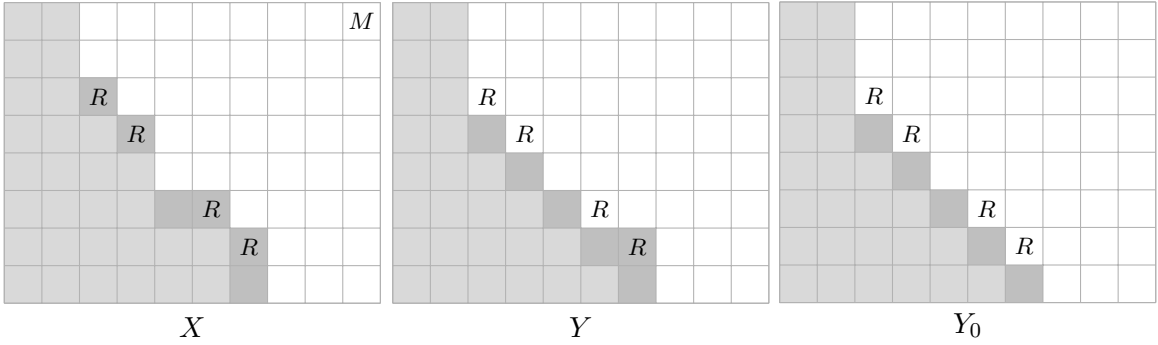
\begin{figure}[htbp]
\def\sca{.5}
\begin{center}
\begin{tikzpicture}[scale=\sca]
\def \length{10}
\def \height{8}
\def \lengthb{9} 
\def \heightb{7} 
\fill [shade] (0,0) rectangle (2,8);
\fill [shade] (2,0) rectangle (3,6);
\fill [shade] (3,0) rectangle (4,5);
\fill [shade] (4,0) rectangle (5,3);
\fill [shade] (5,0) rectangle (6,3);
\fill [shade] (6,0) rectangle (7,2);
\fill [shade b] (2,5) rectangle (3,6);
\fill [shade b] (3,4) rectangle (4,5);
\fill [shade b] (4,2) rectangle (5,3);
\fill [shade b] (5,2) rectangle (6,3);
\fill [shade b] (6,1) rectangle (7,2);
\fill [shade b] (6,0) rectangle (7,1);
\draw [step=1,thin,linee] (0,0) grid (\length, \height);
\draw [linee] (0,0) -- (0,\height);
\draw [linee] (0,0) -- (\length,0); 
\draw (5,-.1) node [below] {$X$};
\draw [linee] (\length, \height) -- (0,\height);
\draw [linee] (\length, \height) -- (\length,0);
\footnotesize
\draw (2.5,5.5) node {$R$}; 
\draw (3.5,4.5) node {$R$};
\draw (5.5,2.5) node {$R$};
\draw (6.5,1.5) node {$R$};
\draw (9.5,7.5) node {$M$};
\end{tikzpicture}
%
%
\begin{tikzpicture}[scale=\sca]
\def \length{10}
\def \height{8}
\def \lengthb{9} 
\def \heightb{7} 
\fill [shade] (0,0) rectangle (2,8);
\fill [shade] (2,0) rectangle (3,5);
\fill [shade] (3,0) rectangle (4,4);
\fill [shade] (4,0) rectangle (5,3);
\fill [shade] (5,0) rectangle (6,2);
\fill [shade] (6,0) rectangle (7,2);
\fill [shade b] (2,4) rectangle (3,5);
\fill [shade b] (3,3) rectangle (4,4);
\fill [shade b] (4,2) rectangle (5,3);
\fill [shade b] (5,1) rectangle (6,2);
\fill [shade b] (6,1) rectangle (7,2);
\fill [shade b] (6,0) rectangle (7,1);
\draw [step=1,thin,linee] (0,0) grid (\length, \height);
\draw [linee] (0,0) -- (0,\height);
\draw [linee] (0,0) -- (\length,0); 
\draw (5,-.1) node [below] {$Y$};
\draw [linee] (\length, \height) -- (0,\height);
\draw [linee] (\length, \height) -- (\length,0);
\footnotesize
\draw (2.5,5.5) node {$R$}; 
\draw (3.5,4.5) node {$R$};
\draw (5.5,2.5) node {$R$};
\draw (6.5,1.5) node {$R$};
\end{tikzpicture}
%
%
\begin{tikzpicture}[scale=\sca]
\def \length{10}
\def \height{8}
\def \lengthb{9} 
\def \heightb{7} 
\fill [shade] (0,0) rectangle (2,8);
\fill [shade] (2,0) rectangle (3,5);
\fill [shade] (3,0) rectangle (4,4);
\fill [shade] (4,0) rectangle (5,3);
\fill [shade] (5,0) rectangle (6,2);
\fill [shade] (6,0) rectangle (7,1);
\fill [shade b] (2,4) rectangle (3,5);
\fill [shade b] (3,3) rectangle (4,4);
\fill [shade b] (4,2) rectangle (5,3);
\fill [shade b] (5,1) rectangle (6,2);
\fill [shade b] (6,0) rectangle (7,1);
\draw [step=1,thin,linee] (0,0) grid (\length, \height);
\draw [linee] (0,0) -- (0,\height);
\draw [linee] (0,0) -- (\length,0); 
\draw (5,0) node [below] {$Y_0$};
\draw [linee] (\length, \height) -- (0,\height);
\draw [linee] (\length, \height) -- (\length,0);
\footnotesize
\draw (2.5,5.5) node {$R$}; 
\draw (3.5,4.5) node {$R$};
\draw (5.5,2.5) node {$R$};
\draw (6.5,1.5) node {$R$};
\end{tikzpicture}
\caption{Diagrams for $Y$ and $Y_0$.}
\label{fig:XY}
\end{center}
\end{figure}

In Lemmas \ref{lem:Ysub}--\ref{lem:Y2} we will show that $Y$ and $Y_{0}$ are each subcomplexes and that $\widetilde{H}_{d}(Y)\neq 0$ and $\widetilde{H}_{d+1}(Y)\neq0$.

\begin{lem}
\label{lem:Ysub}
If $X$ is a subcomplex with unmatched faces in exactly two dimensions $d$ and $d+1$, then $Y$ and $Y_0$ are both subcomplexes.
\end{lem}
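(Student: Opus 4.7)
The plan is to apply the characterization from Remark \ref{rem:downleft}: a diagram determines a subcomplex precisely when every shaded square has every square below and to its left also shaded. Since $Y$ and $Y_0$ are obtained from $X$ by unshading certain squares of $R$ (all of dimension $d+1$), it suffices to show that for each such unshaded square $Q$ at position $(i_Q, j_Q)$, no other shaded square of $X$ lies in the rectangle $\{(i,j) : i \le i_Q,\, j \le j_Q\}$. If this holds, then unshading $Q$ cannot strand any other shaded square with a missing below-or-left neighbor.

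For any such $Q$, the definition of $R$ makes $Q$ the highest shaded square in column $i_Q$, so nothing shaded lies above $Q$ in that column. The heart of the argument is to show that $(i_Q - 1, j_Q)$, the square immediately to the right of $Q$, is not shaded in $X$. Since $Q$ has dimension $d+1$, the square $(i_Q - 1, j_Q)$ would have dimension $d+2$; if it were shaded, the highest shaded square in column $i_Q - 1$ would sit at some row $j^* \le j_Q$ and hence have dimension at least $d+2$. Because $Q$'s column is strictly between the rightmost completely shaded column and $M_X$'s column, $i_Q - 1 \ge m_0$, placing column $i_Q - 1$ in or to the left of the column containing $M_X$. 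Moreover, this highest shaded square cannot lie in the top row: otherwise $(i_Q - 1, 0)$ shaded would force $(i_Q, 0)$ shaded by the subcomplex property, contradicting $Q$ being the highest shaded square in its column (and $Q$ itself cannot be in the top row since its column is not completely shaded). Lemma \ref{lem:unmatch} then produces an unmatched face of $X$ of dimension at least $d+2$, contradicting the hypothesis that $X$ has unmatched faces only in dimensions $d$ and $d+1$.

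With $(i_Q - 1, j_Q)$ unshaded in $X$, the down-left subcomplex property of $X$ propagates the conclusion across the whole rectangle: any shaded $(i', j')$ with $i' < i_Q$ and $j' \le j_Q$ would force $(i_Q - 1, j_Q)$ to be shaded, a contradiction. Combined with the absence of shaded squares above $Q$ in column $i_Q$, this shows no shaded square of $X$ other than $Q$ lies in the desired rectangle, so unshading $Q$ preserves the down-left condition everywhere. Applying this step in turn for each square unshaded when passing from $X$ to $Y$, and then once more for the rightmost dimension-$(d+1)$ square of $R$ when passing to $Y_0$, yields that both $Y$ and $Y_0$ are subcomplexes. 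The main obstacle is handling the rightmost dimension-$(d+1)$ square of $R$ uniformly (which is relevant for the $Y \to Y_0$ step), since its right-hand neighbor may lie in $M_X$'s column itself; this is resolved by observing that $j_Q < m_1$ (because $i_Q > m_0$ forces $(i_Q, m_1-1)$ to be shaded in $X$ by the definition of the canonical matching, whereas $Q$ being highest in its column requires $j_Q < m_1$), so the highest shaded square in that adjacent column is $M_X$ at row $m_1 > j_Q$ and $(i_Q - 1, j_Q)$ is indeed unshaded.
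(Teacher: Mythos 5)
Your proof is correct and takes the same route as the paper, which simply declares the lemma ``immediate from Remark \ref{rem:downleft}''; you have supplied the verification the paper leaves implicit, namely that each unshaded square of $R$ has nothing shaded above it and that its right-hand neighbor must be unshaded in $X$ (since otherwise Lemma \ref{lem:unmatch} would yield an unmatched face of dimension at least $d+2$). The extra care you take with the top-row case and with the column adjacent to $M_X$ is sound and goes beyond what the paper writes down.
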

\begin{proof}
This is immediate from Remark \ref{rem:downleft}.
\end{proof}

\begin{lem}
\label{lem:Y1}
Let $X$ be a subcomplex with unmatched faces in exactly two dimensions, $d$ and $d+1$. 
If the diagram of $X$ does not have any completely shaded rows, then $\widetilde{H}_{d}(Y)\neq 0$, and $\widetilde{H}_{d+1}(Y)\neq0$.
\end{lem}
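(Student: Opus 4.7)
The plan is to apply Lemma \ref{lem:Xnotred} to the pair $(Y_0, Y)$. Since the diagram of $X$ has no completely shaded rows, the same is true of $Y$ and $Y_0$, so all three canonical matchings have $m_0 = m_1 = 0$ and place $M$ in the top-right corner; moreover the rightmost column of every such diagram is empty by Remark \ref{rem:downleft}. Both $Y_0$ and $Y$ are subcomplexes by Lemma \ref{lem:Ysub}, and by construction $Y$ is obtained from $Y_0$ by shading in exactly one additional square, namely the rightmost dimension-$(d+1)$ square $Q$ of $R$. Such a square exists because any unmatched dimension-$(d+1)$ face of $X$ must, by Lemma \ref{lem:unmatch} and the contiguous-block structure of each column from Remark \ref{rem:downleft}, be the top of some column in $R$.

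The main task is to show that every unmatched face of $Y_0$ lies in dimension $d$. By Lemma \ref{lem:unmatch}, the unmatched squares of $X$ are precisely the squares of $R$ together with, if distinct, the rightmost shaded square in the bottom row. The shaded squares of each column form a contiguous block ending at the bottom row, so unshading the dimension-$(d+1)$ top of a column in $R$ either empties the column (when that top was also the bottom-row square) or produces a new top one row below, which has dimension $d$. A short case analysis on whether the bottom-row rightmost unmatched square of $X$ has dimension $d$ or $d+1$ then verifies that in $Y_0$ every highest shaded square (not in the top row) has dimension $d$; in the dimension-$(d+1)$ case the column is emptied, and the new bottom-row rightmost square lies one column further to the left, which still contains the bottom row by the subcomplex property and has dimension $d$.

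Next I would argue that $Y$ has unmatched faces in exactly the two dimensions $d$ and $d+1$. Adding $Q$ to $Y_0$ makes $Q$ the new highest shaded square in its column, now at dimension $d+1$, while the previous highest shaded square in that column (at dimension $d$) ceases to be the top and so becomes matched. All other unmatched squares of $Y_0$ are preserved. Hence $Y$ has the dimension-$d$ unmatched faces of $Y_0$ except one, together with the new unmatched $Q$ of dimension $d+1$, so $Y$ has unmatched faces in exactly the two dimensions $d$ and $d+1$.

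Having verified that $Y_0$ has unmatched faces in the single dimension $d$ and that $Y$ is obtained from $Y_0$ by shading in one square so that $Y$ has unmatched faces in exactly two dimensions, Lemma \ref{lem:Xnotred}, applied to $Y_0$ and $Y$ (playing the roles of $K'$ and $X$ respectively), immediately yields $\widetilde{H}_d(Y) \neq 0$ and $\widetilde{H}_{d+1}(Y) \neq 0$. The main obstacle will be the case analysis verifying that every unmatched face of $Y_0$ has dimension $d$, and in particular tracking the bottom-row rightmost unmatched square when removing a dimension-$(d+1)$ top empties a column and forces this square to shift leftward.
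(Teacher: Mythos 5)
Your overall strategy is exactly the paper's: show that $Y_0$ has unmatched faces only in dimension $d$, that $Y$ is obtained from $Y_0$ by shading the single square $Q$ and has unmatched faces in exactly the dimensions $d$ and $d+1$, and then invoke Lemma \ref{lem:Xnotred}. Your analysis of $Y_0$, including the leftward shift of the rightmost bottom-row square when unshading empties a column, also matches the paper's.

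However, there is a genuine gap in your accounting for $Y$. You assert that when $Q$ is re-shaded, the previous top of $Q$'s column ``ceases to be the top and so becomes matched,'' that all other unmatched squares of $Y_0$ are preserved, and hence that $Y$ keeps the dimension-$d$ unmatched squares of $Y_0$ ``except one.'' This does not establish that $Y$ retains \emph{any} unmatched face of dimension $d$. Indeed $Y_0$ can have exactly one unmatched square, sitting directly below $Q$: take $X$ to consist of several completely shaded columns together with one further column containing exactly its two lowest squares. Then your count leaves zero dimension-$d$ unmatched squares in $Y$, contradicting your own conclusion that $Y$ has unmatched faces in two dimensions. What saves the lemma is that ``becomes matched'' is false in that situation: the square below $Q$ is the rightmost shaded square of the bottom row, so it remains unmatched in $Y$ by clause (ii) of Lemma \ref{lem:unmatch} even after losing its column-top status. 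The paper closes this by splitting on the dimension of the rightmost bottom-row square $Q_B$ of $X$: if it has dimension $d$, it persists as a dimension-$d$ unmatched square of $Y$ and $Y_0$; if it has dimension $d+1$, then $Q_B=Q$, and since $X$ has some unmatched face of dimension $d$ there must be a dimension-$d$ square of $R$ in a column strictly left of $Q$, whose column is untouched in passing to $Y$. A symmetric caveat applies to ``all other unmatched squares of $Y_0$ are preserved'': when $Q=Q_B$, re-shading $Q$ strips the square to its left of its rightmost-in-the-bottom-row status, and that square need not be the top of its column. You need one of these explicit witnesses for a surviving dimension-$d$ unmatched face before Lemma \ref{lem:Xnotred} can be applied.
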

\begin{proof}
Notice that $M$ is in the top right corner since the diagram of $X$ does not have any completely shaded rows.
Let $Q$ denote the rightmost square in $R$ that contains faces of dimension $d+1$.
Since $X$ does not have any completely shaded rows, Lemma \ref{lem:unmatch} shows that the only squares containing unmatched faces are those in $R$ and the rightmost shaded square in the bottom row, which might also be in $R$.
So every square in $R$ and the rightmost shaded square in the bottom row contain faces of dimension $d$ or $d+1$.
However, every square in $R$ that contains faces of dimension $d+1$ is unshaded in the diagram of $Y_0$, so the highest shaded square in every column to the right of the rightmost completely shaded column in $Y_0$ contains faces of dimension $d$. 

Denote the rightmost shaded square in the bottom row of $X$ by $Q_B$.
Suppose first that $Q_B$ contains faces of dimension $d+1$.
If the entire column containing $Q_B$ is shaded in, then the diagram of $X$ is made up of several completely shaded columns and nothing else.
So by Proposition \ref{prop:subtypes}, $X$ contains unmatched faces in a single dimension, which is a contradiction.
If the column containing $Q_B$ is not entirely shaded in and $Q_B$ is not the highest shaded square in this column, then by Lemma \ref{lem:unmatch} $X$ contains unmatched faces in a dimension strictly greater than $d+1$, which is also a contradiction.
Therefore $Q_B$ must be the highest shaded square in its column and hence $Q_B=Q$.
So in this case, $Q$ is both the rightmost shaded square in the bottom row and the highest shaded square in its column. 
This implies that there exists a square $Q'$ in $R$ that is in a column to the left of $Q$ and contains faces of dimension $d$ since $X$ has unmatched faces in dimensions $d$ and $d+1$. 
The column containing $Q'$ remains unchanged while going from $X$ to $Y$, and since $Y$ contains both $Q$ and $Q'$, $Y$ contains unmatched faces of dimension $d$ and $d+1$.
Also in this case, the rightmost shaded square in the bottom row of $Y_0$ is the square to the left of $Q$, which contains faces of dimension $d$. 

Suppose now that $Q_B$ contains unmatched faces of dimension $d$, then $Q_B$ is also the rightmost shaded square in the bottom row of $Y$ and $Y_0$, since we only unshade squares containing faces of dimension $d+1$ to get from $X$ to either of them. 
This implies that $Y$ contains unmatched faces of dimension $d$, specifically in $Q_B$, and contains unmatched faces of dimension $d+1$ in $Q$.

Either way, $Y$ contains unmatched faces in exactly two dimensions, $d$ and $d+1$, $Y_0$ contains unmatched faces in exactly one dimension, $d$, and the diagram of $Y$ can be obtained from the diagram of $Y_0$ by shading the single square $Q$.
The assertion now follows by applying Lemma \ref{lem:Xnotred}.
\end{proof}

\begin{lem}
\label{lem:Y2}
Let $X$ be a subcomplex with unmatched faces in exactly two dimensions $d$ and $d+1$. 
If the diagram of $X$ has at least one completely shaded row, then $\widetilde{H}_{d}(Y)\neq 0$, and $\widetilde{H}_{d+1}(Y)\neq0$.
\end{lem}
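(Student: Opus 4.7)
The plan is to adapt the template used in Lemma \ref{lem:Y1}: I will show that $Y_0$ is a subcomplex whose only unmatched faces lie in a single dimension $d$, that $Y$ is obtained from $Y_0$ by shading in exactly one square $Q^*$ of dimension $d+1$, and that $Y$ therefore has unmatched faces in exactly the two dimensions $d$ and $d+1$; then Lemma \ref{lem:Xnotred}, applied with $Y_0$ in the role of $K'$ and $Y$ in the role of $X$, will yield $\widetilde{H}_d(Y) \neq 0$ and $\widetilde{H}_{d+1}(Y) \neq 0$. Since $X$ has a completely shaded row by hypothesis, the bottom row is in particular completely shaded, and Lemma \ref{lem:unmatch} identifies the unmatched squares of $X$ as exactly $R_X \cup \{M_X\}$; let $d_M$ denote the dimension of the faces in $M_X$, so $d_M \in \{d, d+1\}$.

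The key geometric fact that I would establish first is that every square of $R_X$ lies strictly above row $m_1^X$: for any column with $S(0) > m_0^X$, Definition \ref{defn:canon} forces $(i, m_1^X - 1)$ to be shaded in $X$, so the highest shaded square in that column sits at row index less than $m_1^X$. Unshading any such square therefore leaves row $m_1^X$ completely shaded in both $Y$ and $Y_0$. I then split on the value of $d_M$. In the case $d_M = d$, a direct dimension computation shows that each dimension-$(d+1)$ square of $R_X$ sits at row at most $m_1^X - 2$, so removing these does not disturb the shading of row $m_1^X - 1$; consequently $M_{Y_0} = M_X$, and in every affected column of $R_X$ the new highest shaded square drops one row to dimension $d$. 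In the case $d_M = d+1$, the same computation forces the $R_X$-square of column $m_0^X + 1$ to sit at row $m_1^X - 1$ with dimension $d+1$; unshading it makes $(m_0^X + 1, m_1^X - 1)$ unshaded in $Y_0$, so the leftmost-square-above-not-shaded condition in row $m_1^X$ is first satisfied at column $m_0^X + 1$, placing $M_{Y_0}$ at $(m_0^X + 1, m_1^X)$, which again has dimension $d$. A routine check that each remaining column of $R_X$ contributes a dimension-$d$ highest shaded square to $Y_0$ completes the verification that $Y_0$ has unmatched faces in the single dimension $d$.

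With this in hand, let $Q^*$ denote the rightmost dimension-$(d+1)$ square of $R_X$, whose existence is guaranteed in both cases above; then $Y = Y_0 \cup \{Q^*\}$ is obtained from $Y_0$ by shading in one square. Re-shading $Q^*$ either adds a single new unmatched dimension-$(d+1)$ square (when $d_M = d$) or restores $M_X$ as $M_Y$ while also leaving $Q^*$ unmatched, producing two dimension-$(d+1)$ unmatched squares (when $d_M = d+1$); in either situation the dimension-$d$ unmatched squares of $Y_0$ are preserved in $Y$, so $Y$ contains unmatched faces in exactly the two dimensions $d$ and $d+1$. Lemma \ref{lem:Xnotred} then delivers the claimed conclusion. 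The main technical obstacle is the canonical-matching shift in the case $d_M = d+1$, where unshading a single square of $R_X$ forces $M_{Y_0} \neq M_X$; carefully tracking the boundary profile of $X$ through Definition \ref{defn:canon} is the step that distinguishes this argument from the more direct proof of Lemma \ref{lem:Y1}.
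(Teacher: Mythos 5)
Your overall strategy is exactly the paper's: show $Y_0$ has unmatched faces in the single dimension $d$, observe that $Y$ is obtained from $Y_0$ by re-shading one square of dimension $d+1$ so that $Y$ has unmatched faces in exactly the dimensions $d$ and $d+1$, and then invoke Lemma \ref{lem:Xnotred} with $Y_0$ playing the role of $K'$. The case split on whether $M_X$ contains faces of dimension $d$ or $d+1$, and the identification of $M_{Y_0}$ with the square to the left of $M_X$ in the latter case, also match the paper's proof.

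There is, however, one step that does not hold as stated. In the case where $M_X$ has dimension $d+1$, you claim that ``the dimension-$d$ unmatched squares of $Y_0$ are preserved in $Y$.'' But $M_{Y_0}=(m_0+1,m_1)$ is itself a dimension-$d$ unmatched square of $Y_0$ (it is the highest shaded square in its column once $(m_0+1,m_1-1)$ is unshaded), and it is \emph{not} preserved: re-shading $Q^*=(m_0+1,m_1-1)$ puts a shaded square above it, so it is matched in $Y$. If $M_{Y_0}$ were the only dimension-$d$ unmatched square of $Y_0$, your argument would give $Y$ unmatched faces only in dimension $d+1$ and Lemma \ref{lem:Xnotred} could not be applied. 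The repair is the observation the paper makes explicitly: since $X$ has unmatched faces in dimension $d$ and, in this case, both $M_X$ and $Q^*$ have dimension $d+1$, Lemma \ref{lem:unmatch} forces some other square $Q'$ of $R_X$ to have dimension $d$; its column lies strictly left of the column of $Q^*$, is untouched in passing to $Y$, and so $Q'$ remains unmatched of dimension $d$ in $Y$. With that sentence added, your argument closes and coincides with the paper's.
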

\begin{proof}
Let $Q$ denote the rightmost square in $R$ that contains faces of dimension $d+1$.
Since $X$ has at least one shaded row, the only squares containing unmatched faces are those in $R$ and the square $M_X$ by Lemma \ref{lem:unmatch}.
So every square in $R$ and $M_X$ contains faces of dimension $d$ or $d+1$.
However, every square in $R$ that contains faces of dimension $d+1$ becomes unshaded in the diagram of $Y_0$ and so the highest shaded square in each column strictly between the rightmost completely shaded column and the column containing $M_X$ in $Y_0$ contains faces of dimension $d$. 

Denote the square to the left of $M_X$ by $M_L$ and the square diagonally above and to the left of $M_X$ by $M_{UL}$.
Recall that the square above $M_X$ is not shaded, but $M_L$ is by Definition \ref{defn:canon}.
If $M_X$ contains faces of dimension $d+1$ then $M_{UL}$ also contains faces of dimension $d+1$ and so $M_{UL}$ must be $Q$.
This implies that there exists a square $Q'$ in $R$ that is in a column to the left of $Q$ and contains faces of dimension $d$, since $X$ has unmatched faces in dimensions $d$ and $d+1$.
The column containing $Q'$ remains unchanged while going from $X$ to $Y$ and since $Y$ contains both $Q$ and $Q'$, $Y$ contains unmatched faces of dimension $d$ and $d+1$.
Also in this case, unshading $Q$ means that $M_{Y_0}$ is $M_L$, which contains faces of dimension $d$. 

If $M_X$ contains faces of dimension $d$, then $M_{UL}$ contains faces in dimension $d$ and so it remains shaded in $Y$ and $Y_0$.
Therefore we have $M_{Y}=M_{Y_0}=M_X$ in this case.
This implies that $Y$ contains unmatched faces of dimension $d$, specifically in $M_Y$, and contains unmatched faces of dimension $d+1$ in $Q$.

Either way, $Y$ contains unmatched faces in exactly two dimensions, $d$ and $d+1$, $Y_0$ contains unmatched faces in exactly one dimension, $d$, and the diagram of $Y$ can be obtained from the diagram of $Y_0$ by shading the single square $Q$.
The assertion now follows by applying Lemma \ref{lem:Xnotred}.
\end{proof}

\begin{defn}
\label{defn:Y'}
Let $X$ be a subcomplex with unmatched faces in exactly two dimensions $d$ and $d+1$. 
Let $Q$ denote the leftmost square in the diagram of $X$ containing unmatched faces of dimension $d$.
If the square above $Q$ is shaded, then define $Y'_X$ to be the space whose diagram is obtained from the diagram of $X$ by completely shading in every column left of and including the column containing $Q$.
If the square above $Q$ is not shaded, then define $Y'_X$ to be the space whose diagram is obtained from the diagram of $X$ by completely shading in every column strictly to the left of the column containing $Q$ and every row below and including the row containing $Q$.
When the subcomplex $X$ is clear from context, we will refer to $Y'_X$ simply as $Y'$.
\end{defn}

\begin{rem}
\label{rem:nottop}
If a shaded square is in the top row and contains unmatched faces, then by Lemma \ref{lem:unmatch} the top row must be the same as the bottom row. In this case the columns are only one square tall. It follows that the only subcomplexes that can occur are those in which several columns are completely shaded in. These diagrams all represent subcomplexes whose unmatched faces are in a single dimension by Proposition \ref{prop:subtypes}.
Therefore any subcomplex with unmatched faces in two or more dimensions cannot have a square in the top row containing unmatched faces.
\end{rem}

\begin{eg}
Consider the subcomplexes shown in Figure \ref{fig:XY'} below. 
Notice how $X$ and $X'$ have unmatched faces in dimensions 7 and 8.
The square denoted by $Q$ is the leftmost square in $X$ and $X'$ that contains unmatched faces of dimension 7. 
To obtain $Y'_{X}$ from $X$, we shade in every column strictly left of the column containing $Q$ and every row below and including the row containing $Q$.
To obtain $Y'_{X'}$ from $X'$, we shade in every column left of and including the column containing $Q$.
\end{eg}

\begin{figure}[htbp]
\def\sca{.49}
\begin{center}
\begin{tikzpicture}[scale=\sca]
\def \length{10}
\def \height{8}
\def \lengthb{9} 
\def \heightb{7} 
\fill [shade] (0,0) rectangle (2,8);
\fill [shade] (2,0) rectangle (3,6);
\fill [shade] (3,0) rectangle (4,5);
\fill [shade] (4,0) rectangle (5,3);
\fill [shade] (5,0) rectangle (6,3);
\fill [shade] (6,0) rectangle (7,2);
\fill [shade b] (2,5) rectangle (3,6);
\fill [shade b] (3,4) rectangle (4,5);
\fill [shade b] (4,2) rectangle (5,3);
\fill [shade b] (5,2) rectangle (6,3);
\fill [shade b] (6,1) rectangle (7,2);
\fill [shade b] (6,0) rectangle (7,1);
\draw [step=1,thin,linee] (0,0) grid (\length, \height);
\draw [linee] (0,0) -- (0,\height);
\draw [linee] (0,0) -- (\length,0); 
\draw (5,-.25) node [below] {$X$};
\draw [linee] (\length, \height) -- (0,\height);
\draw [linee] (\length, \height) -- (\length,0);
\footnotesize
\draw (4.5,2.5) node {$Q$};
\end{tikzpicture}
\hspace{.4in}
\begin{tikzpicture}[scale=\sca]
\def \length{10}
\def \height{8}
\def \lengthb{9} 
\def \heightb{7} 
\fill [shade] (0,0) rectangle (4,8);
\fill [shade] (0,0) rectangle (10,3);
\fill [shade b] (4,2) rectangle (5,3);
\draw [step=1,thin,linee] (0,0) grid (\length, \height);
\draw [linee] (0,0) -- (0,\height);
\draw [linee] (0,0) -- (\length,0); 
\draw (5,0) node [below] {$Y'_X$};
\draw [linee] (\length, \height) -- (0,\height);
\draw [linee] (\length, \height) -- (\length,0);
\footnotesize
\draw (4.5,2.5) node {$Q$};
\end{tikzpicture}

\vspace{.2in}

\begin{tikzpicture}[scale=\sca]
\def \length{10}
\def \height{8}
\def \lengthb{9} 
\def \heightb{7} 
\fill [shade] (0,0) rectangle (2,8);
\fill [shade] (2,0) rectangle (3,6);
\fill [shade] (3,0) rectangle (4,5);
\fill [shade] (4,0) rectangle (5,4);
\fill [shade] (5,0) rectangle (6,3);
\fill [shade] (6,0) rectangle (7,2);
\fill [shade b] (2,5) rectangle (3,6);
\fill [shade b] (3,4) rectangle (4,5);
\fill [shade b] (4,3) rectangle (5,4);
\fill [shade b] (5,2) rectangle (6,3);
\fill [shade b] (6,1) rectangle (7,2);
\fill [shade b] (6,0) rectangle (7,1);
\draw [step=1,thin,linee] (0,0) grid (\length, \height);
\draw [linee] (0,0) -- (0,\height);
\draw [linee] (0,0) -- (\length,0); 
\draw (5,-.25) node [below] {$X'$};
\draw [linee] (\length, \height) -- (0,\height);
\draw [linee] (\length, \height) -- (\length,0);
\footnotesize
\draw (6.5,0.5) node {$Q$};
\end{tikzpicture}
\hspace{.4in}
\begin{tikzpicture}[scale=\sca]
\def \length{10}
\def \height{8}
\def \lengthb{9} 
\def \heightb{7} 
\fill [shade] (0,0) rectangle (7,8);
\fill [shade b] (6,0) rectangle (7,1);
\draw [step=1,thin,linee] (0,0) grid (\length, \height);
\draw [linee] (0,0) -- (0,\height);
\draw [linee] (0,0) -- (\length,0); 
\draw (5,0) node [below] {$Y'_{X'}$};
\draw [linee] (\length, \height) -- (0,\height);
\draw [linee] (\length, \height) -- (\length,0);
\footnotesize
\draw (6.5,0.5) node {$Q$};
\end{tikzpicture}
\caption{Diagrams for $Y'$.}
\label{fig:XY'}
\end{center}
\end{figure}
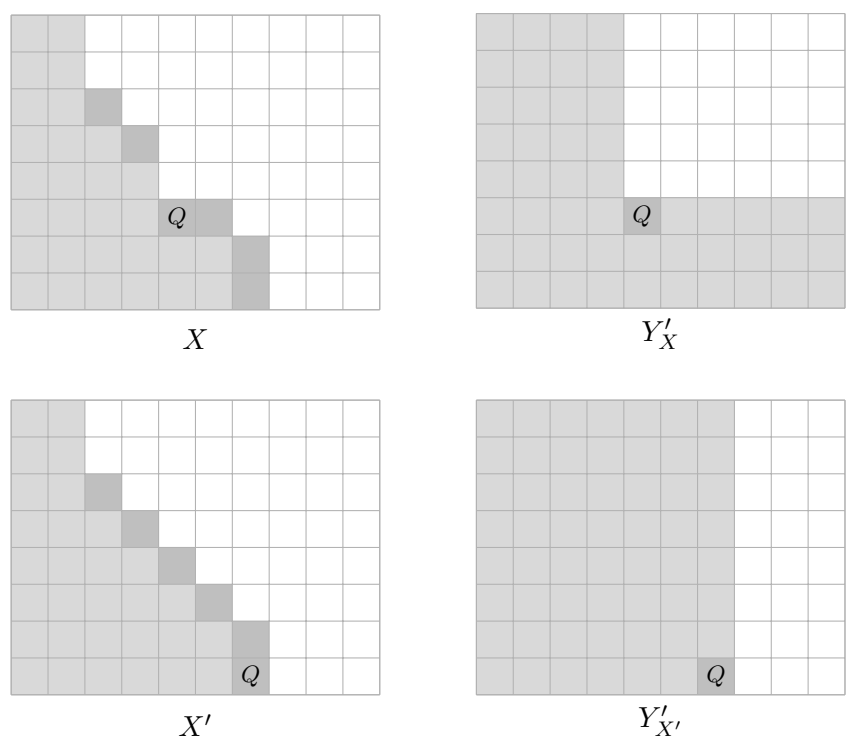

\begin{lem}
\label{lem:Y'1}
Let $X$ be a subcomplex with unmatched faces in exactly two dimensions $d$ and $d+1$. 
Let $Q$ be the leftmost square in the diagram of $X$ containing unmatched faces of dimension $d$.
If $Q$ is in the bottom row and the square above it is shaded, then $Y'$ is a subcomplex that only contains unmatched faces in dimension $d$ and $\widetilde{H}_d(Y')\neq0$. 
\end{lem}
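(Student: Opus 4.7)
The plan is to analyze the structure of $X$ carefully using the hypothesis that $Q$ lies in the bottom row with a shaded square above it, reduce $Y'$ to one of the subcomplex types from Proposition \ref{prop:subtypes}, and then apply Proposition \ref{prop:K'reduced}.

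First, I would use Lemma \ref{lem:unmatch} to pin down the location of $Q$ in $X$. Since $Q$ contains unmatched faces and the square above $Q$ is shaded, $Q$ is not the highest shaded square in its column, so it must be the rightmost shaded square in the bottom row of $X$, and the bottom row of $X$ is not completely shaded. Let $c$ denote the column index of $Q$ (counted from the left). The key step is then to show that every column strictly to the right of column $c$ in $X$ is entirely empty: this follows because $Q$ is rightmost shaded in the bottom row, so the bottom cell of every column to the right of $Q$ is unshaded, and by the subcomplex property (Remark \ref{rem:downleft}) any shaded cell in those columns would force its bottom cell to be shaded, a contradiction.

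Next I would translate this into a description of $Y'$. By Definition \ref{defn:Y'}, in this case $Y'$ is obtained from $X$ by completely shading columns $0$ through $c$. Combining the previous step with the definition, $Y'$ has exactly the leftmost $c+1$ columns completely shaded, and every cell strictly to the right of column $c$ unshaded. In particular $Y'$ is a subcomplex (this is immediate from Remark \ref{rem:downleft} since we only added full columns) and it is of the form in Proposition \ref{prop:subtypes}(iii) with $i=n-k-1-c$. Hence $Y'$ has unmatched faces concentrated in a single dimension.

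To identify this dimension, I would either invoke Lemma \ref{lem:subtypes1b} or just apply Lemma \ref{lem:unmatch} directly to $Y'$: since $Y'$ has no completely shaded row, the canonical matching gives $m_0=m_1=0$, so $M_{Y'}$ is in the top right corner. The only square containing unmatched faces is then the rightmost shaded square in the bottom row, which is precisely $Q$. Since $Q$ contains faces of dimension $d$ by hypothesis, all unmatched faces of $Y'$ lie in dimension $d$. Finally, Proposition \ref{prop:K'reduced} yields $\widetilde{H}_d(Y')\cong\mathbb{Z}^{u_d}$ where $u_d$ is the number of faces in the orbit $Q$, which is nonzero, so $\widetilde{H}_d(Y')\neq 0$. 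The only mild subtlety is confirming that the canonical matching on $Y'$ really is what I described (no completely shaded row appears), but this is forced by the fact that the columns to the right of $Q$ in $Y'$ are empty.
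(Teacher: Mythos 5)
Your proposal is correct and follows essentially the same route as the paper: use Lemma \ref{lem:unmatch} to force $Q$ to be the rightmost shaded square in a not-completely-shaded bottom row, deduce that all columns to its right are empty, conclude that $Y'$ consists of the leftmost columns fully shaded (hence a subcomplex of the type in Proposition \ref{prop:subtypes}), and finish with Proposition \ref{prop:K'reduced}. The only difference is cosmetic — you derive the position of $Q$ directly from the dichotomy in Lemma \ref{lem:unmatch} rather than by the paper's short contradiction, and you spell out the canonical matching on $Y'$ in slightly more detail.
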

\begin{proof}
If $Q$ is in the rightmost column or the square to its right is shaded then $Q$ does not contain any unmatched faces by Lemma \ref{lem:unmatch}, since the square above $Q$ is shaded by assumption. 
So $Q$ is the rightmost shaded square in the bottom row of $X$ and is not in the rightmost column. 
This implies that there are no shaded squares in any column to the right of the column containing $Q$.
Therefore, in the diagram of $Y'$, every square in a column left of and including the column containing $Q$ is shaded in and they are the only squares that are shaded in.
This implies that $Y'$ is a subcomplex by Remark \ref{rem:downleft}. 

Since $Q$ is still the rightmost shaded square in the bottom row of $Y$, $Q$ contains unmatched faces in $Y'$.
Also, $Y'$ is one of the subcomplexes described in Proposition \ref{prop:subtypes}, so $Y'$ contains unmatched faces in a single dimension $d$. Therefore the reduced homology of $Y'$ is concentrated in degree $d$ by Proposition \ref{prop:K'reduced}.
\end{proof}

\begin{lem}
\label{lem:Y'2}
Let $X$ be a subcomplex with unmatched faces in exactly two dimensions $d$ and $d+1$. 
Let $Q$ be the leftmost square in the diagram of $X$ containing unmatched faces of dimension $d$.
If the square above $Q$ is not shaded, then $Y'$ is a subcomplex that only contains unmatched faces in dimension $d$ and $\widetilde{H}_d(Y')\neq0$. 
\end{lem}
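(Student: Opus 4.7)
My plan is to describe $Y'$ explicitly, locate its canonical matching via Definition \ref{defn:canon}, identify the unmatched faces by Lemma \ref{lem:unmatch}, and then conclude via Proposition \ref{prop:K'reduced}.

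First I would verify $Y'$ is a subcomplex. Let $Q$ sit in column $c$ and row $r_Q$. The shaded squares of $Y'$ are the union of (a) the shaded squares of $X$, (b) every square in a column strictly left of $c$, and (c) every square in a row at or below $r_Q$. Regions (b) and (c) are each closed under taking left- and below-neighbors, and region (a) is closed by hypothesis, so the union is closed and Remark \ref{rem:downleft} implies $Y'$ is a subcomplex.

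Next I would pin down $M_{Y'}$. Row $r_Q$ is completely shaded by construction, while the row immediately above it is not, since the square above $Q$ is unshaded in $X$ by hypothesis and is not added by the $Y'$ construction (it sits in column $c$, not strictly left of $c$). Thus $r_Q$ is the highest completely shaded row of $Y'$. For each square of row $r_Q$ strictly left of $c$, the above-neighbor lies in a column strictly left of $c$ and hence is shaded in $Y'$, so such a square cannot serve as $M_{Y'}$. Since the square above $Q$ itself is unshaded, Definition \ref{defn:canon} forces $M_{Y'}=Q$.

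Finally I would apply Lemma \ref{lem:unmatch}. Each column strictly left of $c$ is completely shaded in $Y'$, so its topmost shaded square lies in the top row and contributes no unmatched faces. The highest shaded square in column $c$ is $Q$, which is not in the top row by Remark \ref{rem:nottop} (as $Q$ carries unmatched faces in $X$), so $Q$ contains unmatched faces. The bottom row of $Y'$ is completely shaded, contributing nothing. Hence $Q$ is the unique square of $Y'$ containing unmatched faces, and these all have dimension $d$, inherited from $X$. Proposition \ref{prop:K'reduced} then gives that $Y'$ has reduced homology concentrated in degree $d$ with $\widetilde{H}_d(Y')\neq 0$. The step requiring the most care is confirming $M_{Y'}=Q$, ruling out competing leftmost candidates in row $r_Q$; this is where the hypothesis that the square above $Q$ is unshaded is used essentially.
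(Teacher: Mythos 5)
Your proposal is correct and follows essentially the same route as the paper: describe the diagram of $Y'$, use the hypothesis that the square above $Q$ is unshaded (together with the subcomplex property of $X$) to locate $M_{Y'}=Q$, conclude that the unmatched faces lie only in $Q$ and hence in the single dimension $d$, and finish with Proposition \ref{prop:K'reduced}. The only cosmetic difference is that you identify the unmatched squares directly from Lemma \ref{lem:unmatch}, whereas the paper routes this observation through Proposition \ref{prop:subtypes}.
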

\begin{proof}
Consider the diagram of $Y'$.
By definition, every column strictly to the left of the column containing $Q$ and every row below and including the row containing $Q$ is shaded in.
Also, these are the only squares that are shaded in since the square above $Q$ is not shaded.
This implies that $M_{Y'}=Q$.
It follows that $Y'$ is one of the subcomplexes described in Proposition \ref{prop:subtypes} and that $Y'$ contains unmatched faces in a single dimension $d$.
Therefore the reduced homology of $Y'$ is concentrated in degree $d$ by Proposition \ref{prop:K'reduced}.
\end{proof}

\begin{lem}
\label{lem:alld}
Let $X$ be any subcomplex containing unmatched faces in at least one dimension. If $d$ is the lowest dimension in which $X$ contains unmatched faces, then $X$ contains every face of dimension $d$.
\end{lem}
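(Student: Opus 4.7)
My plan is to argue by contradiction. Because a face $F(S)$ of $J(n,k)$ has dimension $n-S(0)-S(1)-1$, the faces of dimension $d$ correspond to the diagonal $D_d = \{(i,j) : i+j = n-d-1\}$ in the subcomplex diagram, so the claim amounts to showing that every square on $D_d$ is shaded in the diagram of $X$. I will pick a supposedly unshaded $Q=(i,j) \in D_d$ and aim to produce an unmatched face of dimension strictly less than $d$, contradicting the minimality of $d$.

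A degenerate case is worth dispensing of first: if $X$ contains no edges, then the subcomplex property forces $X$ to consist of just the vertices of $J(n,k)$, the unmatched faces are precisely the non-$v_0$ vertices, $d=0$, and every vertex lies in $X$. From here on I assume $X$ contains all of the edges so that Lemma \ref{lem:unmatch} applies. I let $m_0,m_1$ be the canonical matching parameters of $X$ and write $h(i)$ for the smallest $j$ with $(i,j)$ shaded (or $\infty$ if column $i$ is empty); the subcomplex condition makes $h$ non-increasing in $i$.

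If column $i$ contains a shaded square, then $h(i)>j$, so the highest shaded square $(i,h(i))$ has dimension at most $d-1$. When $i\geq m_0$ this square is itself type (i) unmatched by Lemma \ref{lem:unmatch}, and the contradiction is immediate. If column $i$ is empty I would pick the smallest $i_0$ for which column $i_0$ has a shaded square; by the subcomplex property $(i_0,k-1)$ is the rightmost shaded square in the (not completely shaded) bottom row, hence a type (ii) unmatched square of dimension $n-i_0-k$, so minimality of $d$ forces $i_0\leq n-k-d$, whence $i<i_0$ gives $j=n-d-1-i\geq k$, incompatible with $(i,j)$ being a grid square.

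The main obstacle is the remaining subcase, where column $i$ has a shaded square but $i<m_0$, so $(i,h(i))$ is no longer unmatched and the previous argument fails. Here I would use that $m_0\geq 1$ forces $m_1\geq 1$ (by Definition \ref{defn:canon}), and that the subcomplex property applied to the unshaded square $(m_0,m_1-1)$ directly above $M$ forces the entire row $m_1-1$ to the right of the column of $M$ to be unshaded, pinning down $h(i)=m_1$. Then $m_1>j$ together with $m_0>i$ gives $m_0+m_1>n-d-1$, so the type (i) unmatched square $M$ itself has dimension $n-m_0-m_1-1<d$, contradicting the minimality of $d$.
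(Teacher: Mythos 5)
Your proof is correct and follows essentially the same route as the paper's: a case split on whether the column of the missing diagonal square is empty, nonempty and weakly left of the column of $M$, or nonempty and strictly right of it, producing in each case (via Lemma \ref{lem:unmatch}) an unmatched square of dimension less than $d$ or an outright impossibility. Your treatment is in fact slightly more careful than the paper's, e.g.\ in pinning down $h(i)=m_1$ explicitly and in noting that the top-row exception in Lemma \ref{lem:unmatch} cannot occur since $h(i)>j\geq 0$.
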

\begin{proof}
Suppose there exists a square $Q'$ containing faces of dimension $d$ that is not in $X$. 
If there are no shaded squares in the column containing $Q'$, then the bottom row is not completely shaded.
So by Lemma \ref{lem:unmatch}, the rightmost shaded square in the bottom row contains unmatched faces.
However, this square contains faces of dimension strictly less than $d$, which is a contradiction.
So from now on, assume that there are shaded squares in the column containing $Q'$ and let $Q_0$ be the highest of them. 

If the column containing $Q'$ is to the left of or is the column containing $M$, then $Q_0$ contains unmatched faces by Lemma \ref{lem:unmatch}. However $Q_0$ is below $Q'$ and so $Q_0$ contains faces of dimension strictly less than $d$, which is a contradiction. 

If the column containing $Q'$ is to the right of the column containing $M$, then $M$ is not in the rightmost column. 
It follows from Definition \ref{defn:canon} that there is at least one completely shaded row in $X$ and that $Q_0$ is in the same row as $M$. 
Also, $M$ contains unmatched faces by Lemma \ref{lem:unmatch}.
However $M$ would then be strictly left of and below $Q'$ and so $X$ would contain unmatched faces of dimension strictly less than $d$, which is a contradiction.
\end{proof}

\begin{eg}
Consider the subcomplexes $X_1$ and $X_2$ shown in Figure \ref{fig:XY'} again. 
It can be seen that 7 is the lowest dimension in which either of these diagrams contains unmatched faces and that they both contain every face of dimension 7.
\end{eg}

\begin{lem}
\label{lem:X2}
Let $X$ be a subcomplex of $K$ with unmatched faces in exactly two dimensions $d_1$ and $d_2$. Then the reduced homology of $X$ is not concentrated in a single degree.
Specifically, we have $\widetilde{H}_{d_1}(X)\neq 0$ and $\widetilde{H}_{d_2}(X)\neq0$.
\end{lem}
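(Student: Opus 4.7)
Without loss of generality take $d_1 < d_2$. The plan is to split into two cases depending on whether $d_2 = d_1 + 1$ or $d_2 \ge d_1 + 2$, because the auxiliary constructions $Y$ and $Y'$ from Definitions \ref{defn:Y} and \ref{defn:Y'} were built for the consecutive case and can degenerate (for example $Y$ may equal $X$) when the two dimensions are farther apart. In the consecutive case I would transfer nonvanishing from these auxiliary complexes to $X$ via Lemma \ref{lem:HomContain}, while in the non-consecutive case I would compute the reduced homology directly from the Forman-reduced CW complex supplied by Theorem \ref{thm:Forman}(ii).

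For the non-consecutive case $d_2 \ge d_1 + 2$: after unpairing $(v_0,\emptyset)$ from the canonical matching, Theorem \ref{thm:Forman}(ii) gives that $X$ is homotopy equivalent to a CW complex with exactly one $0$-cell, $u_{d_1}$ cells in dimension $d_1$, $u_{d_2}$ cells in dimension $d_2$, and no cells in any other positive dimension. Since the dimensions $d_1-1, d_1+1, d_2-1$, and $d_2+1$ are then all empty of cells (other than possibly $d_1 - 1 = 0$ when $d_1 = 1$), the boundaries $\partial_{d_1+1}, \partial_{d_2}$, and $\partial_{d_2+1}$ in the reduced chain complex all vanish automatically, and the only remaining boundary that could affect $\widetilde{H}_{d_1}$ is $\partial_{d_1}$. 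This vanishes trivially for $d_1 \ge 2$ because $\widetilde{C}_{d_1-1} = 0$, and for $d_1 = 1$ it vanishes by the augmentation-isomorphism argument used in the proof of Proposition \ref{prop:K'reduced}. Hence $\widetilde{H}_{d_1}(X) \cong \mathbb{Z}^{u_{d_1}}$ and $\widetilde{H}_{d_2}(X) \cong \mathbb{Z}^{u_{d_2}}$ are both nonzero.

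For the consecutive case $d_2 = d_1 + 1$: Lemmas \ref{lem:Y1} and \ref{lem:Y2} produce a subcomplex $Y \subseteq X$ with $\widetilde{H}_{d_2}(Y) \neq 0$, and because $Y$ is obtained from $X$ by unshading only squares whose faces have dimension $d_2$ we have $Y_{d_2+1} = X_{d_2+1}$ and $Y_{d_2} \subseteq X_{d_2}$; Lemma \ref{lem:HomContain}(ii) applied with $C = Y$, $D = X$, $l = d_2$ then gives $\widetilde{H}_{d_2}(X) \neq 0$. Dually, Lemmas \ref{lem:Y'1} and \ref{lem:Y'2} produce a subcomplex $Y' \supseteq X$ whose unmatched faces all lie in dimension $d_1$, so Proposition \ref{prop:K'reduced} gives $\widetilde{H}_{d_1}(Y') \neq 0$; Lemma \ref{lem:alld} applied to both $X$ and $Y'$ forces them to coincide on faces of dimension $\le d_1$, whence $X_{d_1} = Y'_{d_1}$ and $X_{d_1+1} \subseteq Y'_{d_1+1}$, and Lemma \ref{lem:HomContain}(i) applied with $C = X$, $D = Y'$, $l = d_1+1$ gives $\widetilde{H}_{d_1}(X) \neq 0$. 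The main obstacle is recognizing that the non-consecutive case genuinely requires a different argument: when $d_2 - d_1 \ge 2$ the $R$-squares of Definition \ref{defn:Y} need not contain multiple dimension-$d_2$ squares, so the unshading step is vacuous, $Y$ coincides with $X$, and the transfer via Lemma \ref{lem:HomContain} becomes circular; one must instead read the nonvanishing directly off the reduced chain complex after Forman reduction.
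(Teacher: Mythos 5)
Your proof is correct and follows essentially the same route as the paper: the non-consecutive case is dispatched by Theorem \ref{thm:Forman}(ii) (which the paper does in one sentence and you spell out in detail), and the consecutive case uses the same subcomplexes $Y$ and $Y'$ together with Lemma \ref{lem:HomContain} with the identical choices of $C$, $D$, and $l$. Your closing observation about why the $Y$-construction degenerates when the dimensions are non-consecutive is a useful justification for the case split that the paper leaves implicit.
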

\begin{proof}
First note that if the unmatched faces of $X$ are not in consecutive dimensions then the statement is true by Theorem \ref{thm:Forman} (ii). From now on assume that the unmatched faces of $X$ are in dimensions $d$ and $d+1$. 

Consider the diagram of $X$ and let $R$ denote the set of the highest shaded squares in each column strictly between the rightmost completely shaded column and the column containing $M_X$.
Recall from Definition \ref{defn:Y} that $Y$ is the subcomplex whose diagram is obtained from the diagram of $X$ by unshading the squares in $R$ containing unmatched faces of dimension $d+1$ except the rightmost of them.
By Lemmas \ref{lem:Y1} and \ref{lem:Y2}, we have $\widetilde{H}_{d+1}(Y)\neq0$.
It follows that (a) $Y$ is a subcomplex of $X$, (b) $X$ and $Y$ contain the same faces of dimension $d+2$ (since we only unshaded squares containing faces of dimension $d+1$), and (c) every face of dimension $d+1$ in $Y$ is also in $X$.
Therefore we can apply Lemma \ref{lem:HomContain} (ii) with $C=Y$, $D=X$, and $l=d+1$ to get that $\widetilde{H}_{d+1}(X)\neq0$ as well.

Let $Q$ be the leftmost square in the diagram of $X$ containing unmatched faces of dimension $d$.
Let $Y'$ be the subcomplex whose diagram is obtained from the diagram of $X$ as in Definition \ref{defn:Y'}.
By Lemmas \ref{lem:Y'1} and \ref{lem:Y'2}, $Y'$ only contains unmatched faces in dimension $d$ and $\widetilde{H}_{d}(Y')\neq0$.
Since $X$ is a subcomplex of $Y'$, every face of dimension $d+1$ in $X$ is also in $Y'$, and by Lemma \ref{lem:alld} both $X$ and $Y'$ contain every face of dimension $d$.
Therefore we can apply Lemma \ref{lem:HomContain} (i)  with $C=X$, $D=Y'$, and $l=d+1$ to get that $\widetilde{H}_{d}(X)\neq0$ as well.
\end{proof}

\begin{defn}
\label{defn:Xi}
Let $X$ be a subcomplex of $K$ with unmatched faces in at least three dimensions.
Let the smallest and largest dimensions in which $X$ contains unmatched faces be denoted by $d_1$ and $d_2$ respectively. 
Define $X_1$ to be the subcomplex whose diagram is obtained from the diagram of $X$ by unshading every square containing faces of dimension strictly greater than $d_1+1$.
Define $X_2$ to be the subcomplex whose diagram is obtained from the diagram of $X$ by shading every square containing faces of dimension strictly less than $d_2$.
\end{defn}

\begin{eg}
Consider the subcomplex $X$ shown in Figure \ref{fig:X3}.
Notice that $X$ has unmatched faces in dimensions 5--8 and so $d_1=5$ and $d_2=8$.
To obtain $X_1$ from $X$, we unshade every square containing faces of dimension strictly greater than 6.
To obtain $X_2$ from $X$, we shade every square containing faces of dimension strictly less than 8.
The outline in the diagrams of $X_1$ and $X_2$ denotes $X$.
\end{eg}

\begin{figure}[htbp]
\def\sca{.5}
\begin{center}
\begin{tikzpicture}[scale=\sca]
\def \length{10}
\def \height{8}
\def \lengthb{9} 
\def \heightb{7} 
\fill [shade] (0,0) rectangle (2,7);
\fill [shade] (2,0) rectangle (3,5);
\fill [shade] (3,0) rectangle (4,3);
\fill [shade] (4,0) rectangle (10,1);
\fill [shade b] (0,6) rectangle (1,7);
\fill [shade b] (1,6) rectangle (2,7);
\fill [shade b] (2,4) rectangle (3,5);
\fill [shade b] (3,2) rectangle (4,3);
\fill [shade b] (4,0) rectangle (5,1);
\draw [step=1,thin,linee] (0,0) grid (\length, \height);
\draw [linee] (0,0) -- (0,\height);
\draw [linee] (0,0) -- (\length,0); 
\draw (5,-.12) node [below] {$X$};
\draw [linee] (\length, \height) -- (0,\height);
\draw [linee] (\length, \height) -- (\length,0);
\end{tikzpicture}
%
%
\begin{tikzpicture}[scale=\sca]
\def \length{10}
\def \height{8}
\def \lengthb{9} 
\def \heightb{7} 
\fill [shade] (0,0) rectangle (1,6);
\fill [shade] (1,0) rectangle (2,5);
\fill [shade] (2,0) rectangle (3,4);
\fill [shade] (3,0) rectangle (4,3);
\fill [shade] (4,0) rectangle (6,1);
\fill [shade b] (0,5) rectangle (1,6);
\fill [shade b] (1,4) rectangle (2,5);
\fill [shade b] (2,3) rectangle (3,4);
\fill [shade b] (3,2) rectangle (4,3);
\fill [shade b] (4,0) rectangle (5,1);
\fill [shade b] (5,0) rectangle (6,1);
\draw [step=1,thin,linee] (0,0) grid (\length, \height);
\draw [linee] (0,0) -- (0,\height);
\draw [linee] (0,0) -- (\length,0); 
\draw (5,0) node [below] {$X_1$};
\draw [linee] (\length, \height) -- (0,\height);
\draw [linee] (\length, \height) -- (\length,0);
\draw [thick] (0,0)--(0,7)--(2,7)--(2,5)--(3,5)--(3,3)--(4,3)--(4,1)--(10,1)--(10,0)--(0,0);
\end{tikzpicture}
%
%
\begin{tikzpicture}[scale=\sca]
\def \length{10}
\def \height{8}
\def \lengthb{9} 
\def \heightb{7} 
\fill [shade] (0,0) rectangle (2,7);
\fill [shade] (2,0) rectangle (3,5);
\fill [shade] (3,0) rectangle (4,4);
\fill [shade] (4,0) rectangle (5,3);
\fill [shade] (5,0) rectangle (6,2);
\fill [shade] (6,0) rectangle (10,1);
\fill [shade b] (0,6) rectangle (1,7);
\fill [shade b] (1,6) rectangle (2,7);
\fill [shade b] (2,4) rectangle (3,5);
\fill [shade b] (3,3) rectangle (4,4);
\fill [shade b] (4,2) rectangle (5,3);
\fill [shade b] (5,1) rectangle (6,2);
\fill [shade b] (6,0) rectangle (7,1);
\draw [step=1,thin,linee] (0,0) grid (\length, \height);
\draw [linee] (0,0) -- (0,\height);
\draw [linee] (0,0) -- (\length,0); 
\draw (5,0) node [below] {$X_2$};
\draw [linee] (\length, \height) -- (0,\height);
\draw [linee] (\length, \height) -- (\length,0);
\draw [thick] (0,0)--(0,7)--(2,7)--(2,5)--(3,5)--(3,3)--(4,3)--(4,1)--(10,1)--(10,0)--(0,0);
\end{tikzpicture}
\caption{Diagrams for $X_1$ and $X_2$.}
\label{fig:X3}
\end{center}
\end{figure}
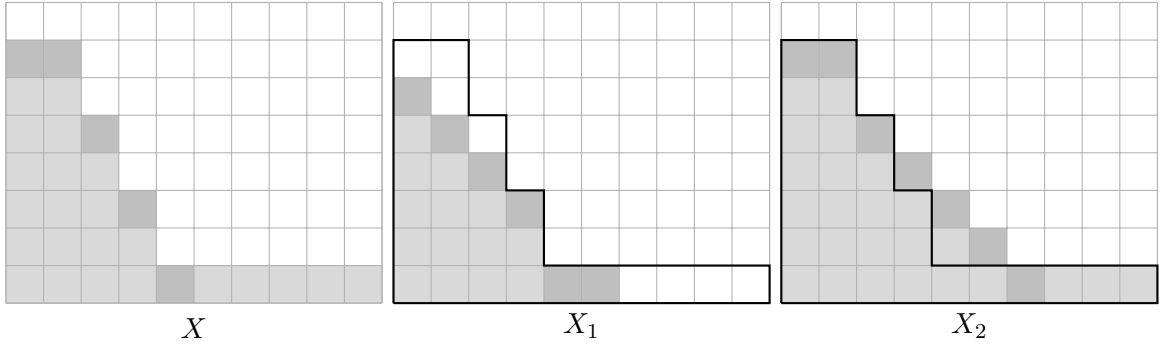

\begin{lem}
\label{lem:X3}
Let $X$ be a subcomplex of $K$ with unmatched faces in at least three dimensions. The reduced homology of $X$ is not concentrated in a single degree.
\end{lem}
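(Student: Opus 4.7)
The plan is to reduce the problem to the two-dimension case already handled by Lemma \ref{lem:X2} by means of the auxiliary subcomplexes $X_1$ and $X_2$ introduced in Definition \ref{defn:Xi}, transporting the non-vanishing of their homology back to $X$ via Lemma \ref{lem:HomContain}. Write $d_1 < d_2$ for the smallest and largest dimensions in which $X$ has unmatched faces.

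First I would show that $X_1$ is a subcomplex whose unmatched faces (using its own canonical matching) lie only in dimensions $d_1$ and $d_1+1$. The truncation preserves the staircase condition of Remark \ref{rem:downleft}, so $X_1$ is a subcomplex. In the diagram of $X_1$, any column whose highest shaded square in $X$ had dimension at most $d_1+1$ keeps that same highest square, while any column whose highest shaded square in $X$ had dimension greater than $d_1+1$ now has its highest shaded square at dimension exactly $d_1+1$ (since the columns in $X$ are contiguously shaded from the bottom up). By Lemma \ref{lem:unmatch}, the only squares in $X_1$ that can contain unmatched faces are these highest shaded squares together with the rightmost square in the bottom row. Those inherited from $X$ must lie at dimension $d_1$ (since $d_1$ is the minimum for $X$, any highest shaded square left of or at $M$'s column has dimension at least $d_1$, and here we have it at most $d_1+1$), while the new ones created by truncation sit at $d_1+1$. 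A short case analysis of the bottom row handles the remaining case. Hence $X_1$ has unmatched faces in at most two dimensions, and by Proposition \ref{prop:K'reduced} or Lemma \ref{lem:X2}, $\widetilde{H}_{d_1}(X_1) \neq 0$. Since $X_1 \subset X$ with $C_i(X_1) = C_i(X)$ for $i \leq d_1+1$, applying Lemma \ref{lem:HomContain}(ii) with $C = X_1$, $D = X$, $l = d_1$ gives $\widetilde{H}_{d_1}(X) \neq 0$.

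An entirely parallel argument with $X_2$ yields $\widetilde{H}_{d_2}(X) \neq 0$. Shading every square of dimension strictly less than $d_2$ completely fills the lower-left corner of the diagram, so the only columns that can still carry unmatched faces in $X_2$ are those whose highest shaded square sits at dimension $d_2-1$ (forced by the filling) or at dimension $d_2$ (inherited from $X$, where $d_2$ is the maximum unmatched dimension). Thus $X_2$ has unmatched faces in at most the two dimensions $d_2-1$ and $d_2$, so $\widetilde{H}_{d_2}(X_2) \neq 0$ by the same appeal to Proposition \ref{prop:K'reduced} or Lemma \ref{lem:X2}. Since $X \subset X_2$ with $C_i(X) = C_i(X_2)$ for $i \geq d_2$, Lemma \ref{lem:HomContain}(i) applied with $C = X$, $D = X_2$, $l = d_2+1$ transfers this to $\widetilde{H}_{d_2}(X) \neq 0$. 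As $d_1 \neq d_2$, the reduced homology of $X$ is nonzero in two distinct degrees, so it is not concentrated in any single degree.

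The main obstacle is the bookkeeping in the first paragraph: rigorously verifying that truncating or filling the diagram produces subcomplexes whose unmatched faces live in only two consecutive dimensions. The delicate point is that the canonical matching of $X_1$ (respectively $X_2$) need not be the restriction of the canonical matching of $X$, since the location of the highest completely shaded row can shift, and Lemma \ref{lem:unmatch} must be reapplied from scratch to identify the unmatched squares. Once this is in hand, the Mayer--Vietoris-style transfer via Lemma \ref{lem:HomContain} is routine.
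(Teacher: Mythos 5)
Your proposal is correct and follows essentially the same route as the paper: it uses the truncated subcomplex $X_1$ and the filled subcomplex $X_2$ of Definition \ref{defn:Xi}, verifies via Lemma \ref{lem:unmatch} (reapplied to the new canonical matchings) that each has unmatched faces only in the two consecutive dimensions $d_1, d_1+1$ (respectively $d_2-1, d_2$) with at least one unmatched face in $d_1$ (respectively $d_2$), invokes Proposition \ref{prop:K'reduced} or Lemma \ref{lem:X2}, and transfers the nonvanishing back to $X$ with Lemma \ref{lem:HomContain}(ii) and (i) using exactly the same choices of $C$, $D$, and $l$. The bookkeeping you flag as the main obstacle is handled in the paper by tracking a single unmatched square $Q_1$ (resp. $Q_2$) and using Lemma \ref{lem:alld} to rule out $M_{X_1}$ drifting left of it, which fills in precisely the gap you identified.
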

\begin{proof}
Let the smallest and largest dimensions in which $X$ contains unmatched faces be denoted by $d_1$ and $d_2$ respectively. 
First consider the subcomplex $X_1$ of Definition \ref{defn:Xi} and let $Q_1$ be a square in $X$ containing unmatched faces in dimension $d_1$. 
By Lemma \ref{lem:unmatch}, either $Q_1$ is the highest shaded square in its column and not in the top row or $Q_1$ is the rightmost shaded square in the bottom row and not in the rightmost column.

Assume that $Q_1$ is the highest shaded square in its column and not in the top row.
This implies that $Q_1$ is still the highest shaded square in its column in $X_1$ since we only unshaded squares containing faces of dimension strictly greater than $d_1+1$ to get $X_1$.
Notice that $X$ contains every face of dimension $d_1$ by Lemma \ref{lem:alld} and so the square above and to the left of $Q_1$ is shaded in both $X$ and $X_1$. 
Therefore $M_{X_1}$ cannot be in a column strictly to the left of $Q_1$ by Definition \ref{defn:canon}, and $Q_1$ contains unmatched faces in $X_1$ by Lemma \ref{lem:unmatch}. 

Now assume that $Q_1$ is the rightmost shaded square in the bottom row of $X$ but not in the rightmost column. 
This implies that $Q_1$ is still the rightmost shaded square in the bottom row of $X_1$, since we only unshaded squares containing faces of dimension strictly greater than $d_1+1$ to get $X_1$.
By Lemma \ref{lem:unmatch}, $Q_1$ contains unmatched faces in $X_1$.

So either way, $Q_1$ contains unmatched faces in $X_1$.
Since $X$ contains every face of dimension $d_1$, $X_1$ does as well and so $X_1$ cannot have any unmatched faces of dimension strictly less than $d_1$.
Also, by definition, $X_1$ does not have any faces of dimension strictly greater than $d_1+1$.
This implies that either $X_1$ contains unmatched faces only in dimension $d_1$ and $\widetilde{H}_{d_1}(X_1)\neq0$ by Proposition \ref{prop:K'reduced},
or $X_1$ contains unmatched faces in both dimensions $d_1$ and $d_1+1$ and $\widetilde{H}_{d_1}(X_1)\neq0$ by Lemma \ref{lem:X2}.
So now we can apply Lemma \ref{lem:HomContain} (ii) with $C=X_1$, $D=X$, and $l=d_1$ to see that $\widetilde{H}_{d_1}(X)\neq0$ as well.

Now consider the subcomplex $X_2$ of Definition \ref{defn:Xi}.
Let $Q_2$ be a square containing unmatched faces in $X_2$ of dimension strictly greater than $d_2-1$ so that $Q_2$ is in $X$. Since $Q_2$ contains faces of dimension strictly greater than $d_2-1$, then so does every other square above and/or to the right of $Q_2$.
It follows that each of these squares is shaded in $X_2$ if and only if it is shaded in $X$. 
This implies that $Q_2$ is the highest shaded square in its column in $X_2$ if and only if it has the same property in $X$. 
The same is true if $Q_2$ is in a completely shaded row or if $Q_2$ is the rightmost shaded square in the bottom row.
Therefore by Lemma \ref{lem:unmatch}, $Q_2$ is unmatched in $X_2$ if and only if it is unmatched in $X$.

This implies that $X_2$ can only contain unmatched faces of dimension $d_2-1$ or $d_2$, and since $X$ has at least one square containing unmatched faces of dimension $d_2$ then so does $X_2$. So either $X_2$ contains unmatched faces only in dimension $d_2$ and $\widetilde{H}_{d_2}(X_2)\neq0$ by Proposition \ref{prop:K'reduced}, or $X_2$ contains unmatched faces in both dimensions $d_2-1$ and $d_2$ and so $\widetilde{H}_{d_2}(X_2)\neq0$ by Lemma \ref{lem:X2}.
Now we can apply Lemma \ref{lem:HomContain} (i) with $C=X$, $D=X_2$, and $l=d_2+1$ to see that $\widetilde{H}_{d_2}(X)\neq0$ as well.
We conclude that $\widetilde{H}_{d_1}(X)\neq0$ and $\widetilde{H}_{d_2}(X)\neq0$ as desired.
\end{proof}

We are now ready to prove the main result of Chapter \ref{chap:class}.
Recall that Proposition \ref{prop:subtypes} says that a subcomplex $K'$ of $K$ using the canonical matching has unmatched faces in a single dimension if and only if $K'$ can be described completely by one or more of the following properties:
\begin{enumerate}
  \item [\rm(i)]$K'$ contains every face $F(S)$ of dimension at most $d$, $0\leq d\leq n-2$;
  \item [\rm(ii)]$K'$ contains every face $F(S)$ such that $S(1)\geq j$, $1\leq j\leq k-1$;
  \item [\rm(iii)]$K'$ contains every face $F(S)$ such that $S(0)\geq i$, $1\leq i\leq n-k-1$. 
\end{enumerate}
The next theorem shows that these subcomplexes are the only ones whose reduced homology groups are concentrated in a single degree.

\begin{thm}
\label{thm:class}
Let $K'$ be an $S_n$-invariant subcomplex of $K$ that uses the canonical matching.
\begin{enumerate}
  \item [\rm(i)] The subcomplex $K'$ has its reduced homology groups concentrated in a single degree if and only if $K'$ has unmatched faces in a single dimension. Furthermore, if $K'$ contains $u_d$ unmatched faces in dimension $d>0$ and has no other unmatched faces then the $d$-th reduced homology group is free abelian of rank $u_d$.
  \item [\rm(ii)] If $K'$ has $u_d$ unmatched faces of dimension $d>0$ and zero unmatched faces in every other dimension, then $K'$ is homotopy equivalent to a wedge of $u_d$ $d$-spheres.
\end{enumerate}
\end{thm}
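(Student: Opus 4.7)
The plan is to assemble the theorem from the three main ingredients already developed in the excerpt: Proposition \ref{prop:K'reduced} handles the easy direction of (i), Lemmas \ref{lem:X2} and \ref{lem:X3} together dispose of the converse, and Proposition \ref{prop:wedge} combined with Theorem \ref{thm:Forman} (ii) gives (ii).

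For part (i), I would begin with the ``if'' direction, which is essentially just a citation: if $K'$ has unmatched faces in a single dimension $d$, Proposition \ref{prop:K'reduced} says the reduced homology is concentrated in degree $d$, and moreover that when $d>0$ the group $\widetilde{H}_d(K')$ is free abelian of rank equal to the number $u_d$ of unmatched $d$-cells. This also gives the ``furthermore'' clause in the theorem at no extra cost. For the ``only if'' direction I would argue the contrapositive: suppose $K'$ has unmatched faces in at least two distinct dimensions. If exactly two dimensions appear, Lemma \ref{lem:X2} shows that $\widetilde{H}_{d_1}(K')$ and $\widetilde{H}_{d_2}(K')$ are both nonzero, hence the reduced homology is not concentrated in a single degree. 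If three or more dimensions appear, the same conclusion follows from Lemma \ref{lem:X3}, which produces two nonzero reduced homology groups in the smallest and largest unmatched dimensions. So in every case where $K'$ fails to have unmatched faces in a single dimension, its reduced homology fails to be concentrated in a single degree, completing the proof of (i).

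For part (ii), the argument is a direct application of Forman's theorem together with the wedge-of-spheres recognition proposition. Starting from the canonical matching, I would unpair the empty cell from the vertex $v_0 = (1,\ldots,1,0,\ldots,0)$, just as in the proof of Proposition \ref{prop:K'reduced}; this yields an acyclic partial matching in which the empty set is unpaired, and the counts of unpaired cells are $u_0 = 1$, $u_d = $ the given number, and $u_p = 0$ for every other $p$. Theorem \ref{thm:Forman} (ii) then produces a CW complex homotopy equivalent to $K'$ with exactly one $0$-cell and exactly $u_d$ cells of dimension $d$, and no other cells. Since $d>0$, Proposition \ref{prop:wedge} applies verbatim and gives that this CW complex, and hence $K'$, is homotopy equivalent to a wedge of $u_d$ $d$-spheres.

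The main obstacle is not really in this assembling step at all, since the deep work has been carried out in Sections \ref{s:single} and \ref{s:class2}; the only subtlety worth flagging in the write-up is making sure that the hypothesis $d>0$ in part (ii) is used to justify applying Proposition \ref{prop:wedge} (whose statement requires $d>0$) and to ensure that the reduced homology degree in (i) is unambiguously positive when we invoke the rank formula. Apart from that, the theorem follows from collecting the results already proved, so the proof itself should be short and essentially a tidy citation of Proposition \ref{prop:K'reduced}, Lemmas \ref{lem:X2}--\ref{lem:X3}, Theorem \ref{thm:Forman}, and Proposition \ref{prop:wedge}.
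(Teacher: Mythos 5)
Your proposal is correct and follows exactly the same route as the paper: part (i) is assembled from Proposition \ref{prop:K'reduced} (single dimension), Lemma \ref{lem:X2} (two dimensions), and Lemma \ref{lem:X3} (three or more dimensions), while part (ii) is Theorem \ref{thm:Forman}(ii) followed by Proposition \ref{prop:wedge}. Your extra remarks about unpairing the empty cell and the role of the hypothesis $d>0$ are consistent with what the paper does inside Proposition \ref{prop:K'reduced}.
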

\begin{proof}
To prove part (i), notice that if $K'$ has unmatched faces in a single dimension, then this follows from Proposition \ref{prop:K'reduced}.
If $K'$ has unmatched faces exactly two dimensions, then this follows from Lemma \ref{lem:X2}.
Finally, if $K'$ has unmatched faces in at least three dimensions, then this follows from Lemma \ref{lem:X3}.

To prove part (ii), notice that $K'$ is homotopic to a CW complex with exactly one cell of dimension zero, $u_d$ cells of dimension $d$, and zero cells in every other dimension by Theorem \ref{thm:Forman} (ii). The result now follows from Proposition \ref{prop:wedge}.
\end{proof}

\chapter{Homology Representations}

\section{Betti Numbers}

Consider a $S_n$-invariant subcomplex $K'$ that has unmatched faces in a single dimension $d$.
Recall from Theorem \ref{thm:class} that the $d$-th reduced homology group of $K'$ is free abelian of rank equal to the number of unmatched faces.
The first goal of this chapter is to compute the rank of $H_d(K')$. 
This rank is called the \textbf{$d$-th Betti number} of $K'$. 

Each of the homology groups obtained in this way supports a natural action of the symmetric group $S_n$ because $S_n$ acts by continuous transformations. Therefore the $d$-th Betti number of these subcomplexes is also the dimension of a corresponding homology representation.

\begin{defn}
Let the triple $(d, i, j)$, where $0<d<n-1$, $0<i\leq n-k$, and $0<j\leq k$, be the subcomplex $K'$ containing the faces of dimension at most $d$, every face $F(S)$ such that $S(1)\geq j$ or $S(0)\geq i$, and no other faces.
If there is more than one triple that describes $K'$, then we will only consider the triple in which $d$ is maximal and $i\ \mathrm{and} \ j$ are minimal.
\end{defn}

\begin{eg}
Consider the subcomplex of $J(18, 8)$ given by the diagram in Figure \ref{fig:triple} below. 
Recall that the square labeled by $(i,j)$ contains faces of dimension $18-i-j-1$ and notice every square labeled by $(i,j)$ in which $i+j\geq7$ is contained in $K'$. 
This implies that every face of dimension $d \leq 18-7-1=10$ is contained in $K'$.
However the diagonal of squares containing every face of dimension $11$ is not entirely in $K'$ since, for example, the square labeled by $(6,0)$ is not in $K'$. 
We can also see that every square labeled by $(i,j)$ where $i\geq 7$ or $j\geq 4$ is contained in $K'$ and therefore $K'=(10,7,4)$.
Notice that $(10,8,4)$ also describes this subcomplex, but by convention we will only associate $(10,7,4)$ with $K'$.
\end{eg}

\begin{figure}[htbp]
\def\sca{1}
\def\split{.4}
\begin{center}
\begin{tikzpicture}[scale=\sca]
\def \length{10}
\def \height{8}
\def \lengthb{9} 
\def \heightb{7} 
\fill [shade] (0,0) rectangle (3,8);
\fill [shade] (3,7) rectangle (4,0);
\fill [shade] (4,6) rectangle (5,0);
\fill [shade] (5,5) rectangle (6,0);
\fill [shade] (6,0) rectangle (10,4);
\fill [shade b] (3,6) rectangle (4,7);
\fill [shade b] (4,5) rectangle (5,6);
\fill [shade b] (5,4) rectangle (6,5);
\fill [shade b] (6,3) rectangle (7,4);
\foreach \x in {0,...,\lengthb}
\foreach \y in {0,...,\heightb}
{
\draw [xshift=.5cm, yshift=.5cm] ($(\length,\height) - (\x,\y) -(1,1)$) node {\x,\y}; 
}
\normalsize
\draw [step=1,thin,gray!40] (0,0) grid (\length, \height);
\draw [linee] (0,0) -- (0,\height);
\draw [linee] (0,0) -- (\length,0); 
\draw [linee] (\length, \height) -- (0,\height);
\draw [linee] (\length, \height) -- (\length,0);
\end{tikzpicture}
\caption{Example of the subcomplex $(10,7,4)$.}
\label{fig:triple}
\end{center}
\end{figure}

\begin{lem}
\label{lem:donly}
The subcomplex given by $(d,i,j)$ has unmatched faces in dimension $d$ only.
\end{lem}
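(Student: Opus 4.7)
The plan is to invoke Proposition~\ref{prop:subtypes} for the single-dimension property and then pin the dimension down to $d$ using Lemmas~\ref{lem:subtypes1b} and~\ref{lem:subtypes2b}. By its very definition, the subcomplex $K' = (d,i,j)$ is described exactly by the three conditions of Proposition~\ref{prop:subtypes}: it contains every face of dimension at most $d$ (condition~(i)), every face $F(S)$ with $S(1) \geq j$ (condition~(ii)), and every face $F(S)$ with $S(0) \geq i$ (condition~(iii)), and no other faces. Hence Proposition~\ref{prop:subtypes} immediately guarantees that $K'$ has unmatched faces in a single dimension, and the remaining task is to verify that this dimension equals $d$.

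To identify the dimension, I would pass to the subcomplex diagram, whose shaded region is the union of every square on or below the diagonal of faces of dimension $d$ (from~(i)), the bottom $k-j$ rows (from~(ii)), and the leftmost $n-k-i$ columns (from~(iii)). If no row of the diagram is completely shaded, the diagram matches the hypotheses of Lemma~\ref{lem:subtypes1b}, whose conclusion is that the unmatched faces lie in dimension $d$. Otherwise, I would appeal to Lemma~\ref{lem:subtypes2b}; its two side hypotheses---namely $d < n-1$ and that the diagonal of faces of dimension $d+1$ is not completely shaded---follow from the maximality convention on $d$, which guarantees the existence of at least one face of dimension $d+1$ outside $K'$.

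The main obstacle is verifying the width condition in Lemma~\ref{lem:subtypes2b}(ii), which requires the leftmost completely shaded strip to have width strictly less than $n - k - m_0$, where $m_0$ is determined by Definition~\ref{defn:canon}. This amounts to locating $M_{K'}$ on the diagonal of faces of dimension $d$. The minimality conventions on $i$ and $j$ supply unshaded squares in the column $S(0) = i-1$ and in the row $S(1) = j-1$ respectively, and combining these with condition~(i) (which shades the dimension-$d$ diagonal completely) together with Definition~\ref{defn:canon} pins $M_{K'}$ onto the dimension-$d$ diagonal. At that point Lemma~\ref{lem:subtypes2b} applies as stated, and the identification of the unmatched dimension as $d$ is complete.
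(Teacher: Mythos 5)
Your proposal is correct in outline but reaches the conclusion by a different route than the paper. Both proofs begin identically, citing Proposition~\ref{prop:subtypes} for the single-dimension property. From there the paper argues directly: it locates the distinguished square $Q$ (the rightmost shaded square of the bottom row when $j=k$, or $M$ when $j<k$), observes via Lemma~\ref{lem:unmatch} that $Q$ carries unmatched faces of the single dimension $d_0$, and then notes that $(d,i,j)$ contains every face of dimension $d_0$ but not every face of dimension $d_0+1$, so the maximality convention on $d$ forces $d=d_0$. You instead feed the explicit shape of the diagram into the converse classification Lemmas~\ref{lem:subtypes1b} and~\ref{lem:subtypes2b}, whose conclusions name the dimension outright. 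Your route is legitimate and has the virtue of reusing machinery already proved, but it shifts the work into verifying those lemmas' hypotheses, and you have omitted one such check in the $j=k$ case: Lemma~\ref{lem:subtypes1b} requires the number of completely shaded columns (here $n-k-i$) to be at most $d$. This is true --- if $n-k-i>d$ the diagram would consist of fully shaded columns containing every face of dimension up to $n-k-i$, contradicting the maximality of $d$ --- but it is not automatic and must be stated, since the lemma's conclusion genuinely fails without it. Your verification that $M_{K'}$ lands on the dimension-$d$ diagonal in the $j<k$ case (via minimality of $i,j$ and maximality of $d$) is the right argument; the paper avoids these checks entirely by arguing from containment of faces rather than from the shape hypotheses of the converse lemmas, which is why its proof is shorter.
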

\begin{proof}
We know that $(d,i,j)$ has unmatched faces in a single dimension $d_0$ by Proposition \ref{prop:subtypes}, so we just need to show that $d=d_0$.

Suppose first that there are no rows in the diagram of $(d,i,j)$ that are completely shaded. By Lemma \ref{lem:unmatch}, the only unmatched faces are in the highest shaded squares in each column, unless that square is in the top row, and in the rightmost shaded square in the bottom row. 
Let $Q$ denote the rightmost shaded square in the bottom row.
Since $Q$ contains unmatched faces, it must contain faces of dimension $d_0$.
Also notice that the square to the right of $Q$ is not shaded and so $(d,i,j)$ does not contain every face of dimension $d_0+1$. However $(d,i,j)$ does contain every face of dimension $d_0$ since the highest shaded square in the column containing $Q$ and any column to its left is either in the top row or contains faces of dimension $d_0$. Therefore $d=d_0$ as desired.

The case when there are rows in the diagram that are completely shaded in is proven similarly except that we let $Q$ denote the square containing the faces $F(S)$ such that $S(0)=m_0$ and $S(1)=m_1$. 
\end{proof}

\begin{lem}
\label{lem:label}
Given a subcomplex $(d,i,j)$, let $a_0=\mathrm{max}\{n-d-k,\ n-d-1-j\}$ and $b_0=n-d-1-a_0$. The set of labels of the squares containing unmatched faces is given by
$$\{(a_0,b_0)\} \cup \{(a,b) \ | \ a+b=n-d-1,\ a_0+1\leq a\leq i-1\}.$$
In particular, the square labeled by $(a_0, b_0)$ is the rightmost square containing unmatched faces.
\end{lem}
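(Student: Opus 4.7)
The plan is to combine Lemma~\ref{lem:donly}, which forces every unmatched face of $(d,i,j)$ to have dimension $d$ (so every unmatched square lies on the diagonal $a+b=n-d-1$), with Lemma~\ref{lem:unmatch}, which says a square contains unmatched faces either by being the highest shaded square in its column (not in the top row, and weakly left of $M$'s column) or by being the rightmost shaded square in a non-completely-shaded bottom row. I would handle the two cases in turn and show that together they produce exactly the set $\{(a,n-d-1-a):a_0\leq a\leq i-1\}$.

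For the highest-in-column case, the central computation is that the square directly above $(a,n-d-1-a)$ is $(a,n-d-2-a)$, which has coordinate sum $n-d-2<n-d-1$ and is thus shaded precisely when $a\geq i$ or $n-d-2-a\geq j$. Consequently $(a,n-d-1-a)$ is the highest shaded square in its column exactly when $n-d-1-j\leq a\leq i-1$, and intersecting with the grid range $[\max(0,n-d-k),\min(n-k-1,n-d-1)]$ yields $a\in[a_0,\min(i-1,n-d-1,n-k-1)]$. The canonicity assumptions on $(d,i,j)$ (maximal~$d$, minimal~$i$ and $j$) force $i,j\leq n-d-1$ and $i\geq n-d-k$: the first inequality ensures $b=n-d-1-a\geq 1$ throughout the range, while the second lets me compute the highest completely shaded row as $b^{\ast}=j$, so that a direct analysis of row $j-1$ (where $(a,j-1)$ is shaded iff $a\geq\min(i,n-d-j)$) gives $m_0=\min(i,n-d-j)-1\leq n-d-1-j\leq a_0$; hence $a\geq a_0$ automatically gives $a\geq m_0$.

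The hardest part will be cleanly merging case~(ii) of Lemma~\ref{lem:unmatch} with case~(i) without gaps or double counting, particularly for the leftmost square $(a_0,b_0)$ of our set. When $j\leq k-1$ the bottom row is completely shaded (because $b=k-1\geq j$), so (ii) contributes nothing and $(a_0,b_0)=(n-d-1-j,j)$ enters via case~(i). When $j=k$ the row condition is vacuous inside the grid, and canonicity ($i\geq n-d-k$) lets me identify the rightmost shaded square in the bottom row as $(n-d-k,k-1)=(a_0,b_0)$ --- which coincides with the case-(i) square whenever $i>n-d-k$, and otherwise is supplied only by case~(ii). Conversely, no diagonal square outside $[a_0,i-1]$ is unmatched: if $a\geq i$ the entire column is shaded so the highest square sits in the top row, while if $a<a_0$ either $a<n-d-k$ (the square lies off the grid) or $a<n-d-1-j$ (the square above is shaded, so our square is not highest in its column). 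Since ``rightmost'' in the grid means smallest~$a$ and $a_0$ is the minimum of the index range $[a_0,i-1]$, the final clause of the lemma is immediate.
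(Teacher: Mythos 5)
Your proof is correct and takes essentially the same route as the paper's: both arguments reduce to Lemma \ref{lem:unmatch} combined with Lemma \ref{lem:donly} (which forces every unmatched square onto the diagonal $a+b=n-d-1$) and split on $j=k$ versus $j<k$, i.e.\ on whether a row is completely shaded. The only difference is presentational: you derive the range $[a_0,i-1]$ by explicitly computing the shading conditions and the consequences of the canonicity conventions (e.g.\ $i\ge n-d-j$ and $j\le n-d-1$), facts the paper relies on implicitly when it identifies $(a_0,b_0)$ directly as the rightmost shaded square of the bottom row or as $M$.
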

\begin{proof}
Note that $n-a_0-b_0-1=d$, and so the square labeled by $(a_0,b_0)$ contains faces of dimension $d$.
Consider the subcomplex diagram for $(d,i,j)$ and suppose first that there are no rows that are completely shaded.
In this case, we have $j=k$, which implies that $a_0=n-d-k$ and $b_0=k-1$. 
By Lemma \ref{lem:unmatch}, the rightmost shaded square in the bottom row contains unmatched faces, the highest shaded square in each column that is not in the top row (if there exist any) contains unmatched faces, and no other square contains unmatched faces. 
Let $Q$ denote the rightmost shaded square in the bottom row.
If $Q$ is labeled by $(a,b)$ then $b=k-1$ since $Q$ is in the bottom row.
By Remark \ref{rem:subcomp} we have $n-a-b-1=d$ and thus $a=n-d-k$. 
Therefore $Q$ is the square labeled by $(a_0,b_0)$.

Suppose next that there are rows that are completely shaded.
In this case, the only squares containing unmatched faces are the highest shaded square in each column left of and including the column containing $M$ unless that square is in the top row.
By Definition \ref{defn:canon}, $M$ is in the highest completely shaded row and has label $(m_0,m_1)$.  
Note that $0<j$ by definition, so the top row is not completely shaded in and $0<m_1$.
It follows that $j=m_1<k$.
By Lemma \ref{lem:donly}, $M$ contains unmatched faces in dimension $d$.
This implies that $n-m_0-m_1-1 = d$ or that $m_0 = n-d-1-j$.
Therefore $a_0 = m_0$ and $b_0=m_1$, or in other words, $M$ is the square labeled by $(a_0,b_0)$.

Now suppose we are in either case and let $(a,b)$ be the label of the highest shaded square in some column that is not in the top row. 
Recall that the square labeled by $(a,b)$ contains faces of dimension $n-a-b-1=d$. 
This implies that $a+b=n-d-1$.
Also note that if the square labeled by $(a,b)$ is in the same column as the square labeled by $(a_0,b_0)$ then we must have $(a,b)=(a_0,b_0)$ since otherwise they would contain faces of different dimensions.
The remaining squares containing unmatched faces are precisely those that are both left of the column containing the square labeled by $(a_0,b_0)$, which means that $a_0+1\leq a$, and strictly right of the rightmost completely shaded column, which means that $a\leq i-1$, as desired.
\end{proof}

\begin{eg}
Consider the subcomplex $(3,7,8)$ shown in Figure \ref{fig:label} below. In this case we have $n-d-k=18-3-8=7$ and $n-d-1-j=18-3-1-8=6$. This implies that $a_0=7$ and $b_0=7$. Also note that $a_0+1=8$ and $i-1=6$, and so there is no such $a$ that satisfies $a_0+1\leq a\leq i-1$. Therefore the square labeled by $(a_0, b_0)$ is the only square containing unmatched faces. This example shows that the set
$$\{(a,b) \ | \ a+b=n-d-1,\ a_0\leq a\leq i-1\}$$
does not contain all of the labels of squares containing unmatched faces.
\end{eg}

\begin{figure}[htbp]
\def\sca{1}
\def\split{.4}
\begin{center}
\begin{tikzpicture}[scale=\sca]
\def \length{10}
\def \height{8}
\def \lengthb{9} 
\def \heightb{7} 
\fill [shade] (0,0) rectangle (3,8);
\fill [shade b] (2,0) rectangle (3,1);
\foreach \x in {0,...,\lengthb}
\foreach \y in {0,...,\heightb}
{
\draw [xshift=.5cm, yshift=.5cm] ($(\length,\height) - (\x,\y) -(1,1)$) node {\x,\y}; 
}
\normalsize
\draw [step=1,thin,gray!40] (0,0) grid (\length, \height);
\draw [linee] (0,0) -- (0,\height);
\draw [linee] (0,0) -- (\length,0); 
\draw [linee] (\length, \height) -- (0,\height);
\draw [linee] (\length, \height) -- (\length,0);
\end{tikzpicture}
\caption{Unmatched faces of $(3,7,8)$.}
\label{fig:label}
\end{center}
\end{figure}

\begin{lem}
\label{lem:c1}
Consider a subcomplex $(d,i,j)$. If $j=k$, then:
\begin{enumerate}
  \item [\rm(i)] $a_0>0=m_0$;
  \item [\rm(ii)] the square labeled by $(a_0,b_0)$ has unmatched faces of type 3 and there are $$\displaystyle\sum_{c=0}^{k-1}\binom{n-1-c}{a_0-1}\binom{d+k-1-c}{d}$$ of them;
  \item [\rm(iii)] if $a_0<i$, then square labeled by $(a_0,b_0)$ also has unmatched faces of type 1 and there are
$$\binom{n}{a_0}\binom{n-a_0-1}{b_0-1}$$ of them;
  \item [\rm(iv)] the faces in parts (ii) and (iii) are all of the unmatched faces in the square labeled by $(a_0,b_0)$.
\end{enumerate}
\end{lem}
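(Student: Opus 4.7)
My plan is to handle parts (i)--(iv) in sequence, relying on the ten matching rules from Section 2.1 and the bookkeeping of Lemma \ref{lem:label}. For (i), I would argue that ``$j=k$ in canonical form'' is equivalent to ``no row of the diagram is completely shaded,'' which by Definition \ref{defn:canon} forces $m_0=m_1=0$. The assertion $a_0>0$ is then a contrapositive: if $a_0=0$ then $d=n-k$, so condition~(i) of the triple shades the entire bottom row, contradicting the previous equivalence. Along the way I would prove an auxiliary fact used repeatedly below, namely that canonicality forces $a_0\leq i$: otherwise every square on the dimension-$(d{+}1)$ diagonal satisfies $a\geq a_0-1\geq i$, so condition~(iii) already includes every such face and $d$ could be raised by one, contradicting maximality.

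For (ii), the fact that type-3 faces of $(a_0,b_0)$ are unmatched is immediate: their rule-3 partner lives in $(a_0-1,b_0)$, which has dimension $d+1$, $S(1)=k-1<k$, and $S(0)=a_0-1<i$ (by the auxiliary fact), so it fails all three defining conditions of $K'$. To count the type-3 faces I would set up a bijection keyed to the statistic $t$, defined as the number of $1$'s appearing strictly before the leftmost $0$ of the sequence; this is well-defined because the rule-3 condition forces a $0$ to precede every $*$. Positions $1,\ldots,t$ are then all $1$'s and position $t+1$ is a $0$; among the remaining $n-1-t$ positions, pick the $a_0-1$ zero-positions in $\binom{n-1-t}{a_0-1}$ ways, and then arrange the surviving $k-1-t$ ones and $d+1$ stars in the $d+k-t$ non-zero slots with the last slot forced to be a star (equivalent to ``no $1$ to the right of the rightmost $*$''), giving $\binom{d+k-1-t}{d}$. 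Summing over $0\leq t\leq k-1$ yields the claimed formula.

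Part (iii) follows the same template: a type-1 face of $(a_0,b_0)$ is matched via rule~1 to a face in $(a_0,b_0-1)$, whose data is dimension $d+1$, $S(1)=k-2$, and $S(0)=a_0$, so this square lies in $K'$ iff $a_0\geq i$. When $a_0<i$, every type-1 face is therefore unmatched, and the count is $\binom{n}{a_0}$ choices of zero-positions times $\binom{n-a_0-1}{b_0-1}$ arrangements of the remaining $k-1$ ones and $d+1$ stars with the last slot a $1$. For (iv) I would rule out every other matching type: rule~2 requires $S(1)\leq k-2$; rules 5--8 require $S(1)\in\{m_1-1,m_1\}=\{-1,0\}$; rule~9 requires a vertex. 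The only remaining possibilities are rule~4 and (when $d=1$) rule~10, and in both cases the partner face lies in a square of dimension $\leq d$ or is a vertex, hence is in $K'$, so those faces are matched.

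The main obstacle is setting up the bijection in (ii): the crucial insight is to parameterize by the number of $1$'s before the \emph{leftmost $0$} (rather than before the leftmost $*$), which is what makes the count separate cleanly into the two binomial factors appearing in the statement.
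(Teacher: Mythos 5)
Your proposal is correct and follows essentially the same route as the paper: the same type analysis (only types 1 and 3 can be unmatched in $(a_0,b_0)$ once $S(0)=a_0>0=m_0$ rules out types 5--8), and the identical counting bijections parameterized by the number of leading 1's before the first 0 (for type 3) and by the zero-positions with a forced trailing 1 (for type 1). The only cosmetic difference is that you verify unmatchedness by checking the partner square directly against the three defining conditions of the triple, supported by your auxiliary fact $a_0\leq i$, where the paper instead identifies $(a_0,b_0)$ as the rightmost shaded square in the bottom row via Lemmas \ref{lem:label} and \ref{lem:unmatch} and reads off which neighboring squares are unshaded.
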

\begin{proof}
Let $Q$ denote the square labeled by $(a_0,b_0)$ and let $F(S)$ be any face in $Q$.
We know that $Q$ is the rightmost square containing unmatched faces by Lemma \ref{lem:label}.
Since $j=k$, the diagram of $(d,i,j)$ does not have any rows that are completely shaded in. 
By Lemma \ref{lem:unmatch}, the rightmost shaded square in the bottom row contains unmatched faces, it is in the rightmost column containing any shaded squares, and no other square in this column can contain unmatched faces since that would contradict $(d,i,j)$ having unmatched faces in a single dimension.
So the rightmost shaded square in the bottom row is $Q$.

Recall that the only types of faces that are matched with faces of higher dimension, and hence the only types of faces in $Q$ that can be unmatched, are types 1, 3, 5, 7, and 9. 
Since $j=k$, we have $a_0=n-d-k$ and $b_0=k-1$ by definition, which implies that $S(0)=n-d-k\leq n-k-1$ and $S(1)=k-1$.
Since there are no rows that are completely shaded in, we have $m_0=m_1=0$ and $a_0>0$, and so $S(0)=a_0>0=m_0$, proving (i). 
Therefore if $Q$ contains unmatched faces, then they must be of types 1 or 3.
The square $Q$ will always contain unmatched faces of type 3 since the square to its right is not shaded.
However, $Q$ will only contain unmatched faces of type 1 when the column containing $Q$ is not completely shaded in, which means that $a_0<i$.
This proves (iv).

Counting the number of faces of type 3 in $Q$ is equivalent to counting the number of sequences of length $n$ made up of $a_0$ 0's, $b_0$ 1's, and $d+1$ *'s (since $d=n-a_0-b_0-1$ by Remark \ref{rem:subcomp}) such that there is not a 1 to the right of the rightmost *, but there is a 0 to the left of the leftmost *.
This implies that the sequence must start with $1\cdots 10$ where there are $c$ 1's and $0\leq c\leq b_0=k-1$.
At this point we can place the remaining $a_0-1$ 0's into any of the remaining $n-1-c$ spots in the sequence.
Now we are forced to put a * into the rightmost available spot in the sequence since otherwise there would be a 1 to the right of the rightmost *.
All that is left to do is place the remaining $d$ *'s into any of the remaining $n-1-c-(a_0-1)-1=d+k-1-c$ spots in the sequence since this fixes where all the 1's must be. Summing over all possible $c$'s proves (ii).

Counting the faces of type 1, if any exist, is done similarly except that we only require a 1 to the right of the rightmost *.
First we place all of the $a_0$ 0's into any of the $n$ spots in the sequence.
Now we are forced to put a 1 in the rightmost available spot in the sequence since otherwise there would not be a 1 to the right of the rightmost *.
All that is left to do is place the remaining $(b_0-1)$ 1's into any of the remaining $n-a_0-1$ spots in the sequence since this fixes where all the *'s must be, proving (iii). 
\end{proof}

\begin{lem}
\label{lem:c2}
Consider a subcomplex $(d,i,j)$. If $j<k$, then $a_0=m_0$, the unmatched faces in the square labeled by $(a_0,b_0)$ are all of type 1, and there are
$$\displaystyle\sum_{c=0}^{a_0}\binom{n-1-c}{j-1}\binom{d+a_0-c}{d}$$
of them.
\end{lem}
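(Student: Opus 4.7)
The plan is to establish the three assertions of the lemma in turn: first $a_0 = m_0$, then the classification of unmatched faces in the square $(a_0,b_0)$ as type 1, and finally the count. For the first, I would invoke Lemma \ref{lem:label}: since $j<k$ the maximum defining $a_0$ is attained by the second term, giving $a_0 = n-d-1-j$ and $b_0 = j$. By Definition \ref{defn:canon} the canonical matching records $M=(m_0,m_1)$, where the row $S(1)=m_1$ is the highest completely shaded row. The shaded rows are exactly $S(1)\geq j$, so $m_1 = j = b_0$, and $m_0$ is the largest $S(0)$ such that the square $(S(0),j-1)$ is \emph{not} in $K'$. That square is in $K'$ iff $S(0)\geq i$ or $S(0)\geq n-d-j$ (the latter from the dimension condition), so $m_0 = \min(i,n-d-j)-1$. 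The key step is to invoke the maximality convention on $d$: if $n-d-j > i$ then every face of dimension $d+1$ would already satisfy $S(0)\geq i$ or $S(1)\geq j$, so $(d+1,i,j)$ would describe the same subcomplex, contradicting maximality. Hence $n-d-j\leq i$, giving $m_0 = n-d-j-1 = a_0$.

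Next I plan to rule out every matching rule other than 1(c) on faces in $M$. A face $F(S)$ in $M$ satisfies $S(0)=m_0$ and $S(1)=m_1=j$, which immediately fails the side conditions of rules 1(a), 1(b), 2(b), 2(c), 6, 7, and 8. Rule 9 cannot apply because $d\geq 1$ forces $S$ to contain at least two $*$'s. Rules 3, 4, and 10 demand $S(1)=k-1$ and are relevant only when $j=k-1$; a brief case-check shows each matches $F(S)$ to a face either in the same row as $M$ (rule 3 replaces the leftmost 0 by $*$) or to a strictly lower-dimensional face in $K'$ (rule 4 increases $S(0)$ within the completely shaded row, rule 10 produces a vertex). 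Rule 5 likewise matches to the same row as $M$, and rule 2(a) (when $j<k-1$) matches to the row $S(1)=j+1$, which is again completely shaded. The only surviving possibility is rule 1(c): $S(1)=m_1$, $S(0)=m_0$, no 0 to the left of the leftmost $*$, and a 1 to the right of the rightmost $*$. Then $F(S)$ is matched to a face in the square $(m_0,m_1-1)$ directly above $M$, which is unshaded by the definition of $m_0$, so the face is unmatched. These are exactly the unmatched type 1 faces in $M$.

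For the count, I plan to parameterize by $c$, the number of 0's lying to the right of the rightmost 1 in $S$. Since the rightmost $*$ precedes the rightmost 1 (by the ``1 to the right of the rightmost $*$'' condition) and no $*$'s or 1's lie further right, positions $n-c+1,\dots,n$ are forced to be 0's and position $n-c$ holds the rightmost 1, with $0\leq c\leq a_0$. For each $c$, the remaining $n-c-1$ positions must hold $a_0-c$ 0's, $j-1$ 1's, and $d+1$ $*$'s, subject to the constraint that no 0 lies to the left of the leftmost $*$. Placing the $j-1$ ones first gives $\binom{n-1-c}{j-1}$ choices; among the remaining $n-c-j$ slots for 0's and $*$'s, the leftmost must be a $*$, and distributing the remaining $d$ $*$'s among the remaining $n-c-j-1 = d+a_0-c$ slots yields $\binom{d+a_0-c}{d}$. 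Summing over $c$ produces the stated total. The delicate step of the whole argument is the identification $a_0=m_0$, which quietly rests on the maximality convention in the definition of the triple; once that is settled, the classification of unmatched faces is a routine inspection of the ten rules, and the counting step is a direct adaptation of the argument used in Lemma \ref{lem:c1}.
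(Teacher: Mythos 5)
Your proof is correct and follows essentially the same route as the paper's: identify the square labeled $(a_0,b_0)$ with $M$, observe that only rule 1(c) can leave a face of $M$ unmatched (all other rules pair faces of $M$ with faces in shaded squares), and count the type 1(c) sequences by the number $c$ of trailing zeros. The only cosmetic difference is that you re-derive $a_0=m_0$ directly from the canonical-matching definition together with the maximality convention on $d$, whereas the paper obtains the same identity by citing Lemma \ref{lem:label} (which in turn uses Lemma \ref{lem:donly}); both arguments rest on the same conventions.
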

\begin{proof}
Let $Q$ denote the square labeled by $(a_0,b_0)$ and let $F(S)$ be any face in $Q$.
We know that $Q$ is the rightmost square containing unmatched faces by Lemma \ref{lem:label}, since if $(a,b)$ is the label of any other square containing unmatched faces, then $a_0<a$. 
Since $j<k$, the diagram of $(d,i,j)$ does have some rows that are completely shaded in. 
By Lemma \ref{lem:unmatch}, the only squares containing unmatched faces are the highest shaded square in each column left of and including the column containing $M$ unless that square is in the top row.
This implies that $Q$ must be $M$.

If $Q$ contains unmatched faces, then they must be matched with the square above $Q$ since by Definition \ref{defn:canon}, $M$ is either in the rightmost column or the square to the right of $M$ is shaded.
However only faces of type 1 are matched with the square above and more specifically, the unmatched faces are all of type 1(c) since $Q=M$.

Counting the number of faces of type 1(c) in $M$ is equivalent to counting the number of sequences of length $n$ made up of $a_0$ 0's, $b_0$ 1's, and $d+1$ *'s (since $d=n-a_0-b_0-1$ by Remark \ref{rem:subcomp}) such that there is a 1 to the right of the rightmost *, but there is not a 0 to the left of the leftmost *.
This implies that the sequence ends with $10\cdots0$ where there are $c$ 0's and $0\leq c \leq a_0$.
Since $j<k$, we have $a_0=n-d-1-j$ and $b_0=j$.
At this point we can place the remaining $(j-1)$ 1's into any of the remaining $n-1-c$ spots in the sequence.
Now we are forced to put a * into the leftmost available spot in the sequence since otherwise there would be a 0 to the left of the leftmost *.
All that is left to do is place the remaining $d$ *'s into any of the remaining $n-1-c-(j-1)-1=d+a_0-c$ spots in the sequence since this fixes where all the 0's must be. Summing over all possible $c$'s gives the result.
\end{proof}

\begin{lem}
\label{lem:c3}
Consider a subcomplex $(d,i,j)$. Let $(a,b)$ be the label of a square containing unmatched faces in $(d,i,j)$ such that $(a,b)\neq(a_0,b_0)$. Then the unmatched faces in this square are all of type 1 and there are
$$\binom{n}{a}\binom{n-a-1}{b-1}$$
of them.
\end{lem}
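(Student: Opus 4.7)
The plan is to establish three things in sequence: first, that the square labeled $(a,b)$ is the highest shaded square in its column and strictly above the bottom row; second, that its only unmatched faces must be of type 1; third, that the count of type 1 faces agrees with the stated binomial product.

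For the positioning, I would observe from Lemma \ref{lem:label} that $a+b=n-d-1$ and $a\geq a_0+1\geq n-d-k+1$, which forces $b\leq k-2$, so $(a,b)$ is not in the bottom row. Since $a<i$, column $a$ is not one of the completely shaded left columns, and because $b<j$ (in the case $j<k$, using $a>a_0=n-d-1-j$), the square is also above the completely shaded bottom rows. Hence the only shaded square in column $a$ at or above row $b$ is $(a,b)$ itself on the dimension-$d$ diagonal, so $(a,b-1)$ is unshaded while $(a,b+1)$ is shaded. The argument that $(a,b)$ is not in the top row is immediate when $b\geq 1$; this holds because $a\leq i-1$ together with the constraint on the diagonal prevents $b=0$ for squares genuinely carrying unmatched faces, as in Lemma \ref{lem:label}.

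Next, to pin down the type of unmatched faces, I would go through rules (1)--(10) from Section \ref{s:Construction} and check which can apply to a face $F(S)$ in $(a,b)$. Using $S(1)=b<k-1$ rules out types 3 and 4; using $b<j=m_1$ (when $j<k$) or $m_1=0$ (when $j=k$) rules out types 5 and 6; and using $S(0)=a>m_0=a_0$ rules out types 7 and 8. Type 9 is excluded since $F(S)$ is not a vertex. What remain are type 1, matched with the square above $(a,b-1)$, and type 2, matched with the square below $(a,b+1)$. Since $(a,b+1)$ is shaded, all type 2 faces are matched, while since $(a,b-1)$ is unshaded, every type 1 face at $(a,b)$ is unmatched.

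Finally, I would count the type 1 faces in $(a,b)$ as sequences of length $n$ with $a$ zeros, $b$ ones, and $d+1$ stars such that at least one $1$ lies to the right of the rightmost $*$; equivalently, as shown by comparing the relative order of ones and stars among non-zero positions, the rightmost non-zero entry of the sequence is a $1$. Choosing positions for the zeros gives $\binom{n}{a}$ options, forcing the rightmost remaining slot to hold a $1$ and distributing the remaining $b-1$ ones among the other $n-a-1$ non-zero slots contributes $\binom{n-a-1}{b-1}$, yielding the total $\binom{n}{a}\binom{n-a-1}{b-1}$ as claimed. The main obstacle is really just the careful case analysis ruling out all rules other than type~1; the counting step is a direct analogue of the type~1 count appearing in Lemma \ref{lem:c1}(iii), and everything hinges on the position of $(a,b)$ strictly to the left of the column containing $M$ and strictly above the completely shaded rows.
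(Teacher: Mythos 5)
Your proposal is correct and follows essentially the same route as the paper: both arguments use $S(1)=b<k-1$ and $S(0)=a>m_0$ to eliminate all rules except (1) and (2), observe that type~2 faces are matched downward into a shaded square, and then count the type~1 faces by placing the $a$ zeros, forcing a $1$ into the rightmost remaining slot, and distributing the other $b-1$ ones. The only nitpick is your assertion $m_0=a_0$, which holds only when $j<k$ (when $j=k$ one has $m_0=0<a_0$ by Lemma~\ref{lem:c1}), but the inequality $a>a_0\geq m_0$ that you actually need is valid in both cases.
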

\begin{proof}
Let $Q$ denote a square containing unmatched faces and labeled by $(a,b)$ such that $(a,b)\neq(a_0,b_0)$, and let $F(S)$ be any face in $Q$.
Notice that $a>a_0$ by Lemma \ref{lem:label}, and that $a_0\geq m_0$ by Lemmas \ref{lem:c1} and \ref{lem:c2}.
It follows that $S(0)>m_0$.
We also have $$b = n-d-1-a < n-d-1-a_0 \leq n-d-1-(n-d-k) = k-1,$$ and so $S(1) < k-1$.
This implies that $Q$ can only contain faces of types 1 and 2, but faces of type 2 are matched with faces of lower dimension and so the unmatched faces of $Q$ are all of type 1 as desired.

Counting the number of faces of type 1 in $Q=M$ is equivalent to counting the number of sequences of length $n$ made up of $a$ 0's, $b$ 1's, and $d+1$ *'s (since $d=n-a-b-1$ by Remark \ref{rem:subcomp}) such that there is a 1 to the right of the rightmost *. 
First we place all of the $a$ 0's into any of the $n$ spots in the sequence.
Now we are forced to put a 1 in the rightmost available spot in the sequence since otherwise there would not be a 1 to the right of the rightmost *.
All that is left to do is place the remaining $(b-1)$ 1's into any of the remaining $n-a-1$ spots in the sequence since this fixes where all the *'s must be.
\end{proof}

\begin{thm}
\label{thm:Betti}
Let $(d,i,j)$ be a subcomplex of $K$ using the canonical matching. 
Let $|(d,i,j)|$ denote the number of unmatched faces in dimension $d$ and let $b_a=n-d-a-1$.
\begin{enumerate}
  \item [\rm(i)] If $j=k$, then $$|(d,i,j)|=\displaystyle\sum_{a=a_0}^{i-1}\binom{n}{a}\binom{n-a-1}{b_a-1} + \displaystyle\sum_{c=0}^{k-1}\binom{n-1-c}{a_0-1}\binom{d+k-1-c}{d}.$$
  \item [\rm(ii)] If $j<k$, then $$|(d,i,j)|=\displaystyle\sum_{a=a_0+1}^{i-1}\binom{n}{a}\binom{n-a-1}{b_a-1} + \displaystyle\sum_{c=0}^{a_0}\binom{n-1-c}{j-1}\binom{d+a_0-c}{d}.$$
\end{enumerate}
\end{thm}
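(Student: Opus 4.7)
The plan is to obtain the formula as a direct bookkeeping exercise by combining the counts already established in Lemmas \ref{lem:label}, \ref{lem:c1}, \ref{lem:c2}, and \ref{lem:c3}. By Theorem \ref{thm:class} and Proposition \ref{prop:subtypes}, a subcomplex of the form $(d,i,j)$ has its unmatched faces lying entirely in dimension $d$, so $|(d,i,j)|$ is the total number of unmatched faces. Lemma \ref{lem:label} identifies the squares in the subcomplex diagram that contain these unmatched faces: the rightmost such square is labeled $(a_0,b_0)$, and the remaining contributing squares are exactly those labeled $(a,\,n-d-1-a) = (a, b_a)$ for $a_0+1 \le a \le i-1$. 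Therefore $|(d,i,j)|$ is obtained by summing the number of unmatched faces over these squares.

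For case (i), when $j = k$, I would apply Lemma \ref{lem:c1}: the square $(a_0,b_0)$ contains $\sum_{c=0}^{k-1}\binom{n-1-c}{a_0-1}\binom{d+k-1-c}{d}$ unmatched faces of type 3 in every instance, and additionally, whenever $a_0 < i$, it contains $\binom{n}{a_0}\binom{n-a_0-1}{b_0-1}$ unmatched faces of type 1. Lemma \ref{lem:c3} then supplies $\binom{n}{a}\binom{n-a-1}{b_a-1}$ unmatched faces (all of type 1) in each other square $(a, b_a)$ with $a_0+1 \le a \le i-1$. Adding the type 1 count at $(a_0,b_0)$ (when present) to the type 1 counts at the remaining squares collapses into the single expression $\sum_{a=a_0}^{i-1}\binom{n}{a}\binom{n-a-1}{b_a-1}$: if $a_0 < i$ the $a=a_0$ term supplies the contribution from $(a_0,b_0)$, while if $a_0 \ge i$ the sum is empty and Lemma \ref{lem:c1}(iii) likewise gives no type 1 contribution. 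Adjoining the type 3 sum yields exactly the formula claimed in (i).

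For case (ii), when $j < k$, the argument is simpler because the unmatched faces in $(a_0,b_0) = M$ are all of a single type. By Lemma \ref{lem:c2}, the square $(a_0,b_0)$ contains exactly $\sum_{c=0}^{a_0}\binom{n-1-c}{j-1}\binom{d+a_0-c}{d}$ unmatched faces (all of type 1(c)), and by Lemma \ref{lem:c3} each remaining square $(a, b_a)$ with $a_0+1 \le a \le i-1$ contributes $\binom{n}{a}\binom{n-a-1}{b_a-1}$ unmatched faces of type 1. Summing produces the desired expression.

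There is really no substantial obstacle here, since all the combinatorial counting has been done in the preceding lemmas; the only matter requiring mild care is verifying that the index conventions in the two summations on the right-hand side correctly interpolate between the distinguished square $(a_0,b_0)$ and the generic squares $(a,b_a)$, and that edge cases (such as $a_0 = i-1$, $a_0 \ge i$, or there being no squares other than $(a_0,b_0)$ with unmatched faces) are correctly absorbed by the convention that an empty sum is zero.
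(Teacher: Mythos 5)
Your proposal is correct and follows essentially the same route as the paper: identify the squares with unmatched faces via Lemma \ref{lem:label}, count them with Lemmas \ref{lem:c1}--\ref{lem:c3}, and observe that the type 1 count at $(a_0,b_0)$ (present exactly when $a_0<i$) is absorbed as the $a=a_0$ term of the first sum in case (i). The only cosmetic point is that the fact that the single dimension of unmatched faces is $d$ itself is Lemma \ref{lem:donly} rather than Proposition \ref{prop:subtypes} alone.
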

\begin{proof}
By Lemma \ref{lem:label} the labels of the squares containing unmatched faces are given by the set
$$\{(a_0,b_0)\} \cup \{(a,b) \ | \ a+b=n-d-1,\ a_0+1\leq a\leq i-1\}.$$
By Lemma \ref{lem:c3}, the squares labeled by the subset $\{(a,b) \ | \ a+b=n-d-1,\ a_0+1\leq a\leq i-1\}$
each contribute
$$\binom{n}{a}\binom{n-a-1}{b_a-1}$$
unmatched faces, for a total of
$$\displaystyle\sum_{a=a_0+1}^{i-1}\binom{n}{a}\binom{n-a-1}{b_a-1}$$
unmatched faces. 
This accounts for the first sum in (i), except for the first term, and the entire first sum in (ii). 

If $j=k$, then by Lemma \ref{lem:c1}, the square labeled by $(a_0,b_0)$ contributes  
$$\displaystyle\sum_{c=0}^{k-1}\binom{n-1-c}{a_0-1}\binom{d+k-1-c}{d}$$
more unmatched faces, plus another 
$$\binom{n}{a_0}\binom{n-a_0-1}{b_0-1}$$
if $a_0<i$. 
This accounts for the second sum in (i) and the remaining term of the first sum in (i) respectively.

If $j<k$, then by Lemma \ref{lem:c2}, the square labeled by $(a_0,b_0)$ contributes  
$$\displaystyle\sum_{c=0}^{a_0}\binom{n-1-c}{j-1}\binom{d+a_0-c}{d}$$
more unmatched faces.
This accounts for the second sum in (ii).
\end{proof}

\begin{eg}
Consider the subcomplex $(10, 7, 4)$ of $J(18,8)$.
Notice that $j=4<8=k$, $$a_0 = \ \mathrm{max}\{n-d-k,\ n-d-1-j\}= \ \mathrm{max}\{18-10-8,\ 18-10-1-4\}= 3,$$ and $b_a = n-d-1-a=18-10-1-a= 7-a$.
We can apply Theorem \ref{thm:Betti} to see that 
$$|(d,i,j)|=\displaystyle\sum_{a=4}^{6}\binom{18}{a}\binom{17-a}{6-a} + \displaystyle\sum_{c=0}^{3}\binom{17-c}{3}\binom{13-c}{10}=596869.$$
\end{eg}

%


\section{Characters Arising from the Subcomplexes}

Before we identify the irreducible characters of the homology representations, it will be helpful to describe the character of the following representation.

\begin{defn}
Consider the CW complex $K$ associated with $J(n,k)$. We define $Y(a,b)$ to be the $\mathbb{C}S_n$-submodule of one of its chain groups equal to the span of the faces $F(S)$ such that $S$ has exactly $a$ 0's and $b$ 1's.
\end{defn}

\begin{lem}
\label{lem:neg}
Let $K'$ be the subcomplex of $K$ obtained by removing the unique $(n-1)$-face and let $s$ be a Coxeter generator of the group $S_n$. The induced action of $s$ on $H_{n-2}(K')$ and on the $(n-1)$-th chain group of $K$ is negation.
\end{lem}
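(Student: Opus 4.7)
The unique $(n-1)$-face is $J(n,k)$ itself, realized as an $(n-1)$-dimensional convex polytope inside the affine hyperplane $H = \{x \in \mathbb{R}^n : x_1 + \cdots + x_n = k\}$. The plan is to exhibit $s$ geometrically as a reflection of $H$ and then read off the sign $-1$ in both settings.

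First I would analyze the action on the top chain group $C_{n-1}(K) \cong \mathbb{Z}$, which is generated by the oriented top face. Writing $s = (i,i+1)$ acting on $\mathbb{R}^n$ by permuting coordinates, one sees that $s$ fixes the hyperplane $\{x_i = x_{i+1}\}$ pointwise and negates $\epsilon_i - \epsilon_{i+1}$; both this hyperplane and this vector are parallel to $H$ (the relevant coordinate sums are zero), so $s$ restricts to a reflection of $H$ through the $(n-2)$-dimensional affine subspace $H \cap \{x_i = x_{i+1}\}$. In particular $\det(s|_H) = -1$, and since the top face is setwise fixed by $s$, this determinant is precisely the sign with which $s$ sends the generator of $C_{n-1}(K)$ to itself.

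For $H_{n-2}(K')$, I would use that $K$ is homeomorphic to the closed ball $J(n,k) \cong D^{n-1}$, so deleting the interior (the unique top face) gives $K' \cong \partial J(n,k) \cong S^{n-2}$ and hence $H_{n-2}(K') \cong \mathbb{Z}$. The action of $s$ on $K'$ is just the restriction of its action on $J(n,k)$, which by the previous paragraph is a reflection of the ambient $(n-1)$-dimensional affine space $H$. Any reflection of $\mathbb{R}^{n-1}$ restricts to an orientation-reversing self-homeomorphism of $S^{n-2}$, and such a map acts on $H_{n-2}(S^{n-2}) \cong \mathbb{Z}$ as multiplication by its degree $-1$.

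The only subtlety is matching the geometric orientation sign with the formal $\mathbb{Z}S_n$-action coming from cellular homology, but this is automatic for regular CW complexes attached by homeomorphisms, so no real obstacle arises and the proof reduces entirely to the reflection computation above.
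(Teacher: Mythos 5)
Your proof is correct and follows essentially the same route as the paper, which simply cites the analogous half-cube argument of Green and notes that the key point is that removing the unique top face of a convex $(n-1)$-polytope leaves a space homeomorphic to $S^{n-2}$. You have merely written out in full the details the paper delegates to that reference: $s$ restricts to a reflection of the ambient hyperplane $H$, hence reverses the orientation of the top cell (giving $-1$ on $C_{n-1}(K)$) and restricts to a degree $-1$ map of $\partial J(n,k)\cong S^{n-2}$ (giving $-1$ on $H_{n-2}(K')$).
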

\begin{proof}
While not exactly the same, this proof follows from some trivial modifications to the proof of \cite[Lemma 3.4]{Green10}. This lemma states the exact same result except for the polytope known as the half cube. It relies on the property that if we remove the unique $(n-1)$-dimensional face of the $(n-1)$-dimensional half cube, then it is homeomorphic to $S^{n-2}$, which is also true of any polytope.
\end{proof}

\begin{lem}
\label{lem:cong}
Let $W=S_n$ and let $W_I$ denote the parabolic subgroup generated by the set $I=\{s_1,s_2,\ldots,s_{n-a-b-1}\}\cup\{s_{n-a-b+1},\ldots,s_{n-b-1}\}\cup\{s_{n-b+1},\ldots,s_{n-1}\}$. Regarding $Y(a,b)$ as a $\mathbb{C}S_n$-module by extension of scalars, we have $Y(a,b)\cong (\mathrm{sgn}_{n-a-b} \otimes \mathrm{id}_{a} \otimes \mathrm{id}_{b}) \uparrow^W_{W_I}$.
\end{lem}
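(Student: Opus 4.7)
The plan is to realize $Y(a,b)$ as an explicit transitive permutation module (with signs) and match it against the induced representation via the standard ``stabilizer/character'' identification.

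First I would fix the distinguished face $F(S_0)$, where
$$S_0 = \underbrace{*\cdots *}_{n-a-b}\underbrace{0\cdots 0}_{a}\underbrace{1\cdots 1}_{b},$$
and verify that $W_I$ is precisely the stabilizer of $S_0$ under the coordinate-permutation action of $S_n$ on sequences. The three generator blocks in $I$ consist of exactly those $s_j$ whose swap occurs within the star block, within the zero block, or within the one block, respectively; conversely, the two omitted generators $s_{n-a-b}$ and $s_{n-b}$ would mix blocks and fail to fix $S_0$. This gives $W_I\cong S_{n-a-b}\times S_a\times S_b$. Since $W$ acts transitively on the set of length-$n$ sequences with $a$ zeros, $b$ ones, and $n-a-b$ stars, the orbit of $F(S_0)$ is precisely the set of faces spanning $Y(a,b)$, and the orbit-stabilizer count $n!/((n-a-b)!\,a!\,b!)$ matches $\dim Y(a,b)$.

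Next I would compute the character by which $W_I$ acts on the chain basis vector $[F(S_0)]$. The second and third factors $S_a$ and $S_b$ permute coordinates that are constantly $0$ or constantly $1$ on every vertex of $F(S_0)$, so they fix the face and its orientation, giving the trivial characters $\mathrm{id}_a$ and $\mathrm{id}_b$. For the first factor $S_{n-a-b}$, I would observe that $F(S_0)$ is itself (affinely) a lower-dimensional hypersimplex $J(n-a-b,\,k-b)$ on the star coordinates, and that $[F(S_0)]$ is the top-dimensional cell of this sub-hypersimplex. A Coxeter generator swapping two adjacent star positions is then exactly a Coxeter generator of the $S_{n-a-b}$-action on $J(n-a-b,k-b)$, so by Lemma \ref{lem:neg} applied to this sub-hypersimplex it acts on $[F(S_0)]$ by $-1$. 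Hence the $W_I$-action on the line $\mathbb{C}[F(S_0)]$ is $\mathrm{sgn}_{n-a-b}\otimes\mathrm{id}_a\otimes\mathrm{id}_b$.

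Finally I would build the $\mathbb{C}W$-module homomorphism
$$\Phi:(\mathrm{sgn}_{n-a-b}\otimes\mathrm{id}_a\otimes\mathrm{id}_b)\uparrow_{W_I}^W\longrightarrow Y(a,b),\qquad g\otimes 1\longmapsto g\cdot[F(S_0)],$$
and check that $\Phi$ is well-defined (precisely because the $W_I$-action on $[F(S_0)]$ agrees with the inducing character, by the previous step), $W$-equivariant by construction, and surjective because $W\cdot[F(S_0)]$ spans $Y(a,b)$. Since source and target have equal dimension $n!/((n-a-b)!\,a!\,b!)$, $\Phi$ is the desired isomorphism.

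The main obstacle is the sign computation in the middle step: one must be careful that the orientation on $F(S_0)$ inherited from $K$ agrees (up to a global sign, which is irrelevant for the character) with the orientation on $J(n-a-b,k-b)$ used in Lemma \ref{lem:neg}, so that the $-1$ really is transmitted. Once the sub-hypersimplex identification is set up cleanly, everything else is a standard induction-from-a-stabilizer argument.
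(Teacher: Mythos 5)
Your proposal is correct and follows essentially the same route as the paper: identify $W_I$ as the stabilizer of the distinguished face, compute that it acts by $\mathrm{sgn}_{n-a-b}\otimes\mathrm{id}_a\otimes\mathrm{id}_b$ on that cell via Lemma \ref{lem:neg}, and conclude by the standard induced-module identification (the paper phrases this via the coset representatives $W^I$ of Lemma \ref{lem:Coxeter} rather than your orbit-stabilizer dimension count, but these are the same argument). Your explicit remark that $F(S_0)$ must be viewed as the top cell of a sub-hypersimplex $J(n-a-b,k-b)$ before invoking Lemma \ref{lem:neg} is a point the paper passes over silently, and it is a worthwhile clarification.
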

\begin{proof}
Let $e$ be the cell of the $CW$ complex $K$ corresponding to $F(S)\in Y(a,b)$ where $S=(*_1,*_2,\ldots,*_{n-a-b},0_1,\ldots,0_a,1_1,\ldots,1_b)$. Notice that the elements of the set $\{s_1,s_2,\ldots,s_{n-a-b-1}\}$ act on the sequence $S$ by permuting the *'s. 
In other words, each element of $\{s_1,s_2,\ldots,s_{n-a-b-1}\}$ fixes $F(S)$ setwise but not pointwise, and so by Lemma \ref{lem:neg} each one of these generators sends $e$ to $-e$. 
However the elements of the sets $\{s_{n-a-b+1},\ldots,s_{n-b-1}\}$ and $\{s_{n-b+1},\ldots,s_{n-1}\}$ act on $S$ by permuting the 0's and 1's respectively and hence each of these generators fixes the set of vertices pointwise, and fixes $e$.

We see that the subgroup $W_I$ stabilizes $F(S)$ and has order $(n-a-b)!a!b!$ and index $\binom{n}{n-a-b}\binom{a+b}{b}$ in $W$. Therefore by Proposition \ref{prop:faces}, $W_I$ is the full set stabilizer of $F(S)$. Now by Lemma \ref{lem:Coxeter} we see that every element of $Y(a,b)$ can be written as a linear combination of elements of the form $w^IF(S)$ for some $w^I \in W^I$. On the other hand, every element of $(\mathrm{sgn}_{n-a-b} \otimes \mathrm{id}_{a} \otimes \mathrm{id}_{b}) \uparrow^W_{W_I}$ can be written as a linear combination of elements of the form $w^I \otimes_{\mathbb{C}W_I} v$ where $v$ is the single basis element of the one dimensional module $(\mathrm{sgn}_{n-a-b} \otimes \mathrm{id}_{a} \otimes \mathrm{id}_{b})$. It now follows that the linear map $\phi:Y(a,b)\longrightarrow (\mathrm{sgn}_{n-a-b} \otimes \mathrm{id}_{a} \otimes \mathrm{id}_{b}) \uparrow^W_{W_I}$ such that $\phi(w^IF(S)) = w^I \otimes_{\mathbb{C}W_I} v$ is an isomorphism of $\mathbb{C}S_n$-modules.
\end{proof}

\begin{thm}[\textbf{The Pieri Rules}]
\label{thm:Pieri}
Given a partition $\mu$ of $n$ we can decompose the characters below into irreducibles in the following way:

\begin{enumerate}
\item [\rm(i)] $(\chi^\mu \times \chi^{[m]}) \uparrow^{S_{n+m}}_{S_n\times S_m} = \displaystyle\sum_{\lambda\in C_m(\mu)}\chi^\lambda,$
\item [\rm(ii)] $(\chi^\mu \times \chi^{[1^m]}) \uparrow^{S_{n+m}}_{S_n\times S_m} = \displaystyle\sum_{\lambda\in R_m(\mu)}\chi^\lambda,$
\end{enumerate}
where $C_m(\mu)$ (respectively, $R_m(\mu)$) denotes the set of partitions that can be obtained from $\mu$ by adding $m$ boxes to the Young diagram of $\mu$ so that there is at most one new box per column (respectively, row).
\end{thm}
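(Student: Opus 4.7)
The plan is to translate the Pieri rules into the language of symmetric functions via the Frobenius characteristic map, then prove the resulting symmetric function identities. The Frobenius map $\operatorname{ch}: \bigoplus_n R(S_n) \to \Lambda$ is a ring isomorphism when the left-hand side is equipped with the product induced by the operation $(\chi^{\rho_n}, \chi^{\rho_m}) \mapsto (\chi^{\rho_n} \times \chi^{\rho_m})\uparrow^{S_{n+m}}_{S_n \times S_m}$. Under $\operatorname{ch}$, the irreducible $\chi^\mu$ corresponds to the Schur function $s_\mu$, while $\chi^{[m]}$ corresponds to $s_{(m)} = h_m$ (complete homogeneous) and $\chi^{[1^m]}$ to $s_{(1^m)} = e_m$ (elementary). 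Thus the claim becomes equivalent to the classical Schur function identities $s_\mu \cdot h_m = \sum_{\lambda \in C_m(\mu)} s_\lambda$ and $s_\mu \cdot e_m = \sum_{\lambda \in R_m(\mu)} s_\lambda$.

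First I would set up the ring structure on $\bigoplus_n R(S_n)$ via induction, verify that $\operatorname{ch}$ is a ring isomorphism onto $\Lambda$, and confirm the identifications $\chi^{[m]} \leftrightarrow h_m$ and $\chi^{[1^m]} \leftrightarrow e_m$. The trivial character of $S_m$ is the character of $S^{[m]}$, which corresponds to the one-row Schur function $s_{(m)}$; a direct computation of $\operatorname{ch}(\chi^{[m]})$ from the cycle type expansion yields the power-sum formula for $h_m$. The sign character case follows by an analogous computation or by applying the involution $\omega$ on $\Lambda$.

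Next I would prove the $h_m$ identity. My preferred approach uses the combinatorial description of Schur functions via semistandard Young tableaux together with the standard expansion $s_\mu \cdot h_m = \sum_\lambda N_{\lambda/\mu,(m)} \, s_\lambda$, where $N_{\lambda/\mu,(m)}$ counts semistandard tableaux of skew shape $\lambda/\mu$ with content $(m)$. The key combinatorial observation is that such a tableau must have every entry equal to $1$, and this filling is legitimate exactly when $\lambda/\mu$ is a horizontal strip of size $m$, that is, when no column of $\lambda/\mu$ contains two boxes. This is precisely the condition $\lambda \in C_m(\mu)$, so $N_{\lambda/\mu,(m)}$ equals $1$ if $\lambda \in C_m(\mu)$ and $0$ otherwise, yielding the first Pieri rule.

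Finally, I would deduce the $e_m$ rule from the $h_m$ rule by applying the involution $\omega$ on $\Lambda$, which swaps $h_m \leftrightarrow e_m$ and $s_\lambda \leftrightarrow s_{\lambda'}$. Since conjugation of partitions interchanges horizontal and vertical strips, the conjugate of a horizontal strip $\lambda'/\mu'$ of size $m$ is a vertical strip $\lambda/\mu$ of size $m$, and $C_m(\mu')$ transports across $\omega$ to exactly $R_m(\mu)$. The principal obstacle is the combinatorial lemma expanding $s_\mu \cdot h_m$ in terms of skew semistandard tableaux; if one wishes to avoid that route, an alternative is to use the Jacobi--Trudi determinantal formula and carry out the cofactor expansion directly, but the tableau argument keeps the combinatorial content (horizontal versus vertical strips) transparent and aligns with the exposition in \cite{Fulton}.
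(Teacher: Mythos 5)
Your outline is correct, but note that the paper does not actually prove this theorem: it simply cites Fulton \cite[\S 2.2]{Fulton}, where the Pieri rules are established combinatorially via the row-bumping (Robinson--Schensted insertion) lemma in the ring of tableaux, with the translation to characters of $S_n$ coming only later in Fulton's Chapter 7. Your route is the other standard one: pass through the Frobenius characteristic $\mathrm{ch}$, reduce to the Schur-function identities $s_\mu h_m=\sum_{\lambda\in C_m(\mu)}s_\lambda$ and $s_\mu e_m=\sum_{\lambda\in R_m(\mu)}s_\lambda$, prove the first by the horizontal-strip/content-$(m)$ tableau argument, and deduce the second by applying the involution $\omega$, which swaps $h_m\leftrightarrow e_m$ and $s_\lambda\leftrightarrow s_{\lambda'}$ and hence horizontal with vertical strips. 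This buys you a clean derivation of (ii) from (i) for free, at the cost of two nontrivial standard inputs that your sketch leans on: that $\mathrm{ch}$ is a ring isomorphism intertwining the induction product with multiplication in $\Lambda$, and the expansion $\langle s_\mu h_m,s_\lambda\rangle=K_{\lambda/\mu,(m)}$ (equivalently, that skew Schur functions defined by adjointness are generating functions for skew semistandard tableaux). You correctly flag the latter as the principal obstacle and offer the Jacobi--Trudi cofactor expansion as a fallback; either completion is standard, so there is no gap in substance, only details deferred to the same literature the paper itself defers to.
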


\begin{proof}
This can be found in \cite[\S 2.2]{Fulton}.
\end{proof}

\begin{eg}
In Figure \ref{fig:Pieri} below we see the Young diagram for the partition $\lambda=[3,2,2,1]$ at the top. The remaining six Young diagrams are the elements of $C_2(\lambda)$ where the shaded boxes represent the two boxes added to $[3,2,2,1]$. Therefore by the Pieri rules, we have  $$(\chi^{[3,2,2,1]} \times \chi^{[2]}) \uparrow^{S_{10}}_{S_8\times S_2} = \chi^{[4,3,2,1]} + \chi^{[4,2,2,2]} + \chi^{[4,2,2,1,1]} + \chi^{[3,3,2,2]} + \chi^{[3,3,2,1,1]} + \chi^{[3,2,2,2,1]}.$$
\end{eg}

\def \sscale{.7}
\def \vvspace{.5}
\begin{figure}[htbp]
\begin{center}
\def \labelh{-4}
\def \split{4}
\begin{tikzpicture}[scale=\sscale]

\draw [step=1,thin] (0,0) grid (3, 1);
\draw [step=1,thin] (0,-1) grid (2, 0);
\draw [step=1,thin] (0,-2) grid (2, -1);
\draw [step=1,thin] (0,-3) grid (1, -2);
\draw node at (1.5,\labelh) {$[3,2,2,1]$};

\end{tikzpicture}

\vspace{\vvspace in}
\begin{tikzpicture}[scale=\sscale]
\def \split{4}

\fill [shade] (3,0) rectangle (4,1);
\fill [shade] (2,-1) rectangle (3,0);
\draw [step=1,thin] (0,0) grid (3, 1);
\draw [step=1,thin] (0,-1) grid (2, 0);
\draw [step=1,thin] (0,-2) grid (2, -1);
\draw [step=1,thin] (0,-3) grid (1, -2);
\draw [step=1,thin] (3,0) grid (4,1);
\draw [step=1,thin] (2,-1) grid (3,0);
\draw node at (2,\labelh) {$[4,3,2,1]$};

\fill [shade] (6+\split+3,0) rectangle (6+\split+4,1);
\fill [shade] (6+\split+1,-3) rectangle (6+\split+2,-2);
\draw [step=1,thin] (6+\split,0) grid (9+\split, 1);
\draw [step=1,thin] (6+\split,-1) grid (8+\split, 0);
\draw [step=1,thin] (6+\split,-2) grid (8+\split, -1);
\draw [step=1,thin] (6+\split,-3) grid (7+\split, -2);
\draw [step=1,thin] (6+\split+3,0) grid (6+\split+4,1);
\draw [step=1,thin] (6+\split+1,-3) grid (6+\split+2,-2);
\draw node at (2+6+\split,\labelh) {$[4,2,2,2]$};

\end{tikzpicture}

\vspace{\vvspace in}
\begin{tikzpicture}[scale=\sscale]
\def \split{4}

\fill [shade] (3,0) rectangle (4,1);
\fill [shade] (0,-4) rectangle (1,-3);
\draw [step=1,thin] (0,0) grid (3, 1);
\draw [step=1,thin] (0,-1) grid (2, 0);
\draw [step=1,thin] (0,-2) grid (2, -1);
\draw [step=1,thin] (0,-3) grid (1, -2);
\draw [step=1,thin] (3,0) grid (4,1);
\draw [step=1,thin] (0,-4) grid (1,-3);
\draw node at (2,\labelh-1) {$[4,2,2,1,1]$};

\fill [shade] (6+\split+2,-1) rectangle (6+\split+3,0);
\fill [shade] (6+\split+1,-3) rectangle (6+\split+2,-2);
\draw [step=1,thin] (6+\split,0) grid (9+\split, 1);
\draw [step=1,thin] (6+\split,-1) grid (8+\split, 0);
\draw [step=1,thin] (6+\split,-2) grid (8+\split, -1);
\draw [step=1,thin] (6+\split,-3) grid (7+\split, -2);
\draw [step=1,thin] (6+\split+2,-1) grid (6+\split+3,0);
\draw [step=1,thin] (6+\split+1,-3) grid (6+\split+2,-2);
\draw node at (6+\split+4,0) {};
\draw node at (2+6+\split,\labelh-1) {$[3,3,2,2]$};

\end{tikzpicture}

\vspace{\vvspace in}
\begin{tikzpicture}[scale=\sscale]
\def \split{4}

\fill [shade] (2,-1) rectangle (3,0);
\fill [shade] (0,-4) rectangle (1,-3);
\draw [step=1,thin] (0,0) grid (3, 1);
\draw [step=1,thin] (0,-1) grid (2, 0);
\draw [step=1,thin] (0,-2) grid (2, -1);
\draw [step=1,thin] (0,-3) grid (1, -2);
\draw [step=1,thin] (2,-1) grid (3,0);
\draw [step=1,thin] (0,-4) grid (1,-3);
\draw node at (2,\labelh-1) {$[3,3,2,1,1]$};

\fill [shade] (6+\split+0,-4) rectangle (6+\split+1,-3);
\fill [shade] (6+\split+1,-3) rectangle (6+\split+2,-2);
\draw [step=1,thin] (6+\split,0) grid (9+\split, 1);
\draw [step=1,thin] (6+\split,-1) grid (8+\split, 0);
\draw [step=1,thin] (6+\split,-2) grid (8+\split, -1);
\draw [step=1,thin] (6+\split,-3) grid (7+\split, -2);
\draw [step=1,thin] (6+\split+0,-4) grid (6+\split+1,-3);
\draw [step=1,thin] (6+\split+1,-3) grid (6+\split+2,-2);
\draw node at (6+\split+4,0) {};
\draw node at (2+6+\split,\labelh-1) {$[3,2,2,2,1]$};

\end{tikzpicture}
\vspace{.3 in}
\caption{The Pieri Rules}
\label{fig:Pieri}
\end{center}
\end{figure}
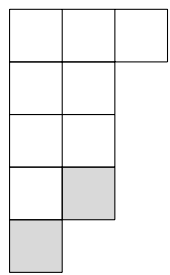

Here we introduce some more notation in order to simplify some of the following statements.
\begin{defn}
Let $a,b\in \mathbb{Z}$ such that $a\geq b$. Then we define $f_1, f_2, f_3,$ and $f_4$ in the following way:
\begin{enumerate}
  \item [(i)] $f_1(a,b) = \displaystyle\sum_{c=0}^{b-1}\chi^{[a+c,\ b-c,\ 1^{n-a-b}]}$;
  \item [(ii)] $f_2(a,b) = \displaystyle\sum_{c=0}^{b}\chi^{[a+c,\ b+1-c,\ 1^{n-a-b-1}]}$;
  \item [(iii)] $f_3(a,b) = \displaystyle\sum_{c=0}^{b-1}\chi^{[a+1+c,\ b-c,\ 1^{n-a-b-1}]}$;
  \item [(iv)] $f_4(a,b) = \displaystyle\sum_{c=0}^{b}\chi^{[a+1+c,\ b+1-c,\ 1^{n-a-b-2}]}$;
\end{enumerate}
where $\chi^{[a,\ b+1,\ 1^{n-a-b-1}]}=0$ if $a=b$ and $f_1(a,b) = f_3(a,b) = 0$ if $b=0$. Also, if $a<b$ then we define $f_i(a,b) = f_i(b,a)$ for $1\leq i \leq 4$.
\end{defn}

\begin{lem}
\label{lem:charY}
The character of the module $Y(a,b)$ is given by $f_1(a,b)+f_2(a,b)+f_3(a,b)+f_4(a,b)$. 
\end{lem}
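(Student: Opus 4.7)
My plan is to apply the Pieri rules of Theorem \ref{thm:Pieri} in two stages to the induced character supplied by Lemma \ref{lem:cong}. Since $Y(a,b)\cong Y(b,a)$ as $S_n$-modules (swapping $0$'s and $1$'s in each sequence $S$ is an $S_n$-equivariant bijection on the basis) and $f_i(a,b)=f_i(b,a)$ by convention, I may assume $a\geq b$ throughout.

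By Lemma \ref{lem:cong}, the character of $Y(a,b)$ equals
\[
(\chi^{[1^{n-a-b}]}\times \chi^{[a]}\times \chi^{[b]})\uparrow^{S_n}_{S_{n-a-b}\times S_a\times S_b}.
\]
I will induce in stages, first from $S_{n-a-b}\times S_a$ to $S_{n-b}$ and then from $S_{n-b}\times S_b$ to $S_n$. For the first stage, Pieri rule (i) adds $a$ boxes to the single column $[1^{n-a-b}]$ with no two sharing a column. Only two outcomes are possible: either column $1$ receives no new box (giving $[a+1,1^{n-a-b-1}]$) or column $1$ is extended by one and the remaining $a-1$ boxes go into new columns (giving $[a,1^{n-a-b}]$). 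Hence the first induction produces $\chi^{[a+1,\,1^{n-a-b-1}]}+\chi^{[a,\,1^{n-a-b}]}$.

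For the second stage, I tensor each hook with $\chi^{[b]}$ and apply Pieri rule (i) again to add $b$ boxes, at most one per column. Because the starting hook has column $1$ occupying almost the whole diagram while columns $2,3,\ldots$ have height $1$, the column-by-column restriction (namely $\lambda'_j\leq\mu'_j+1$) forces every resulting $\lambda$ to have at most two rows of length $\geq 2$ followed by a tail of length-$1$ rows whose count is determined by whether column $1$ received a new box. I parametrize the two longer rows as $(\lambda_1,\lambda_2)=(p+c,\,q+1-c)$ and let $c$ run over the integers consistent with $\lambda_1\geq\lambda_2\geq 1$ and the column constraint. This will show that the four families of partitions obtained are exactly the summands of $f_4(a,b)$ and $f_3(a,b)$ (arising from $[a+1,1^{n-a-b-1}]$, according to whether column $1$ is extended) and of $f_2(a,b)$ and $f_1(a,b)$ (arising from $[a,1^{n-a-b}]$), with the boundary value of $c$ where $\lambda_2$ degenerates to $1$ producing the extra hook shape that appears at the end of each sum.

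The main obstacle is the careful bookkeeping in the second stage: I must verify that the index ranges $0\leq c\leq b$ (for $f_2,f_4$) and $0\leq c\leq b-1$ (for $f_1,f_3$) match precisely the collection of Pieri-allowed partitions, and that the $a=b$ degeneracy (which would formally produce a label $[a,b+1,1^{n-a-b-1}]$ with $\lambda_1<\lambda_2$, hence not a partition) is absorbed by the convention $\chi^{[a,\,b+1,\,1^{n-a-b-1}]}=0$. Once this matching is established, summing the four Pieri contributions yields $f_1(a,b)+f_2(a,b)+f_3(a,b)+f_4(a,b)$, as required.
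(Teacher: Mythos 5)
Your proposal is correct and follows essentially the same route as the paper: reduce to $a\geq b$ by symmetry, apply Lemma \ref{lem:cong}, induce in two stages, and use the Pieri rule (Theorem \ref{thm:Pieri}(i)) twice — first obtaining $\chi^{[a+1,\,1^{n-a-b-1}]}+\chi^{[a,\,1^{n-a-b}]}$ and then identifying the resulting two-row-plus-tail partitions with the summands of $f_1,\dots,f_4$, with the $a=b$ degeneracy handled by the stated convention. The remaining bookkeeping you defer is exactly the computation the paper carries out.
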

\begin{proof}
Suppose first that $a\geq b$.
By transitivity of induction and Lemma \ref{lem:cong}, we have
\begin{center}
$Y(a,b)\cong ((\mathrm{sgn}_{n-a-b} \otimes \mathrm{id}_{a} \otimes \mathrm{id}_{b}) \uparrow^{S_{n-a}\times S_b}_{S_{n-a-b}\times S_a\times S_b})\uparrow^{S_{n}}_{S_{n-a}\times S_b}$.
\end{center}
At this point we need to utilize the Pieri rule from Theorem \ref{thm:Pieri} (i) twice. We use it the first time to see that
\begin{center}
$(\chi^{[1^{n-a-b}]} \times \chi^{[a]} \times \chi^{[b]}) \uparrow^{S_{n-a}\times S_b}_{S_{n-a-b}\times S_a\times S_b} = (\chi^{[a+1,\ 1^{n-a-b-1}]}+\chi^{[a,\ 1^{n-a-b}]})\times \chi^{[b]}$
\end{center}
where $\chi^{[a,\ 1^{n-a-b}]}=0$ if $a=0$.
The assertion then follows from applying the same Pieri rule a second time and noticing that
\begin{center}
$(\chi^{[a,\ 1^{n-a-b}]}\times \chi^{[b]})\uparrow^{S_{n}}_{S_{n-a}\times S_b}=f_1(a,b)+f_2(a,b)$
\end{center}
and
\begin{center}
$(\chi^{[a+1,\ 1^{n-a-b-1}]}\times \chi^{[b]})\uparrow^{S_{n}}_{S_{n-a}\times S_b}=f_3(a,b)+f_4(a,b)$.
\end{center}
If $a<b$, then the assertion follows from the observation that $$(\chi^{[1^{n-a-b}]} \times \chi^{[a]} \times \chi^{[b]}) \uparrow^{S_{n}}_{S_{n-a-b}\times S_a\times S_b} = (\chi^{[1^{n-a-b}]} \times \chi^{[b]} \times \chi^{[a]}) \uparrow^{S_{n}}_{S_{n-a-b}\times S_b\times S_a}.$$
\end{proof}

\begin{lem}
\label{lem:canceldown}
We have
$$
  \displaystyle\sum_{c=0}^{b}(-1)^{b-c}\Big(f_1(a,c)+f_2(a,c)+f_3(a,c)+f_4(a,c)\Big) = \left\{ 
  \begin{array}{l l}
    f_2(a,b)+f_4(a,b) & \quad \text{if $a\geq b$}\\
    f_3(a,b)+f_4(a,b) & \quad \text{if $a<b$.}\\
  \end{array} \right.
$$
\end{lem}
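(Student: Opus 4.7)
The plan is to abbreviate $g(a,c) := \sum_{i=1}^{4} f_i(a,c)$ and denote the sum on the left by $S_b$. The desired identity is then the piecewise claim that $S_b = f_2(a,b)+f_4(a,b)$ when $a\geq b$ and $S_b = f_3(a,b)+f_4(a,b)$ when $a<b$. My starting observation is the trivial telescoping relation
$$S_b + S_{b-1} = g(a,b) \qquad (b\geq 1),$$
obtained by splitting off the $c=b$ summand from $S_b$ and recognising the rest as $-S_{b-1}$. This recasts the lemma as an induction on $b$.

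I would prove the piecewise claim by induction on $b$. The base case $b=0$ is immediate from the conventions $f_1(a,0)=f_3(a,0)=0$, which force $S_0 = g(a,0) = f_2(a,0)+f_4(a,0)$. For the inductive step I write $S_b = g(a,b) - S_{b-1}$, substitute the inductive expression for $S_{b-1}$, and reduce to a partition-level identity between sums of $f_i$'s. Three configurations arise. When $a\geq b$ the IH gives $S_{b-1} = f_2(a,b-1)+f_4(a,b-1)$ and I must verify $f_1(a,b)+f_3(a,b) = f_2(a,b-1)+f_4(a,b-1)$; when $a<b-1$ the IH gives $S_{b-1} = f_3(b-1,a)+f_4(b-1,a)$ and I must verify $f_1(b,a)+f_2(b,a) = f_3(b-1,a)+f_4(b-1,a)$. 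Both reductions follow by unrolling the definitions and matching summands: the pairings $f_1(a,b)\leftrightarrow f_2(a,b-1)$, $f_3(a,b)\leftrightarrow f_4(a,b-1)$, $f_1(b,a)\leftrightarrow f_3(b-1,a)$, and $f_2(b,a)\leftrightarrow f_4(b-1,a)$ are equalities of character sums term by term, once the ambient $1$-exponent $n-a-b-1$ and summation range are compared.

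The third, transitional configuration is where $a = b-1$, and this is the main obstacle of the proof. Here the IH (valid in the branch $a\geq b-1$) supplies $S_a = f_2(a,a)+f_4(a,a)$, while the target for $S_{a+1}$ lies in the branch $a<b$ and reads $S_{a+1} = f_3(a+1,a)+f_4(a+1,a)$ after applying the symmetry $f_i(a,b)=f_i(b,a)$. The required reduction is therefore
$$f_1(a+1,a)+f_2(a+1,a) \;=\; f_2(a,a)+f_4(a,a).$$
The match $f_2(a+1,a) = f_4(a,a)$ is immediate from the defining sums. The delicate point is the pairing $f_1(a+1,a) = f_2(a,a)$: in $f_2(a,a) = \sum_{c=0}^{a}\chi^{[a+c,\ a+1-c,\ 1^{n-2a-1}]}$ the $c=0$ summand is not a partition and vanishes by the explicit convention $\chi^{[a,\ a+1,\ 1^{n-2a-1}]}=0$, so only after discarding this term does the reindexing $d = c-1$ identify what remains with $f_1(a+1,a) = \sum_{d=0}^{a-1}\chi^{[a+1+d,\ a-d,\ 1^{n-2a-1}]}$, closing the induction.
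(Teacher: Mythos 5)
Your proof is correct and follows essentially the same route as the paper's: the same induction on $b$ via the telescoping relation $S_b = g(a,b) - S_{b-1}$, the same three-way case split at $a\geq b$, $a=b-1$, and $a<b-1$, and the same term-by-term pairings, including the reindexing and the use of the vanishing convention $\chi^{[a,\ a+1,\ 1^{n-2a-1}]}=0$ in the transitional case. No gaps.
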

\begin{proof}
We will prove this by induction on $b$. The base case when $b=0$, which implies $a\geq b$, is true since $f_1(a,b)=f_3(a,b)=0$ when $b=0$. To prove the inductive step, we break it into cases.

If $a\geq b$, then $a > b-1$ and by induction the alternating sum is equal to
$$-f_2(a,b-1)-f_4(a,b-1)+f_1(a,b)+f_2(a,b)+f_3(a,b)+f_4(a,b).$$
However when $a\geq b$, $f_2(a,b-1) = f_1(a,b)$ and $f_4(a,b-1) = f_3(a,b)$ as desired.

If $a=b-1$, then by induction the alternating sum is equal to
$$-f_2(a,b-1)-f_4(a,b-1)+f_1(a,b)+f_2(a,b)+f_3(a,b)+f_4(a,b).$$
However, 
$$f_2(a,b-1) = f_2(b-1,b-1) = \displaystyle\sum_{c=0}^{b-1}\chi^{[b-1+c,\ b-c,\ 1^{n-2b+1}]}=\displaystyle\sum_{c=1}^{b-1}\chi^{[b-1+c,\ b-c,\ 1^{n-2b+1}]}$$ 
and 
$$f_1(a,b)=f_1(b-1,b) = f_1(b,b-1) = \displaystyle\sum_{c=0}^{b-2}\chi^{[b+c,\ b-1-c,\ 1^{n-2b+1}]}.$$ 
So after reindexing we see that $f_2(a,b-1)=f_1(a,b)$ in this case.
Also, 
$$f_4(a,b-1) = f_4(b-1,b-1) = \displaystyle\sum_{c=0}^{b-1}\chi^{[b+c,\ b-c,\ 1^{n-2b}]} = f_2(b,b-1) = f_2(a,b)$$
as desired.

If $a<b-1$, then by induction the alternating sum is equal to
$$-f_3(a,b-1)-f_4(a,b-1)+f_1(a,b)+f_2(a,b)+f_3(a,b)+f_4(a,b).$$
However, 
$$f_3(a,b-1) = f_3(b-1,a) = \displaystyle\sum_{c=0}^{a-1}\chi^{[b+c,\ a-c,\ 1^{n-a-b}]} = f_1(b,a) = f_1(a,b)$$
and 
$$f_4(a,b-1) = f_4(b-1,a) = \displaystyle\sum_{c=0}^{a}\chi^{[b+c,\ a+1-c,\ 1^{n-a-b-1}]} = f_2(b,a) = f_2(a,b)$$
when $a<b-1$, as desired.
\end{proof}

\begin{lem}
\label{lem:cancelleft}
We have
$$\displaystyle\sum_{\substack{c=0 \\ c < b}}^{a}(-1)^{a-c}\Big(f_3(c,b)+f_4(c,b)\Big)+\displaystyle\sum_{c=b}^{a}(-1)^{a-c}\Big(f_2(c,b)+f_4(c,b)\Big)=f_4(a,b).$$
\end{lem}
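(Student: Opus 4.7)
The plan is to proceed by induction on $a$, in direct analogy with Lemma \ref{lem:canceldown} but iterating on the first argument. The proof will split the alternating sum into the terms with $c \leq a-1$, which the inductive hypothesis collapses to $f_4(a-1,b)$ (with its sign), and the new terms at $c=a$, and then verify that the leftover cancellation reduces to a single identity between $f_2$ or $f_3$ evaluated at $(a,b)$ and $f_4$ evaluated at $(a-1,b)$.

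First I would handle the base case $a=0$. If $b>0$, only the first sum contributes, giving $f_3(0,b)+f_4(0,b) = f_4(0,b)$ since $f_3(b,0)=0$ by definition. If $a=b=0$, only the $c=0$ term of the second sum appears; here $f_2(0,0)=\chi^{[0,1,1^{n-1}]}=0$ by the convention that $\chi^{[a,b+1,1^{n-a-b-1}]}=0$ when $a=b$, so the sum collapses to $f_4(0,0)$, as required.

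For the inductive step I would separate the contribution of indices $c \leq a-1$ (which by induction equals $-f_4(a-1,b)$, picking up an overall sign since the exponent of $-1$ shifts) from the new $c=a$ term. In the case $a<b$ the new term comes from the first sum and equals $f_3(a,b)+f_4(a,b)$, so the identity reduces to $f_3(a,b)=f_4(a-1,b)$; using the symmetry convention $f_i(a,b)=f_i(b,a)$ one writes both sides as $\sum_{c=0}^{a-1}\chi^{[b+1+c,\,a-c,\,1^{n-a-b-1}]}$ and they agree. In the cases $a=b$ and $a>b$ the new contribution comes from the second sum and equals $f_2(a,b)+f_4(a,b)$, reducing the identity to $f_2(a,b)=f_4(a-1,b)$. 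When $a>b$ both sides directly expand to $\sum_{c=0}^{b}\chi^{[a+c,\,b+1-c,\,1^{n-a-b-1}]}$; when $a=b$ the $c=0$ term of $f_2(a,a)$ vanishes by the convention, and a reindexing $c\mapsto c+1$ matches the remaining terms with the expansion of $f_4(a,a-1)=f_4(a-1,a)$.

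The main obstacle is bookkeeping rather than any conceptual difficulty: each case requires writing out the $f_i$'s in the ``canonical'' form (first argument $\geq$ second) before comparing terms, and one has to be careful that the boundary conventions $f_1(a,0)=f_3(a,0)=0$ and $\chi^{[a,b+1,1^{n-a-b-1}]}=0$ when $a=b$ are applied consistently so that the telescoping genuinely leaves only $f_4(a,b)$. Once these conventions are tracked, the three index-comparison identities are routine reindexings of the defining sums.
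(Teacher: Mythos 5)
Your proposal is correct and follows essentially the same route as the paper's proof: induction on $a$, the same base-case split between $b=0$ and $b>0$, and the same three-case inductive step reducing to the identities $f_3(a,b)=f_4(a-1,b)$ for $a<b$ and $f_2(a,b)=f_4(a-1,b)$ for $a\geq b$ (with the vanishing convention and a reindexing handling $a=b$). Your handling of the base case is in fact slightly cleaner than the paper's, which contains a small typo writing $f_3(0,0)$ where $f_3(0,b)$ is meant.
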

\begin{proof}
We will prove this by induction on $a$. 
First let 
$$LS = \displaystyle\sum_{\substack{c=0 \\ c < b}}^{a}(-1)^{a-c}\Big(f_3(c,b)+f_4(c,b)\Big)$$
and 
$$RS = \displaystyle\sum_{c=b}^{a}(-1)^{a-c}\Big(f_2(c,b)+f_4(c,b)\Big).$$

The base case when $a=0$ has two cases. 
If $b=0$ as well, then $LS=0$
and so the alternating sum becomes $f_2(0,0)+f_4(0,0)$, but $f_2(0,0)=0$ by definition.
If $b>0$, then the alternating sum becomes $f_3(0,0)+f_4(0,0)$, but $f_3(0,0)=0$ by definition.

To prove the inductive step, we break it into cases.
If $a<b$, then $RS =0$ and by induction, $LS = -f_4(a-1,b)+f_3(a,b)+f_4(a,b)$.
However when $a<b$, we have
$$f_4(a-1,b)=f_4(b,a-1)=\displaystyle\sum_{c=0}^{a-1}\chi^{[b+1+c,\ a-c,\ 1^{n-a-b-1}]}= f_3(b,a) =f_3(a,b)$$
by definition.
Therefore $LS + RS = -f_4(a-1,b)+f_3(a,b)+f_4(a,b) + 0 = f_4(a,b)$ as desired.

If $a=b$, then $LS = -f_4(a-1,b)$ by induction, and $RS = f_2(a,b)+f_4(a,b)$.
However when $a=b$,
$$f_4(a-1,b)=f_4(b,b-1)=\displaystyle\sum_{c=0}^{b-1}\chi^{[b+1+c,\ b-c,\ 1^{n-2b-1}]}$$
and 
$$f_2(a,b) =f_2(b,b)=\displaystyle\sum_{c=0}^{b}\chi^{[b+c,\ b+1-c,\ 1^{n-2b-1}]}=\displaystyle\sum_{c=1}^{b}\chi^{[b+c,\ b+1-c,\ 1^{n-2b-1}]}$$
by definition. After reindexing we see that $f_4(a-1,b) = f_2(a,b)$.
Therefore $$LS + RS = -f_4(a-1,b) + f_2(a,b)+f_4(a,b) =f_4(a,b)$$ as desired.

If $a > b$, then by induction $LS + RS = -f_4(a-1,b)+f_2(a,b)+f_4(a,b)$.
However when $a > b$,
$$f_4(a-1,b) = \displaystyle\sum_{c=0}^{b}\chi^{[a+c,\ b+1-c,\ 1^{n-a-b-1}]} = f_2(a,b)$$
by definition.
Therefore $LS + RS = -f_4(a-1,b)+f_2(a,b)+f_4(a,b)=f_4(a,b)$ as desired.
\end{proof}


\section{Characters of the Homology Representations}

In the previous section, a description of the characters that arise from the modules $Y(a,b)$ was given in Lemma \ref{lem:charY}. Our goal now is to use the following theorem to give a similar description for the characters of the homology representations.

\begin{thm}[\textbf{Hopf trace formula}] Let $K$ be a finite complex with (integral) chain groups $C_p(K)$ and homology groups $H_p(K)$. Let $T_p(K)$
be the torsion subgroup of $H_p(K$). Let $\phi : C_p(K) \longrightarrow C_p(K)$ be a chain map, and let $\phi_*$ be the induced map on homology. Then we have
\end{thm}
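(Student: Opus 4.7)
The plan is to prove the formula
\[
\sum_{p\geq 0} (-1)^p \mathrm{tr}\bigl(\phi : C_p(K) \to C_p(K)\bigr) = \sum_{p\geq 0} (-1)^p \mathrm{tr}\bigl(\phi_* : H_p(K)/T_p(K) \to H_p(K)/T_p(K)\bigr)
\]
by an application of additivity of trace along short exact sequences, combined with a telescoping cancellation. The main inputs are the two defining short exact sequences of abelian groups attached to the chain complex, together with the fact that a chain map respects the filtration by cycles and boundaries.

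First, I would set notation by letting $Z_p = \ker \partial_p$ and $B_p = \mathrm{im}\, \partial_{p+1}$. The boundary operator furnishes two short exact sequences of finitely generated abelian groups, namely
\[
0 \longrightarrow Z_p \longrightarrow C_p \stackrel{\partial_p}{\longrightarrow} B_{p-1} \longrightarrow 0
\qquad \text{and} \qquad
0 \longrightarrow B_p \longrightarrow Z_p \longrightarrow H_p(K) \longrightarrow 0.
\]
Because $\phi$ is a chain map, it restricts to endomorphisms of $Z_p$ and $B_p$, and the induced endomorphism on the quotient $Z_p/B_p = H_p(K)$ is exactly $\phi_*$.

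To use trace additivity I would tensor everything in sight with $\mathbb{Q}$. This turns each group into a finite-dimensional $\mathbb{Q}$-vector space, kills the torsion subgroup $T_p(K)$ so that $H_p(K)\otimes\mathbb{Q} \cong \bigl(H_p(K)/T_p(K)\bigr)\otimes\mathbb{Q}$, and leaves the traces of the integer-valued endomorphisms unchanged. The key lemma I would cite (or briefly verify using a compatible basis) is that whenever $0 \to A \to B \to C \to 0$ is a short exact sequence of finite-dimensional vector spaces equipped with commuting endomorphisms $\alpha, \beta, \gamma$, one has $\mathrm{tr}(\beta) = \mathrm{tr}(\alpha) + \mathrm{tr}(\gamma)$. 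Applied to the two sequences above (after tensoring with $\mathbb{Q}$) this yields
\[
\mathrm{tr}(\phi|_{C_p}) = \mathrm{tr}(\phi|_{Z_p}) + \mathrm{tr}(\phi|_{B_{p-1}}),
\qquad
\mathrm{tr}(\phi|_{Z_p}) = \mathrm{tr}(\phi|_{B_p}) + \mathrm{tr}(\phi_*|_{H_p/T_p}).
\]

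Finally I would combine these with the alternating sum. Substituting the first identity and then reindexing $p \mapsto p+1$ in the $B_{p-1}$ term gives
\[
\sum_p (-1)^p \mathrm{tr}(\phi|_{C_p}) = \sum_p (-1)^p \mathrm{tr}(\phi|_{Z_p}) - \sum_p (-1)^p \mathrm{tr}(\phi|_{B_p}),
\]
and substituting the second identity collapses the right-hand side to $\sum_p (-1)^p \mathrm{tr}(\phi_*|_{H_p/T_p})$, as required. The telescoping relies crucially on $K$ being finite, so that only finitely many terms are nonzero and no convergence issues arise. The only real obstacle is handling torsion cleanly; tensoring with $\mathbb{Q}$ (equivalently, passing to the free parts) is the standard trick, and the reason the torsion subgroups $T_p(K)$ appear explicitly in the statement of the theorem.
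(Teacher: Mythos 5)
Your argument is correct and is essentially the standard proof of the Hopf trace formula (the one in Munkres, Theorem 22.1, which is all the paper cites for this statement): the two short exact sequences $0\to Z_p\to C_p\to B_{p-1}\to 0$ and $0\to B_p\to Z_p\to H_p\to 0$, additivity of trace, and the telescoping alternating sum, with tensoring by $\mathbb{Q}$ cleanly disposing of the torsion. Nothing further is needed.
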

\begin{center}
$\displaystyle\sum_{p}(-1)^p$ tr$(\phi,C_p(K)) = \displaystyle\sum_{p}(-1)^p$ tr$(\phi_*,H_p(K)/T_p(K)).$
\end{center}
\begin{proof}
See \cite[Theorem 22.1]{Munkres}.
\end{proof}

\begin{lem}
\label{lem:allzero}
Consider the CW complex $K$ associated with $J(n,k)$ and let $C_p$ denote the chain groups of $K$. Let $\phi$ be a chain map of this chain complex. Then we have
\end{lem}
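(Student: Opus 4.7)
The plan is to show that the alternating sum $\sum_p (-1)^p\,\mathrm{tr}(\phi, C_p)$ vanishes by appealing to the Hopf trace formula together with the contractibility of the ambient complex $K$. First I would observe that the hypersimplex $J(n,k)$, being a convex polytope in $\mathbb{R}^n$, is homeomorphic to a closed ball of the appropriate dimension, and therefore is contractible. Consequently, its reduced homology groups $\widetilde{H}_p(K)$ all vanish, where we use the convention from Section~\ref{s:CW} that treats $\emptyset$ as a unique cell of dimension $-1$ so that the reduced cellular chain complex ends with $\cdots \to C_0 \to C_{-1} \to 0$ with $C_{-1} \cong \mathbb{Z}$.

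Next I would apply the Hopf trace formula to the reduced chain complex with the chain map $\phi$. Since every $\widetilde{H}_p(K)$ is zero, its torsion subgroup is zero and its torsion-free quotient is zero, so the induced map $\phi_*$ on each $\widetilde{H}_p(K)/T_p(K)$ is the zero endomorphism of the zero group, which has trace zero. The Hopf trace formula then gives
$$\sum_p (-1)^p\,\mathrm{tr}(\phi, C_p) \;=\; \sum_p (-1)^p\,\mathrm{tr}(\phi_*, \widetilde{H}_p(K)/T_p(K)) \;=\; 0,$$
where the sum on the left runs over all $p \geq -1$ (with the term for $p=-1$ coming from the action of $\phi$ on the one-dimensional space $C_{-1}$). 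This is precisely the desired vanishing statement.

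There is no real obstacle here: the only substantive input beyond quoting the Hopf trace formula is the contractibility of $K$, which is immediate from the fact that a convex polytope, viewed as a subspace of $\mathbb{R}^n$, deformation retracts to any of its interior points. Care should be taken only with the bookkeeping of the $(-1)$-dimensional chain group $C_{-1}$ when computing the alternating sum, so that the formula is consistent with the reduced rather than the unreduced homology; once that convention is fixed the result falls out immediately.
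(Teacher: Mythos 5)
Your proof is correct and follows exactly the paper's argument: contractibility of the convex polytope $J(n,k)$ gives vanishing reduced homology, and the Hopf trace formula (applied to the reduced chain complex including $C_{-1}$) forces the alternating sum of traces to be zero. The extra care you take with the $(-1)$-dimensional term is a welcome elaboration of the paper's terser version but not a different method.
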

\begin{center}
$\displaystyle\sum_{p\geq -1} (-1)^p$ tr$(\phi, C_p)=0$.
\end{center}
\begin{proof}
This follows after noticing that the hypersimplex $J(n,k)$ is contractible and hence has trivial reduced homology. Therefore the right side of the Hopf trace formula becomes zero as desired.
\end{proof}

\begin{lem}
\label{lem:notin}
Consider any subcomplex $K'$ of $K$ whose reduced homology is concentrated in a single degree $d$. 
Then we have
\end{lem}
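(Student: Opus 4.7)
The plan is to apply the Hopf trace formula to the chain complex of $K'$ with coefficients in $\mathbb{C}$, using the $S_n$-action and reading off traces. For each $g\in S_n$, let $\phi_g$ denote the induced chain map on the augmented cellular chain complex $C_\bullet(K')$; then both sides of the Hopf trace formula are characters (class functions on $S_n$), and the identity upgrades to an identity of virtual characters. Because $K'$ has reduced homology concentrated in degree $d$, the right-hand side collapses to $(-1)^d\,\chi^{\widetilde H_d(K')}$, with all other terms vanishing (in particular the torsion groups $T_p(K')$ are trivial since $\widetilde H_d(K')$ is free abelian by Theorem~\ref{thm:class}(i)).

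Next I would decompose each chain group as a direct sum of the modules $Y(a,b)$. A $p$-face $F(S)$ lies in $Y(a,b)$ iff $S(0)=a$ and $S(1)=b$, and in that case $\dim F(S) = n-a-b-1 = p$. Hence
\begin{equation*}
\chi^{C_p(K')} \;=\; \sum_{\substack{(a,b)\,:\,a+b=n-p-1\\ Y(a,b)\subseteq K'}} \chi^{Y(a,b)},
\end{equation*}
and the analogous identity for $K$ sums over all admissible $(a,b)$. Subtracting the Hopf formula for $K'$ from the one for $K$ (whose alternating sum vanishes by Lemma~\ref{lem:allzero}) gives the desired expression for $\chi^{\widetilde H_d(K')}$ as a signed sum of the $\chi^{Y(a,b)}$ ranging over pairs $(a,b)$ whose orbit of faces does not lie in $K'$, with sign $(-1)^{n-a-b-1-d}$ (up to the overall sign coming from rearranging the Hopf identity).

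The one nontrivial verification is that the identification $C_p(K')=\bigoplus Y(a,b)$ is $S_n$-equivariant, i.e.\ that $S_n$ preserves this direct sum decomposition and acts on each summand as described by Lemma~\ref{lem:cong}. This is immediate from Remark~\ref{rem:subcomp}, since the $S_n$-action on faces preserves the pair $(S(0),S(1))$, and the $S_n$-invariance of $K'$ guarantees that $Y(a,b)\subseteq K'$ is a well-defined condition on orbits rather than on individual faces.

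The main obstacle, if any, is bookkeeping of signs and of which $(a,b)$ are actually missing from $K'$. For the subcomplexes classified in Proposition~\ref{prop:subtypes}, the missing orbits are exactly those with $a+b<n-d-1$, $a<i$, and $b<j$, so the final formula is a finite alternating sum that is well suited to being simplified by Lemmas~\ref{lem:charY}, \ref{lem:canceldown}, and~\ref{lem:cancelleft} in subsequent steps. No deeper topological input beyond the Hopf trace formula and contractibility of $J(n,k)$ is needed.
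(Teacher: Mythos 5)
Your proposal is correct and follows essentially the same route as the paper: apply the Hopf trace formula to $K'$ (using torsion-freeness from Theorem \ref{thm:class}(i) and concentration in degree $d$ to collapse the homology side), then invoke Lemma \ref{lem:allzero} to pass from the sum over $(a,b)\in K'$ to the sum over $(a,b)\notin K'$. Your stated sign $(-1)^{n-a-b-1-d}$ is off by one from the correct $(-1)^{n-d-a-b}$, but since you explicitly deferred the overall sign to the final rearrangement, this is only a bookkeeping point.
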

\begin{center}
tr$(\phi_*,H_{d}(K')) = \displaystyle\sum_{(a,b)\notin K'} (-1)^{n-d-a-b}$tr$(\phi, Y(a,b))$.
\end{center}
\begin{proof}
By Theorem \ref{thm:class} (i) we know that $H_{d}(K')$ has no torsion subgroup and so applying the Hopf trace formula gives
\begin{center}
$\displaystyle\sum_{(a,b)\in K'}(-1)^{n-a-b-1}$tr$(\phi,Y(a,b)) = \displaystyle\sum_p(-1)^p$tr$(\phi_*,H_p(K'))$.
\end{center}
Since the reduced homology of $K'$ is concentrated in degree $d$, this simplifies to
\begin{center}
$(-1)^d$tr$(\phi_*,H_d(K')) = \displaystyle\sum_{(a,b)\in K'}(-1)^{n-a-b-1}$tr$(\phi,Y(a,b))$.
\end{center}
Next, by Lemma \ref{lem:allzero} we know that
\begin{center}
$\displaystyle\sum_{(a,b)\in K'}(-1)^{n-a-b-1}$tr$(\phi,Y(a,b)) + \displaystyle\sum_{(a,b)\notin K'}(-1)^{n-a-b-1}$tr$(\phi,Y(a,b)) = 0$.
\end{center}
Combining this with the preceding equation then gives us
\begin{center}
$(-1)^d$tr$(\phi_*,H_d(K')) = \displaystyle\sum_{(a,b)\notin K'}(-1)^{n-a-b}$tr$(\phi,Y(a,b))$.
\end{center}
Finally, we multiply each side by $(-1)^{-d}$ to get the result.
\end{proof}

\begin{eg}
Consider the subcomplexes $(10,7,4)$ and $(7,7,8)$ of $J(18, 8)$ shown in Figure \ref{fig:altsum} below. 
In order to identify the character of the homology representation we will use Lemma \ref{lem:notin} along with Lemmas \ref{lem:canceldown} and \ref{lem:cancelleft} in the following way. 
First we will only consider the character of $Y(a,b)$ if the square labeled by $(a,b)$ is not shaded by Lemma \ref{lem:notin}. 
Then we will use Lemma \ref{lem:canceldown} to simplify the alternating sum by canceling as we move down columns as seen in the left diagrams.
Finally, we will use Lemma \ref{lem:cancelleft} to further simply this sum by canceling across the row above the highest completely shaded row as seen in the right diagrams.
\end{eg}

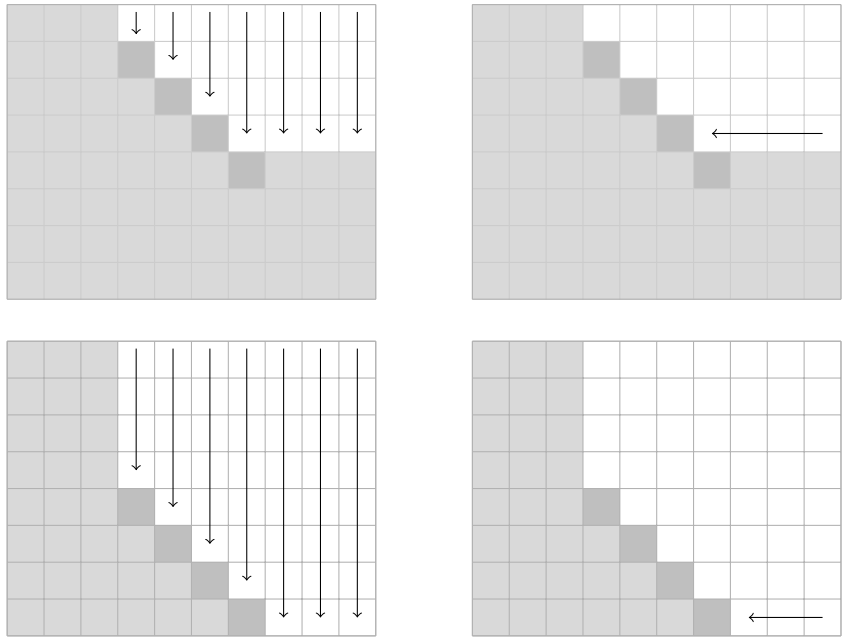
\begin{figure}[htbp]
\def\sca{.49}
\def\split{.4}
\begin{center}
\begin{tikzpicture}[scale=\sca]
\def \length{10}
\def \height{8}
\def \lengthb{9} 
\def \heightb{7} 
\fill [shade] (0,0) rectangle (3,8);
\fill [shade] (3,7) rectangle (4,0);
\fill [shade] (4,6) rectangle (5,0);
\fill [shade] (5,5) rectangle (6,0);
\fill [shade] (6,0) rectangle (10,4);
\fill [shade b] (3,6) rectangle (4,7);
\fill [shade b] (4,5) rectangle (5,6);
\fill [shade b] (5,4) rectangle (6,5);
\fill [shade b] (6,3) rectangle (7,4);
\draw [step=1,thin,gray!40] (0,0) grid (\length, \height);
\draw [linee] (0,0) -- (0,\height);
\draw [linee] (0,0) -- (\length,0); 
\draw [linee] (\length, \height) -- (0,\height);
\draw [linee] (\length, \height) -- (\length,0);
\draw [->] (9.5,7.8) -- (9.5,4.5);
\draw [->] (8.5,7.8) -- (8.5,4.5);
\draw [->] (7.5,7.8) -- (7.5,4.5);
\draw [->] (6.5,7.8) -- (6.5,4.5);
\draw [->] (5.5,7.8) -- (5.5,5.5);
\draw [->] (4.5,7.8) -- (4.5,6.5);
\draw [->] (3.5,7.8) -- (3.5,7.2);
\end{tikzpicture}
\hspace{.4in}
\begin{tikzpicture}[scale=\sca]
\def \length{10}
\def \height{8}
\def \lengthb{9} 
\def \heightb{7} 
\fill [shade] (0,0) rectangle (3,8);
\fill [shade] (3,7) rectangle (4,0);
\fill [shade] (4,6) rectangle (5,0);
\fill [shade] (5,5) rectangle (6,0);
\fill [shade] (6,0) rectangle (10,4);
\fill [shade b] (3,6) rectangle (4,7);
\fill [shade b] (4,5) rectangle (5,6);
\fill [shade b] (5,4) rectangle (6,5);
\fill [shade b] (6,3) rectangle (7,4);
\draw [step=1,thin,gray!40] (0,0) grid (\length, \height);
\draw [linee] (0,0) -- (0,\height);
\draw [linee] (0,0) -- (\length,0); 
\draw [linee] (\length, \height) -- (0,\height);
\draw [linee] (\length, \height) -- (\length,0);
\draw [->] (9.5,4.5) -- (6.5,4.5);
\end{tikzpicture}

\vspace{.2in}

\begin{tikzpicture}[scale=\sca]
\def \length{10}
\def \height{8}
\def \lengthb{9} 
\def \heightb{7} 
\fill [shade] (0,0) rectangle (3,8);
\fill [shade] (3,0) rectangle (4,4);
\fill [shade] (4,0) rectangle (5,3);
\fill [shade] (5,0) rectangle (6,2);
\fill [shade] (6,0) rectangle (7,1);
\fill [shade b] (3,3) rectangle (4,4);
\fill [shade b] (4,2) rectangle (5,3);
\fill [shade b] (5,1) rectangle (6,2);
\fill [shade b] (6,0) rectangle (7,1);
\draw [step=1,thin,linee] (0,0) grid (\length, \height);
\draw [linee] (0,0) -- (0,\height);
\draw [linee] (0,0) -- (\length,0); 
\draw [linee] (\length, \height) -- (0,\height);
\draw [linee] (\length, \height) -- (\length,0);
\draw [->] (9.5,7.8) -- (9.5,0.5);
\draw [->] (8.5,7.8) -- (8.5,0.5);
\draw [->] (7.5,7.8) -- (7.5,0.5);
\draw [->] (6.5,7.8) -- (6.5,1.5);
\draw [->] (5.5,7.8) -- (5.5,2.5);
\draw [->] (4.5,7.8) -- (4.5,3.5);
\draw [->] (3.5,7.8) -- (3.5,4.5);
\end{tikzpicture}
\hspace{.4in}
\begin{tikzpicture}[scale=\sca]
\def \length{10}
\def \height{8}
\def \lengthb{9} 
\def \heightb{7} 
\fill [shade] (0,0) rectangle (3,8);
\fill [shade] (3,0) rectangle (4,4);
\fill [shade] (4,0) rectangle (5,3);
\fill [shade] (5,0) rectangle (6,2);
\fill [shade] (6,0) rectangle (7,1);
\fill [shade b] (3,3) rectangle (4,4);
\fill [shade b] (4,2) rectangle (5,3);
\fill [shade b] (5,1) rectangle (6,2);
\fill [shade b] (6,0) rectangle (7,1);
\draw [step=1,thin,linee] (0,0) grid (\length, \height);
\draw [linee] (0,0) -- (0,\height);
\draw [linee] (0,0) -- (\length,0); 
\draw [linee] (\length, \height) -- (0,\height);
\draw [linee] (\length, \height) -- (\length,0);
\draw [->] (9.5,0.5) -- (7.5,0.5);
\end{tikzpicture}
\caption{Alternating Sum of Characters.}
\label{fig:altsum}
\end{center}
\end{figure}

\begin{lem}
\label{lem:char}
Let $K'=(d,i,j)$ be a subcomplex of $K$.
Let $a_1=n-d-1-j$ and let $b_a=n-d-a-1$.
%
If $j=k$, then the character of the representation of $S_n$ on the $d$-th homology of the complex $K'$, $\chi(H_d(K'))$, is given by
\begin{equation}
\label{eq:1}
f_4(a_1,j-1) + \left(\displaystyle\sum_{\substack{a=a_1+1 \\ a\geq b_a-1}}^{i-1}f_2(a,b_a-1)+f_4(a,b_a-1)\right) + \left(\displaystyle\sum_{\substack{a=a_1+1 \\ a<b_a-1}}^{i-1}f_3(a,b_a-1)+f_4(a,b_a-1)\right).
\end{equation}
\end{lem}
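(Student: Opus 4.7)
The plan is to apply Lemma~\ref{lem:notin} and then collapse the resulting alternating sum of $\chi(Y(a,b))$'s first column-by-column (via Lemma~\ref{lem:canceldown}) and then along the single bottom row (via Lemma~\ref{lem:cancelleft}).

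First I would identify the squares $(a,b)$ outside $K'$. Since $j=k$, the condition $S(1)\geq j$ is vacuous on the grid, so $K'$ is precisely the union of the squares with $a+b\geq n-d-1$ and those in columns $a\geq i$. The missing squares therefore lie in columns $0\leq a\leq i-1$, and the missing row range depends on $a$. Writing $a_0 = a_1+1 = n-d-k$, one checks that $b_a\geq k$ exactly when $a\leq a_1$, and $b_a\leq k-1$ exactly when $a\geq a_0$. Hence for $a\leq a_1$ every row $b=0,\ldots,k-1$ is missing from $K'$, whereas for $a_0\leq a\leq i-1$ only the rows $b=0,\ldots,b_a-1$ are missing.

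Next I would use Lemma~\ref{lem:charY} to substitute $\chi(Y(a,b)) = f_1(a,b)+f_2(a,b)+f_3(a,b)+f_4(a,b)$ in the expression from Lemma~\ref{lem:notin}, and then, for each column $a$, pull out the constant sign $(-1)^{n-d-a-B_a}$ so that the inner sum matches the hypothesis of Lemma~\ref{lem:canceldown} with upper parameter $B_a=k-1$ (if $a\leq a_1$) or $B_a=b_a-1$ (if $a\geq a_0$). A direct parity computation shows that this pulled-out sign equals $+1$ for every $a\geq a_0$, and equals $(-1)^{a_1-a}$ for every $a\leq a_1$. Applying Lemma~\ref{lem:canceldown} then collapses each column to $f_2(a,B_a)+f_4(a,B_a)$ when $a\geq B_a$ and to $f_3(a,B_a)+f_4(a,B_a)$ when $a<B_a$.

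The contributions from columns with $a_0\leq a\leq i-1$ already carry sign $+1$ and directly produce the two explicit sums in the statement, indexed from $a=a_1+1=a_0$. The contributions from columns with $a\leq a_1$ all sit at the single row $b=k-1=j-1$; once the sign $(-1)^{a_1-a}$ is reattached, their total is exactly the left-hand side of Lemma~\ref{lem:cancelleft} with parameters $(a_1,j-1)$, and so collapses to the single term $f_4(a_1,j-1)$. Summing the three pieces yields the claimed formula. The main technical obstacle is the sign bookkeeping, in particular verifying that $(-1)^{n-d-a-B_a}$ evaluates to $+1$ for every column with $a\geq a_0$ and to the alternating pattern $(-1)^{a_1-a}$ for every column with $a\leq a_1$; once those parities are confirmed, the remainder is a two-step telescoping.
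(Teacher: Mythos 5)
Your proposal is correct and follows essentially the same route as the paper's proof: apply Lemma \ref{lem:notin} together with Lemma \ref{lem:charY}, collapse each column of missing squares via Lemma \ref{lem:canceldown} (noting the sign is $+1$ for columns $a\geq a_0$ and $(-1)^{a_1-a}$ for $a\leq a_1$), and then collapse the remaining row-$(k-1)$ contributions via Lemma \ref{lem:cancelleft} with $b=k-1=j-1$ to obtain $f_4(a_1,j-1)$. The only cosmetic difference is that you identify the missing squares directly from the definition of $(d,i,j)$ rather than from the subcomplex diagram and Lemma \ref{lem:label}, which is equivalent.
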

\begin{proof}
By Lemmas \ref{lem:charY} and \ref{lem:notin}, we have
$$\chi(H_d(K')) = \displaystyle\sum_{(a,b)\notin K'} (-1)^{n-d-a-b}\Big(f_1(a,b)+f_2(a,b)+f_3(a,b)+f_4(a,b)\Big).$$ 

Since $j=k$, the diagram of $K'$ has no rows that are completely shaded in and $a_0 = n-d-k$.
Let $Q$ denote the rightmost shaded square in the bottom row. 
The square $Q$ has label $(a_0,b_0)$ by Lemma \ref{lem:label}.
Consider the set of squares in the columns strictly right of $Q$. 
If the top square in one of these columns is labeled by $(a,0)$, then the bottom square in this column is labeled by $(a,k-1)$.
None of these squares are shaded and so we can apply Lemma \ref{lem:canceldown} and see that the contribution of this column to Equation \ref{eq:1} is either 
$\pm\Big(f_2(a,k-1)+f_4(a,k-1)\Big)$, if $a\geq k-1$, or $\pm\Big(f_3(a,k-1)+f_4(a,k-1)\Big)$, if $a<k-1$.
As in Lemma \ref{lem:label}, we have $$a_0=\mathrm{max}\{n-d-k,\ n-d-1-j\},$$ and so the parity is given by $(-1)^{n-d-a-k+1}=(-1)^{a_0-1-a}$.
Summing over all $a$, where $0\leq a\leq a_0-1$, we see that
$$\displaystyle\sum_{\substack{a=0 \\ a < k-1}}^{a_0-1}(-1)^{a_0-1-a}\Big(f_3(a,k-1)+f_4(a,k-1)\Big)+ \displaystyle\sum_{a=k-1}^{a_0-1}(-1)^{a_0-1-a}\Big(f_2(a,k-1)+f_4(a,k-1)\Big)$$
is the contribution of all these columns to Equation \ref{eq:1}.
We can now apply Lemma \ref{lem:cancelleft} with $b=k-1$ to simplify this to $f_4(a_0-1,k-1)$.
Since $j=k$ and $a_0-1=a_1$, we have $f_4(a_0-1,k-1)=f_4(a_1,j-1)$. 

Now consider any column containing shaded squares that is not entirely shaded in. 
If $(a,b)$ is the label of a square above the highest shaded square in that column, then $a_1+1=a_0\leq a \leq i-1$ by Lemma \ref{lem:label}.
We also have $a+b=n-d-2$ since this square must contain faces of dimension $d+1$ by Lemma \ref{lem:donly}.
This implies that $b=b_a-1$.
Applying Lemma \ref{lem:canceldown}, we see that the contribution of each column to Equation \ref{eq:1} is either 
$\pm\Big(f_2(a,b_a-1)+f_4(a,b_a-1)\Big)$, if $a\geq b_a-1$, or $\pm\Big(f_3(a,b_a-1)+f_4(a,b_a-1)\Big)$, if $a<b_a-1$.
The parity is given by $(-1)^{n-d-a-b_a+1}=(-1)^{n-d-a-(n-d-a-2)}=1$.
Summing over all $a$, where $a_1+1\leq a \leq i-1$, we see that
$$\left(\displaystyle\sum_{\substack{a=a_1+1 \\ a\geq b_a-1}}^{i-1}f_2(a,b_a-1)+f_4(a,b_a-1)\right) + \left(\displaystyle\sum_{\substack{a=a_1+1 \\ a<b_a-1}}^{i-1}f_3(a,b_a-1)+f_4(a,b_a-1)\right)$$
is the contribution of all these columns to Equation \ref{eq:1} as desired.
\end{proof}

\begin{lem}
\label{lem:char2}
Let $K'=(d,i,j)$ be a subcomplex of $K$.
Let $a_1=n-d-1-j$ and let $b_a=n-d-a-1$.
%
If $j<k$, then the character of the representation of $S_n$ on the $d$-th homology of the complex $K'$, $\chi(H_d(K'))$, is given by
\begin{equation}
\label{eq:2}
f_4(a_1,j-1) + \left(\displaystyle\sum_{\substack{a=a_1+1 \\ a\geq b_a-1}}^{i-1}f_2(a,b_a-1)+f_4(a,b_a-1)\right) + \left(\displaystyle\sum_{\substack{a=a_1+1 \\ a<b_a-1}}^{i-1}f_3(a,b_a-1)+f_4(a,b_a-1)\right).
\end{equation}
\end{lem}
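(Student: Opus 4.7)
My plan is to follow the argument of Lemma \ref{lem:char} essentially verbatim, making the two adjustments forced by the hypothesis $j<k$: the diagram of $K'$ now has completely shaded rows, and consequently $a_0=a_1$ rather than $a_0=a_1+1$, with the highest unshaded square in each of the leftmost columns sitting at $b=j-1$ instead of $b=k-1$.

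First I would apply Lemma \ref{lem:notin} combined with Lemma \ref{lem:charY} to obtain
$$\chi(H_d(K'))=\sum_{(a,b)\notin K'}(-1)^{n-d-a-b}\Big(f_1(a,b)+f_2(a,b)+f_3(a,b)+f_4(a,b)\Big).$$
Because $j<k$, a square $(a,b)$ fails to lie in $K'$ exactly when $a<i$, $b<j$, and $a+b<n-d-1$. Column by column this splits into two regimes: for $a\leq a_1$ one has $b_a\geq j$, so the unshaded cells in column $a$ are $b=0,1,\ldots,j-1$; for $a_1<a<i$ one has $b_a<j$, so the unshaded cells are $b=0,1,\ldots,b_a-1$.

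Next I would apply Lemma \ref{lem:canceldown} column by column. For $a_1<a<i$ the telescoping terminates at $(a,b_a-1)$; after factoring out the global sign $(-1)^{n-d-a-b_a+1}=1$, this column contributes $f_2(a,b_a-1)+f_4(a,b_a-1)$ if $a\geq b_a-1$ and $f_3(a,b_a-1)+f_4(a,b_a-1)$ otherwise. These are precisely the second and third summands of Equation \ref{eq:2}. For $a\leq a_1$ the telescoping terminates at $(a,j-1)$ with overall sign $(-1)^{n-d-a-j+1}=(-1)^{a_1-a}$; the column contributes $(-1)^{a_1-a}(f_2(a,j-1)+f_4(a,j-1))$ when $a\geq j-1$ and $(-1)^{a_1-a}(f_3(a,j-1)+f_4(a,j-1))$ otherwise. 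Summing these across $a=0,1,\ldots,a_1$ matches the left-hand side of Lemma \ref{lem:cancelleft} with parameter $b=j-1$, and that lemma collapses the entire contribution to the single term $f_4(a_1,j-1)$, which is the first summand of Equation \ref{eq:2}.

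The only real work is sign bookkeeping: one must verify the two identities $n-d-a-b_a+1=2$ and $n-d-a-j+1=a_1-a+2$ (using $a_1=n-d-1-j$) so that the external signs from Lemma \ref{lem:notin} line up with the alternating signs built into Lemmas \ref{lem:canceldown} and \ref{lem:cancelleft}. Once that is done the two lemmas combine mechanically to yield Equation \ref{eq:2}. There is no genuine novelty here relative to the proof of Lemma \ref{lem:char}; the one subtle point I expect is correctly handling the split between columns $a\leq a_1$ and $a>a_1$, since the cut-off row $b=j-1$ is the distinguishing feature of the $j<k$ case that has no counterpart in the $j=k$ proof and dictates where Lemma \ref{lem:cancelleft} should be invoked.
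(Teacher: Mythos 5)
Your proposal is correct and follows essentially the same route as the paper's proof: reduce to the alternating sum via Lemmas \ref{lem:notin} and \ref{lem:charY}, telescope each column with Lemma \ref{lem:canceldown}, and collapse the columns with $a\leq a_1$ (terminating at row $j-1$) via Lemma \ref{lem:cancelleft} with $b=j-1$, with exactly the sign computations $(-1)^{n-d-a-b_a+1}=1$ and $(-1)^{n-d-a-j+1}=(-1)^{a_1-a}$. The only cosmetic difference is that you read off the unshaded squares directly from the defining inequalities of $(d,i,j)$, whereas the paper routes this through the location of $M$ and Lemmas \ref{lem:label} and \ref{lem:donly}; both give the same partition of columns.
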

\begin{proof}
By Lemmas \ref{lem:charY} and \ref{lem:notin}, we have
$$\chi(H_d(K')) = \displaystyle\sum_{(a,b)\notin K'} (-1)^{n-d-a-b}\Big(f_1(a,b)+f_2(a,b)+f_3(a,b)+f_4(a,b)\Big).$$ 

Since $j<k$, the diagram of $K'$ has some rows that are completely shaded in and $a_0 = n-d-1-j = a_1$.
Also since $j<k$, the square $M$ is not in the top right corner and has label $(m_0,j)$ by Definition \ref{defn:canon}.
Next note that $j>0$ by definition, so the top row is not completely shaded in. 
This implies that $M$ is not in the top row.
By Lemma \ref{lem:unmatch}, $M$ contains unmatched faces and so by Lemma \ref{lem:donly}, $M$ must contain faces of dimension $d$.
It follows from Remark \ref{rem:subcomp} that $d=n-m_0-j-1$ or that $m_0 = n-d-1-j = a_1$.

Consider the set of squares that are not in $K'$ and are either in the column containing $M$ or to the right of this column.
If the top square in one of these columns is labeled by $(a,0)$, then the lowest unshaded square in this column is labeled by $(a,j-1)$.
Applying Lemma \ref{lem:canceldown}, we see that the contribution of this column to Equation \ref{eq:2} is either 
$\pm\Big(f_2(a,j-1)+f_4(a,j-1)\Big)$, if $a\geq j-1$, or $\pm\Big(f_3(a,j-1)+f_4(a,j-1)\Big)$, if $a<j-1$.
The parity is given by $(-1)^{n-d-a-j+1}=(-1)^{a_1-a}$.
Summing over all $a$, where $0\leq a\leq a_1$, we see that
$$\displaystyle\sum_{\substack{a=0 \\ a < j-1}}^{a_1}(-1)^{a_1-a}\Big(f_3(a,j-1)+f_4(a,j-1)\Big)+ \displaystyle\sum_{a=j-1}^{a_1}(-1)^{a_1-a}\Big(f_2(a,j-1)+f_4(a,j-1)\Big)$$
is the contribution of all these columns to Equation \ref{eq:2}.
We can now apply Lemma \ref{lem:cancelleft} with $b=j-1$ to simplify this to $f_4(a_1,j-1)$.

Now consider any column strictly left of the column containing $M$ that is not entirely shaded in. 
If $(a,b)$ is the label of a square above the highest shaded square in that column, then $a_1+1\leq a \leq i-1$ by Lemma \ref{lem:label}.
We also have that $a+b=n-d-2$ since this square must contain faces of dimension $d+1$ by Lemma \ref{lem:donly}.
This implies that $b=b_a-1$.
Applying Lemma \ref{lem:canceldown}, we see that the contribution of each column to Equation \ref{eq:2} is either 
$\pm\Big(f_2(a,b_a-1)+f_4(a,b_a-1)\Big)$, if $a\geq b_a-1$, or $\pm\Big(f_3(a,b_a-1)+f_4(a,b_a-1)\Big)$, if $a<b_a-1$.
The parity is given by $(-1)^{n-d-a-b_a+1}=(-1)^{n-d-a-(n-d-a-2)}=1$.
Summing over all $a$, where $a_1+1\leq a \leq i-1$, we see that
$$\left(\displaystyle\sum_{\substack{a=a_1+1 \\ a\geq b_a-1}}^{i-1}f_2(a,b_a-1)+f_4(a,b_a-1)\right) + \left(\displaystyle\sum_{\substack{a=a_1+1 \\ a<b_a-1}}^{i-1}f_3(a,b_a-1)+f_4(a,b_a-1)\right)$$
is the contribution of all these columns to Equation \ref{eq:2} as desired.
\end{proof}

\begin{eg}
Consider again the subcomplex $K'=(10,7,4)$ of $J(18, 8)$ shown in Figure \ref{fig:altsum}. 
Notice that $j=4<8=k$, $a_1=n-d-1-j=18-10-1-4 = 3$, and $$b_a-1=n-d-a-2=18-10-a-2=6-a.$$
So we can apply Lemma \ref{lem:char2} to see that
$$\chi(H_{10}(K')) =f_4(3,3)+f_2(4,2)+f_4(4,2)+f_2(5,1)+f_4(5,1)+f_2(6,0)+f_4(6,0).$$
\end{eg}

\begin{lem}
\label{lem:pieri2}
Let $b_a=n-d-a-1$.
The character corresponding to the representation $$(\displaystyle\bigwedge^{d+1}E_{n-a} \otimes \mathrm{id}_{a})\uparrow^{S_n}_{S_{n-a}\times S_{a}}$$ is given by:
\begin{enumerate}
\item[\rm{(i)}] $f_2(a,b_a-1)+f_4(a,b_a-1)$ if $a\geq b_a-1$; 
\item[\rm{(ii)}] $f_3(b_a-1,a)+f_4(b_a-1,a)$ if $a< b_a-1$.
\end{enumerate}
\end{lem}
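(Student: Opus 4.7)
The plan is to compute the induced character directly by the Pieri rule. By Proposition \ref{prop:ext}, the character of $\bigwedge^{d+1} E_{n-a}$ as an $S_{n-a}$-module is $\chi^{[(n-a)-(d+1),\,1^{d+1}]} = \chi^{[b_a,\,1^{d+1}]}$. So the character I need to evaluate is
$$\left(\chi^{[b_a,\,1^{d+1}]}\times\chi^{[a]}\right)\uparrow^{S_n}_{S_{n-a}\times S_a},$$
which, by Theorem \ref{thm:Pieri}(i), equals $\sum_{\lambda\in C_a(\mu)}\chi^\lambda$ with $\mu=[b_a,\,1^{d+1}]$. Thus the whole proof reduces to enumerating $C_a(\mu)$ and matching it term-by-term with the claimed sum.

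The next step is to describe $C_a(\mu)$ explicitly. The column heights of $\mu$ are $h_1^\mu=d+2$, $h_c^\mu=1$ for $2\le c\le b_a$, and $h_c^\mu=0$ for $c>b_a$. Because Pieri allows at most one new box per column, any $\lambda\in C_a(\mu)$ has column heights $h_1^\lambda\in\{d+2,d+3\}$, $h_c^\lambda\in\{1,2\}$ for $2\le c\le b_a$, and $h_c^\lambda\in\{0,1\}$ for $c>b_a$; the requirement that column heights weakly decrease then forces the 2's among columns $2,\ldots,b_a$ to form a prefix. Parametrize $\lambda$ by $\epsilon=h_1^\lambda-(d+2)\in\{0,1\}$ and by the length $j$ of the second row; then $\lambda=[b_a+a+1-j-\epsilon,\,j,\,1^{d+\epsilon}]$, where $j\ge 1$, $j\le b_a$, $j\le a+1-\epsilon$ (so that the number $t$ of boxes added beyond column $b_a$ is nonnegative), and $j\le b_a+a+1-j-\epsilon$ (so that $\lambda$ is a partition).

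In case (i), $a\ge b_a-1$, the binding upper bound on $j$ becomes $b_a$ when $\epsilon=0$ and $\min(a,b_a)$ when $\epsilon=1$. Setting $c=b_a-j$, the $\epsilon=0$ family becomes $\lambda=[a+1+c,\,b_a-c,\,1^d]$ for $c=0,\ldots,b_a-1$, which is exactly $f_4(a,b_a-1)$; the $\epsilon=1$ family becomes $\lambda=[a+c,\,b_a-c,\,1^{d+1}]$, which is $f_2(a,b_a-1)$ once the boundary value $a=b_a-1$ is reconciled via the convention $\chi^{[a,\,a+1,\,1^{n-2a-1}]}=0$ built into the definition of $f_2$. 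In case (ii), $a<b_a-1$, the binding upper bound is $a+1-\epsilon$. Setting $c=(a+1-\epsilon)-j$, the $\epsilon=0$ family becomes $\lambda=[b_a+c,\,a+1-c,\,1^d]$ for $c=0,\ldots,a$, matching $f_4(b_a-1,a)$; the $\epsilon=1$ family becomes $\lambda=[b_a+c,\,a-c,\,1^{d+1}]$ for $c=0,\ldots,a-1$, matching $f_3(b_a-1,a)$. The only step requiring care is the bookkeeping to confirm that the binding constraint on $j$ switches between $b_a$ and $a+1-\epsilon$ precisely at the case division $a\gtreqless b_a-1$, and that the boundary $a=b_a-1$ is absorbed by the convention killing the invalid term in $f_2$; every other step is routine.
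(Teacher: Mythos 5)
Your proposal is correct and follows essentially the same route as the paper: identify the character of $\bigwedge^{d+1}E_{n-a}$ as $\chi^{[b_a,\,1^{d+1}]}$ via Proposition \ref{prop:ext}, apply the Pieri rule of Theorem \ref{thm:Pieri}(i), and match the resulting partitions against $f_2,f_3,f_4$. The only difference is that you spell out the enumeration of $C_a(\mu)$ (column heights, the parameters $\epsilon$ and $j$, and the boundary case $a=b_a-1$ absorbed by the convention in $f_2$), which the paper leaves implicit; your bookkeeping checks out.
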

\begin{proof}
Since $d+1=n-a-b_a$, Proposition \ref{prop:ext} shows that the character
corresponding to $(\displaystyle\bigwedge^{d+1}E_{n-a} \otimes \mathrm{id}_{a})\uparrow^{S_n}_{S_{n-a}\times S_{a}}$ is given by $(\chi^{[b_a,\ 1^{n-a-b_a}]}\times \chi^{[a]})\uparrow^{S_n}_{S_{n-a}\times S_{a}}$.
At this point we need to utilize the Pieri rule from Theorem \ref{thm:Pieri} (i).

Suppose first that $a\geq b_a-1$. Applying the Pieri rule shows that $$(\chi^{[b_a,\ 1^{n-a-b_a}]}\times \chi^{[a]})\uparrow^{S_n}_{S_{n-a}\times S_{a}}$$ is equal to
$$\displaystyle\sum_{c=0}^{b_a-1}\chi^{[a+c,\ b_a-c,\ 1^{n-a-b_a}]} + 
  \displaystyle\sum_{c=0}^{b_a-1}\chi^{[a+1+c,\ b_a-c,\ 1^{n-a-b_a-1}]},$$
which by definition is equal to $f_2(a,b_a-1)+f_4(a,b_a-1)$.

Suppose next that $a< b_a-1$. Applying the Pieri rule shows that  
$$(\chi^{[b_a,\ 1^{n-a-b_a}]}\times \chi^{[a]})\uparrow^{S_n}_{S_{n-a}\times S_{a}}$$
is equal to
$$\displaystyle\sum_{c=0}^{a-1}\chi^{[b_a+c,\ a-c,\ 1^{n-a-b_a}]} + 
  \displaystyle\sum_{c=0}^{a}\chi^{[b_a+c,\ a+1-c,\ 1^{n-a-b_a-1}]},$$ 
which by definition is equal to $f_3(b_a-1,a) + f_4(b_a-1,a)$. 
\end{proof}

\begin{thm}
\label{thm:ext}
Let $K'=(d,i,j)$ be a subcomplex of $K$ and let $a_1=n-d-1-j$.
Let $V_{f_4(a,b)}$ be the representation corresponding to $f_4(a,b)$. 
Regarded as a $\mathbb{C}S_n$-module, the $d$-th homology group of $K'$ is isomorphic to 
$$
V_{f_4(a_1,j-1)}
+ \displaystyle\bigoplus_{\substack{a=a_1+1}}^{i-1}\left(\displaystyle\bigwedge^{d+1}E_{n-a} \otimes \mathrm{id}_{a}\right)\uparrow^{S_n}_{S_{n-a}\times S_{a}}.
$$ 
\end{thm}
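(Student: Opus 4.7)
The plan is to combine the character computations of Lemmas \ref{lem:char} and \ref{lem:char2} with the Pieri-rule identifications in Lemma \ref{lem:pieri2}, and then pass from equality of characters to isomorphism of modules using semisimplicity of $\mathbb{C}S_n$.

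First I would invoke Lemmas \ref{lem:char} and \ref{lem:char2} to write $\chi(H_d(K'))$ in the uniform form
\[
f_4(a_1,j-1) + \sum_{\substack{a=a_1+1 \\ a\geq b_a-1}}^{i-1}\bigl(f_2(a,b_a-1)+f_4(a,b_a-1)\bigr) + \sum_{\substack{a=a_1+1 \\ a<b_a-1}}^{i-1}\bigl(f_3(a,b_a-1)+f_4(a,b_a-1)\bigr),
\]
regardless of whether $j=k$ or $j<k$. The first term $f_4(a_1,j-1)$ corresponds by construction to $V_{f_4(a_1,j-1)}$, so it suffices to match each term with $a_1+1\leq a\leq i-1$ to the character of $(\bigwedge^{d+1}E_{n-a}\otimes\mathrm{id}_a)\uparrow^{S_n}_{S_{n-a}\times S_a}$.

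Next I would recall the convention $f_r(x,y)=f_r(y,x)$ for $x<y$, which makes $f_3(a,b_a-1)+f_4(a,b_a-1)=f_3(b_a-1,a)+f_4(b_a-1,a)$ whenever $a<b_a-1$. With this identification in hand, Lemma \ref{lem:pieri2} gives exactly the two cases needed: when $a\geq b_a-1$, the character of the induced representation is $f_2(a,b_a-1)+f_4(a,b_a-1)$, and when $a<b_a-1$, it is $f_3(b_a-1,a)+f_4(b_a-1,a)$. Thus each summand in the alternating-sum expression is matched with a genuine representation, and the full character agrees with that of
\[
V_{f_4(a_1,j-1)}\oplus \bigoplus_{a=a_1+1}^{i-1}\left(\bigwedge^{d+1}E_{n-a}\otimes\mathrm{id}_a\right)\uparrow^{S_n}_{S_{n-a}\times S_a}.
\]

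Finally, since $\mathbb{C}S_n$ is semisimple and finite-dimensional representations are determined up to isomorphism by their characters, this equality of characters upgrades to a $\mathbb{C}S_n$-module isomorphism, which is what the theorem asserts. The bulk of the work has already been done in the preceding lemmas, so the main obstacle here is really just bookkeeping: checking that the case dichotomy $a\geq b_a-1$ versus $a<b_a-1$ in Lemma \ref{lem:pieri2} lines up correctly with the $f_2$-versus-$f_3$ dichotomy in Lemmas \ref{lem:char} and \ref{lem:char2}, which the symmetry convention $f_r(x,y)=f_r(y,x)$ handles cleanly.
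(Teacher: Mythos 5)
Your proposal is correct and follows essentially the same route as the paper: cite Lemmas \ref{lem:char} and \ref{lem:char2} for the character of $H_d(K')$, match each summand via Lemma \ref{lem:pieri2} using the convention $f_i(a,b)=f_i(b,a)$ for $a<b$, and conclude. The only difference is that you make explicit two points the paper leaves implicit --- the role of the symmetry convention in reconciling the $a<b_a-1$ cases, and the passage from character equality to module isomorphism via semisimplicity of $\mathbb{C}S_n$ --- both of which are sound.
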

\begin{proof}
Let $b_a=n-d-a-1$.
By Lemmas \ref{lem:char} and \ref{lem:char2}, it is enough to show that the sum of characters 
$$
f_4(a_1,j-1) + \left(\displaystyle\sum_{\substack{a=a_1+1 \\ a\geq b_a-1}}^{i-1}f_2(a,b_a-1)+f_4(a,b_a-1)\right) + \left(\displaystyle\sum_{\substack{a=a_1+1 \\ a<b_a-1}}^{i-1}f_3(a,b_a-1)+f_4(a,b_a-1)\right)
$$
is the same as the character corresponding to 
$$
V_{f_4(a_1,j-1)}
+ \displaystyle\bigoplus_{\substack{a=a_1+1}}^{i-1}\left(\displaystyle\bigwedge^{d+1}E_{n-a} \otimes \mathrm{id}_{a}\right)\uparrow^{S_n}_{S_{n-a}\times S_{a}}.
$$

By Lemma \ref{lem:pieri2}, the character corresponding to the representation $(\displaystyle\bigwedge^{d+1}E_{n-a} \otimes \mathrm{id}_{a})\uparrow^{S_n}_{S_{n-a}\times S_{a}}$ is given by $f_2(a,b_a-1)+f_4(a,b_a-1)$, if $a\geq b_a-1$, and $f_3(b_a-1,a)+f_4(b_a-1,a)$, if $a< b_a-1$.
Summing over all $a$ where $a_1+1\leq a\leq i-1$ gives the result.
\end{proof}

\begin{eg}
Consider the subcomplex $(10, 7, 4)$ of $J(18,8)$.
Notice that $d=10,\ i=7,\ j=4,$ and $a_1 =n-d-1-j=18-10-1-4= 3$.
We can apply Theorem \ref{thm:ext} to see that the $d$-th homology group of $(10, 7, 4)$ is isomorphic as an $S_n$-module to
$$
V_{f_4(3,3)}
+ \displaystyle\bigoplus_{\substack{a=4}}^{6}\left(\displaystyle\bigwedge^{11}E_{18-a} \otimes \mathrm{id}_{a}\right)\uparrow^{S_{18}}_{S_{18-a}\times S_{a}}.$$
\end{eg}


\bibliographystyle{amsplain}

\nocite{*}		

\bibliography{refs}		

\end{document}